\documentclass[12pt,a4paper]{report}
\usepackage[english]{babel}
\usepackage{newlfont}
\usepackage{color}
\usepackage[utf8]{inputenc}
\usepackage{graphicx}
\usepackage{amsfonts}
\usepackage{amssymb}
\usepackage{mathtools}
\usepackage{mathrsfs}
\usepackage{amsthm}
\newtheorem{theorem}{Theorem}[section]
\newtheorem{proposition}{Proposition}[section]
\newtheorem{corollary}{Corollary}[theorem]
\newtheorem{lemma}[proposition]{Lemma}
\theoremstyle{definition}
\newtheorem{definition}{Definition}[section]
\theoremstyle{remark}
\newtheorem{remark}{Remark}[section]
\usepackage{pst-node}
\usepackage{tikz-cd}
\usepackage{tikz}
\usetikzlibrary{positioning, arrows}
\usepackage{leftindex}
\newtheorem{exmp}{Example}[section]
\usepackage{appendix}
\usepackage{hyperref}
\usepackage{url}

\begin{document}
\begin{titlepage}
%
%
%
%
\begin{center}
{{\Large{\textsc{Alma Mater Studiorum $\cdot$ University of  Bologna}}}} 
\rule[0.1cm]{15.8cm}{0.1mm}
\rule[0.5cm]{15.8cm}{0.6mm}
\\\vspace{3mm}
{\small{\bf School of Science \\
Department of Physics and Astronomy\\
Master Degree in Physics}}
\end{center}

\vspace{23mm}

\begin{center}\textcolor{black}{
%
%
{\LARGE{\bf On quantum $G$-structures}}\\
}\end{center}

\vspace{50mm} \par \noindent

\begin{minipage}[t]{0.47\textwidth}
%
%
{\large{\bf Supervisor: \vspace{2mm}\\\textcolor{black}{
Prof. Emanuele Latini}\\\\
%
%
%
\textcolor{black}{
\bf Co-supervisor: 
\vspace{2mm}\\
Dr. Antonio Del Donno}\\\\
}}\end{minipage}
\hfill
\begin{minipage}[t]{0.47\textwidth}\raggedleft \textcolor{black}{
{\large{\bf Submitted by:
\vspace{2mm}\\
%
%
\textcolor{black}{
Giovanni Gava}}}
}
\end{minipage}

\vspace{40mm}

\begin{center}
%
%
Academic Year \textcolor{black}{ 2023/2024}
\end{center}

\end{titlepage}

\newpage
\thispagestyle{empty} 
\mbox{} 
\newpage

\begin{abstract}
We examine principal fiber bundles in classical differential geometry, with a focus on the frame bundle and its $G$-structures. Extending these concepts to noncommutative geometry, we explore their quantum analogues within the framework of quantum principal bundles. In particular, we define a quantum frame resolution and investigate its quantum reduction. Moreover, we study in detail the case of quantum Hermitian structures on the quantum plane.
\end{abstract}


\newpage
\thispagestyle{empty} 
\mbox{} 
\newpage

\clearpage
\pagenumbering{roman}
\tableofcontents
\clearpage
\pagestyle{plain} 

\chapter*{Introduction} \label{intoduction}
\addcontentsline{toc}{chapter}{Introduction}

Differential geometry has long played a fundamental role in theoretical physics, providing the mathematical framework underlying both gauge theories and general relativity. In particular, in this setting, gauge fields can be understood as connections on certain principal fiber bundles, which in turn induce covariant derivatives on sections of associated bundles; physically, this is to be interpreted as the gauge mediator-matter field minimal coupling. In general relativity, the Levi-Civita connection arises as the unique torsion-free, metric-compatible linear connection on a Lorentzian manifold $M$. Similarly, this can be induced from a connection on the principal $SO(1,3)$-bundle of pseudo-orthonormal frames. The latter is an example of a $G$-structure, i.e., a reduction of the frame bundle $FM$---a principal $GL(n,\mathbb{R})$-bundle over an $n$-dimensional manifold $M$---to a Lie subgroup $G \subset GL(n,\mathbb{R})$. \\
Much of the essential information about the underlying geometries one considers is encoded in $G$-structures. Here, in a sense, the Lie group $G$ serves as the symmetry group of the geometry being modelled: for instance, to endow a manifold $M$ with a metric structure, one considers an $O(n)$-structure, a $2n$-dimensional manifold $M$ with an almost symplectic structure corresponds to an $Sp(2n,\mathbb{R})$-structure, while a $U(n)$-structure defines a Hermitian structure on a complex manifold. This aligns with the idea in Klein's \textit{Erlangen Program}, which generalizes Euclidean geometry by studying spaces in terms of their symmetries. A Klein geometry, like Euclidean geometry, is homogeneous, meaning that for any two points, there exists a symmetry that maps one to the other. Cartan geometry generalizes this concept by introducing "curvature" to arbitrary Klein geometries, just as Riemannian geometry introduces curvature to Euclidean geometry. This idea is illustrated in the diagram below (Fig. \ref{fig:sharpediagram}).
\begin{figure}[h]
    \centering
    \begin{tikzpicture}[
        node distance=2cm and 3cm,
        every node/.style={draw, text width=3cm, align=center, rounded corners, font=\small},
        every path/.style={->, thick},
        textnode/.style={draw=none, fill=none, font=\small} 
    ]
    
    \node (E) {Euclidean \\ geometry};
    \node (K) [right=of E] {Klein \\ geometries};
    \node (R) [below=of E] {Riemannian \\ geometry};
    \node (C) [below=of K] {Cartan \\ geometries};

    \draw (E) -- (K) node[midway, above, textnode] {generalize symmetry group};
    \draw (R) -- (C) node[midway, below, textnode] {generalize tangent space geometry};
    \draw (E) -- (R) node[midway, left, textnode] {allow curvature};
    \draw (K) -- (C) node[midway, right, textnode] {allow curvature};

    \end{tikzpicture}
    \caption{Relation between different geometries. This diagram was originally presented by Sharpe in the Preface of \cite{sharpe_dg}.}
    \label{fig:sharpediagram}
\end{figure}
Significantly, $G$-structures are deeply connected to Cartan geometries \cite{cap_slovak}.
In the notable case of $O(p,q)$-structures, there is an important equivalence between the category of pseudo-Riemannian manifolds of signature $(p,q)$ and the category of torsion-free Cartan geometries of type $\big(\mathbb{R}^n \rtimes O(p,q), O(p,q) \big)$.
The elegance and generality of Cartan geometries also find important applications in physics. A significant one in general relativity, is discussed in \cite{wise_cartan}: a Cartan connection offers a notion of parallel transport between different homogeneous spacetime models, extending beyond the Minkowski model typically used in Einstein's theory to include the de Sitter and anti-de Sitter models, and providing a clearer understanding of the MacDowell-Mansouri formulation of general relativity.

The geometry of smooth manifolds, which we use to describe spacetime, has proven its validity up to its limits. However, it is widely accepted that at the Planck scale, where quantum gravitational effects become significant, the very nature of spacetime geometry may be fundamentally different. This is where noncommutative geometry, or quantum geometry, offers a possible path forward \cite{majid_hopfphysicsplanck}. In fact, within this framework, geometry can be extended to encompass discrete spaces and finite dimensional algebras. The classical geometry of spacetime is then expected to emerge as suitable low-energy limit \cite{majid_insights}.\\
The idea of noncommutative geometry is that, instead of working with manifolds, one adopts an algebraic perspective by considering the algebra of functions defined on them. Inspired by the noncommutativity of the algebra of phase space coordinate functions in quantum mechanics --- namely, the canonical commutation relations $[x,p]=i\hbar$ --- one then allows these function algebras to become noncommutative, typically through $q$-deformations.\\
As a result of this noncommutativity, these algebras no longer correspond to functions on an underlying classical space. However, in the algebraic framework, the existence of such a space is not a prerequisite, making it particularly well-suited for regimes where spacetime itself loses its classical meaning. In addition, by the Gel'fand-Naimark theorem, every commutative unital $C^{\ast}$-algebra is isometrically-isomorphic to $C(X)$, the algebra of continuous functions on some compact Hausdorff space $X$, allowing for the generalization of classical geometry into the noncommutative setting.
The central example in noncommutative geometry is that of Hopf algebras and quantum groups \cite{kassel_qgroups}, \cite{sweedler_hopf}. The latter can be viewed as $q$-deformations of algebras of functions over Lie groups. In physics, Hopf algebras have found a role in the interpretation of renormalization in QFT \cite{connes_hopf} and in quantum computing \cite{ercolessi_quantum}. From the philosophical standpoint, the interest in studying Hopf algebras is motivated by the \textit{principle of representation-theoretic self-duality} \cite{majid_principalselfduality}, which argues that a complete theory of physics should have a self-dual structure (in this precise representation-theoretic sense). In fact, the category of Hopf algebras is the simplest example of self-dual category after that of Abelian groups, as established in a known theorem by Pontryagin.\\

Even if it is not crucial to this project, it is worth to mention the Alain Connes' spectral triple approach to noncommutative geometry \cite{connes_ncg}, that provides a path to the unification of geometry and quantum field theory.
In this thesis, on the other hand, we work within the scope of quantum differential geometry \cite{majid_qriemanniangeo}, the other main approach to the subject. In this setting, one adopts the more general algebraic point of view, describing differential calculus over algebras. In particular, this theory provides noncommutative generalizations of principal bundles and their associated bundles.
A natural braiding occurs, enabling the definition of quantum analogues of connections, gauge transformations and metrics are given.\\
In this work, we review the arguments that are necessary to introduce the \textit{quantum frame resolution} and the noncommutative generalization of the soldering form, by which one expresses gravity as gauge theory of the frame bundle. Our original contribution lies in defining quantum $G$-structures as quantum reductions of quantum frame resolutions and proving that they, too, constitute quantum frame resolutions. Importantly, this is done in terms of a general covariant first order differential calculus, from which we induce a quotient calculus on the reduced principal comodule algebra. Furthermore, we illustrate an example of quantum frame resolution in studying the case of the quantum group of affine motions on the quantum plane, using the smashed product calculus on the smashed product algebra $\mathbb{C}_q^2\#GL_q(2)$. We then discuss its quantum reduction to the quantum Hermitian structure $\mathbb{C}_q^2\#U_q(2)$.

\section*{Outline}
In Chapter \ref{principalfiberbundles}, we provide an overview of the principal bundle formalism in differential geometry. We discuss associated bundles and reductions of principal bundles. A connection is defined in terms of a $G$-equivariant horizontal distribution $\mathcal{H}P = \ker{\omega}$, for some $\mathfrak{g}$-valued $1$-form $\omega$, the connection $1$-form. We show how gauge fields are obtained as the pullback by local sections of the principal connection $\omega$, and how one recovers the gauge transformation formula. We then discuss the important example of the frame bundle $FM \to M$ over an $n$-dimensional manifold $M$ and its canonical form $\theta \in \Omega^1_{hor}(FM;\mathbb{R}^n)^{GL(n,\mathbb{R})}$. Particular attention is given to $G$-structures, which can be viewed as principal bundles $P \to M$ with the together with a $1$-form $\Theta \in \Omega^1_{hor}(P;\mathbb{R}^n)^{G}$, the equivalent of the soldering form, obtained as the pullback of the latter by the bundle morphism defining the reduction. Their direct link to Cartan geometries is illustrated in the affine case.

In Chapter \ref{Hopfalgebras}, we shift our discussion to the algebraic perspective. We define algebras and coalgebras. A bialgebra is understood as a vector space with both an algbera and a coalgebra structure. An Hopf algebra is then defined as a bialgebra with the addition of an antipode, playing the role of group inversion. We then present comodules and comodule algebras, the latter being the non commutative analogues of total spaces in the quantum principal bundle framework. We study trivial and cleft extensions as special cases of Hopf-Galois extensions. We then focus on the trivial case and on the corresponding smashed product algebra. Finally, we present the theory of first order differential calculi on algebras, which provides a notion of a differential structure in the quantum-geometric setting. We define first order covariant calculi. Importantly, we examine the case of the pullback and quotient calculi, as well as their covariant versions. A definition of the quantum Maurer-Cartan form is given. We show the construction of a smashed product calculus on the smashed product algebra.

In Chapter \ref{quantumprincipalbundles}, the theory of quantum principal bundles is introduced. They are understood as faithfully flat Hopf-Galois extensions $B := A^{coH} \subseteq A$, where $B$ is the subalgebra of coinvariant elements of the comodule algebra $A$ under its coaction. Two equivalent definitions of associated quantum vector bundles $\mathcal{E}$ are presented.
A definition of base space calculus $(\Gamma_B,\mathrm{d}_B)$ on $B$ and of horizontal forms $A\Gamma_B$ is provided. Lastly, we illustrate the definition of quantum reductions of such quantum principal bundles. In particular, given a principal $H$-comodule algebra $A \subseteq B$, a reduction is a principal $H_0$-comodule algebra $B_0 := A_0^{coH_0} \subseteq A_0$, with $H_0 := H/J$ for some Hopf ideal $J$. This is obtained via a morphism of $H_0$-comodule algebras $\phi \colon A \to A_0$ with the property that its restriction to $B$ gives an isomorphism of algebras $B \cong B_0$.

In Chapter \ref{quantumframebundle} we provide an axiomatic definition of frame bundle as a principal bundle together with a canonical form $\theta \in \Omega^1_{hor}(FM;\mathbb{R}^n)^{GL(n,\mathbb{R})}$. An important correspondence between equivariant functions on a principal bundle $P\to M$ with values in a suitable vector space $V$ and sections of the corresponding associated bundle is shown. This is later extended to the level of differential forms as the correspondence between the space of forms on the base space $M$ with values on the associated bundles $P \times_G V$ and that of horizontal and equivariant forms with values in $V$. Such correspondence, in light of the case of the frame bundle, justifies the definition of a frame resolution of the tangent bundle $TM$ as a principal bundle with an horizontal and equivariant form $\theta$ inducing a bundle isomorphism $TM \cong P \times_G V$. We then provide noncommutative counterparts to such arguments. The translation map is introduced, which is then used to demonstrate the correspondence between strongly tensorial forms $\theta \colon A\Gamma_B \to V$ and left $B$-module maps $s \colon \mathcal{E} = (A \otimes V)^{coH} \to \Gamma_B$. From this, a quantum frame resolution of $(B,\Gamma_B)$ is defined as a tuple $(A,H,V,\theta)$, with $A$ a principal $H$-comodule algebra,  such that $\theta$ induces an isomorphism of left $B$-modules $\mathcal{E} \cong \Gamma_B$.
As discussed earlier, we study the case of the quantum group of affine motions on the quantum plane, providing an original example of quantum frame resolution in terms of the smashed product algebra $\mathbb{C}^2_q \# GL_q(2)$ and the smashed product calculus on it. Finally, we introduce the definition of quantum $G$-structures. Precisely, given a quantum frame resolution $(A,H,V,\theta)$, a quantum $G$-structure $(A_0,H_0,V,\theta_0)$ is obtained as a quantum reduction $\phi \colon A \to A_0$ to the Hopf algebra $H_0$, together with a strongly tensorial $\theta_0 := \Phi_{\Gamma} \circ \theta$. Here, $\Phi_{\Gamma}$ is the morphism of differential calculi between the assumed covariant first order differential calculus $(\Gamma_A,\mathrm{d}_A)$ on $A$ and the quotient calculus $(\Gamma_{A_0},\mathrm{d}_{A_0})$ on $A_0$ induced by $\phi$. We prove that $(A_0, H_0, V, \phi)$ is a quantum frame resolution itself, utilizing the fact that the restriction of $\Phi_{\Gamma}$ to the base space calculus gives an isomorphism of differential calculi $\Gamma_{B} \cong \Gamma_{B_0}$. At last, we provide an example by considering $A_0 = \mathbb{C}_q^2\#U_q(2)$ and $H_0 = U_q(2)$.

\section*{Acknowledgements} I am deeply grateful to my supervisors, Prof. Emanuele Latini and Dr. Antonio Del Donno, for their genuine support and patience. Every discussion and meeting with them has been a truly enjoyable experience. Their passion for the subject was highly inspiring, and this enthusiasm will undoubtedly stay with me in the years to come. I also want to thank Dr. Thomas Weber for his valuable insights throughout this work.

\chapter{Principal fiber bundles} \label{principalfiberbundles}
\pagenumbering{arabic}
In this chapter, we provide a review of principal fiber bundles within the framework of differential geometry. This serves as a reference for the discussion of their quantum geometric counterparts, i.e., quantum principal bundles, which will be presented in later chapters. Our main references are \cite{farrill_gauge}, \cite{sharpe_dg} and \cite{cap_slovak}.

In Section \ref{principalbundles}, we introduce principal bundles, their associated bundles, and reductions. Section \ref{connections} focuses on connections: gauge fields, the mediators of the fundamental interactions we encounter in physics, are understood as connections on particular principal bundles. We show both the interpretation of a connection as an horizontal distribution and as a $1$-form with values in the Lie algebra $\mathfrak{g}$ associated to the structure group of the principal bundle. In \ref{theframebundle}, we analyze a special case, that of the frame bundle, highlighting one of its remarkable features, i.e., the soldering form. Section \ref{Gstructures} covers $G$-structures, or reductions of the frame bundle to a structure group $G \subset GL(n,\mathbb{R})$. We show that these can be viewed as principal bundles equipped with a horizontal and equivariant $1$-form $\Theta$. Finally, in \ref{cartangeometries}, we demostrate the connection between $G$-structures and Cartan geometry in the affine case.

\section{Principal bundles} \label{principalbundles}

\begin{definition}
    A \textsl{principal fiber bundle} $(P,M,G,\pi)$ is the information of:
    \begin{itemize}
        \item A smooth manifold $P$, called \textsl{total space}.
        \item A smooth manifold $M$, called \textsl{base space}. The base space is diffeomorphic to the quotient $M \cong P/G$. Points of $M$ are called orbits.
        \item A smooth surjection $\pi \colon P \to M$, called \textsl{projection map}. For every $m \in M$, the submanifold $\pi^{-1}(m) \subset P$ is called the \textsl{fiber} over $m$. 
        \item A Lie group $G$, called the \textsl{structure group}, acting freely and transitively on $P$ from the right, via the action 
        \begin{equation}
            \begin{split}
                R \colon &P \times G \to P,\\
                &(p, g) \mapsto pg = R_g(p),  
            \end{split}
            \label{eq:rightGactioninthedef}
        \end{equation}
        such that
        \begin{enumerate}
            \item $G$ acts freely, i.e., every element in $G$ except the identity $e$ moves every point in $P$: if $pg = p$ then $g = e$;
            \item $G$ acts transitively on fibers, i.e., for any two points $p_1$ and $p_2$ in the same fiber there exists a structure group element $g \in G$ such that $p_2=p_1 g$; 
            \item the fibers are $G$-invariant, meaning $\pi(pg) = \pi(p)$ for any $p \in P$ and $g \in G$.
        \end{enumerate}
    \end{itemize}
    Moreover, a principal $G$-bundle $\pi \colon P \to M$ is \textsl{locally trivial}: the base space $M$ admits an open cover $\{ U_{\alpha} \}$ for which there exist diffeomorphisms 
    \begin{equation}
        \begin{split}
            \psi_{\alpha} \colon &\pi^{-1}(U_{\alpha}) \to U_{\alpha} \times G,\\
            &p \mapsto \psi_{\alpha}(p):=\big(\pi(p),g_{\alpha}(p)\big)
        \end{split}
        \label{eq:trivializationmaps}
    \end{equation}
    called \textsl{trivialization maps}, where $g_{\alpha} \colon \pi^{-1}(U_{\alpha}) \to G$ are $G$-equivariant fiberwise diffeomorphisms, such that the following diagram
    \[
\begin{tikzcd}
\pi^{-1}(U_{\alpha}) \arrow[rd, "\pi"'] \arrow[rr, "\psi_{\alpha}"] &            & U_{\alpha} \times G \arrow[ld, "pr_1"] \\
                                                                    & U_{\alpha} &                                       
\end{tikzcd}
    \]
    commutes. By $G$-equivariance we mean that $g_{\alpha}(pg)=g_{\alpha}(p)g$ for any $p \in P$ and $g \in G$.\\
    We say that a principal fiber bundle is \textsl{trivial} when there exists a (globally defined) diffeomorphism 
    \begin{equation}
        \begin{split}
            \psi \colon &P \to M \times G,\\
            &p \mapsto \psi(p):=\big(\pi(p), \chi(p)\big)
        \end{split}
        \label{eq:triavialbundleforclarity}
    \end{equation}
    for a $G$-equivariant differomorphism $\chi \colon P \to G$.
    \label{def:principalfiberbundle}
\end{definition}
\begin{definition}
    A \textsl{section} of a principal fiber bundle $\pi \colon P \to M$ is a smooth map
    \begin{equation}
        s \colon M \to P
        \label{eq:sectiondef}
    \end{equation}
    such that $\pi \circ s = id_M$. Similarly, smooth maps
    \begin{equation}
        s_{\alpha} \colon U_{\alpha} \to \pi^{-1}(U_{\alpha}),
        \label{eq:localsections}
    \end{equation}
    satisfying $\pi \circ s_{\alpha} = id_M$ are called \textsl{local sections}.
    \label{def:section}
\end{definition}
Note that (global) sections $s \colon M \to P$ are rare. The following proposition is given.
\begin{proposition}
    A principal fiber bundle is trivial if and only if it admits a (global) section.
    \label{pro:trivialbundleglobalsection}
\end{proposition}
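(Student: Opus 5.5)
I would prove the two implications separately, using only Definition \ref{def:principalfiberbundle} (in particular the global trivialization \eqref{eq:triavialbundleforclarity} with its $G$-equivariant $\chi$) and Definition \ref{def:section}.

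\emph{Trivial $\Rightarrow$ section.} Given $\psi = (\pi,\chi)\colon P \to M\times G$, I define $s(m) := \psi^{-1}(m,e)$. This is smooth because $\psi^{-1}$ is smooth and $m\mapsto (m,e)$ is smooth. Moreover $\pi\circ s = pr_1\circ\psi\circ\psi^{-1}(\cdot,e) = id_M$. This direction is immediate.

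\emph{Section $\Rightarrow$ trivial.} Given a global section $s$, I define
\begin{equation*}
\Phi\colon M\times G\to P,\qquad (m,g)\mapsto s(m)g .
\end{equation*}
$\Phi$ is smooth since the right action $R$ is smooth. It is surjective: for $p\in P$, the points $p$ and $s(\pi(p))$ lie in the same fiber, so by transitivity $p=s(\pi(p))g$ for some $g$. It is injective: if $s(m)g=s(m')g'$, then applying $\pi$ and using $G$-invariance of fibers gives $m=m'$, and freeness gives $g=g'$. Thus $\Phi$ is a bijection. Its inverse has the form $p\mapsto(\pi(p),\chi(p))$, where $\chi(p)$ is the unique $g$ with $p=s(\pi(p))g$. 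Uniqueness yields equivariance directly: since $pg' = s(\pi(p))\chi(p)g'$, we get $\chi(pg')=\chi(p)g'$. Also $pr_1\circ\Phi^{-1}=\pi$, so the required triangle commutes.

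\emph{Main obstacle: smoothness of $\chi$.} Smoothness of $\Phi^{-1}$, equivalently of $\chi$, is the only non-formal point, since an abstract bijection need not be smooth. I would handle it locally via the trivializations \eqref{eq:trivializationmaps}. On $\pi^{-1}(U_\alpha)$, equivariance of $g_\alpha$ gives $g_\alpha(p)=g_\alpha\big(s(\pi(p))\chi(p)\big)=g_\alpha\big(s(\pi(p))\big)\chi(p)$. Hence
\begin{equation*}
\chi(p)=g_\alpha\big(s(\pi(p))\big)^{-1}\,g_\alpha(p),
\end{equation*}
which is a composition of smooth maps: $\pi$, $s$, $g_\alpha$, group inversion and group multiplication. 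Since the $\pi^{-1}(U_\alpha)$ cover $P$, $\chi$ is smooth. Therefore $\psi:=\Phi^{-1}=(\pi,\chi)$ is a diffeomorphism of the required form, and the bundle is trivial.
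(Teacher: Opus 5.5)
Your proof is correct and follows the same route as the paper: the converse uses $s(m)=\psi^{-1}(m,e)$, and the forward direction defines $\chi$ by $p=s(\pi(p))\chi(p)$, gets equivariance from uniqueness, and inverts $\psi=(\pi,\chi)$ by $(m,g)\mapsto s(m)g$. The one place you go beyond the paper is the smoothness of $\chi$: the paper simply asserts it, whereas your local formula $\chi(p)=g_\alpha\big(s(\pi(p))\big)^{-1}g_\alpha(p)$ actually proves it.
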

\begin{proof}
    Let us first show that if a principal fiber bundle admits a section then it is trivial. Let $s \colon M \to P$ be such section, picking out a distinguished point $s(m) \in \pi^{-1}(m)$ for every orbit in the base space. By transitivity of the group action, there exists a unique $g \in G$ such that we can write $p= s\big(\pi(p)\big)g =: s\big(\pi(p)\big)\chi(p)$, where $\chi$ is a smooth map which selects that specific group element for each point in $p \in P$. Moreover, $\chi$ is $G$-equivariant, since we can write $pg = s\big(\pi(pg) \big)\chi(pg) = s\big(\pi(p) \big)\chi(pg)$ but also $pg=s\big(\pi(p)\big)\chi(p)g$. This implies we can define a (global) trivialization $\psi \colon P \to M \times G$ as $\psi(p)=\big(\pi(p), \chi(p)\big) = (m, g)$, which is smooth and has smooth inverse. The inverse is in fact $\phi \colon M \times G \to P$ defined as $\phi(m,g)=s(m)g$: clearly, for every $p=s(m)g \in P$, $\psi(\phi(m,g))=\psi(p)=(m,g)$ and $\phi(\psi(p))=p$.\\
    Conversely, if a bundle is trivial then there exists a diffeomorphism $\psi \colon P \to M \times G, p \mapsto \psi(p)=(\pi(p),\chi(p))$ such that $\chi(pg)=\chi(p)g$ for some $g \in G$. 
    Consider the map $s \colon M \to P, m \mapsto s(m) := \psi^{-1}(m,e)$, where $e$ is the neutral element of $G$. Clearly $s$ is a section because it is smooth and we can write $\pi \circ s(m)=\pi(\psi^{-1}(m,e))=m$, since $\psi^{-1}(m,e) \in \pi^{-1}(m)$.
\end{proof}
A principal fiber bundle always admits local sections $s_{\alpha} \colon U_{\alpha} \to \pi^{-1}(U_{\alpha})$ as in Definition \ref{def:section}. In particular, its local triviality property can be expressed in terms of such local sections. In fact, they are canonically associated to the trivialization maps as
\begin{equation}
    \psi_{\alpha}\big(s_{\alpha}(m)\big)=(m, e),
    \label{eq:aggiungoperchiarezza}
\end{equation}
i.e., $s_{\alpha}(m)=\psi_{\alpha}^{-1}(m,e)$ for any $m \in M$. In other words, $g_{\alpha} \circ s_{\alpha} \colon U_{\alpha} \to G$ is the constant function sending every orbit $m \in U_{\alpha}$ to the neutral element $e$ of the structure group. \\
Summing things up, this means that given any $p \in \pi^{-1}(m)$, by transitivity of the right $G$-action, there is a unique group element $g_{\alpha}(p) \in G$ such that we can write
\begin{equation}
    p = s_{\alpha}(m)g_{\alpha}(p)
    \label{eq:quellochemihafattocapire}
\end{equation}
and
\begin{equation}
    \psi_{\alpha} (p) = \Big(\pi\big(s_{\alpha}(m)g_{\alpha}(p)\big), g_{\alpha}\big(s_{\alpha}(m)g_{\alpha}(p)\big)\Big) = \Big(\pi\big(s_{\alpha}(m)\big), g_{\alpha}(p)\Big).
    \label{eq:generaltrivialization}
\end{equation}
Clearly, for $p=s_{\alpha}(m)$ then $g_{\alpha}(p)=e$.\\

Let $U_{\alpha}, U_{\beta} \subset M$ such that $U_{\alpha\beta} := U_{\alpha} \cap U_{\beta} \neq \emptyset$. For $m \in U_{\alpha\beta}$ and $p \in \pi^{-1}(m)$, on $\pi^{-1}(U_{\alpha\beta})$, we can then trivialize the bundle in two different ways: by $\psi_{\alpha}(p)=\big(m, g_{\alpha}(p)\big)$ and by $\psi_{\beta}(p)=\big(m, g_{\beta}(p)\big)$. There must exists a group element $\bar{g}_{\alpha\beta}(p) \in G$ such that $g_{\alpha}(p)=\bar{g}_{\alpha\beta}(p)g_{\beta}(p)$, that is:
\begin{equation}
    \bar{g}_{\alpha\beta}(p)=g_{\alpha}(p)g_{\beta}(p)^{-1}.
    \label{eq:over}
\end{equation}
We now notice that $\bar{g}_{\alpha\beta} \colon \pi^{-1}(U_{\alpha\beta}) \to G$ is constant on each fiber. In fact, by $G$-equivariance of $g_{\alpha}$ and $g_{\beta}$ we have that 
\begin{equation}
    \bar{g}_{\alpha\beta}(p g)= g_{\alpha}(pg)g_{\beta}(pg)^{-1} = g_{\alpha}(p)g g^{-1} g_{\beta}(p)^{-1}= g_{\alpha}(p) g_{\beta}(p)^{-1}=\bar{g}_{\alpha\beta}(p).
    \label{eq:proofthatgbaralphabetaisconstantonfibers}
\end{equation}
We can translate this into the following definition.
\begin{definition}
    There exist smooth maps
    \begin{equation}
    g_{\alpha\beta} \colon U_{\alpha\beta} \to G,
    \label{eq:transitionfunctions}
    \end{equation}
    called \textsl{transition functions}, on any non-empty overlap $U_{\alpha\beta}$, such that $\bar{g}_{\alpha\beta}(p)=g_{\alpha\beta}(\pi(p))$ for all $p \in \pi^{-1}(U_{\alpha\beta})$.
    \label{def:transitionfunctions}
\end{definition}
\begin{lemma}
    Transition functions satisfy the following cocycle conditions:
    \begin{equation}
        g_{\alpha\beta}(m)g_{\beta\alpha}(m)=e,\;\;\; \forall m \in U_{\alpha\beta};
        \label{eq:cocy1}
    \end{equation}
    \begin{equation}
        g_{\alpha\beta}(m)g_{\beta\gamma}(m)g_{\gamma\alpha}(m)=e,\;\;\; \forall m \in U_{\alpha\beta\gamma}:=U_{\alpha} \cap U_{\beta} \cap U_{\gamma}.
        \label{eq:cocy2}
    \end{equation}
    \label{lem:cocycleconditionsfortransitionfunction}
\end{lemma}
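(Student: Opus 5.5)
The plan is to reduce both cocycle conditions to the defining formula \eqref{eq:over} for $\bar g_{\alpha\beta}$, evaluated at a single point of the relevant fiber. The definition of $g_{\alpha\beta}$ says that $g_{\alpha\beta}(m)=\bar g_{\alpha\beta}(p)$ for any $p\in\pi^{-1}(m)$. By \eqref{eq:proofthatgbaralphabetaisconstantonfibers} the choice of $p$ does not matter. So for each $m$ I will fix one $p\in\pi^{-1}(m)$, which exists because $\pi$ is surjective, and rewrite every transition function as the corresponding product $g_\alpha(p)g_\beta(p)^{-1}$ of group elements.

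For \eqref{eq:cocy1}, take $m\in U_{\alpha\beta}$ and $p\in\pi^{-1}(m)$. Then
\begin{equation*}
g_{\alpha\beta}(m)\,g_{\beta\alpha}(m)=g_\alpha(p)g_\beta(p)^{-1}\,g_\beta(p)g_\alpha(p)^{-1}=e .
\end{equation*}
Here I use that $U_{\beta\alpha}=U_{\alpha\beta}$, so $g_{\beta\alpha}$ is defined at the same point and may be evaluated at the same $p$. For \eqref{eq:cocy2}, take $m\in U_{\alpha\beta\gamma}$ and a single $p\in\pi^{-1}(m)$. Since $U_{\alpha\beta\gamma}$ is contained in $U_{\alpha\beta}$, $U_{\beta\gamma}$ and $U_{\gamma\alpha}$, all three trivializations $g_\alpha,g_\beta,g_\gamma$ are defined at $p$. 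The product then telescopes:
\begin{equation*}
g_\alpha(p)g_\beta(p)^{-1}g_\beta(p)g_\gamma(p)^{-1}g_\gamma(p)g_\alpha(p)^{-1}=e .
\end{equation*}

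The computation itself is routine. The only point needing care is that the transition functions are well defined as functions on $M$: the same $p$ must be usable in every factor, and the result must not depend on that choice. Both facts follow from the fiberwise constancy already established in \eqref{eq:proofthatgbaralphabetaisconstantonfibers}, so I will state this explicitly before the computation.
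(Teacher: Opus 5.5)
Your proposal is correct and matches the paper's proof: fix one $p\in\pi^{-1}(m)$, rewrite each transition function as $\bar g_{\alpha\beta}(p)=g_\alpha(p)g_\beta(p)^{-1}$ via \eqref{eq:over}, and let the product telescope to $e$. Your explicit remark that the same $p$ may be used in every factor, because the transition functions are constant on fibers, is a small clarification the paper leaves implicit.
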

\begin{proof}
    Let $p \in \pi^{-1}(m)$. From (\ref{eq:over}) we get:
    \begin{equation}
        g_{\alpha\beta}(m)g_{\beta\alpha}(m)= \bar{g}_{\alpha\beta}(p)\bar{g}_{\beta\alpha}(p) = g_{\alpha}(p)g_{\beta}(p)^{-1}g_{\beta}(p)g_{\alpha}(p)^{-1} = e.
    \label{eq:proofcocy1}
    \end{equation}
    Similarly,
    \begin{equation}
        \begin{split}
            &g_{\alpha\beta}(m)g_{\beta\gamma}(m)g_{\gamma\alpha}(m) = \bar{g}_{\alpha\beta}(p)\bar{g}_{\beta\gamma}(p)\bar{g}_{\gamma\alpha}(p) = \\
            &= g_{\alpha}(p)g_{\beta}(p)^{-1}g_{\beta}(p)g_{\gamma}(p)^{-1}g_{\gamma}(p)g_{\alpha}(p)^{-1} = e.
        \end{split}
        \label{eq:proofcocy2}
    \end{equation}
\end{proof}
Transition functions determine the relation between different local sections defined on non-empty overlaps $U_{\alpha\beta}$.
\begin{lemma}
    Let $s_{\alpha} \colon U_{\alpha} \to \pi^{-1}(U_{\alpha})$ and $s_{\beta} \colon U_{\beta} \to \pi^{-1}(U_{\beta})$ be two different local setions defined on $U_{\alpha\beta} = U_{\alpha} \cap U_{\beta} \neq \emptyset$. Then
    \begin{equation}
        s_{\beta}(m)=s_{\alpha}(m)g_{\alpha\beta}(m), 
        \label{eq:sexrel}
    \end{equation}
    for all $m \in U_{\alpha\beta}$.
    \label{lem:relationbetweendifferentlocalsections}
\end{lemma}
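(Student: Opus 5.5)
The plan is to evaluate everything at the single point $p := s_{\beta}(m) \in \pi^{-1}(m)$, for a fixed $m \in U_{\alpha\beta}$. We then read off the group element relating it to $s_{\alpha}(m)$ from the two local trivializations $\psi_{\alpha}$ and $\psi_{\beta}$. The three ingredients, all already established above, are:
\begin{itemize}
\item the normalization (\ref{eq:aggiungoperchiarezza}), i.e.\ $g_{\beta}\big(s_{\beta}(m)\big)=e$;
\item the decomposition (\ref{eq:quellochemihafattocapire}) of an arbitrary point of the fiber with respect to $s_{\alpha}$;
\item the relation (\ref{eq:over}) between $g_{\alpha}$ and $g_{\beta}$, together with Definition \ref{def:transitionfunctions}.
\end{itemize}

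First I would use (\ref{eq:over}) and Definition \ref{def:transitionfunctions} at $p=s_{\beta}(m)$. This gives
\[
g_{\alpha}\big(s_{\beta}(m)\big)=\bar{g}_{\alpha\beta}\big(s_{\beta}(m)\big)\,g_{\beta}\big(s_{\beta}(m)\big)=g_{\alpha\beta}(m)\,e=g_{\alpha\beta}(m),
\]
where I used $\pi\big(s_{\beta}(m)\big)=m$ and the normalization $g_{\beta}\circ s_{\beta}\equiv e$ on $U_{\beta}$.

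Next I would apply (\ref{eq:quellochemihafattocapire}) with the index $\alpha$ to the same point $p=s_{\beta}(m)$. This is legitimate because $p\in\pi^{-1}(U_{\alpha})$, since $m\in U_{\alpha\beta}$. It yields $s_{\beta}(m)=s_{\alpha}(m)\,g_{\alpha}\big(s_{\beta}(m)\big)$. Substituting the previous step gives $s_{\beta}(m)=s_{\alpha}(m)g_{\alpha\beta}(m)$, which is (\ref{eq:sexrel}).

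If one prefers not to rely on (\ref{eq:quellochemihafattocapire}) as stated, it can be rederived as follows. By transitivity there is some $h\in G$ with $s_{\beta}(m)=s_{\alpha}(m)h$. Applying $g_{\alpha}$ and using its $G$-equivariance gives $g_{\alpha}\big(s_{\beta}(m)\big)=g_{\alpha}\big(s_{\alpha}(m)\big)h=h$, so $h=g_{\alpha\beta}(m)$ by the first step. Freeness of the action guarantees that $h$ is unique, so there is no ambiguity.

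The main obstacle is not technical but one of conventions. We must make sure the transition function appears on the right of $s_{\alpha}$ with the index order $\alpha\beta$ rather than $\beta\alpha$; this requires keeping consistent the definition $\bar g_{\alpha\beta}=g_{\alpha}g_{\beta}^{-1}$ and the right action. The computation above shows that these choices do match. As a sanity check, one can verify consistency with the cocycle conditions of Lemma \ref{lem:cocycleconditionsfortransitionfunction}: swapping $\alpha$ and $\beta$ gives $s_{\alpha}=s_{\beta}g_{\beta\alpha}=s_{\alpha}g_{\alpha\beta}g_{\beta\alpha}$, which is consistent with (\ref{eq:cocy1}).
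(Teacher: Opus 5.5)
Your proof is correct and is essentially the paper's argument. The paper writes an arbitrary $p\in\pi^{-1}(m)$ as $s_{\alpha}(m)g_{\alpha}(p)=s_{\beta}(m)g_{\beta}(p)$ and cancels $g_{\beta}(p)$ on the right to get $s_{\beta}(m)=s_{\alpha}(m)\bar{g}_{\alpha\beta}(p)=s_{\alpha}(m)g_{\alpha\beta}(m)$; you evaluate at the particular point $p=s_{\beta}(m)$, where $g_{\beta}(p)=e$ by the normalization, which is the special case of the same computation.
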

\begin{proof}
    For $p \in \pi^{-1}(m)$, $m \in U_{\alpha\beta}$ we have that $p=s_{\alpha}(m)g_{\alpha}(p)=s_{\beta}(m)g_{\beta}(p)$, which implies
    \begin{equation}
        s_{\beta}(m)=s_{\alpha}(m)g_{\alpha}(p)g_{\beta}(p)^{-1}=s_{\alpha}(m)\bar{g}_{\alpha\beta}(p)=s_{\alpha}(m)g_{\alpha\beta}(\pi(p))=s_{\alpha}(m)g_{\alpha\beta}(m).
        \label{eq:proofsextionsalphabeta}
    \end{equation}
\end{proof}
\begin{remark}
    One can construct the total space from an open cover $\{U_{\alpha}\}$ and transition functions $\{g_{\alpha\beta}\}$ obeying the cocycle conditions in Lemma \ref{lem:cocycleconditionsfortransitionfunction} as:
    \begin{equation}
        P = \cup_{\alpha} ( U_{\alpha} \times G ) \Big/ \sim ,
        \label{eq:union}   
    \end{equation}
    where $(m,g) \sim (m, g_{\alpha\beta}(m)g)$ for all $m \in U_{\alpha\beta}$ and $g \in G$. $\pi$ is induced by the projection on the first factor and the right $G$-action on the total space is induced by right multiplication on $G$.
    \label{rem:onhowtoconstructPfromMandG}
\end{remark}

\subsection{Associated bundles} \label{associatedbundles}

\begin{definition}
    Let $G$ act on a differentiable manifold $F$ through a representation $\rho \colon G \to \text{Aut}(F)$. We can use the data of a principal $G$-bundle $\pi \colon P \to M$ to define a bundle $\pi_F \colon  E \to M$ with standard fiber $F$, called the \textsl{associated fiber bundle} to $P$. The total space is defined as the quotient
    \begin{equation}
        E = P \times_{G} F := (P \times F) \Big/ \sim
        \label{eq:assfb}
    \end{equation}
    by the equivalence relation
    \begin{equation}
        (p,f) \sim (pg, \rho(g^{-1})f),
        \label{eq:equivalenceassbundlejusttobeclear}
    \end{equation}
    where $(p,f) \in P \times F$, and the projection map $\pi_F$ is induced by $\pi$ as 
    \begin{equation}
        \begin{split}
            \pi_F \colon &P \times_G F \to M,\\
            &[\![p,f]\!] \mapsto \pi_F([\![p,f]\!]) := \pi(p).
        \end{split}
        \label{eq:cosaimprecisa}
    \end{equation}
    When the standard fiber of the associated bundle is a vector space $V$, so that $G$ acts on it via a linear representation $G \to \text{Aut}(V)= GL(V)$, we will call $E = P \times_G V$ the \textsl{associated vector bundle}.
    \label{def:associatedfiberbundle}
\end{definition}
\begin{remark}
    The associated fiber bundle $P \times_G F$ can also be constructed locally from the local data defining $P$,  namely the open cover $\{ U_{\alpha} \}$ and the transition functions $\{g_{\alpha\beta}\}$ on double overlaps. Indeed we have that:
    \begin{equation}
        P \times_G F = \cup_{\alpha} (U_{\alpha} \times G) \Big/ \sim,
        \label{eq:assunion}
    \end{equation}
    with $(m, f) \sim (m, \rho(g_{\alpha\beta}(m))f)$ for all $m \in U_{\alpha\beta}$ and $f \in F$.
    \label{rem:constructtheassociatedbundlefromlocaldata}
\end{remark}

\subsection{Reductions} \label{reductions}
\begin{definition}
    A \textsl{principal bundle morphism} between two principal $G$-bundles $\pi \colon P \to M$ and $\pi^{\prime} \colon P^{\prime} \to M^{\prime}$ consists of a pair of smooth maps $(\Phi, \phi)$, with $\Phi \colon P \to P^{\prime}$ and $\phi \colon M \to M^{\prime}$, such that
    \begin{equation}
        \Phi(pg) = \Phi(p)g
        \label{eq:principalbundlemorphismcondition1}
    \end{equation}
    for all $p \in P$ and $g \in G$, and
    \begin{equation}
        \pi^{\prime} \circ \Phi = \phi \circ \pi.
        \label{eq:principalbundlemorphismcondition2}
    \end{equation}
    Whenever $\Phi$ and $\phi$ are diffeomorphisms, we say that $(\Phi,\phi)$ is a \textsl{principal bundle isomorphism}.
    \label{def:principalbundlemorphism}
\end{definition}
In the definition above, the base space map $\phi$ is often the identity, which is typically the case in most examples. A particularly important instance of morphisms of principal bundles arises in the context of reductions of the structure group.
\begin{definition}
    A \textsl{reduction} of the principal $G$-bundle $\pi \colon P \to M$ to the structure group $H \subset G$, with $H$ a Lie subgroup of $G$, is a principal $H$-bundle $\pi_r \colon R \to M$ together with a principal bundle morphism $\iota \colon R \to P$ with respect to the inclusion $i \colon H \to G$, that is
    \begin{equation}
        \pi \circ \iota = \pi_r
        \label{eq:reductionproperty1}
    \end{equation}
    and
    \begin{equation}
        \iota(ph) = \iota(p)i(h)
        \label{eq:reductionproperty2}
    \end{equation}
    for all $p \in R$ and $h \in H$, or, equivalently $\iota \circ R_h = R_{i(h)} \circ \iota$.
    \label{def:reduction}
\end{definition}
Restricting the right $G$-action to $H$, we obtain a free right action of $H$ on $P$, and one can show that the space $P/H$ of orbits of this action is a fiber bundle over $M$ with fiber $G/H$. An argument based on cocycle conditions of transition functions shows that reductions of $P$ to the structure group $H$ are in bijective correspondence with the set of global smooth sections of the fiber bundle $P/H \to M$.

\section{Connections} \label{connections}

Let $\pi \colon P \to M$ be a principal $G$-bundle. 
\begin{definition}
    The \textsl{vertical subspace} $\mathcal{V}_pP \subset T_pP$ is the kernel of the push forward (at $p$) of the projection map:
    \begin{equation}
        \mathcal{V}_pP=\ker{\pi_{\ast}},
        \label{eq:vertd}
    \end{equation}
    with $\pi_{\ast} \colon T_pP \to T_{\pi(p)}M$. A vector field $\xi \in \mathscr{X}(P)$ is called \textsl{vertical} if $\xi_p \in \mathcal{V}_pP$ for all $p \in P$.
    \label{def:verticalsubspace}
\end{definition}
\begin{remark}
     We view the vertical subspace $\mathcal{V}_pP \subset T_pP$ as the set of vectors in $T_pP$ which are tangent to the fiber $\pi^{-1}(m)$ over $m = \pi(p)$ at $p$. In fact, for a vector $x = \gamma^{\prime}(0) \in \mathcal{V}_pP$ tangent to the curve $\gamma \colon (a,b) \to \pi^{-1}(m)$ at a point $p = \gamma(0) \in \pi^{-1}(m)$, we have that 
    \begin{equation}
        \pi_{\ast}(x)= \pi_{\ast}(\gamma^{\prime}(0))=(\pi \circ \gamma)^{\prime}(0) = 0,
        \label{eq:toshowverticalvectorsarethosetangenttothefiber}
    \end{equation}
    since $(\pi \circ \gamma)$ is constant for any $p=\gamma(t) \in \pi^{-1}(m)$ (in particular, $\pi(\gamma(t))=m$ for any $t \in (a,b)$).
    \label{rem:onthefactthatverticalvectorsaretangenttothefiber}
\end{remark}
A \textsl{distribution} $\mathcal{D}$ on a manifold $M$ is a subset $\mathcal{D} \subset TM$ such that for each $x \in M$ the subset $ \mathcal{D}_x  = \mathcal{D} \cap T_xM$ is a vector subspace in $T_xM$. By linear algebra, each distribution can be defined as the kernel of a one-form $\omega$ with values in a suitable vector space $V$. Thus, we can think of vertical subspaces both as $\mathcal{V}_pP=\ker{\pi_{\ast}}$ and as defining a distribution $\mathcal{V}P \subset TP$, called the vertical distribution, on the manifold $P$: $\mathcal{V}_pP = \mathcal{V}P \cap T_pP$.\\
\begin{remark}
    Since $\pi \circ R_g = \pi$, with $R_g \colon P \to P, p \mapsto pg$, the vertical subspaces define a $G$-invariant distribution $\mathcal{V}P \subset TP$: 
    \begin{equation}
        (R_g)_{\ast}\mathcal{V}_pP= \mathcal{V}_{pg}P,
        \label{eq:Ginv}
    \end{equation}
    with $(R_g)_{\ast} \colon T_pP \to T_{pg}P$. In fact, for any $x \in \mathcal{V}_pP$, we can write $\pi_{\ast}\big((R_g)_{\ast}(x)\big)= (\pi \circ R_g)_{\ast}(x)=\pi_{\ast}(x) = 0$ which shows that $(R_g)_{\ast}(x) \in \mathcal{V}_{pg}P$ for every $x \in \mathcal{V}_pP$.
    \label{rem:ontheuniquenessoftheverticalsubspace}
\end{remark} 
The vertical distribution $\mathcal{V}P$ on the manifold $P$ is uniquely determined by the geometry of the bundle, as it consists of vectors tangent to the fibers, and does not depend on any additional choices. However, in general, there is no canonical way to select a complementary subspace to $\mathcal{V}_pP$ within $T_pP$ without introducing extra structure. This is precisely what a connection provides.

\subsection{The horizontal distribution and the connection $1$-form}

A connection on $P$ is a smooth choice of horizontal subspaces $\mathcal{H}_pP \subset T_pP$ complementary to the vertical subspace $\mathcal{V}_pP = \ker{\pi_{\ast}} \subset T_pP$ (which is unique for each $T_pP$) at each point:
\begin{equation}
    T_pP= \mathcal{V}_pP \oplus \mathcal{H}_pP.
    \label{eq:orsum1}
\end{equation}
\begin{definition}
    A \textsl{connection} on $P$ is a $G$-invariant distribution $\mathcal{H}P \subset TP$
    \begin{equation}
        (R_g)_{\ast}\mathcal{H}_pP= \mathcal{H}_{pg}P,
        \label{eq:orsum2}
    \end{equation}
    called the \textsl{horizontal distribution}, complementary to $\mathcal{V}P$.
    \label{def:connectionhorizontaldistribution}
\end{definition}
We now introduce the definition of fundamental vector fields, which provides a natural way to identify vertical vectors with elements of the Lie algebra $\mathfrak{g}$ associated with the structure group $G$ of the principal bundle. This will later enable us to define the connection $1$-form.
\begin{definition}
    The right $G$-action on the total space defines a map 
    \begin{equation}
        \begin{split}
            \zeta \colon &\mathfrak{g} \to \mathscr{X}(P),\\
            &X \mapsto \zeta(X).
        \end{split}
        \label{eq:justforcalrityfundvectorfields}
    \end{equation}
    The vector field $\zeta(X) \in \mathscr{X}(P)$ is called the \textsl{fundamental vector field}, and describes the infinitesimal action of $G$ on $P$. At each point $p \in P$, it is defined as
    \begin{equation}
        \zeta_p(X) = \frac{d}{dt}(p e^{tX}) \Bigr|_{t=0}.
        \label{eq:fundvf}
    \end{equation}
    \label{def:fundamentalvectorfield}
\end{definition}
We notice that 
\begin{equation}
    \pi_{\ast}\big(\zeta_p(X)\big)=\pi_{\ast}\big(\frac{d}{dt}(p e^{tX}) \Big|_{t=0}\big)= \frac{d}{ds}\big(\pi (p e^{sX})\big) \Big|_{s=0}=\frac{d}{ds} \big(\pi (p)\big) \Big|_{s=0} = 0, 
    \label{eq:proofsigmavertical}
\end{equation}
which tells us that $\zeta_p(X) \in \mathcal{V}_pP$, i.e. that $\zeta(X)$ is a vertical vector field. Thus, the map $\zeta_p \colon \mathfrak{g} \to \mathcal{V}_pP$ takes an element $X$ of the Lie algebra, which can be seen as a tangent vector to the identity point of the Lie group $G$, and assigns to it a vector $\zeta_p(X)$ tangent to the fiber $\pi^{-1}\big(\pi(p)\big)$ at $p$. This makes sense, since the fibers of a principal bundle are diffeomorphic to the structure group. By this argument, since $G$ acts freely and transitively on the total space, the map $\sigma_p \colon \mathfrak{g} \to \mathcal{V}_pP$ is an isomorphism between the Lie algebra and the vertical subspace $\mathcal{V}_pP$, for every $p \in P$.
\begin{lemma}
    For any $X \in \mathfrak{g}$, we have
    \begin{equation}
        (R_g)_{\ast}\big(\zeta(X)\big)= \zeta\big(ad_{g^{-1}}(X)\big),
        \label{eq:lemmasigma}
    \end{equation}
    with
    \begin{equation}
        \begin{split}
            ad_g \colon &\mathfrak{g} \to \mathfrak{g},\\
            &X \mapsto ad_g(X) := \frac{d}{dt} (g e^{tX} g^{-1}) \Bigr|_{t=0}.
        \end{split}
        \label{eq:adjointstuffadgmenouno}
    \end{equation}
    \label{lem:Rgstaronzeta}
\end{lemma}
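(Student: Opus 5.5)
The plan is to compute pointwise, using only the definition (\ref{eq:fundvf}) of the fundamental vector field and the chain rule for push-forwards. The statement should be read pointwise: for every $p \in P$ we want
\begin{equation*}
    (R_g)_{\ast}\big(\zeta_p(X)\big) = \zeta_{pg}\big(ad_{g^{-1}}(X)\big),
\end{equation*}
with $(R_g)_{\ast} \colon T_pP \to T_{pg}P$.

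\textbf{Step 1: push forward the defining curve.} By (\ref{eq:fundvf}), $\zeta_p(X)$ is the velocity at $t=0$ of the curve $t \mapsto p e^{tX}$. The push-forward of a velocity vector is the velocity of the image curve, so
\begin{equation*}
    (R_g)_{\ast}\zeta_p(X) = \frac{d}{dt}\big(p e^{tX} g\big)\Bigr|_{t=0}.
\end{equation*}
This is the same manipulation used in (\ref{eq:proofsigmavertical}).

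\textbf{Step 2: rewrite the image curve.} Since the action is a right action, we can insert $gg^{-1}$ and write
\begin{equation*}
    p e^{tX} g = (pg)\big(g^{-1}e^{tX}g\big).
\end{equation*}
Let $c(t) := g^{-1}e^{tX}g$. This is a one-parameter subgroup of $G$, because conjugation $C_{g^{-1}}$ is a Lie group homomorphism. Hence $c(t) = e^{tY}$, where
\begin{equation*}
    Y = c'(0) = \frac{d}{dt}\big(g^{-1}e^{tX}g\big)\Bigr|_{t=0} = ad_{g^{-1}}(X),
\end{equation*}
by (\ref{eq:adjointstuffadgmenouno}) with $g$ replaced by $g^{-1}$. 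The fact that $c$ is of the form $e^{tY}$ rests on the uniqueness of one-parameter subgroups with a given initial velocity, which is standard Lie theory.

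\textbf{Step 3: conclude.} Substituting $c(t) = e^{tY}$ gives
\begin{equation*}
    (R_g)_{\ast}\zeta_p(X) = \frac{d}{dt}\big((pg)e^{tY}\big)\Bigr|_{t=0} = \zeta_{pg}\big(ad_{g^{-1}}X\big).
\end{equation*}
This holds for all $p$, which is the claimed identity of vector fields.

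\textbf{Possible obstacle and fallback.} The only delicate point is the claim $c(t) = e^{tY}$ in Step 2. If one prefers to avoid it, there is an alternative. Consider the orbit map $\ell_{pg} \colon G \to P$, $h \mapsto (pg)h$. Then $(pg)c(t) = \ell_{pg}(c(t))$, so by the chain rule the derivative at $t=0$ is $(\ell_{pg})_{\ast}(c'(0))$. The vector $\zeta_{pg}(Y)$ equals the same expression $(\ell_{pg})_{\ast}(Y)$. So only the initial velocity $c'(0) = ad_{g^{-1}}X$ matters, and the exponential form of $c$ is not needed. I would present this chain-rule version to keep the argument self-contained.
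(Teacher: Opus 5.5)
Your proposal is correct and follows the same route as the paper: push forward the curve $t \mapsto p e^{tX}$ under $R_g$, rewrite $p e^{tX} g = (pg)(g^{-1} e^{tX} g)$, and identify the initial velocity of the conjugated curve with $ad_{g^{-1}}(X)$. Your orbit-map chain-rule argument spells out the step the paper leaves implicit. The paper only checks that $\frac{d}{dt} e^{t\, ad_{g^{-1}}(X)}\bigr|_{t=0} = ad_{g^{-1}}(X)$ and then equates the two derivatives, which tacitly relies on the fact that curves through $pg$ with the same initial velocity in $G$ give the same tangent vector.
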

\begin{proof}
    From the definition of fundamental vector field we can write, for any $p \in P$, that 
    \begin{equation}
        \begin{split}
            &(R_g)_{\ast}\big(\zeta_p(X)\big) = (R_g)_{\ast}\big(\frac{d}{dt}(p e^{tX}) \Big|_{t=0}\big) = \frac{d}{ds}\big(R_g(p e^{sX})\big)\Big|_{s=0} = \\
            &= \frac{d}{ds}(p e^{sX} g)\Big|_{s=0} = \frac{d}{ds}(p g g^{-1} e^{sX} g)\Bigr|_{s=0}.
        \end{split}
        \label{eq:Rgstaronzeta1}
    \end{equation}
    Since $ad_{g^{-1}}(X) =  \frac{d}{dt} (g^{-1} e^{tX} g) \Big|_{t=0}$ and 
    \begin{equation}
        \frac{d}{dt} (e^{t \cdot ad_{g^{-1}} (X)}) \Big|_{t=0} = ad_{g^{-1}}(X)e^{t \cdot ad_{g^{-1}}(X)}\Big|_{t=0} = ad_{g^{-1}}(X),
        \label{eq:Rgstaronzeta2}
    \end{equation}
    we have that Equation \ref{eq:Rgstaronzeta1} reads:
    \begin{equation}
        \frac{d}{ds}(p g g^{-1} e^{sX} g)\Big|_{s=0} = \frac{d}{dr}(p g  e^{r \cdot ad_{g^{-1}} (X)} )\Big|_{r=0} = \zeta_{pg}\big(ad_{g^{-1}} (X)\big),
        \label{eq:Rgstaronzeta3}
    \end{equation}
    which proves the lemma.
\end{proof}
Let $\text{dim}(\mathcal{V}_pP) = \text{dim}(\mathfrak{g}) = k$. Then the horizontal subspace $\mathcal{H}_pP \subset T_pP$, being a linear subspace, can be seen as the kernel of a $1$-form $\omega$ at $p$ with values in a suitable $k$-dimensional vector space, which can be naturally be chosen to be the Lie algebra $\mathfrak{g}$ of $G$. Since $\omega$ annihilates horizontal vectors, it is entirely determined by its action on the vertical vector fields. This leads to the following definition.
\begin{definition}
    Let $\mathcal{H}P$ be a connection on $P$. Then $\mathcal{H}P = \ker{\omega}$, for a $1$-form $\omega \in \Omega^1(P;\mathfrak{g})$, called the \textsl{connection} $1$\textsl{-form}, satisfying:
    \begin{equation}
        \omega(\xi) = 
        \begin{cases}
        X, \text{ if } \xi=\zeta(X) \\
        0, \text{ if } \xi \text{ horizontal, i.e. } \xi_p \in \mathcal{H}_pP \text{ for all } p \in P
        \end{cases}
        \;\;\;\; \text{,} 
        \label{eq:defcon}
    \end{equation}
    with $\xi \in \mathscr{X}(P)$.
    \label{def:connectiononeform}
\end{definition}
Note that one has a particular connection $1$-form for each connection, i.e. for each choice of an horizontal direction. Let $\mathcal{H}P = \ker{\omega}$. A different horizontal distribution $\mathcal{H}^{\prime}P$ will be identified by a different connection $1$-form $\omega^{\prime}$ as $\mathcal{H}^{\prime}P=\ker{\omega^{\prime}}$. The two connection $1$-forms $\omega$ and $\omega^{\prime}$ identify two different complements $\mathcal{H}_pP$ and $\mathcal{H}^{\prime}_pP$ to $\mathcal{V}_pP$ in $T_pP$:
    \begin{equation}
        T_pP = \mathcal{V}_pP \oplus \mathcal{H}_pP = \mathcal{V}_pP \oplus \mathcal{H}^{\prime}_pP.
        \label{eq:corressioni2}
    \end{equation}
More precisely, this means that a vector field $x \in \mathscr{X}(P)$ which is annihilated by $\omega$ is not annihilated by $\omega^{\prime}$ in general, or, in other words, $x$ has non-zero vertical component in the eyes of $\omega^{\prime}$.\\
Moreover, notice that a vertical vector field $\xi=\zeta(X)$ such that $\omega(\xi)=X$, is sent by any different connection $1$-form $\omega^{\prime}$, by definition, to the same Lie algebra element $X$:
    \begin{equation}
        \omega(\xi) = \omega^{\prime}(\xi) = X.
        \label{eq:corressioni1}
    \end{equation}
This implies that the difference $\tau = \omega - \omega^{\prime}$ of two connection $1$-forms is horizontal, i.e., it annihilates vertical vector fields.
\begin{remark}
    The space of sections for the vertical distribution $\mathcal{V}P$ is the set of vertical vector fields on $P$ (the fundamental vector field $\zeta(X)$ is an example). Analogously, for each horizontal distribution $\mathcal{H}P$, the space of its sections is the set of (horizontal) vector fields which are sent to zero by the connection $1$-form $\omega$ defining the horizontal distribution as $\mathcal{H}P = \ker{\omega}$.
    \label{rem:onsectionsoftheverticalandhorizontaldistributions}
\end{remark}
\begin{proposition}
    A connection one-form obeys the following $G$-equivariance condition:
    \begin{equation}
        (R_g)^{\ast} \omega = ad_{g^{-1}} \circ \omega.
        \label{eq:propRom}
    \end{equation}
    \label{pro:theconnectiononeformisequivariant}
\end{proposition}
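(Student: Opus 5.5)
The plan is to verify the identity pointwise: show that for every $p \in P$ and every $x \in T_pP$,
\begin{equation*}
\omega_{pg}\big((R_g)_{\ast}x\big) = ad_{g^{-1}}\big(\omega_p(x)\big).
\end{equation*}
Both sides are linear in $x$, and the connection gives the splitting $T_pP = \mathcal{V}_pP \oplus \mathcal{H}_pP$ of \eqref{eq:orsum1}. It therefore suffices to check the identity separately on vertical and on horizontal vectors and then add.

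\emph{Vertical vectors.} Since $\zeta_p \colon \mathfrak{g} \to \mathcal{V}_pP$ is an isomorphism, every vertical $x$ can be written as $x = \zeta_p(X)$ for some $X \in \mathfrak{g}$. Lemma \ref{lem:Rgstaronzeta} gives $(R_g)_{\ast}\zeta_p(X) = \zeta_{pg}\big(ad_{g^{-1}}(X)\big)$. The defining property \eqref{eq:defcon} of $\omega$, that it returns the Lie algebra element generating a fundamental vector field, then yields
\begin{equation*}
\omega_{pg}\big((R_g)_{\ast}x\big) = ad_{g^{-1}}(X) = ad_{g^{-1}}\big(\omega_p(x)\big).
\end{equation*}

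\emph{Horizontal vectors.} For $x \in \mathcal{H}_pP$, the $G$-invariance \eqref{eq:orsum2} of the horizontal distribution gives $(R_g)_{\ast}x \in \mathcal{H}_{pg}P$, so the left-hand side vanishes. The right-hand side is $ad_{g^{-1}}(0) = 0$ because $ad_{g^{-1}}$ is linear.

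Combining the two cases by linearity gives $(R_g)^{\ast}\omega = ad_{g^{-1}} \circ \omega$. The main point needing care is the vertical case. One has to be explicit that \eqref{eq:defcon} is meant pointwise, $\omega_p(\zeta_p(X)) = X$ at every $p$, rather than only as an identity of vector fields, so that it can be applied at the point $pg$. One also needs surjectivity of $\zeta_p$ onto $\mathcal{V}_pP$, which the paper asserts just before Lemma \ref{lem:Rgstaronzeta}, so that every vertical vector is of fundamental type.
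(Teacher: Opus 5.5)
Your proposal is correct and follows the paper's proof essentially step for step. Like the paper, you check the identity pointwise: on horizontal vectors both sides vanish by the $G$-invariance of $\mathcal{H}P$, and on vertical vectors $\zeta_p(X)$ you apply Lemma \ref{lem:Rgstaronzeta} and then $\omega(\zeta(Y)) = Y$. You also make explicit the linearity step that combines the two cases, which the paper leaves implicit.
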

\begin{proof}
    In order to prove the proposition we need to show that $((R_g)^{\ast} \omega) (v) = (ad_{g^{-1}} \circ \omega)(v)$ for any vector field $v \in \mathscr{X}(P)$, or, equivalently, for any $p \in P$, that
    \begin{equation}
        \big((R_g)^{\ast} \omega_{pg}\big)(v_p)=ad_{g^{-1}}\big(\omega_p(v_p)\big).
        \label{eq:proofconn1formequivariance1}
    \end{equation}
    Let us start by considering a horizontal vector field $w$, that is $w_p \in \mathcal{H}_pP$ for every $p \in P$. In this case the right-hand side of Equation (\ref{eq:proofconn1formequivariance1}) is zero since $\omega$ annihilates horizontal vector fields. The left-hand side instead reads
    \begin{equation}
        \big((R_g)^{\ast} \omega_{pg}\big) (w_p) = \omega_{pg}\big((R_g)_{\ast}(w_p)\big)=0,
        \label{eq:proofconn1formequivariance2}
    \end{equation}
    since the horizontal distribution $\mathcal{H}P$ is $G$-invariant by definition, hence $(R_g)_{\ast}w_p \in \mathcal{H}_{pg}P$ is again a horizontal tangent vector and is annihilated by $\omega$. \\
    Let now $v$ be a vertical vector field, that is $v_p \in \mathcal{V}_pP \subset T_pP$ for every $p$. Since $\zeta_p \colon \mathfrak{g} \to \mathcal{V}_pP$ is an isomorphism between the algebra and the vector subspaces for each point, we can always write $v_p=\zeta_p(X)$ for some Lie algebra element $X\in \mathfrak{g}$. Hence, for any vertical vector $v_p=\zeta_p(X)$, from Lemma \ref{lem:Rgstaronzeta}, we have that:
    \begin{equation}
        \begin{split}
            &\big((R_g)^{\ast} \omega_{pg}\big)(v_p) = \omega_{pg}\Big((R_g)_{\ast}\big(\zeta_p(X)\big)\Big)= \omega_{pg}\Big(\zeta_{pg}\big(ad_{g^{-1}}(X)\big)\Big)= \\
            &=  ad_{g^{-1}}(X) = ad_{g^{-1}}\Big(\omega_p\big(\zeta_p(X)\big)\Big) = ad_{g^{-1}}\big(\omega_p(v_p)\big).
        \end{split}
        \label{eq:proofconn1formequivariance3}
    \end{equation}
\end{proof}
For completeness, we note that one could alternatively start with a $1$-form $\omega \in \Omega^1(P;\mathfrak{g})$ satisfying $G$-equivariance as in Proposition \ref{pro:theconnectiononeformisequivariant} and such that $\omega\big(\zeta(X)\big) = X$, and, from this, define a connection, i.e. a $G$-invariant distribution $\mathcal{H}P$, as $\mathcal{H}P = \ker{\omega}$.

\subsection{Gauge fields}

Dealing with principal bundles, we always have a local section $s_{\alpha} \colon U_{\alpha} \to \pi^{-1}(U_{\alpha})$ for every open subset $U_{\alpha} \subset M$. This leads to the following definition.
\begin{definition}
    Taking the pullback of the connection $1$-form $\omega \in \Omega^1(P;\mathfrak{g})$ by local sections, defines the following $\mathfrak{g}$-valued $1$-forms on $U_{\alpha}$:
    \begin{equation}
        A_{\alpha} = s_{\alpha}^{\ast} \omega \in \Omega^1 (U_{\alpha}; \mathfrak{g}),
        \label{eq:gaugedef}
    \end{equation}
    which are called \textsl{gauge fields}.
    \label{def:gaugefield}
\end{definition}
We recall here that a vector field $v \in \mathscr{X}(G)$, with $G$ a Lie group, is called left-invariant if $(L_g)_{\ast}v_e=v_g$ for all $g \in G$, where $L_g \colon G \to G, h \mapsto L_g(h) = gh$. The vector space of left-invariant vector fields defines the Lie algebra $\mathfrak{g} \cong T_eG$ associated to the Lie group $G$.\\
\begin{definition}
    The \textsl{Maurer-Cartan form} is the $\mathfrak{g}$-valued $1$-form $\omega^{MC} \in \Omega^1(G;\mathbb{g})$ defined by:
    \begin{equation}
        \omega^{MC}_g :=(L_{g^{-1}})_{\ast} \colon T_gG \to \mathfrak{g}\cong T_eG.
        \label{MCform}
    \end{equation}
    For $G$ a matrix group,
    \begin{equation}
        \omega^{MC}_g = g^{-1} \mathrm{d} g
        \label{eq:matrixgroupMCform}
    \end{equation}
    \label{def:MaurerCartanformclassicaldefinition}
\end{definition}
If $v$ is a left-invariant vector field, then $\omega^{MC}_g(v_g)= v_e$, since $\omega^{MC}_g(v_g)=(L_{g^{-1}})_{\ast}(v_g) = (L_{g^{-1}})_{\ast}\big((L_g)_{\ast} v_e\big) = (L_{g^{-1}} \circ L_g)_{\ast}(v_e)=v_e$. This tells us that $\omega^{MC}_e \colon T_eG \to \mathfrak{g}$ is the natural identification between $T_eG$ and $\mathfrak{g}$.
\begin{lemma}
    The Maurer-Cartan form is equivariant under the right action of $G$, that is
    \begin{equation}
        R_g^{\ast} \omega^{MC} = ad_{g^{-1}} \circ \omega^{MC}.
        \label{eq:MCformequivariance}
    \end{equation}
    \label{lem:MaurerCartanisequvariant}
\end{lemma}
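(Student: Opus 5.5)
We prove the identity pointwise, using only the definition $\omega^{MC}_h=(L_{h^{-1}})_{\ast}$ and the chain rule. Fix $h\in G$ and $v\in T_hG$. By definition of pullback,
\begin{equation*}
\big(R_g^{\ast}\omega^{MC}\big)_h(v)=\omega^{MC}_{hg}\big((R_g)_{\ast}v\big)=(L_{(hg)^{-1}})_{\ast}(R_g)_{\ast}v=(L_{g^{-1}}\circ L_{h^{-1}}\circ R_g)_{\ast}v,
\end{equation*}
where the last step uses $L_{(hg)^{-1}}=L_{g^{-1}h^{-1}}=L_{g^{-1}}\circ L_{h^{-1}}$ and functoriality of the push-forward.

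Next we reorder the maps. Left and right translations commute, $L_a\circ R_b=R_b\circ L_a$, since $a(xb)=(ax)b$. Hence
\begin{equation*}
L_{g^{-1}}\circ L_{h^{-1}}\circ R_g=L_{g^{-1}}\circ R_g\circ L_{h^{-1}}=C_{g^{-1}}\circ L_{h^{-1}},
\end{equation*}
with $C_{g^{-1}}(x)=g^{-1}xg$ the conjugation, which fixes $e$. Taking the push-forward gives
\begin{equation*}
\big(R_g^{\ast}\omega^{MC}\big)_h(v)=(C_{g^{-1}})_{\ast,e}\big((L_{h^{-1}})_{\ast}v\big)=(C_{g^{-1}})_{\ast,e}\big(\omega^{MC}_h(v)\big).
\end{equation*}

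It remains to identify $(C_{g^{-1}})_{\ast,e}$ with $ad_{g^{-1}}$ as defined in (\ref{eq:adjointstuffadgmenouno}). For $X\in\mathfrak g\cong T_eG$, realize $X$ as the velocity of the curve $t\mapsto e^{tX}$ at $t=0$. Then
\begin{equation*}
(C_{g^{-1}})_{\ast,e}(X)=\frac{d}{dt}\big(g^{-1}e^{tX}g\big)\Big|_{t=0}=ad_{g^{-1}}(X),
\end{equation*}
which is exactly the definition, applied with $g^{-1}$ in place of $g$. Combining the three displays gives $R_g^{\ast}\omega^{MC}=ad_{g^{-1}}\circ\omega^{MC}$.

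For matrix groups there is a direct check using (\ref{eq:matrixgroupMCform}). Since $R_g^{\ast}(h)=hg$ with $g$ constant, we get $\mathrm d(hg)=(\mathrm dh)g$, and therefore
\begin{equation*}
R_g^{\ast}\omega^{MC}=(hg)^{-1}\mathrm d(hg)=g^{-1}\big(h^{-1}\mathrm dh\big)g=ad_{g^{-1}}\circ\omega^{MC}.
\end{equation*}

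The only delicate point is bookkeeping rather than any real obstacle: one must check that the conjugation comes out as $g^{-1}(\cdot)g$, and that this matches the paper's convention $ad_{g^{-1}}(X)=\frac{d}{dt}(g^{-1}e^{tX}g)|_{t=0}$. This is the same convention used in Lemma \ref{lem:Rgstaronzeta}, so the result agrees with Proposition \ref{pro:theconnectiononeformisequivariant}.
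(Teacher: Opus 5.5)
Your proof is correct and follows the same route as the paper: you evaluate pointwise, split $L_{(hg)^{-1}}=L_{g^{-1}}\circ L_{h^{-1}}$, and commute $R_g$ past $L_{h^{-1}}$ to isolate the conjugation $x\mapsto g^{-1}xg$, whose differential at $e$ you identify with $ad_{g^{-1}}$. Your extras are the explicit identification via the curve $e^{tX}$, which the paper leaves implicit, and the matrix-group check; they add detail but do not change the argument.
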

\begin{proof}
    Let $\xi \in \mathscr{X}(G)$. By definition,
    \begin{equation}
        R_g^{\ast} \omega^{MC}_{hg} (\xi_h) = \omega^{MC}_{hg}\big((R_g)_{\ast}\xi_h\big) = (L_{g^{-1}h^{-1}} \circ R_g)_{\ast}(\xi_h).
        \label{eq:proviamochelaMCèequivarianteclassica}
    \end{equation}
    Left and right multiplications commute, i.e.
    \begin{equation}
        L_{g^{-1}h^{-1}} \circ R_g = L_{g^{-1}} \circ L_{h^{-1}} \circ R_g = L_{g^{-1}} \circ R_g \circ L_{h^{-1}},
        \label{eq:leftandrightmultiplicationscommute}
    \end{equation}
    and since $(L_{g^{-1}} \circ R_g)$ is the conjugation by $g^{-1}$, Equation (\ref{eq:proviamochelaMCèequivarianteclassica}) reads
    \begin{equation}
        R_g^{\ast} \omega^{MC}_{hg} (\xi_h) = (R_g \circ L_{g^{-1}})_{\ast} \circ (L_{h^{-1}})_{\ast}(\xi_h) = ad_{g^{-1}} \circ \omega^{MC}_h(\xi_h).
        \label{eq:finiamostadimostrazioneMCcalssica}
    \end{equation}
\end{proof}
We now show a lemma which will be needed in the following discussion.
\begin{lemma}
    The $G$-equivariant diffeomorphisms $g_{\alpha} \colon \pi^{-1}(U_{\alpha}) \to G$ satisfy 
    \begin{equation}
        g_{\alpha\ast}\big(\zeta_p(X)\big)= (L_{g_{\alpha}(p)})_{\ast}(X),
        \label{eq:lemmaL}
    \end{equation}
    for any $X \in \mathfrak{g}$ and with $\zeta(X)$ being the fundamental vector field. 
    \label{lem:itwasanexercise}
\end{lemma}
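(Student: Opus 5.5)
The plan is to compute the pushforward directly along the curve that defines the fundamental vector field, and then use the $G$-equivariance of $g_{\alpha}$ to turn right multiplication on $P$ into left multiplication in $G$. Fix $p \in \pi^{-1}(U_{\alpha})$ and $X \in \mathfrak{g}$. By Definition \ref{def:fundamentalvectorfield}, $\zeta_p(X)$ is the velocity at $t=0$ of the curve $\gamma(t) = p e^{tX}$. This curve stays in the fiber $\pi^{-1}(\pi(p)) \subset \pi^{-1}(U_{\alpha})$, because fibers are $G$-invariant. So $g_{\alpha}$ is defined along $\gamma$, and by the chain rule
\begin{equation*}
    g_{\alpha\ast}\big(\zeta_p(X)\big) = \frac{d}{dt}\, g_{\alpha}\big(p e^{tX}\big)\Big|_{t=0}.
\end{equation*}

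Next, I would invoke the $G$-equivariance $g_{\alpha}(pg) = g_{\alpha}(p)g$ from Definition \ref{def:principalfiberbundle} with $g = e^{tX}$. This gives $g_{\alpha}(p e^{tX}) = g_{\alpha}(p) e^{tX} = L_{g_{\alpha}(p)}(e^{tX})$. The curve in $G$ is therefore the left translate by $g_{\alpha}(p)$ of the one-parameter subgroup $t \mapsto e^{tX}$. That subgroup passes through $e$ at $t=0$ with velocity $X \in T_eG \cong \mathfrak{g}$. Applying the chain rule once more, now to $L_{g_{\alpha}(p)}$, gives
\begin{equation*}
    \frac{d}{dt}\, L_{g_{\alpha}(p)}\big(e^{tX}\big)\Big|_{t=0} = (L_{g_{\alpha}(p)})_{\ast}\Big(\frac{d}{dt} e^{tX}\Big|_{t=0}\Big) = (L_{g_{\alpha}(p)})_{\ast}(X),
\end{equation*}
which is the claimed identity.

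There is no real obstacle here. The only point needing care is to make explicit the identification $\mathfrak{g} \cong T_eG$, under which $X$ is the velocity $\frac{d}{dt}e^{tX}|_{t=0}$. This is exactly the identification used in the definitions of $\zeta$ and of $\omega^{MC}$ earlier in the chapter. It is also worth checking that the curve never leaves the domain of $g_{\alpha}$, which the fiber-invariance guarantees.

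As a remark, combining the result with Definition \ref{def:MaurerCartanformclassicaldefinition} gives $\big(g_{\alpha}^{\ast}\omega^{MC}\big)\big(\zeta_p(X)\big) = X$. This is presumably how the lemma will be used in what follows.
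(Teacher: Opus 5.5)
Your proposal is correct and follows the paper's proof exactly: differentiate $g_{\alpha}(pe^{tX})$ at $t=0$, use $G$-equivariance to rewrite it as $L_{g_{\alpha}(p)}(e^{tX})$, and apply the chain rule. Your two extra checks, that the curve stays in $\pi^{-1}(U_{\alpha})$ and that $\mathfrak{g}\cong T_eG$ with $X = \frac{d}{dt}e^{tX}\big|_{t=0}$, are details the paper leaves implicit.
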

\begin{proof}
    From equivariance of $g_{\alpha}$ we have:
    \begin{equation}
        \begin{split}
            &g_{\alpha\ast}\big(\zeta_p(X)\big) = \frac{d}{dt}\big(g_{\alpha}(p e^{tX})\big) \Big|_{t=0} = \frac{d}{dt}\big(g_{\alpha}(p) e^{tX}\big) \Big|_{t=0} = \\
            &= \frac{d}{dt} (L_{g_{\alpha}(p)} e^{tX}) \Big|_{t=0}        =(L_{g_{\alpha}(p)})_{\ast}(X).
        \end{split}
        \label{eq:whatoncewasanexercise}
    \end{equation}
\end{proof}
\begin{proposition}
    The restriction of the connection one-form $\omega$ to $\pi^{-1}(U_{\alpha})$ agrees with
    \begin{equation}
        \omega_{\alpha} := ad_{g_{\alpha}^{-1}} \circ \pi^{\ast} A_{\alpha} + g_{\alpha}^{\ast} \omega^{MC},
        \label{eq:localconn}
    \end{equation}
    where $\omega^{MC}$ is the Maurer-Cartan form.
    \label{pro:restrictionoftheconnectiononeform}
\end{proposition}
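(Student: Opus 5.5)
Both $\omega$ and $\omega_{\alpha}$ are $\mathfrak{g}$-valued $1$-forms on $\pi^{-1}(U_{\alpha})$, so it suffices to check that they agree on every tangent vector $v_p \in T_pP$. We first reduce to points of the image of the local section, $p = s_{\alpha}(m)$, and then to vertical vectors and vectors tangent to the image of $s_{\alpha}$.

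The reduction works because both forms obey the same equivariance law $(R_g)^{\ast}(\cdot) = ad_{g^{-1}} \circ (\cdot)$. For $\omega$ this is Proposition \ref{pro:theconnectiononeformisequivariant}. For $\omega_{\alpha}$ it is checked term by term:
\begin{itemize}
\item \emph{Second term.} Since $g_{\alpha} \circ R_g = R_g \circ g_{\alpha}$ by $G$-equivariance of $g_{\alpha}$, we get $R_g^{\ast} g_{\alpha}^{\ast}\omega^{MC} = g_{\alpha}^{\ast} R_g^{\ast}\omega^{MC} = ad_{g^{-1}} \circ g_{\alpha}^{\ast}\omega^{MC}$, using Lemma \ref{lem:MaurerCartanisequvariant}.
\item \emph{First term.} Since $\pi \circ R_g = \pi$, we have $R_g^{\ast}\pi^{\ast}A_{\alpha} = \pi^{\ast}A_{\alpha}$. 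Moreover $g_{\alpha}(pg)^{-1} = g^{-1}g_{\alpha}(p)^{-1}$, so $ad_{g_{\alpha}(pg)^{-1}} = ad_{g^{-1}} \circ ad_{g_{\alpha}(p)^{-1}}$.
\end{itemize}
Hence both forms transform identically. Since every $p \in \pi^{-1}(U_{\alpha})$ has the form $s_{\alpha}(m)g$ by Equation (\ref{eq:quellochemihafattocapire}), it is enough to prove $\omega = \omega_{\alpha}$ at points $p = s_{\alpha}(m)$, where $g_{\alpha}(p) = e$.

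At such a point, $T_pP = \mathcal{V}_pP + (s_{\alpha})_{\ast}(T_mM)$. This holds because $\pi_{\ast} \circ (s_{\alpha})_{\ast} = id$, so any $v_p$ decomposes as $v_p = (s_{\alpha})_{\ast}\pi_{\ast}v_p + \big(v_p - (s_{\alpha})_{\ast}\pi_{\ast}v_p\big)$ with the second summand vertical. By linearity we treat the two kinds of vectors separately.
\begin{itemize}
\item \emph{Vertical vectors} $v_p = \zeta_p(X)$. The term $\pi^{\ast}A_{\alpha}$ vanishes since $\pi_{\ast}\zeta_p(X) = 0$ by (\ref{eq:proofsigmavertical}). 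By Lemma \ref{lem:itwasanexercise},
\[
(g_{\alpha}^{\ast}\omega^{MC})(\zeta_p(X)) = (L_{g_{\alpha}(p)^{-1}})_{\ast}(L_{g_{\alpha}(p)})_{\ast}X = X,
\]
which equals $\omega(\zeta_p(X))$ by Definition \ref{def:connectiononeform}.
\item \emph{Vectors $v_p = (s_{\alpha})_{\ast}u$ with $u \in T_mM$.} Here $g_{\alpha} \circ s_{\alpha}$ is constant equal to $e$, so $(g_{\alpha})_{\ast}(s_{\alpha})_{\ast}u = 0$ and the Maurer--Cartan term vanishes. The first term gives $ad_{e}\big(A_{\alpha}(\pi_{\ast}(s_{\alpha})_{\ast}u)\big) = A_{\alpha}(u) = (s_{\alpha}^{\ast}\omega)(u) = \omega(v_p)$, by Definition \ref{def:gaugefield}.
\end{itemize}
This completes the argument.

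The main obstacle is the equivariance reduction, not the pointwise check on the section. The delicate parts are the sign and ordering conventions of $ad$ under composition, $ad_{(hg)^{-1}} = ad_{g^{-1}} \circ ad_{h^{-1}}$, and the precise interpretation of $ad_{g_{\alpha}^{-1}} \circ \pi^{\ast}A_{\alpha}$ as a pointwise expression. An alternative that avoids the reduction is to verify the identity directly at a general point $p = s_{\alpha}(m)g$, decomposing $v_p = (R_g)_{\ast}(s_{\alpha})_{\ast}u + \zeta_p(X)$. If the reduction step becomes messy, I would fall back on this direct decomposition.
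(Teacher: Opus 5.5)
Your proposal is correct and follows the paper's route. You show that both forms satisfy $R_g^{\ast}(\cdot) = ad_{g^{-1}}\circ(\cdot)$, which reduces the claim to points $s_{\alpha}(m)$. There you split a tangent vector into $(s_{\alpha}\circ\pi)_{\ast}\xi_p$ plus a vertical part and use $(g_{\alpha}\circ s_{\alpha})_{\ast}=0$ together with Lemma \ref{lem:itwasanexercise}, exactly as the paper does. The differences are cosmetic: you do the equivariance step first and at a general point, and you treat the two summands separately. Your identity $ad_{g_{\alpha}(pg)^{-1}} = ad_{g^{-1}}\circ ad_{g_{\alpha}(p)^{-1}}$ is precisely the bookkeeping the paper's equivariance computation relies on.
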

\begin{proof}
    We divide the proof into two steps. First, we verify that the restriction of the connection $1$-form $\omega$ to $\pi^{-1}(U_{\alpha})$ agrees with $\omega_{\alpha}$ along the image of a local section. Next, we check that they also agree along the fibers. Since the sections select a specific point in each fiber, and the right action of the structure group allows us to move along the fibers, these two conditions together imply that $\omega$ and $\omega_{\alpha}$ must agree on all of $\pi^{-1}(U_{\alpha})$.\\
    What we have to prove, at each point $p \in P$, reads:
        \begin{equation}
            (\omega_{\alpha})_p = ad_{g_{\alpha}(p)^{-1}} \circ \pi^{\ast} (A_{\alpha})_{\pi(p)} + g_{\alpha}^{\ast} \omega^{MC}_{g_{\alpha}(p)}.
        \label{eq:localconnindexes}
        \end{equation}
    Recall that $g_{\alpha} \colon \pi^{-1}(U_{\alpha}) \to G$ is the $G$-equivariant diffeomorphism defining the trivialization map.
    
    Let us start by showing that $\omega$ and $\omega_{\alpha}$ agree on the image of a local section $s_{\alpha} \colon U_{\alpha} \to \pi^{-1}(U_{\alpha})$. Let $m \in U_{\alpha}$ and consider a (trivialized) point $p=s_{\alpha}(m)$ in the image of the local section. Any tangent vector $\xi_p$ at such a point $p$ can always be written as
    \begin{equation}
        \xi_p=(s_{\alpha} \circ \pi)_{\ast}(\xi_p) + \bar{\xi_p},
        \label{eq:dsumex}
    \end{equation}
    for a particular vertical vector $\bar{\xi}_p \in \mathcal{V}_pP$. Since for $p=s_{\alpha}(m)$ we have $g_{\alpha}(p)=e$, we can write:
    \begin{equation}
        \begin{split}
        &(\omega_{\alpha})_p(\xi_p) = ad_{g_{\alpha}(p)^{-1}}\big( (\pi^{\ast} s_{\alpha}^{\ast} \omega_{s_{\alpha}(\pi(p))})(\xi_p)\big) + (g_{\alpha}^{\ast} \omega^{MC}_{g_{\alpha}(p)})(\xi_p) = \\
        &= ad_e\big( (\pi^{\ast} s_{\alpha}^{\ast} \omega_p)(\xi_p) \big) + (g_{\alpha}^{\ast} \omega^{MC}_e)(\xi_p) = (\pi^{\ast} s_{\alpha}^{\ast} \omega_p)(\xi_p) + (g_{\alpha}^{\ast} \omega^{MC}_e)(\xi_p) = \\
        &= \omega_p\big( s_{\alpha\ast} \pi_{\ast}(\xi_p)\big) + \omega^{MC}_e \big(g_{\alpha\ast} (\xi_p) \big),
        \end{split}
        \label{eq:proofagree1}
    \end{equation}
    where we used $ad_e (X) =X$ for any $X \in \mathfrak{g}$.
    Inserting the expression for $\xi_p$ from Equation (\ref{eq:dsumex}), then:
    \begin{equation}
        \begin{split}
        &(\omega_{\alpha})_p(\xi_p) =  \omega_p\Big( s_{\alpha\ast} \pi_{\ast}\big((s_{\alpha} \circ \pi)_{\ast}(\xi_p)\big)\Big) + \omega^{MC}_e \big(g_{\alpha\ast} (\bar{\xi}_p) \big) = \\
        &= \omega_p\big( (s_{\alpha} \circ \pi \circ s_{\alpha} \circ \pi)_{\ast}(\xi_p)\big) + \omega^{MC}_e \big(g_{\alpha\ast} (\bar{\xi}_p) \big) = \\
        &= \omega_p\big( s_{\alpha\ast} \pi_{\ast}(\xi_p)\big) +  \omega^{MC}_e \big(g_{\alpha\ast} (\bar{\xi}_p) \big),
        \end{split}
        \label{eq:proofagree2}
    \end{equation}
    where we used that $\pi \circ s_{\alpha} = id$ and that, since $g_{\alpha} \big(s_{\alpha}(m)\big)=e$ for all $m \in M$, $(g_{\alpha} \circ s_{\alpha})_{\ast}=0$. \\
    Let $\bar{\xi}_p = \zeta_p(X)$. Using Lemma \ref{lem:itwasanexercise}, we have
    \begin{equation}
        \begin{split}
            &\omega^{MC}_{e}\big(g_{\alpha\ast} (\bar{\xi}_p) \big) = \omega^{MC}_e\big((L_{g_{\alpha}(p)})_{\ast}(X)\big) = (L_e)_{\ast}(L_e)_{\ast}(X) = X = \omega_p(\bar{\xi}_p),
        \end{split}
        \label{eq:usingtheoncewasexercise}
    \end{equation}
    so that we can write Equation (\ref{eq:proofagree2}) as    
    \begin{equation}
        \begin{split}
            &(\omega_{\alpha})_p(\xi_p) = \omega_p\big( s_{\alpha\ast} \pi_{\ast}(\xi_p)\big) +  \omega^{MC}_e \big(g_{\alpha\ast} (\bar{\xi}_p) \big) =\\
            &= \omega_p\big( s_{\alpha\ast} \pi_{\ast}(\xi_p)\big) +  \omega_p(\bar{\xi}_p) = \omega_p(\xi_p),
        \end{split}
        \label{eq:proofagree3}
    \end{equation}
    showing that $(\omega_{\alpha})_p$ and $\omega_p$ agree on any $\xi_p \in T_pP$ whit $p$ in the image of a local section.
    
    Next, we show that they transform in the same way under the right action of the structure group $G$, that is they agree along fibers. Still, let $p = s_{\alpha}(m)$. Since $R_g\big(g_{\alpha}(p)\big)= g_{\alpha}\big(R_g(p)\big)$ and $\pi\big(R_g(p)\big) =\pi(p)$ we have:
    \begin{equation}
    \begin{split}
        &R_g^{\ast}(\omega_{\alpha})_{pg}=  ad_{g_{\alpha}(pg)^{-1}} \circ R_g^{\ast}\pi^{\ast} s_{\alpha}^{\ast} \omega_{s_{\alpha}(\pi(pg))}  + R_g^{\ast}g_{\alpha}^{\ast}\omega^{MC}_{g_{\alpha}(pg)}) =\\
        &= ad_{g^{-1}g_{\alpha}(p)} \circ \pi^{\ast} s_{\alpha}^{\ast} \omega_{s_{\alpha}(\pi(p))} + g_{\alpha}^{\ast}R_g^{\ast} \omega^{MC}_{g_{\alpha}(p)g} =  \\
        &= ad_{g^{-1}g_{\alpha}(p)} \circ \pi^{\ast} s_{\alpha}^{\ast} \omega_{s_{\alpha}(\pi(p))} + g_{\alpha}^{\ast}( ad_{g^{-1}} \circ \omega^{MC}_{g_{\alpha}(p)}) = \\
        &= ad_{g^{-1}} \circ \pi^{\ast} s_{\alpha}^{\ast} \omega_p +  ad_{g^{-1}} \circ  g_{\alpha}^{\ast}\omega^{MC}_e = ad_{g^{-1}} \circ (\pi^{\ast} s_{\alpha}^{\ast} \omega_p + g_{\alpha}^{\ast}\omega^{MC}_e) = \\
        &= ad_{g^{-1}} \circ (\omega_{\alpha})_p,
    \end{split} 
    \label{eq:righproof}
    \end{equation}
    where we used $G$-equivariance property for the Maurer-Cartan form $R_g^{\ast} \omega^{MC} = ad_{g^{-1}} \circ \omega^{MC}$.
\end{proof}
Considering a non-empty overlap $U_{\alpha\beta}$, we have that $\omega_{\alpha}=\omega_{\beta}$ on $\pi^{-1}(U_{\alpha\beta})$, since $\omega_{\alpha}$ and $\omega_{\beta}$ agree with the restriction of $\omega$ on $\pi^{-1}(U_{\alpha})$ and $\pi^{-1}(U_{\beta})$ respectively and $\omega$ is defined on the whole total space $P$. We can now show the relation between different gauge fields defined on $U_{\alpha\beta}$.
\begin{proposition}
    Let $U_{\alpha\beta} = U_{\alpha} \cap U_{\beta} \neq \emptyset$ and $s_{\alpha}$ and $s_{\beta}$ be two local sections defined on $U_{\alpha\beta}$. Then $A_{\alpha} = s_{\alpha}^{\ast}\omega$ and $A_{\beta} = s_{\beta}^{\ast}$ are related on $U_{\alpha\beta}$ as
    \begin{equation}
        A_{\alpha} = ad_{g_{\alpha\beta}} \circ A_{\beta} + g_{\beta\alpha} ^{\ast} \omega^{MC},
        \label{eq:relateAalphaandAbeta1}
    \end{equation}
    \label{pro:recoveringgaugetransformations}
\end{proposition}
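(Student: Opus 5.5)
The plan is to compute $A_{\alpha}=s_{\alpha}^{\ast}\omega$ on $U_{\alpha\beta}$ by expressing $\omega$ through the local formula adapted to the \emph{other} trivialization. By Proposition \ref{pro:restrictionoftheconnectiononeform}, applied to $U_{\beta}$, the restriction of $\omega$ to $\pi^{-1}(U_{\beta})$ coincides with $\omega_{\beta}=ad_{g_{\beta}^{-1}}\circ\pi^{\ast}A_{\beta}+g_{\beta}^{\ast}\omega^{MC}$. Since $s_{\alpha}$ maps $U_{\alpha\beta}$ into $\pi^{-1}(U_{\alpha\beta})\subset\pi^{-1}(U_{\beta})$, we get on $U_{\alpha\beta}$
\begin{equation*}
A_{\alpha}=s_{\alpha}^{\ast}\omega=s_{\alpha}^{\ast}\omega_{\beta}=ad_{(g_{\beta}\circ s_{\alpha})^{-1}}\circ(\pi\circ s_{\alpha})^{\ast}A_{\beta}+(g_{\beta}\circ s_{\alpha})^{\ast}\omega^{MC}.
\end{equation*}
Here $ad$ is evaluated pointwise at $g_{\beta}(s_{\alpha}(m))$, and we use functoriality of the pullback, $s_{\alpha}^{\ast}\pi^{\ast}=(\pi\circ s_{\alpha})^{\ast}$ and $s_{\alpha}^{\ast}g_{\beta}^{\ast}=(g_{\beta}\circ s_{\alpha})^{\ast}$.

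Next we simplify the two composite maps. First, $\pi\circ s_{\alpha}=id$, so the first pullback is just $A_{\beta}$. Second, Lemma \ref{lem:relationbetweendifferentlocalsections} with the roles of $\alpha$ and $\beta$ exchanged gives $s_{\alpha}(m)=s_{\beta}(m)g_{\beta\alpha}(m)$. Applying the $G$-equivariance of $g_{\beta}$ and $g_{\beta}(s_{\beta}(m))=e$, we obtain
\begin{equation*}
g_{\beta}\big(s_{\alpha}(m)\big)=g_{\beta}\big(s_{\beta}(m)\big)g_{\beta\alpha}(m)=g_{\beta\alpha}(m),
\end{equation*}
that is, $g_{\beta}\circ s_{\alpha}=g_{\beta\alpha}$ on $U_{\alpha\beta}$. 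Finally, the cocycle condition (\ref{eq:cocy1}) gives $g_{\beta\alpha}^{-1}=g_{\alpha\beta}$, so $ad_{(g_{\beta}\circ s_{\alpha})^{-1}}=ad_{g_{\alpha\beta}}$. Substituting these into the display above yields exactly (\ref{eq:relateAalphaandAbeta1}).

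The only delicate point is the first step: we must justify that $\omega$ and $\omega_{\beta}$ may be pulled back along $s_{\alpha}$ interchangeably. This holds because Proposition \ref{pro:restrictionoftheconnectiononeform} asserts equality of the forms on all of $\pi^{-1}(U_{\beta})$, and not merely along the image of $s_{\beta}$, together with the observation that $s_{\alpha}(U_{\alpha\beta})\subset\pi^{-1}(U_{\beta})$. A secondary point is bookkeeping of the pointwise $ad$ factor: $ad_{g_{\beta}^{-1}}$ is evaluated at $s_{\alpha}(m)$, and pulling back only affects the form part, not the base point of the adjoint. Everything else is direct substitution.
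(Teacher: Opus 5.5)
Your proof is correct and follows the same route as the paper. You pull back the local expression $\omega_{\beta}$ along $s_{\alpha}$, then use $\pi\circ s_{\alpha}=id$, $g_{\beta}\circ s_{\alpha}=g_{\beta\alpha}$ and the cocycle condition $g_{\beta\alpha}^{-1}=g_{\alpha\beta}$. The only cosmetic difference is that you obtain $g_{\beta}\circ s_{\alpha}=g_{\beta\alpha}$ from Lemma \ref{lem:relationbetweendifferentlocalsections} and the equivariance of $g_{\beta}$, whereas the paper reads it off directly from $\bar{g}_{\beta\alpha}=g_{\beta}g_{\alpha}^{-1}$ and $g_{\alpha}\circ s_{\alpha}=e$.
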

\begin{proof}
    Let $m \in U_{\alpha\beta}$, $p=s_{\alpha}(m) \in \pi^{-1}(U_{\alpha\beta})$. By Proposition \ref{pro:restrictionoftheconnectiononeform} we have
    \begin{equation}
        \begin{split}
        &(A_{\alpha})_m=s_{\alpha}^{\ast} \omega_{s_{\alpha}(m)} =  s_{\alpha}^{\ast} (\omega_{\alpha})_{s_{\alpha}(m)} = s_{\alpha}^{\ast} (\omega_{\beta})_{s_{\alpha}(m)} = \\
        &= ad_{g_{\beta}(s_{\alpha}(m))^{-1}} \circ s_{\alpha}^{\ast} \pi^{\ast} (A_{\beta})_{\pi(s_{\alpha}(m))} + s_{\alpha}^{\ast} g_{\beta}^{\ast} \omega^{MC}_{g_{\beta}(s_{\alpha}(m))} = \\
        &= ad_{g_{\alpha\beta}(m)} \circ (\pi \circ s_{\alpha})^{\ast} (A_{\beta})_m + (g_{\beta} \circ s_{\alpha})^{\ast} \omega^{MC}_{g_{\beta\alpha}(m)} = \\
        &=ad_{g_{\alpha\beta}(m)} \circ (A_{\beta})_m + g_{\beta\alpha} ^{\ast} \omega^{MC}_{g_{\beta\alpha}(m)},
        \end{split} 
        \label{eq:relateproof}
    \end{equation}
    where we used that $g_{\beta\alpha}(m) = \bar{g}_{\beta\alpha}\big(s_{\alpha}(m)\big) = g_{\beta}\big(s_{\alpha}(m)\big)g_{\alpha}\big(s_{\alpha}(m)\big)^{-1} = (g_{\beta} \circ s_{\alpha})(m)$ and that, by Lemma \ref{lem:cocycleconditionsfortransitionfunction}, $g_{\beta\alpha}^{-1} = g_{\alpha\beta}$.
    Summing things up, without specifying the points, we can write:
    \begin{equation}
        A_{\alpha} = ad_{g_{\alpha\beta}} \circ A_{\beta} + g_{\beta\alpha} ^{\ast} \omega^{MC},
        \label{eq:relate1}
    \end{equation}
    as desired.
\end{proof}
For matrix groups, Equation (\ref{eq:relateAalphaandAbeta1}) becomes the more familiar:
\begin{equation}
    A_{\alpha} = g_{\alpha\beta} A_{\beta} g_{\alpha\beta}^{-1} - \mathrm{d} g_{\alpha\beta} g_{\alpha\beta}^{-1},
    \label{eq:relation1}
\end{equation}
which is exactly the gauge transformation law for mediators of the interactions in gauge theory. Fixing the gauge means then chosing a local trivialization, that is a local section.\\
What we did is we have defined gauge fields $A_{\alpha} \in \Omega^1(U_{\alpha}; \mathfrak{g})$ starting from the globally defined connection $1$-form $\omega \in \Omega^1(P;\mathfrak{g})$. Conversely, given a family of $1$-forms $A_{\alpha} \in \Omega^1(U_{\alpha}; \mathfrak{g})$ satisfying Proposition \ref{pro:recoveringgaugetransformations} on overlaps $U_{\alpha\beta}$, we
can construct a globally defined $\omega \in \Omega^1(P; \mathfrak{g})$ by Proposition \ref{pro:restrictionoftheconnectiononeform} such that $\omega$ is the connection $1$-form of a connection on $P$.

\section{The frame bundle} \label{theframebundle}

An important example of a principal bundle is the frame bundle $\pi \colon FM \to M$ of a smooth manifold $M$. Its fiber over each orbit $m \in M$ is the set of all bases of the tangent space $T_mM$. The structure group is $GL(n, \mathbb{R})$, with $\text{dim}(M) = n$: its action along fibers is to be interpreted as a basis change for $T_mM$.  We may equivalently view the fiber $FM_m$ over $m \in M$ as the space of all linear isomorphisms between $\mathbb{R}^n$ and $T_mM$: an element $u \in \pi^{-1}(m) \subset FM$ is an isomorphism $u \colon \mathbb{R}^n \xrightarrow[]{\sim} T_mM$, that is an assignment of a particular tangent vector to an $n$-tuple of components in $\mathbb{R}^n$. \\
A crucial feature of the frame bundle is the existence of a canonical $1$-form with values in the fundamental representation of $GL(n,\mathbb{R}^n)$, defined as follows.
\begin{definition}
    There exists a $1$-form $\theta \in \Omega^1(FM;\mathbb{R}^n)$, called the \textsl{canonical form} (or \textsl{soldering form}), defined on vector fields $\xi \in \mathscr{X}(FM)$ as
    \begin{equation}
        \theta_u (\xi_u) := u^{-1} (\pi_{\ast} \xi_u) \in \mathbb{R}^n,
        \label{eq:definitioncanonicalform}
    \end{equation}
    viewing $u \in FM$ as a linear isomorphism $u \colon \mathbb{R}^n \xrightarrow[]{\sim} T_{\pi(u)}M$. 
    \label{def:canonicalsolderingform}
\end{definition}
This means that the value of the canonical form at each frame $u \in FM$ for any $\xi_u \in T_uFM$ is defined as the components of the projection $\pi_{\ast}\xi_u \in \mathbb{R}^n$. By construction, the canonical form has the following properties.
\begin{proposition}
    The canonical form $\theta \in \Omega^1(FM;\mathbb{R}^n)$ is $GL(n;\mathbb{R})$-equivariant, that is
    \begin{equation}
        R_g^{\ast} \theta = g^{-1} \circ \theta
        \label{eq:canonicalformequivariance}
    \end{equation}
    for all $g \in GL(n,\mathbb{R})$, and it horizontal, that is
    \begin{equation}
        \theta(\xi) = 0
        \label{eq:canonicalformhorizontality}
    \end{equation}
    for any vertical vector field $\xi \in \mathcal{V}FM$. Hence, we write $\theta \in \Omega^1_{hor}(FM;\mathbb{R}^n)^{GL(n,\mathbb{R})}$.
    \label{pro:thecanonicalformishorizontalandequivarint}
\end{proposition}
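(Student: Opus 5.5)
Both properties follow directly from Definition \ref{def:canonicalsolderingform}, using two facts about the frame bundle: the right action of $g \in GL(n,\mathbb{R})$ on a frame $u \colon \mathbb{R}^n \xrightarrow{\sim} T_{\pi(u)}M$ is precomposition, $R_g(u) = u \circ g$, and $\pi \circ R_g = \pi$. The plan is to verify each property pointwise at an arbitrary frame $u$ and tangent vector $\xi_u \in T_uFM$, and then globalize.

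\emph{Horizontality.} Let $\xi_u \in \mathcal{V}_uFM$. By Definition \ref{def:verticalsubspace}, $\mathcal{V}_uFM = \ker \pi_{\ast}$, so $\pi_{\ast}\xi_u = 0$. Since $u^{-1}$ is linear,
\begin{equation*}
\theta_u(\xi_u) = u^{-1}(0) = 0 .
\end{equation*}
Equivalently, one can note that every vertical vector is of the form $\zeta_u(X)$, and by (\ref{eq:proofsigmavertical}) such vectors are killed by $\pi_{\ast}$.

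\emph{Equivariance.} We must show $(R_g^{\ast}\theta)_u(\xi_u) = g^{-1}\big(\theta_u(\xi_u)\big)$. Unwinding the pullback gives
\begin{equation*}
(R_g^{\ast}\theta)_u(\xi_u) = \theta_{ug}\big((R_g)_{\ast}\xi_u\big) = (ug)^{-1}\Big(\pi_{\ast}(R_g)_{\ast}\xi_u\Big).
\end{equation*}
Using $(\pi \circ R_g)_{\ast} = \pi_{\ast}$, as in Remark \ref{rem:ontheuniquenessoftheverticalsubspace}, the argument becomes $\pi_{\ast}\xi_u$. Using $(u\circ g)^{-1} = g^{-1}\circ u^{-1}$, the whole expression equals
\begin{equation*}
g^{-1}\big(u^{-1}(\pi_{\ast}\xi_u)\big) = g^{-1}\big(\theta_u(\xi_u)\big),
\end{equation*}
which is the claimed identity with $g^{-1}$ acting through the fundamental representation on $\mathbb{R}^n$.

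Since $u$ and $\xi_u$ were arbitrary, both identities hold as identities of forms, which justifies the notation $\theta \in \Omega^1_{hor}(FM;\mathbb{R}^n)^{GL(n,\mathbb{R})}$.

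The only real obstacle is conventions rather than computation. The identification $R_g(u) = u\circ g$ must be fixed and checked against the interpretation of the action as a change of basis: if $u$ sends the standard basis $e_i$ to a frame $(v_i)$, then $u\circ g$ sends $e_j$ to $\sum_i g_{ij}v_i$, which is a right action. With this convention the inverse appears on the correct side and the right action is compatible with $\pi$. Smoothness of $\theta$ needs no separate argument here, since it is presupposed in the definition; if desired, it can be checked in a local trivialization induced by a chart.
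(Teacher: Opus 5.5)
Your proof is correct and matches the paper's argument essentially line for line. Horizontality follows from $\mathcal{V}_uFM=\ker\pi_{\ast}$, and equivariance from $\theta_{ug}\big((R_g)_{\ast}\xi_u\big)=(u\circ g)^{-1}(\pi_{\ast}\xi_u)=g^{-1}\circ u^{-1}(\pi_{\ast}\xi_u)$ using $\pi\circ R_g=\pi$. Your check that $R_g(u)=u\circ g$ really is a right action is a small clarification the paper leaves implicit.
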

\begin{proof}
    The fact that $\theta$ is horizontal is obvious from the definition \ref{def:canonicalsolderingform} of canonical form, since $\pi_{\ast}$ annihilates vertical vectors.\\
    $GL(n,\mathbb{R})$-equivariance follows from the fact that, viewing frames as linear isomorphisms $u \colon \mathbb{R}^n \xrightarrow[]{\sim} T_{\pi(u)}M$, the right $GL(n, \mathbb{R})$-action gives:
    \begin{equation}
        \begin{split}
            &R_g^{\ast}\theta_{ug}(\xi_{u})= \theta_{ug}\big((R_g)_{\ast}\xi_u\big)=(ug)^{-1} \big(\pi_{\ast} (R_g)_{\ast}\xi_u\big) = \\
            &= g^{-1} \circ u^{-1}(\pi_{\ast}\xi_u) = g^{-1} \circ \theta_u(\xi_u),
        \end{split}
        \label{eq:proofcanonicalformisequivariant}
    \end{equation}
    where we used that $\pi \circ R_g = \pi$ and that $GL(n,\mathbb{R})$ acts on $\mathbb{R}^n$ via the fundamental representation.
\end{proof}

\subsection{$G$-structures} \label{Gstructures}
Reductions of the frame bundle to subgroups of $GL(n;\mathbb{R})$ are called $G$-structures. They provide simple examples of geometric structures on manifolds.
        Riemannian spin structures may be similarly interpreted as G-structures corresponding to the homomorphism $i \colon Spin(m,\mathbb{R})\to GL(m,\mathbb{R})$. This is an important example in which the structure group is not a subgroup of $GL(m,\mathbb{R})$ but a covering of such a subgroup.
\begin{definition}
    Let $H \subset GL(n,\mathbb{R})$ a Lie subgroup and $M$ be a smooth manifold, with $\text{dim}(M) = n$. A $G$\textsl{-structure} on $M$ is a reduction $\iota \colon P \to FM$ of the frame bundle of $M$ to a structure group $H$. One may call such a reduction an $H$-structure.
    \label{def:Gstructure}
\end{definition}
The above is saying that $P \to M$ is a principal $H$-bundle and $\iota \colon P \to FM$ is a principal bundle morphism with respect to the inclusion $i \colon H \to GL(n,\mathbb{R})$ as in Definition \ref{def:reduction}.
\begin{exmp}
    A Riemannian structure is the reduction of $FM$ to orthonormal frames, i.e. to $H = O(n, \mathbb{R}) \subset GL(n,\mathbb{R})$. The group $GL(n, \mathbb{R})$ acts transitively on the space of all inner products on $\mathbb{R}^n$ and the isotropy group of the standard inner product is $O(n)$, so $GL(n,\mathbb{R})/O(n)$ is the space of all inner products on $R^n$. Hence, the associated bundle $FM/H=FM/O(n)$ is the bundle of all inner products on the tangent spaces of $M$, i.e. smooth sections of this bundle are exactly Riemannian metrics on $M$.
    \label{ex:exampleofGstructure}
\end{exmp}
Equivalently, one may characterize an $H$-structure as a principal $H$-bundle $P$ together with a $1$-form $\Theta \in \Omega^1_{hor}(P; \mathbb{R}^n)^{H}$, that is horizontal and $H$-equivariant, i.e. satisfying the analogues of properties of the canonical form in Definition \ref{def:canonicalsolderingform} for elements $g \in H$.
\begin{proposition}
    A $G$-structure $\pi \colon P \to M$ on $M$ with structure group $H \subset GL(n, \mathbb{R})$ is a principal $H$-bundle $\pi \colon P \to M$ together with an $H$-equivariant and horizontal $1$-form $\Theta \in \Omega^{1}_{hor}(P;\mathbb{R}^n)^H$. The converse is also true: a principal $H$-bundle $\pi \colon P \to M$ equipped with a $\Theta \in \Omega^1_{hor}(P;\mathbb{R}^n)^H$ is a $G$-structure on $M$.
    \label{pro:thepowerofGstructures}
\end{proposition}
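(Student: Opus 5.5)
The forward direction is proved by pulling back the canonical form, and the converse by reconstructing the frame map pointwise from $\Theta$. Throughout, $i \colon H \to GL(n,\mathbb{R})$ is the inclusion of Definition \ref{def:reduction}. For the converse I will assume, as is implicit in the word ``soldering'', that $\Theta$ is pointwise nondegenerate: $\ker \Theta_p = \mathcal{V}_pP$ for every $p$. Without this, $\Theta = 0$ would be a counterexample, so the proof will add this hypothesis explicitly.

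\emph{Forward direction.} Let $\iota \colon P \to FM$ be a reduction and set $\Theta := \iota^{\ast}\theta$.
\begin{itemize}
\item Horizontality: since $\pi_{FM} \circ \iota = \pi_P$, we have $\iota_{\ast}\mathcal{V}_pP \subset \mathcal{V}_{\iota(p)}FM$. Indeed, $\pi_{FM\ast}\iota_{\ast}\xi = \pi_{P\ast}\xi = 0$ for vertical $\xi$, so Proposition \ref{pro:thecanonicalformishorizontalandequivarint} gives $\Theta(\xi) = \theta(\iota_{\ast}\xi) = 0$.
\item Equivariance: for $h \in H$, the relation $\iota \circ R_h = R_{i(h)} \circ \iota$ gives
\[
R_h^{\ast}\Theta = \iota^{\ast}R_{i(h)}^{\ast}\theta = \iota^{\ast}\big(i(h)^{-1}\circ\theta\big) = h^{-1}\circ\Theta .
\]
\item Nondegeneracy: $\Theta_p(\xi) = \iota(p)^{-1}\big(\pi_{\ast}\xi\big)$ and $\iota(p)$ is an isomorphism, so $\Theta_p(\xi) = 0$ forces $\pi_{\ast}\xi = 0$. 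Hence $\ker\Theta_p = \mathcal{V}_pP$ exactly.
\end{itemize}

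\emph{Converse.} Let $\Theta \in \Omega^1_{hor}(P;\mathbb{R}^n)^H$ be nondegenerate.
\begin{enumerate}
\item Since $\pi_{\ast} \colon T_pP \to T_mM$ is surjective with kernel $\mathcal{V}_pP \subset \ker\Theta_p$, the form descends to a linear map $\bar\Theta_p \colon T_mM \to \mathbb{R}^n$ with $\bar\Theta_p(\pi_{\ast}\xi) = \Theta_p(\xi)$. It is well defined by horizontality.
\item Nondegeneracy makes $\bar\Theta_p$ injective. Since $\dim M = n$, it is then an isomorphism, so $\iota(p) := \bar\Theta_p^{-1} \colon \mathbb{R}^n \to T_{\pi(p)}M$ is a frame. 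This defines $\iota \colon P \to FM$ with $\pi_{FM}\circ\iota = \pi_P$.
\item Smoothness: I will check it in a local trivialization. Choose a local section $s_\alpha$ and a coordinate frame $\partial_1,\dots,\partial_n$ on $U_\alpha$. The matrix $\Theta_{s_\alpha(m)}(s_{\alpha\ast}\partial_j)$ depends smoothly on $m$ and is invertible. Its inverse, transported along the fibres by the equivariance of step 4, gives $\iota$ in the chart of $FM$ induced by $\partial_j$, which is therefore smooth.
\item Equivariance of $\iota$: using $\pi \circ R_h = \pi$ and $R_h^{\ast}\Theta = h^{-1}\circ\Theta$,
\[
\bar\Theta_{ph}(\pi_{\ast}\xi) = \Theta_{ph}\big((R_h)_{\ast}\xi\big) = h^{-1}\Theta_p(\xi) = h^{-1}\bar\Theta_p(\pi_{\ast}\xi).
\]
Hence $\bar\Theta_{ph} = h^{-1}\bar\Theta_p$, so $\iota(ph) = \bar\Theta_p^{-1}\circ h = \iota(p)\,i(h)$. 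This is exactly the reduction property of Definition \ref{def:reduction}.
\end{enumerate}
Finally, $\iota^{\ast}\theta = \Theta$ by construction, since $\theta_{\iota(p)}(\iota_{\ast}\xi) = \iota(p)^{-1}\pi_{\ast}\xi = \Theta_p(\xi)$. So the two constructions are mutually inverse.

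The algebra is routine. The main obstacle is pinning down the right hypothesis: the statement as written needs the nondegeneracy condition, which the forward direction shows is automatic for reductions. With that hypothesis in place, the remaining delicate point is smoothness of $\iota$, which step 3 handles by writing $\iota$ as an inverse of a smooth invertible matrix in local sections.
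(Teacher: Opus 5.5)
Your proof is correct and follows the same route as the paper. The forward direction pulls back the canonical form, $\Theta = \iota^{\ast}\theta$. The converse inverts the map $T_{\pi(p)}M \cong T_pP/\mathcal{V}_pP \to \mathbb{R}^n$ induced by $\Theta_p$ and uses $H$-equivariance to get $\iota(ph) = \iota(p)\,i(h)$. The nondegeneracy hypothesis $\ker\Theta_p = \mathcal{V}_pP$ that you make explicit is exactly what the paper assumes tacitly when it asserts that the kernel of $\Theta_p$ is the vertical subspace. Your local-trivialization smoothness check supplies a step the paper omits.
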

\begin{proof}
    Let $\pi \colon P \to M$ be a reduction of the frame bundle $\tilde{\pi} \colon FM \to M$ to the structure group $H \subset GL(n,\mathbb{R})$, given by the principal bundle morphism $\iota \colon P \to FM$ with respect to the inclusion $i \colon H \to GL(n, \mathbb{R})$. By definition of reduction, we know that $\tilde{\pi} \circ \iota = \pi$ and that $\iota(p h)=\iota(p)i(h)$ for all $p \in P$ and $h \in H$. We know that the frame bundle is always equipped with the canonical form $\theta \in \Omega^1_{hor}(FM; \mathbb{R}^n)^{GL(n;\mathbb{R})}$, defined as $\theta_u(\xi_u):=u^{-1}(\tilde{\pi}_{\ast}\xi_u)$, for any $\xi \in \mathscr{X}(FM)$.\\
    Consider the $1$-form
    \begin{equation}
        \Theta := \iota^{\ast}\theta,
        \label{eq:Thetaasthepullbackbyiota}
    \end{equation}
    which is clearly is an element of $\Omega^1(P;\mathbb{R}^n)$. We have to prove it is horizontal and $H$-equivariant.\\
    It is horizontal since for any vertical vector $\xi_p \in \mathcal{V}_pP$  we have:
    \begin{equation}
        \begin{split}
            &\Theta_p(\xi_p) = \iota^{\ast} \theta_{\iota(p)} (\xi_p) = \theta_{\iota(p)}(\iota_{\ast} \xi_p)_{\iota(p)}= \iota(p)^{-1}(\tilde{\pi}_{\ast}\iota_{\ast} \xi_p) = \\
            &= \iota(p)^{-1}\big((\tilde{\pi} \circ \iota)_{\ast} \xi_p\big) = \iota(p)^{-1}\big(\pi_{\ast} \xi_p\big) = 0
        \end{split}
        \label{eq:steptoshowThetavertical}
    \end{equation}
    with $FM \ni \iota(p) \colon \mathbb{R}^n \xrightarrow[]{\sim} T_{\tilde{\pi}(\iota(p))}M = T_{\pi(p)}M$.\\
    It is $H$-equivariant since, knowing that $R_g^{\ast}\theta=g^{-1}\circ \theta$, we can write;
    \begin{equation}
        \begin{split}
            &R_h^{\ast} \Theta_{ph} = R_h^{\ast} (\iota^{\ast} \theta_{\iota(ph)}) = (\iota \circ R_h)^{\ast} \theta_{\iota(ph)} = (R_{i(h)} \circ \iota)^{\ast}\theta_{\iota(p)i(h)} =\\
            &= \iota^{\ast}R_{i(h)}^{\ast}\theta_{\iota(p)i(h)} = i(h)^{-1} \circ \iota^{\ast} \theta_{\iota(p)} = i(h)^{-1} \circ \Theta_p,
        \end{split}
        \label{eq:trytoproveThetaHequivariant}
    \end{equation}
    where we have used that $\iota \circ R_h = R_{i(h)} \circ \iota$. Summing things up, we have shown that a $G$-structure on $M$ is a principal $H$-bundle equipped with a $\Theta = \iota^{\ast}\theta \in \Omega^1_{hor}(P;\mathbb{R}^n)^H$.
    
    Conversely, let us now start from a principal $H$-bundle $\pi \colon P \to M$ equipped with an horizontal and $H$-equivariant one-form $\Theta \in \Omega^1_{hor}(P;\mathbb{R}^n)^H$. We want to show that $\pi \colon P \to M$ is a reduction of the frame bundle $\pi^{\prime} \colon FM \to M$. Since $H \subset GL(n,\mathbb{R})$ is a Lie subgroup, we obtain the natural inclusion $i \colon H \to GL(n,\mathbb{R})$. We are left with the task of finding a principal bundle morphism $\iota \colon P \to FM$, i.e. we need a smooth map $\iota$ assigning a frame $u =\iota(p) \in FM$ to each point $p \in P$ in a unique way. \\
    To do this, we have to use $\Theta \in \Omega^1_{hor}(P;\mathbb{R}^n)$. At each point $p \in P$, $\Theta_p$ is a map $\Theta_p \colon T_pP \to \mathbb{R}^n$. This is not an isomorphism, since it has non-trivial kernel, i.e. the vertical subspace $\mathcal{V}_pP$. We then need to consider the restriction
    \begin{equation}
        \Theta_p\big| \colon T_pP/\mathcal{V}_pP \cong \mathcal{H}_pP \xrightarrow[]{\sim} \mathbb{R}^n,
        \label{eq:restrictiondoesthejob}
    \end{equation}
    which is an isomorphism since we have removed the kernel from the domain. We can then observe that $T_pP/\mathcal{V}_pP \cong T_{\pi(p)}M$ since we can consider $\pi_{\ast} \colon T_pP \to T_{\pi(p)}M$, which is surjective, and remove its kernel (which is exactly $\mathcal{V}_pP$), to obtain an isomorphism $T_pP/\mathcal{V}_pP \cong T_{\pi(p)}M$. This implies that the restriction of $\Theta$ as written above identifies an isomorphism $T_{\pi(p)}M \cong \mathbb{R}^n$. \\
    Thus, we can define the principal bundle morphism $\iota \colon P \to FM$, inducing it from $\Theta_p\big|$, as:
    \begin{equation}
        \begin{split}
            \iota \colon &P \to FM,\\
            &p \mapsto \iota(p) := (\Theta_p\big|)^{-1} \colon \mathbb{R}^n \xrightarrow[]{\sim} T_{\pi(p)}M.
        \end{split}
        \label{eq:Ithinkthisisthecorrectprocedure}
    \end{equation}
    This, by construction, means that $\iota(p) \in FM$ is a basis for $T_{\tilde{\pi}(\iota(p))}M = T_{\pi(p)}M$, that is $\pi^{\prime}(\iota(p))=\pi(p)$ for all $p \in P$.\\
    In order to show $\iota$ is actually the principal bundle morphism characterizing the reduction of the frame bundle $FM$ to the principal $H$-bundle $P$, we still have to prove $\iota$ is equivariant. This can be done using $H$-equivariance of $\Theta$, namely $R_h^{\ast} \Theta_{ph} = h^{-1} \circ \Theta_p = i(h)^{-1} \circ \Theta_p$ for all $h \in H$. This still holds for the restrictions, so that we can write, for any $\xi_p \in \mathcal{H}_pP$, that: $R_h^{\ast}\Theta_{ph}\big|(\xi_p) = \Theta_{ph}\big|\big((R_h)_{\ast}\xi_p\big)= i(h)^{-1} \circ \Theta_p\big|(\xi_p)$. Renaming $(R_h)_{\ast}\xi_p=\xi^{\prime}_{ph}$ such that $\xi_p = (R_{h^{-1}})_{\ast}\xi^{\prime}_{ph}$ we have that:
    \begin{equation}
        \Theta_{ph}\big|(\xi^{\prime}_{ph}) = i(h)^{-1} \circ \Theta_p\big|((R_{h^{-1}})_{\ast}\xi^{\prime}_{ph}),
        \label{eq:fiberpreservingmorphismproof1}
    \end{equation}
    that is
    \begin{equation}
        \Theta_{ph}\big|= i(h)^{-1} \circ \Theta_p\big| \circ (R_{h^{-1}})_{\ast}.
        \label{eq:fiberpreservingmorphismproof2}
    \end{equation}
    From (\ref{eq:Ithinkthisisthecorrectprocedure}) and (\ref{eq:fiberpreservingmorphismproof2}):
    \begin{equation}
        \begin{split}
            &\iota(ph)^{-1} (\pi_{\ast}\xi_{ph}) = i(h)^{-1} \circ \iota(p)^{-1}\Big(\pi_{\ast}\big((R_{h^{-1}})_{\ast}\xi_{ph}\big)\Big) =\\
            &=i(h)^{-1} \circ \iota(p)^{-1}\big((\pi \circ R_{h^{-1}})_{\ast}\xi_{ph}\big) = i(h)^{-1} \circ \iota(p)^{-1}\big(\pi_{\ast}\xi_{ph}\big),
        \end{split}
        \label{eq:fiberpreservingmorphismproof3}
    \end{equation}
    whence
    \begin{equation}
        \iota(ph) = \iota(p) i(h),
        \label{eq:fiberpreservingmorphismproof4}
    \end{equation}
    completing the proof.
\end{proof}

\subsection{Cartan geometries: the affine model and its reductions} \label{cartangeometries}

Given a connection $1$-form $\gamma \in \Omega^1(FM; \mathfrak{gl}(n, \mathbb{R}))$ on the frame bundle $\pi \colon FM \to M$, with $\text{dim}(M) = n$, we may consider
\begin{equation}
    \omega := \theta + \gamma \in \Omega^1(FM; \mathbb{R}^n \oplus \mathfrak{gl}(n, \mathbb{R})),
    \label{eq:sumofcanonicalandprincipal}
\end{equation}
where $\theta \in \Omega^1_{hor}(FM;\mathbb{R}^n)^{GL(n,\mathbb{R})}$ is the canonical form in Definition \ref{def:canonicalsolderingform}.
The $1$-form $\omega$ reproduces the generators of fundamental vector fields: in fact, $\gamma$ does so, being a principal connection, and the canonical form $\theta$ is horizontal. Moreover, since the kernel of $\theta$ in a point is the vertical subspace and $\gamma$ is injective on the vertical bundle, it follows that for each $u \in FM$, the restriction of $\omega$ to $T_uFM$ is injective, and therefore a linear isomorphism.\\
In this picture, we can nicely describe the affine $n$-space $A^n$ as the homogeneous model of such a structure. As a set, $A^n = \mathbb{R}^n$ and the group $A(n, \mathbb{R})$ of affine motions is the group of all maps from $\mathbb{R}^n$ to itself, which are of the form
\begin{equation}
    x \mapsto Ax+b, \text{ for } A \in GL(n, \mathbb{R}), b \in \mathbb{R}^n.
    \label{eq:defintionofgroupofaffinemotions}
\end{equation}
This group can be seen as the semidirect product
\begin{equation}
    A(n,\mathbb{R}) = \mathbb{R}^n \rtimes GL(n,\mathbb{R}),
    \label{eq:affinesemidirect}
\end{equation}
meaning that $A(n,\mathbb{R}) = \mathbb{R}^n \times GL(n,\mathbb{R})$ as a set and its multiplication is given by $(b, A)(b^{\prime}, A^{\prime}) = (b + Ab^{\prime}, AA^{\prime})$ for any $b,b^{\prime} \in \mathbb{R}^n$ and $A,A^{\prime} \in GL(n,\mathbb{R})$. Equivalently, viewing $A^n$ as the affine hyperplane $x_1 = 1$ in $\mathbb{R}^{n+1}$, the affine motions are exactly the elements of $GL(n + 1, \mathbb{R})$ which map this affine hyperplane to itself, i.e.,
\begin{equation}
    A(n, \mathbb{R}) = \big\{  \begin{pmatrix} 1 & 0 \\ b & A  \end{pmatrix} \; \big| \; A \in GL(n, \mathbb{R}), b \in \mathbb{R}^n \big\} \subset GL(n+1, \mathbb{R}).
    \label{eq:affinehyperplaneshit}
\end{equation}
The group $A(m, \mathbb{R})$ acts transitively on $A^m$, and the isotropy subgroup of the first unit vector, i.e., what leaves $A^m$ invariant, is the subgroup of all elements with $b = 0$, so 
\begin{equation}
    A^n \cong A(n,\mathbb{R})/GL(n,\mathbb{R}).
    \label{eq:affinemspace}
\end{equation}
The Lie algebra $\mathfrak{a}(n, \mathbb{R})$ associated to the group of affine motions is
\begin{equation}
    \mathfrak{a}(n, \mathbb{R}) = \big\{  \begin{pmatrix} 0 & 0 \\ B & X  \end{pmatrix} \;\big|\; X \in \mathfrak{gl}(n, \mathbb{R}), B \in \mathbb{R}^n \big\} \cong \mathbb{R}^m \oplus \mathfrak{gl}(m, \mathbb{R}),
    \label{eq.defliealgebraassociatedtoaffinegroup}
\end{equation}
where $\cong$ indicates a vector space isomorphism. Thus, since the adjoint action of a matrix group is given by conjugation, one can see that the splitting $\mathbb{R}^n \oplus \mathfrak{gl}(n, \mathbb{R})$ is invariant under the restriction of the adjoint action to $GL(n,\mathbb{R})$. In particular, the action of $GL(n,\mathbb{R})$ on $\mathfrak{a}(m, \mathbb{R})$ is the direct sum of the standard representation and the adjoint action:  it acts on the translational part ($\mathbb{R}^n$) via the standard representation and on the linear part ($\mathfrak{gl}(m, \mathbb{R})$) using the adjoint representation. We say that splitting $\mathfrak{a}(m, \mathbb{R})= \mathbb{R}^m \oplus \mathfrak{gl}(m, \mathbb{R})$ is $GL(m, \mathbb{R})$-invariant. \\
Accordingly, the $1$-form $\omega$ as in Equation (\ref{eq:sumofcanonicalandprincipal}) is $GL(n,\mathbb{R})$-equivariant. In particular, $\omega$ can be viewes as an element of $\Omega^1(FM;\mathfrak{a}(n,\mathbb{R}))$, and its properties can be summarized as follows:
\begin{equation}
    \begin{split}
        &R_g^{\ast} \omega = ad_{g^{-1}} \circ \omega \text{ for all } g \in GL(n,\mathbb{R});\\
        &\omega\big(\zeta(X)\big) = X, \text{ for all } X \in \mathfrak{gl}(n, \mathbb{R})\subset \mathfrak{a}(n, \mathbb{R});\\
        &\omega_u \colon T_uFM \to \mathfrak{a}(n, \mathbb{R}) \text{ is a linear isomorphism for all } u \in FM,
    \end{split}
    \label{eq:propertiesofomegathesum}
\end{equation}
where $ad$ is the adjoint action of $A(n,\mathbb{R})$ (restricted to $GL(n,\mathbb{R})$) on its Lie algebra according to the splitting, as discussed.

The natural projection $\pi \colon A(n,\mathbb{R}) \to A(n, \mathbb{R})/GL(n,\mathbb{R})$ is a principal $GL(n,\mathbb{R})$-bundle. One can introduce Maurer-Cartan form $\omega^{MC} \in \Omega^1(A(n,\mathbb{R});\mathfrak{a}(n,\mathbb{R}))$ on the affine Lie group, and split $\omega^{MC} = \theta + \gamma$ according to $\mathfrak{a}(n,\mathbb{R}) = \mathbb{R}^n \oplus \mathfrak{gl}(n, \mathbb{R})$. Since this splitting is $GL(n,\mathbb{R})$-invariant, both $\theta$ and $\gamma$ are $GL(n, \mathbb{R})$-equivariant forms.
The form $\theta$ associates to each element $g \in A(n,\mathbb{R})$ a linear isomorphism $T_{\pi(g)}A^n \to \mathbb{R}^n$, i.e. a frame, and thus identifies $A(n, \mathbb{R})$ with the frame bundle $FA^m$. 
\\
Therefore, returning to a general manifold $M$, 
the analogue of the principal $GL(n, \mathbb{R})$-bundle $A(n,\mathbb{R}) \to A^n$ is the frame bundle $FM \to M$, 
and the $1$-form $\omega = \theta +  \gamma$ as in (\ref{eq:sumofcanonicalandprincipal}) is an analog of the Maurer-Cartan form $\omega_{MC}$ on $A(n, \mathbb{R})$. In particular, affine structures are to be interpreted as curved analogues (in the Cartan-geometric sense) of the homogeneous model $A(n,\mathbb{R}) \to A^n$. Moreover, the following lemma is given. 
\begin{lemma}
    Let $\pi \colon P \to M$ be an affine structure, i.e., a principal $GL(n,\mathbb{R})$-bundle together with a $1$-form $\omega \in \Omega^1(P;\mathfrak{a}(n,\mathbb{R}))$ satisfying the properties in (\ref{eq:propertiesofomegathesum}). Then, $P \cong FM$ as principal bundles.
    \label{lem:tosumupaffinestructures}
\end{lemma}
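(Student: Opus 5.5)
The plan is to reduce the lemma to the converse direction of Proposition \ref{pro:thepowerofGstructures}, applied with $H = GL(n,\mathbb{R})$. Using the $GL(n,\mathbb{R})$-invariant splitting $\mathfrak{a}(n,\mathbb{R}) = \mathbb{R}^n \oplus \mathfrak{gl}(n,\mathbb{R})$, I would decompose $\omega = \theta + \gamma$, with $\theta \in \Omega^1(P;\mathbb{R}^n)$ and $\gamma \in \Omega^1(P;\mathfrak{gl}(n,\mathbb{R}))$. Applying the projections onto the two summands to the first property in (\ref{eq:propertiesofomegathesum}) gives two facts. First, since $ad$ restricted to $GL(n,\mathbb{R})$ acts on $\mathbb{R}^n$ by the standard representation and on $\mathfrak{gl}(n,\mathbb{R})$ by the adjoint one, we get $R_g^{\ast}\theta = g^{-1}\circ\theta$ and $R_g^{\ast}\gamma = ad_{g^{-1}}\circ\gamma$. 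Second, the property $\omega(\zeta(X)) = X$ for $X \in \mathfrak{gl}(n,\mathbb{R})$ forces $\theta(\zeta(X)) = 0$ and $\gamma(\zeta(X)) = X$. Since the fundamental vector fields span $\mathcal{V}_pP$ at every point, $\theta$ is horizontal, so $\theta \in \Omega^1_{hor}(P;\mathbb{R}^n)^{GL(n,\mathbb{R})}$. Incidentally, $\gamma$ is then a principal connection.

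The next step is to check nondegeneracy, namely that $\theta_p$ induces an isomorphism $T_pP/\mathcal{V}_pP \to \mathbb{R}^n$. This is exactly what the converse argument of Proposition \ref{pro:thepowerofGstructures} uses implicitly when it writes $\Theta_p|$ as an isomorphism, so it is the point I expect to need care. I would argue it as follows.
\begin{itemize}
\item By dimension count, since $\omega_p$ is an isomorphism, $\dim P = n + n^2$. Because $\dim \mathcal{V}_pP = \dim\mathfrak{gl}(n,\mathbb{R}) = n^2$, this gives $\dim M = n$.
\item For injectivity on the quotient, suppose $\theta_p(\xi) = 0$. Set $X = \gamma_p(\xi)$ and $\eta = \xi - \zeta_p(X)$. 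Then $\theta_p(\eta) = 0$ by horizontality, and $\gamma_p(\eta) = X - X = 0$. Hence $\omega_p(\eta) = 0$, so $\eta = 0$ by injectivity of $\omega_p$, and therefore $\xi \in \mathcal{V}_pP$.
\item Surjectivity onto $\mathbb{R}^n$ follows from surjectivity of $\omega_p$ composed with the projection onto $\mathbb{R}^n$.
\end{itemize}

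With these facts in place, I define $\iota(p) := (\theta_p|)^{-1} \circ$ (the identification $T_pP/\mathcal{V}_pP \cong T_{\pi(p)}M$ induced by $\pi_{\ast}$), following (\ref{eq:Ithinkthisisthecorrectprocedure}). This gives a frame $\iota(p) \colon \mathbb{R}^n \xrightarrow{\sim} T_{\pi(p)}M$, and it satisfies $\pi'\circ\iota = \pi$ by construction. Equivariance $\iota(pg) = \iota(p)g$ follows word for word from the computation (\ref{eq:fiberpreservingmorphismproof1})–(\ref{eq:fiberpreservingmorphismproof4}), using $R_g^{\ast}\theta = g^{-1}\circ\theta$. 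Smoothness of $\iota$ follows because $\theta$ is smooth and inversion of linear isomorphisms is smooth; in local trivializations $\iota$ is expressed through the matrix of $\theta$ on horizontal lifts.

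Finally, $\iota$ is a morphism of principal $GL(n,\mathbb{R})$-bundles over $\mathrm{id}_M$ with the same structure group. Any such morphism is a diffeomorphism. It is injective on fibers because if $\iota(p) = \iota(pg) = \iota(p)g$ then $g = e$ by freeness on $FM$. It is surjective on fibers by transitivity. In local trivializations it has the form $(m,h) \mapsto (m, \phi(m)h)$, whose inverse $(m,k)\mapsto(m,\phi(m)^{-1}k)$ is smooth. Hence $P \cong FM$ as principal bundles.
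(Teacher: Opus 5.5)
Your proposal is correct and follows the same route as the paper. You take the $\mathbb{R}^n$-component of $\omega$, use the last two defining properties of $\omega$ to show its kernel is exactly $\mathcal{V}_pP$, read off a frame $T_{\pi(p)}M \cong \mathbb{R}^n$ at each point, and get equivariance from the first property. You also supply details the paper leaves implicit: the kernel computation, $\dim M = n$, smoothness, and why a morphism of principal $GL(n,\mathbb{R})$-bundles over $\mathrm{id}_M$ is automatically an isomorphism. There is one notational slip: the frame is the identification $T_pP/\mathcal{V}_pP \cong T_{\pi(p)}M$ composed \emph{after} $(\theta_p|)^{-1}$, not before.
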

\begin{proof}
    At each point $p \in P$, the kernel of the $\mathbb{R}^n$ component of $\omega$ is exactly the vertical subspace $\mathcal{V}_pP$. This is guaranteed by the last two properties in Equation (\ref{eq:propertiesofomegathesum}). Its restriction to $T_pP/\mathcal{V}_pP$ gives a linear isomorphism $T_{\pi(p)}M \cong \mathbb{R}^n$. The latter corresponds to a unique frame $u \in FM$, so that we have, by the first property in (\ref{eq:propertiesofomegathesum}), an isomorphism of principal fiber bundles $\phi \colon P \to FM$. In particular, the $\mathbb{R}^n$ component of $\omega$ is the pullback by $\phi$ of the canonical form $\theta$ and the $\mathfrak{gl}(n,\mathbb{R})$ component is a principal connection on $P$.
\end{proof}
Let now $\pi \colon P \to M$ be an $H$-structure, i.e. a reduction $\iota \colon P \to FM$ of the frame bundle to the structure group $H$ with respect to the inclusion $i \colon H \to GL(n,\mathbb{R})$, and let the corresponding infinitesimal homomorphism $i^{\prime} \colon \mathfrak{g} \to \mathfrak{gl}(n,\mathbb{R})$ be injective. 
A connection on an $H$-structure $P$ is a connection $1$-form $\gamma \in \Omega^1(P;\mathfrak{h})$ on the principal $H$-bundle $P \to M$. \\
Consider the affine extension 
\begin{equation}
    B := \mathbb{R}^m \rtimes H
    \label{eq:affineextension}
\end{equation}
of $H$. This means that $B = \mathbb{R}^m \times H$ as a set and the multiplication is given by $(x, h)(y, h^{\prime}) = (x +i(h)(y), hh^{\prime})$ for every $x,y \in \mathbb{R}^n$ and $h,h^{\prime} \in H$. If $H$ is a closed subgroup of $ GL(n, \mathbb{R})$, then $B$ is a closed subgroup of $A(n, \mathbb{R})$, and in general there is an obvious inclusion $j \colon B \to A(m, \mathbb{R})$ such that $j^{\prime} \colon \mathfrak{b} \to \mathfrak{a}(m, \mathbb{R})$ is injective. Now we can equivalently view connections on $H$-structures as principal $H$-bundles $P \to M$ endowed with a $1$-form $\omega \in \Omega^1(P, \mathfrak{b})$ which satisfy the analogues of (\ref{eq:propertiesofomegathesum}) with respect to elements $g = j(h)$ with $h \in H$. In particular, as in the case of the affine space $A^m$ discussed above, one may look at the homogeneous space $B/H \cong \mathbb{R}^m$, and view the Maurer-Cartan form on $B$ as the connection on the $H$-structure $B \to \mathbb{R}^m$. Connections on generic $H$-structures can thus be thought of as curved analogues of this homogeneous space.
\begin{exmp}
    In the case $H = O(n) \subset GL(n,\mathbb{R})$, the affine extension $B$ is exactly the Euclidean group $\text{Euc}(n)$, and from Example \ref{ex:exampleofGstructure}, we see that this picture leads to viewing $n$-dimensional Riemannian manifolds as curved analogues of the Euclidean space $E^n$.
    \label{ex:exampleofcurvedanalogueGstructureofhomoaffine}
\end{exmp}
\begin{definition}
    Let $H \subset G$ be a Lie subgroup of the Lie group $G$. A \textsl{Cartan geometry} of type $(G,H)$ on a manifold $M$ is a principal $H$-bundle $\pi \colon P \to M$ together with a $\mathfrak{g}$-valued $1$-form $\omega \in \Omega^1(P;\mathfrak{g})$ called the \textsl{Cartan connection}, satisfying
    \begin{equation}
        \begin{split}
            &R_h^{\ast} \omega = ad_{h^{-1}} \circ \omega \text{ for all } h \in H;\\
            &\omega\big(\zeta(X)\big) = X, \text{ for all } X \in \mathfrak{h};\\
            &\omega_p \colon T_pP \to \mathfrak{g} \text{ is a linear isomorphism for all } p \in P.
        \end{split}
        \label{eq:propertiesofCartanconnection}
    \end{equation}
    The \textsl{homogeneous model} for a Cartan geometry of type $(G,H)$ is the bundle $G \to G/H$ endowed with the Maurer-Cartan form $\omega^{MC} \in \Omega^1(G;\mathfrak{g})$.
    \label{def:cartangeometries}
\end{definition}
In light of the above arguments, an $H$-structure $\pi \colon P \to M$ endowed with a principal connection $\gamma \in \Omega^1(P;\mathfrak{h})$ provides an example of Cartan geometry of type $(B,H)$, with $B = \mathbb{R}^n \rtimes H$ the affine extension, and Cartan connection given by
\begin{equation}
    \omega = \gamma + \Theta.
    \label{eq:cartanconnectionforanHstructure}
\end{equation}
Recall that $\Theta \in \Omega^1_{hor}(P;\mathbb{R}^n)^H$ is obtained as the pullback $\iota^{\ast}\theta$ of the canonical form by the reduction morphism $\iota \colon P \to FM$, as in Proposition \ref{pro:thepowerofGstructures}.\\

\chapter{Hopf algebras} \label{Hopfalgebras}
In this chapter, we introduce the reader to the realm of noncommutative geometry. Smooth manifolds are substituted by algebras of functions defined on them, which are then considered to be noncommutative in general. Without the need of a smooth manifold structure or even a topological space, one can formulate differential geometry over such algebras. Hopf algebras represent the algebraic generalization of the structure groups in classical differential geometry and principal fiber bundles, and thus play a fundamental role in the theory of quantum principal bundles which will be presented later.  Our main reference for this chapter is \cite{del_donno_durdevic}, with insights from \cite{kassel_qgroups}.

In Section \ref{algebrasandcoalgebras}, we define algebras and coalgebras, to later present in \ref{bialgebrasandHopfalgebras} the concept of an Hopf algebra as a bialgebra together with an antipode, which is the algebraic counterpart of group inversion in group theory. Examples of such Hopf algebras, or "quantum groups", can be obtained as deformations by a parameter $q$ of the algebra of functions over Lie groups: we show the example of $GL_q(2)$, which is relevant to later discussions in this thesis. In \ref{comodulealgebras}, we define comodules and comodule algebras, which play the role of total spaces in the quantum principal bundle formalism. In, particular, a quantum principal bundle will be defined as a Hopf-Galois extension. In Section \ref{HopfGaloisextensions}, we define the latter, together with cleft and trivial extensions, which will be shown in \ref{smashedproductalgebras} to correspond to the important case of the smashed product algebra. At last, in \ref{differentialcalculi}, the theory of first order differential calculi over algebras is provided, generalizing to noncommutative geometry the differentiable structure on a smooth manifold. In \ref{firstordercovariantcalculi}, we introduce covariant first order differential calculi with respect to an Hopf algebra $H$. This is done with particular focus on the case of the quotient and pullback calculi; in \ref{smashedproductcalculus} we show the construction of a first order differential calculus on the smashed product algebra \cite{latiniweber_projective}.

\section{Algebras and coalgebras} \label{algebrasandcoalgebras}
\begin{definition}
    An \textsl{algebra} $A$ over a field $\mathbb{K}$ is a $\mathbb{K}$-vector space together with $\mathbb{K}$-linear maps $\mu \colon A \otimes A \to A$, called \textsl{product}, and $\eta \colon \mathbb{K} \to A$, called \textsl{unit}, with $\eta(1) = 1_A$ being the unit element of the algebra, such that the following diagrams commute:\\
    \[
    \begin{tikzcd}
    A \otimes A \otimes A \arrow{r}{\mu \otimes id} \arrow[swap]{d}{id \otimes \mu} & A \otimes A \arrow{d}{\mu} \\
    A \otimes A \arrow{r}{\mu} & A
    \end{tikzcd}\;\;\;
    \begin{tikzcd}
    \mathbb{K} \otimes A \arrow{r}{\eta \otimes id} \arrow[swap]{rd}{\cong} & A \otimes A \arrow{d}{\mu} \\
    & A
    \end{tikzcd} \;\;\;
    \begin{tikzcd}
     A \otimes \mathbb{K} \arrow{r}{id \otimes \eta} \arrow[swap]{rd}{\cong} & A \otimes A \arrow{d}{\mu} \\
    & A 
    \end{tikzcd}.
    \]
    The first diagram gives \textsl{associativity}
    \begin{equation}
            \mu \circ (id \otimes \mu) = \mu \circ (\mu \otimes id),
        \label{eq:algebraassociativity}
    \end{equation}
    while the other two \textsl{unitality}
    \begin{equation}
        \mu \circ (\eta \otimes id) = id = \mu \circ (id \otimes \eta)
        \label{eq:algebraunitality}
    \end{equation}
    of the algebra $A$.
    \label{def:algebra}
\end{definition}
\begin{definition}
    A \textsl{morphism of algebras} is a $\mathbb{K}$-linear map $f \colon A \to A^{\prime}$ between $\mathbb{K}$-algebras $(A,\mu,\eta)$ and $(A^{\prime},\mu^{\prime},\eta^{\prime})$ such that the following diagrams commute:\\
    \[
    \begin{tikzcd}
    A \otimes A \arrow{r}{f \otimes f} \arrow[swap]{d}{\mu} & A^{\prime} \otimes A^{\prime} \arrow{d}{\mu^{\prime}} \\
    A \arrow{r}{f} & A^{\prime}
    \end{tikzcd}\;\;\;
    \begin{tikzcd}
     \mathbb{K} \arrow{r}{\eta} \arrow[swap]{rd}{\eta^{\prime}} &A \arrow{d}{f} \\
    & A^{\prime} 
    \end{tikzcd},
    \]
    that is
    \begin{equation}
        \begin{split}
            &f \circ \mu = \mu^{\prime} \circ (f \otimes f);\\
            &f \circ \eta = \eta^{\prime}.
        \end{split}
        \label{eq:morphismofalgebrasproperties}
    \end{equation}
    \label{def:algebramorphism}
\end{definition}
\begin{definition}
    A \textsl{coalgebra} $C$ over a field $\mathbb{K}$ is a $\mathbb{K}$-vector space together with $\mathbb{K}$-linear maps $\Delta \colon C \to C \otimes C$, called \textsl{coproduct}, and $\epsilon \colon C \to \mathbb{K}$, called \textsl{counit}, such that the following diagrams\\
    \[
    \begin{tikzcd}
    C \arrow{r}{\Delta} \arrow[swap]{d}{\Delta} & C \otimes C \arrow{d}{\Delta \otimes id} \\
    C \otimes C \arrow{r}{id \otimes \Delta} & C \otimes C \otimes C
    \end{tikzcd}\;\;\;
    \begin{tikzcd}
    C \arrow{r}{\Delta} \arrow[swap]{rd}{\cong} & C \otimes C \arrow{d}{\epsilon \otimes id} \\
    & \mathbb{K} \otimes C
    \end{tikzcd}\;\;\;
    \begin{tikzcd}
     C \arrow{r}{\Delta} \arrow[swap]{rd}{\cong} & C \otimes C \arrow{d}{id \otimes \epsilon} \\
    & \mathbb{K} \otimes C 
    \end{tikzcd},
    \]
    obtained by reversing the arrows in Definition \ref{def:algebra}, commute. The first diagram gives \textsl{coassociativity}
    \begin{equation}
        (id \otimes \Delta) \circ \Delta = (\Delta \otimes id) \circ \Delta
        \label{eq:coalgebracoassociativity}
    \end{equation}
    while the other two give \textsl{counitality}
    \begin{equation}
        (\epsilon \otimes id) \circ \Delta = id = (id \otimes \epsilon) \circ \Delta
        \label{eq:coalgebracounitality}
    \end{equation}
    of the coalgebra $C$.
    \label{def:coalgebra}
\end{definition}
\begin{remark}
    Throughout this thesis, we will use the so called "Sweedler notation", by which we denote
    \begin{equation}
        \Delta(c) = \sum_i c_{1_i} \otimes c_{2_i} =: c_1 \otimes c_2, \text{ for all } c \in C.
        \label{eq:sveedlernotation1}
    \end{equation}
    Using this notation, coassociativity reads
    \begin{equation}
        \begin{split}
            &(id \otimes \Delta) \circ \Delta(c)= c_1 \otimes \Delta(c_2) = c_1 \otimes c_{21} \otimes c_{22} = \\
            &=(\Delta \otimes id) \circ  \Delta(c) = c_{11} \otimes c_{12} \otimes c_2 = \\
            &= c_1 \otimes c_2 \otimes c_3,
        \end{split}
        \label{eq:sveedlercoassociativity}
    \end{equation}
    where we relable indexes from lowest to highest.\\
    On the other hand, counitality can be written as
    \begin{equation}
        \begin{split}
            &(\epsilon \otimes id) \circ \Delta(c) = \epsilon(c_1) \otimes c_2 = \epsilon(c_1)c_2 =\\
            &= (id \otimes \epsilon) \circ \Delta(c) = c_1\epsilon(c_2)=\\
            &=id(c)=c,
        \end{split}
        \label{eq:sveedlercounitality}
    \end{equation}
\end{remark}
\begin{definition}
    A \textsl{morphism of coalgebras} is a $\mathbb{K}$-linear map $g \colon C \to C^{\prime}$ between two coalgebras $(C,\Delta,\epsilon)$ and $(C^{\prime},\Delta^{\prime},\epsilon^{\prime})$ such that the following diagrams commute:\\
    \[
    \begin{tikzcd}
    C \arrow{r}{g} \arrow[swap]{d}{\Delta} & C^{\prime} \arrow{d}{\Delta^{\prime}} \\
    C \otimes C \arrow{r}{g \otimes g} & C^{\prime} \otimes C^{\prime}
    \end{tikzcd}\;\;\;
    \begin{tikzcd}
     C \arrow{r}{g} \arrow[swap]{rd}{\epsilon} &C^{\prime} \arrow{d}{\epsilon^{\prime}} \\
    & \mathbb{K} 
    \end{tikzcd},
    \]
    that is
    \begin{equation}
        \begin{split}
            &(g \otimes g) \circ \Delta = \Delta^{\prime} \circ g;\\
            &\epsilon = \epsilon^{\prime} \circ g.
        \end{split}
        \label{eq:morphismofcoalgebrasproperties}
    \end{equation}
    \label{def:coalgebramorphism}
\end{definition}

\section{Bialgebras and Hopf algebras} \label{bialgebrasandHopfalgebras}

\begin{definition}
    A $\mathbb{K}$-algebra $(B, \mu, \eta)$ is called a \textsl{bialgebra} if there is a $\mathbb{K}$-coalgebra structure $(\Delta, \epsilon)$ on $B$ such that $\Delta \colon B \to B \otimes B$ and $\epsilon \colon B \to \mathbb{K}$ are algebra morphisms or, equivalently, such that $\mu \colon B \otimes B \to B$ and $\eta \colon \mathbb{K} \to B$ are coalgebra morphisms. 
    \label{def:bialgebra}
\end{definition}
\begin{definition}
    A \textsl{morphism of bialgebras} is a $\mathbb{K}$-linear map $\phi \colon B \to B^{\prime}$ between bialgebras $(B,\mu,\eta,\Delta,\epsilon)$ and $(B^{\prime},\mu^{\prime},\eta^{\prime},\Delta^{\prime},\epsilon^{\prime})$ such that $\phi$ is both an algebra morphism and a coalgebra morphism.
    \label{def:bialgebramorphism}
\end{definition}
A Hopf algebra is a bialgebra with an additional element: the antipode $S$, which can be viewed as the algebraic counterpart of group inversion.
\begin{definition}
    A \textsl{Hopf algebra} is a bialgebra $(H, \mu, \eta, \Delta, \epsilon)$ together with a $\mathbb{K}$-linear map $S \colon H \to H$, called \textsl{antipode}, such that the following diagram commutes:\\
    \[
\begin{tikzcd}
                                                                    & H \otimes H \arrow[rr, "S \otimes id"] &                               & H \otimes H \arrow[rd, "\mu"]  &   \\
H \arrow[rr, "\epsilon"] \arrow[ru, "\Delta"] \arrow[rd, "\Delta"'] &                                        & \mathbb{K} \arrow[rr, "\eta"] &                                & H \\
                                                                    & H \otimes H \arrow[rr, "id \otimes S"] &                               & H \otimes H \arrow[ru, "\mu"'] &  
\end{tikzcd}
    \]
    This can be equivalently be written as
    \begin{equation}
        \mu \circ (S \otimes id) \circ \Delta = \eta \circ \epsilon = \mu \circ (id \otimes S) \circ \Delta,
        \label{eq:antipodeproperty}
    \end{equation}
    which in Sweedler notation reads
    \begin{equation}
        S(h_1)h_2 = \epsilon(h)=h_1S(h_2).
        \label{eq:robaseria}
    \end{equation}
    \label{def:Hopfalgebra} 
\end{definition}
\begin{definition}
    A \textsl{morphism of Hopf algebras} is a morphism of bialgebras $\varphi \colon H \to H^{\prime}$ between two Hopf algebras $(H,\mu,\eta,\Delta,\epsilon,S)$ and $(H^{\prime},\mu^{\prime},\eta^{\prime},\Delta^{\prime},\epsilon^{\prime},S^{\prime})$ such that the following diagram commutes:
    \[
    \begin{tikzcd}
    H \arrow{r}{\varphi} \arrow[swap]{d}{S} & H^{\prime} \arrow{d}{S^{\prime}} \\
    H \arrow{r}{\varphi} & H^{\prime}
    \end{tikzcd},
    \]
    that is
    \begin{equation}
        \varphi \circ S = S^{\prime} \circ \varphi.
        \label{eq:morphofHopfalg}
    \end{equation}
    \label{def:morphismofHopfalgebras}
\end{definition}
Following \cite{manin_qgroups}, we show the example of $GL_q(2)$, which will be relevant in the following sections
\begin{exmp}
    Let $\mathbb{K} = \mathbb{C}$ and $0 \neq q \in \mathbb{C}$. The ring of polynomial functions $\mathcal{O}_q(GL(2, \mathbb{C})) = GL_q(2)$ is a Hopf algebra $H =: GL_q(2)$ generated by $a,b,c,d$ and the formal inverse of a central element (i.e. it commutes with all elements in the algebra)
    \begin{equation}
        D = \underset{q}{det}\begin{pmatrix} a & b \\ c & d \end{pmatrix} = ad - qbc,
        \label{eq:quantumdeterminantGLq2}
    \end{equation}
    where $a,b,c,d$ satisfy the following $q$-commutation relations
    \begin{equation}
        \begin{split}
            & ab = qba, \;\; ac=qca, \;\; bd = qdb, \;\; cd=qdc, \\
            &bc=cb, \;\; ad-da=(q-q^{-1})bc.
        \end{split}
        \label{eq:GLq2commutationrelations}
    \end{equation}
    Coproduct $\Delta \colon H \to H \otimes H$, counit $\epsilon \colon H \to \mathbb{K}$ and antipode $S \colon H \to H$ are given as
    \begin{equation}
        \begin{split}
            &\Delta \colon \begin{pmatrix} a & b \\ c & d \end{pmatrix} \mapsto \begin{pmatrix} a & b \\ c & d \end{pmatrix} \dot{\otimes} \begin{pmatrix} a & b \\ c & d \end{pmatrix} = \begin{pmatrix} a \otimes a + b \otimes c & a \otimes b + b \otimes d \\ c \otimes a + d \otimes c & c \otimes b + d \otimes d \end{pmatrix};\\
            &\epsilon \colon \begin{pmatrix} a & b \\ c & d \end{pmatrix} \mapsto \begin{pmatrix} 1 & 0 \\ 0 & 1 \end{pmatrix}; \\
            &S \colon \begin{pmatrix} a & b \\ c & d \end{pmatrix} \mapsto D^{-1}\begin{pmatrix} d & -q^{-1}b \\ -qc & a \end{pmatrix}.
        \end{split}
        \label{eq:GLq2operations}
    \end{equation}
    To find how the coproduct acts on $D^{-1}$, we compute $\Delta(D) = \Delta(ad-qbc) = \Delta(a)\Delta(d)-q\Delta(b)\Delta(c)$ and we note that $\Delta(DD^{-1}) = \Delta(D)\Delta(D^{-1}) = 1_H \otimes 1_H$, hence $\Delta(D^{-1}) = \Delta(D)^{-1}$. One finds $\Delta(D) = D \otimes D$ so that $\Delta(D^{-1}) = D^{-1} \otimes D^{-1}$. Similarly, requiring the counit to be an algebra map, we have $\epsilon(D) = 1$ and thus $\epsilon(D^{-1}) = 1$. \\
    One can check that the above coalgebra operations are well defined. Let us show this for the generator $a$. Coassociativity reads:
    \begin{equation}
        \begin{split}
            &(id \otimes \Delta) \circ \Delta (a) =(id \otimes \Delta) \circ (a \otimes a + b \otimes c) =\\
            &=id(a) \otimes \Delta(a) +  id(b) \otimes \Delta(c)= \\
            &= a \otimes (a \otimes a + b \otimes c) + b \otimes (c \otimes a + d \otimes c) =\\
            &= a \otimes a \otimes a + a \otimes b \otimes c + b \otimes c \otimes a + b \otimes d \otimes c = \\
            &= (\Delta \otimes id) \circ \Delta(a) = \Delta(a) \otimes id(a) +  \Delta(b) \otimes id(c)= \\
            &= (a \otimes a + b \otimes c) \otimes a + (a \otimes b + b \otimes d) \otimes c = \\
            &= a \otimes a \otimes a + b \otimes c \otimes a + a \otimes b \otimes c + b \otimes d \otimes c.
        \end{split}
        \label{eq:GLq2coassociativityproof}
    \end{equation}
    Counitality reads:
    \begin{equation}
        \begin{split}
           &(\epsilon \otimes id) \circ \Delta(a) = (\epsilon \otimes id) \circ (a \otimes a + b \otimes c) =\\
           &=\epsilon(a) \otimes a + \epsilon(b) \otimes c = \\
           &=\epsilon(a)a+\epsilon(b)c=  a=\\
           &=(id \otimes \epsilon) \circ \Delta(a) = (id \otimes \epsilon) \circ (a \otimes a + b \otimes c)=\\
           &= a\epsilon(a) + b\epsilon(c) = a.
           \end{split}
        \label{eq:GLq2counitalityproof}
    \end{equation}
    Lastly, the antipode property is satisfied  (using the commutiation relations in Equation (\ref{eq:GLq2commutationrelations})):
    \begin{equation}
        \begin{split}
           & \mu \circ (S \otimes id) \circ \Delta(a) = \mu \circ (S \otimes id) \circ (a \otimes a + b \otimes c) = \\
           &= \mu \circ \big(S(a) \otimes a + S(b) \otimes c \big) = \mu \circ (D^{-1}d \otimes a  -D^{-1}q^{-1}b \otimes c) = \\
           &=D^{-1}(da -q^{-1}bc) = D^{-1}(ad - qbc + q^{-1}bc -q^{-1}bc)= \\
           &= D^{-1}D = 1_H =\eta\big( \epsilon(a) )= \\
           &= \mu \circ (id \otimes S) \circ \Delta(a) = \mu \circ \big(a \otimes S(a) + b \otimes S(c)\big)=\\
           &=aD^{-1}d + b(-D^{-1}qc) = D^{-1}(ad-qbc) = 1_H,
           \end{split}
        \label{eq:GLq2antipodeproof}
    \end{equation}
    where we used that since $D$ is central, i.e. $Dx = xD$ for any $x \in \{ a,b,c,d \}$, also $D^{-1}$ commutes with any element of the algebra, since $D^{-1}x = D^{-1}xDD^{-1} = D^{-1}DxD^{-1} = xD^{-1}$. One can similarly check the Hopf algebra axioms are satisfied on any of the generators.
    \label{ex:GLq2}
\end{exmp}
\begin{proposition}
    Let $H$ be a Hopf algebra. Then its antipode $S \colon H \to H$ is unique.
    \label{pro:unicityofantipode}
\end{proposition}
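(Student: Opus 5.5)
The plan is the standard convolution argument: the antipode is a two-sided inverse of $id_H$ in an associative unital algebra, so it is unique. First I will put an algebra structure on $\mathrm{Hom}_{\mathbb{K}}(H,H)$, the convolution product
\begin{equation*}
    f \ast g := \mu \circ (f \otimes g) \circ \Delta, \qquad \text{i.e.}\quad (f \ast g)(h) = f(h_1)g(h_2).
\end{equation*}
Associativity of $\ast$ follows from associativity of $\mu$ (Equation (\ref{eq:algebraassociativity})) together with coassociativity of $\Delta$ (Equation (\ref{eq:coalgebracoassociativity})). In Sweedler notation both $(f\ast g)\ast k$ and $f \ast (g \ast k)$ evaluate to $f(h_1)g(h_2)k(h_3)$.

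The unit for $\ast$ is $\eta\circ\epsilon$. Indeed $(f \ast \eta\epsilon)(h) = f(h_1)\epsilon(h_2)1_H = f(h_1\epsilon(h_2)) = f(h)$, using linearity of $f$ and counitality (Equation (\ref{eq:coalgebracounitality})). The identity $\eta\epsilon \ast f = f$ is symmetric.

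In this language the antipode axiom (Equation (\ref{eq:antipodeproperty})) says exactly that
\begin{equation*}
    S \ast id_H = \eta\circ\epsilon = id_H \ast S,
\end{equation*}
so $S$ is a two-sided convolution inverse of $id_H$. Now suppose $S'$ is another linear map satisfying the same axiom. Then
\begin{equation*}
    S' = S' \ast (\eta\circ\epsilon) = S' \ast (id_H \ast S) = (S' \ast id_H) \ast S = (\eta\circ\epsilon) \ast S = S .
\end{equation*}
If preferred, this can be written elementwise without naming the convolution algebra: $S'(h) = S'(h_1)\epsilon(h_2) = S'(h_1)h_2S(h_3) = \epsilon(h_1)S(h_2) = S(h)$.

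The only step needing care is the middle equality, where $h_2$ is split via coassociativity so that both $S'(h_1)h_2$ and $h_2S(h_3)$ can be collapsed using Equation (\ref{eq:robaseria}). I would justify it by regrouping $S'(h_1)h_{2}S(h_{3})$ both as $S'(h_1)\,(h_{21}S(h_{22}))$ and as $(S'(h_{11})h_{12})\,S(h_2)$, then applying counitality. Everything else is routine.
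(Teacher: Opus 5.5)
Your proof is correct and is essentially the paper's argument: the paper shows elementwise that both $S_1(h)$ and $S_2(h)$ equal $S_1(h_1)h_2S_2(h_3)$, using counitality, coassociativity and the antipode axiom, which is exactly your Sweedler-notation chain. Your convolution-algebra packaging (uniqueness of two-sided inverses in $(\mathrm{Hom}_{\mathbb{K}}(H,H),\ast)$) is the same idea; the paper introduces that algebra immediately afterwards and uses it to prove the remaining properties of $S$.
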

\begin{proof}
    Let $h \in H$ and $S_1$, $S_2$ be two antipode maps. Using counitality, i.e. $h = h_1\epsilon(h_2) = \epsilon(h_1)h_2$, we can write:
    \begin{equation}
        \begin{split}
            &S_1(h)=S_1\big(h_1\epsilon(h_2)\big)=S_1(h_1)\epsilon(h_2) = \\
            &=S_1(h_1)h_{21}S_2(h_{22})=S_1(h_1)h_2S_2(h_3)
        \end{split}
        \label{eq:proofofunicity1}
    \end{equation}
    and
    \begin{equation}
        \begin{split}
            &S_2(h)=S_2\big(\epsilon(h_1)h_2\big)=\epsilon(h_1)S(h_2)=\\
            &=S_1(h_{11})h_{12}S_2{h_2}=S_1(h_1)h_2S_2(h_3).
        \end{split}
        \label{eq:proofofunicity2}
    \end{equation}
\end{proof}
We now introduce the notions of convolution algebra and convolution invertibility to prove properties of the antipode $S$.
\begin{definition}
    Given an associative unital algebra $(A, \mu, \eta)$ and a coassociative counital coalgebra $(C, \Delta, \epsilon)$, we denote by $Hom_{\mathbb{K}}(C,A)$ the vector space of $\mathbb{K}$-linear maps $C \to A$. Define a $\mathbb{K}$-linear map
    \begin{equation}
        \begin{split}
            * \colon &Hom_{\mathbb{K}}(C,A) \otimes Hom_{\mathbb{K}}(C,A) \to Hom_{\mathbb{K}}(C,A),\\
            &(f,f^{\prime}) \mapsto f*f^{\prime}:=\mu \circ (f \otimes f^{\prime}) \circ \Delta.
        \end{split}
        \label{eq:convolutiondef}
    \end{equation}
    We call $(Hom_{\mathbb{K}}(C,A),*)$ the \textsl{convolution algebra}, with $*$ being its product.
    \label{def:convolutionalgebra}
\end{definition}
\begin{proposition}
     The convolution algebra $(Hom_{\mathbb{K}}(C,A),*)$ is an associative unital algebra with unit $\eta \circ \epsilon$. 
    \label{pro:convolutionalgebra}
\end{proposition}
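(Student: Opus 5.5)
The plan is to verify the three algebra axioms for $(Hom_{\mathbb{K}}(C,A),*)$ directly from the definition $f*f^{\prime} = \mu \circ (f \otimes f^{\prime}) \circ \Delta$. Bilinearity of $*$ is immediate, since $\mu$, $\Delta$ and the assignment $(f,f^{\prime}) \mapsto f \otimes f^{\prime}$ are all linear. So $*$ is indeed a well-defined $\mathbb{K}$-linear map on $Hom_{\mathbb{K}}(C,A) \otimes Hom_{\mathbb{K}}(C,A)$. The substantive work is associativity and unitality, and I will do both in Sweedler notation, writing $(f*f^{\prime})(c) = f(c_1)f^{\prime}(c_2)$.

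\textbf{Associativity.} For $f,g,k \in Hom_{\mathbb{K}}(C,A)$ and $c \in C$, first expand
\begin{equation*}
((f*g)*k)(c) = (f*g)(c_1)\,k(c_2) = f(c_{11})g(c_{12})k(c_2).
\end{equation*}
Next expand
\begin{equation*}
(f*(g*k))(c) = f(c_1)\,(g*k)(c_2) = f(c_1)g(c_{21})k(c_{22}).
\end{equation*}
By coassociativity (\ref{eq:coalgebracoassociativity}) of $\Delta$, both expressions equal $f(c_1)g(c_2)k(c_3)$, after relabelling as in (\ref{eq:sveedlercoassociativity}). Here associativity of $\mu$ (\ref{eq:algebraassociativity}) is what lets me write the triple product in $A$ without brackets.

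In map form, the same argument reads
\begin{equation*}
(f*g)*k = \mu\circ(\mu\otimes id)\circ(f\otimes g\otimes k)\circ(\Delta\otimes id)\circ\Delta .
\end{equation*}
One then swaps $\mu\circ(\mu\otimes id)$ for $\mu\circ(id\otimes\mu)$ and $(\Delta\otimes id)\circ\Delta$ for $(id\otimes\Delta)\circ\Delta$. This uses the naturality identity $(f\otimes g\otimes k)\circ(\Delta\otimes id) = ((f\otimes g)\circ\Delta)\otimes k$.

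\textbf{Unit.} The candidate unit is $\eta\circ\epsilon$. I compute
\begin{equation*}
((\eta\circ\epsilon)*f)(c) = \eta(\epsilon(c_1))\,f(c_2) = \epsilon(c_1)f(c_2) = f(\epsilon(c_1)c_2) = f(c).
\end{equation*}
This uses unitality of $\eta$ (\ref{eq:algebraunitality}), so that $\eta(\lambda)a = \lambda a$, then linearity of $f$, then counitality (\ref{eq:sveedlercounitality}). The computation for $f*(\eta\circ\epsilon) = f$ is symmetric, using $c_1\epsilon(c_2) = c$.

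The only point needing care, rather than a real obstacle, is the bookkeeping in the associativity step. Sweedler notation hides the sums, so I will justify the relabelling by the explicit map-level identity above. This makes it clear that only coassociativity of $\Delta$ and associativity of $\mu$ are used.
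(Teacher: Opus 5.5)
Your proposal is correct and follows essentially the same route as the paper: bilinearity of $*$, associativity by expanding both sides in Sweedler notation and invoking coassociativity of $\Delta$ and associativity of $\mu$, and unitality of $\eta\circ\epsilon$ via counitality. Your additional map-level form of the associativity argument is a tidy way to make the Sweedler relabelling explicit.
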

\begin{proof}
    The product $*$ is bilinear. For every $f,g,f^{\prime},g^{\prime} \in Hom_{\mathbb{K}}(C,A)$ and $k,h,k^{\prime},h^{\prime} \in \mathbb{K}$ we have
    \begin{equation}
        \begin{split}
            &(kf + hg)*(k^{\prime}f^{\prime}+h^{\prime}g^{\prime}) (x)=\\
            &=\mu \circ \big((kf + hg) \otimes (k^{\prime}f^{\prime}+h^{\prime}g^{\prime})\big) \circ \Delta(x)=\\
            &=\mu \circ (kf \otimes k^{\prime}f^{\prime}+kf \otimes h^{\prime}g^{\prime}+hg\otimes k^{\prime}f^{\prime}+hg\otimes h^{\prime}g^{\prime}) \circ (x_1 \otimes x_2)=\\
            &=\mu \circ \big(kf(x_1) \otimes k^{\prime}f^{\prime}(x_2)+kf(x_1)\otimes h^{\prime}g^{\prime}(x_2)+\\
            &+hg(x_1)\otimes k^{\prime}f^{\prime}(x_2)+hg(x_1)\otimes h^{\prime}g^{\prime}(x_2)\big)=\\
            &=kk^{\prime}f(x_1)f^{\prime}(x_2) + kh^{\prime}f(x_1)g^{\prime}(x_2) + hk^{\prime}g(x_1)f^{\prime}(x_2)+hh^{\prime}g(x_1)g^{\prime}(x_2)=\\
            &=kk^{\prime}(f*f^{\prime})(x) + kh^{\prime}(f*g^{\prime})(x) + hk^{\prime}(g*f^{\prime})(x)+hh^{\prime}(g*g^{\prime})(x),
        \end{split}
        \label{eq:convolutionalgebrabilinearityproof}
    \end{equation}
    It satisfies associativity:
    \begin{equation}
        \begin{split}
            &\big((f*g)*h\big)(x)= \mu \circ \big((f*g)\otimes h\big) \circ \Delta(x) =  \mu \circ \big((f*g)(x_1) \otimes h(x_2)\big) =  \\
            &= \mu \circ \Big(\mu \circ \big(f(x_{11})\otimes g(x_{12})\big) \otimes h(x_2)\Big)= \mu \circ \big(f(x_{11})g(x_{12}) \otimes h(x_2)\big)=\\
            &=f(x_1)g(x_2)h(x_3)=\big(f*(g*h)\big)(x)= \mu \circ \big(f(x_1) \otimes g(x_{21})h(x_{22})\big)=\\
            &=f(x_1)g(x_2)h(x_3)
        \end{split}
        \label{eq:convolutionalgebraassociativityproof}
    \end{equation}
    for any $f,g,h \in Hom_{\mathbb{K}}(C,A)$ and $x \in C$.\\
    Finally, $\eta \circ \epsilon$ satisfies unitality:
    \begin{equation}
        \begin{split}
            &\big((\eta \circ \epsilon) * f\big)(x)= \mu \circ \big((\eta \circ \epsilon) \otimes f\big)(x_1 \otimes x_2) =\\
            &= \mu \circ \big((\eta \circ \epsilon)(x_1) \otimes f(x_2)\big)=\eta\big(\epsilon(x_1)\big)f(x_2)=\\
            &=\epsilon(x_1)f(x)_2=f(x).
        \end{split}
        \label{eq:convolutionalgebraunitproof}
    \end{equation}
    The proof is similar for $\big(f*(\eta \circ \epsilon)\big)=f$.
\end{proof}
\begin{definition}
    A $\mathbb{K}$-linear map $j \colon H \to A$ is said \textsl{convolution invertible} there exists a $\mathbb{K}$-linear map $i \colon H \to A$ such that
    \begin{equation}
        j(h_1)i(h_2)=(\eta \circ \epsilon)(h)=\epsilon(h)1_A=i(h_1)j(h_2),
        \label{eq:convolutioninvertibilitydef}
    \end{equation}
    for every element $h \in H$. In this case one can write $i=j^{-1}$ or $j = i^{-1}$.
    \label{def:convolutioninvertibility}
\end{definition}
\begin{proposition}
    Let $H$ be a Hopf algebra.
    \begin{enumerate}
        \item If $f \colon C \to H$ is a morphism of coalgebras then $S \circ f$ is the convolution inverse of $f$ in the convolution algebra $Hom_{\mathbb{K}}(C,H)$.
        \item If $f \colon H \to A$ is a morphism of algebras then $f \circ S$ is the convolution inverse of $f$ in the convolution algebra $Hom_{\mathbb{K}}(H,A)$.
    \end{enumerate}
    \label{pro:inveritbilityinconvolutionalgebra}
\end{proposition}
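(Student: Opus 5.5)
The proof is a direct verification of the two-sided inverse condition of Definition \ref{def:convolutioninvertibility}, in the convolution algebra of Definition \ref{def:convolutionalgebra}. In each case we compute $f*(S\circ f)$ and $(S\circ f)*f$ (respectively $f*(f\circ S)$ and $(f\circ S)*f$) pointwise in Sweedler notation. We then reduce to the antipode identity (\ref{eq:robaseria}), $S(h_1)h_2=\epsilon(h)1_H=h_1S(h_2)$. Associativity and unitality of the convolution product (Proposition \ref{pro:convolutionalgebra}) are not needed, since we only check the defining equations. Uniqueness of the inverse is not needed either, although it follows from associativity.

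\textbf{First point.} Let $f\colon C\to H$ be a coalgebra morphism and $x\in C$. By (\ref{eq:morphismofcoalgebrasproperties}) we have $\Delta(f(x))=f(x_1)\otimes f(x_2)$, that is, $f(x)_1\otimes f(x)_2=f(x_1)\otimes f(x_2)$. Therefore
\begin{equation*}
\big((S\circ f)*f\big)(x)=S\big(f(x_1)\big)f(x_2)=S\big(f(x)_1\big)f(x)_2=\epsilon\big(f(x)\big)1_H=\epsilon(x)1_H,
\end{equation*}
where the last step uses $\epsilon\circ f=\epsilon_C$. The computation of $f*(S\circ f)$ is symmetric and uses $h_1S(h_2)=\epsilon(h)1_H$ with $h=f(x)$. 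Both products equal the unit $\eta\circ\epsilon$ of $Hom_{\mathbb{K}}(C,H)$.

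\textbf{Second point.} Let $f\colon H\to A$ be an algebra morphism and $h\in H$. Multiplicativity of $f$ (\ref{eq:morphismofalgebrasproperties}) lets us pull $f$ out of the product:
\begin{equation*}
\big((f\circ S)*f\big)(h)=f\big(S(h_1)\big)f(h_2)=f\big(S(h_1)h_2\big)=f\big(\epsilon(h)1_H\big)=\epsilon(h)f(1_H)=\epsilon(h)1_A,
\end{equation*}
where we used linearity of $f$ and unitality $f\circ\eta=\eta'$. The same computation with $h_1S(h_2)$ gives $f*(f\circ S)=\eta_A\circ\epsilon$.

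\textbf{Main point of care.} There is no genuine obstacle. The only place needing attention is in the first point: the identity $\Delta\circ f=(f\otimes f)\circ\Delta$ must be used to rewrite $f(x_1)\otimes f(x_2)$ as the coproduct of the single element $f(x)$, so that the antipode axiom can be applied to $f(x)$. Before that rewriting, the expression $S(f(x_1))f(x_2)$ is a sum of terms and the antipode axiom does not apply to it directly.
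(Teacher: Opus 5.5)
Your proposal is correct and follows the paper's proof essentially step for step. In the first point you rewrite $f(x_1)\otimes f(x_2)$ as $\Delta(f(x))$, apply the antipode axiom and use $\epsilon\circ f=\epsilon_C$. In the second point you use multiplicativity and unitality of $f$ to reduce to $f\big(S(h_1)h_2\big)=\epsilon(h)1_A$, and symmetrically for the other products.
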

\begin{proof} In order:
    \begin{enumerate}
    \item Let $f \colon C \to H$ be a coalgebra map, i.e. $\Delta \circ f = (f \otimes f) \circ \Delta_C$ and $\epsilon \circ f= \epsilon_C$, where $\Delta_C$ and $\epsilon_C$ are the coproduct and counit map of $C$. We have to prove $(S \circ f)*f=\mu \circ \big((S \circ f)\otimes f\big) \circ \Delta_C = \eta \circ \epsilon_C = f*(S \circ f) = \mu \circ \big(f \otimes (S \circ f)\big) \circ \Delta_C$. For any $x \in C$ we have:
    \begin{equation}
        \begin{split}
            &\big((S \circ f) * f\big)(x) = \mu \circ \big(S(f(x_1))\otimes f(x_2)\big)=S\big(f(x_1)\big)f(x_2)=\\
            &=S\big(f(x)_1\big)f(x)_2=\eta \circ \epsilon\big(f(x)\big)=\\
            &=\eta \circ \epsilon_C(x)=\epsilon_C(x)1_H=\\
            &=\big(f*(S\circ f)\big)(x)=f(x)_1S\big(f(x)_2\big)=\eta \circ \epsilon\big(f(x)\big)=\\
            &=\eta \circ \epsilon_C(x)=\epsilon_C(x)1_H.
        \end{split}
        \label{eq:proofconvolutioninverse1}
    \end{equation}  
    \item Let $f \colon H \to A$ be an algebra map, i.e. $f \circ \mu = \mu_A \circ (f \otimes f)$ and $f \circ \eta= \eta_A$, where $\mu_A$ and $\eta_A$ are the product and unit map of $A$. We have to prove $(f \circ S)*f=\mu_A \circ \big((f \circ S)\otimes f\big) \circ \Delta = \eta_A \circ \epsilon(x) = f*(f \circ S) = \mu_A \circ \big(f \otimes (f \circ S)\big) \circ \Delta$. For any $h \in H$ we have:
    \begin{equation}
        \begin{split}
            &\big((f \circ S) * f\big)(h) = \mu_A \circ \big(f(S(h_1))\otimes f(h_2)\big)=f\big(S(h_1)\big)f(h_2)=\\
            &=f\big(S(h_1)h_2\big)=f\big(\eta \circ \epsilon(h)\big)= \eta_A \circ \epsilon(h) = \epsilon(h)1_A=\\
            &=\big(f*(f \circ S)\big)(h)=f(h_1)f\big(S(h_2)\big)=f\big(h_1S(h_2)\big)=\\
            &=f\big(\eta \circ \epsilon(h)\big)=\eta_A \circ \epsilon(h)=\epsilon(h)1_A.
        \end{split}
        \label{eq:proofconvolutioninverse2}
    \end{equation}
    \end{enumerate}
\end{proof}
Given the last two propositions, we can prove the following properties of the antipode and see it is an anti-bialgebra map, i.e. it flips the product and the coproduct.
\begin{proposition}
    Let $H$ be a Hopf algebra. Its antipode $S \colon H \to H$ satisfies:
    \begin{enumerate}
        \item $S(hh^{\prime})=S(h^{\prime})S(h)$ for every $h,h^{\prime} \in H$;
        \item $S(1_H)=1_H$;
        \item $\Delta \circ S = (S \otimes S) \circ flip \circ \Delta$;
        \item $\epsilon \circ S =\epsilon$.
    \end{enumerate}
    \label{pro:Hopfantipodeproperties}
\end{proposition}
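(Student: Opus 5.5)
The plan is to derive all four properties from the uniqueness of convolution inverses, using Proposition \ref{pro:convolutionalgebra} together with the convolution-inverse statements of Proposition \ref{pro:inveritbilityinconvolutionalgebra}. Since the convolution algebra is associative and unital with unit $\eta\circ\epsilon$, a left inverse and a right inverse of the same element coincide. So it is enough to exhibit, for each identity, two maps that are respectively a left and a right convolution inverse of a common map.

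For property 1, I will work in $Hom_{\mathbb{K}}(H\otimes H,H)$, where $H\otimes H$ carries the tensor product coalgebra structure $\Delta(h\otimes h^{\prime})=h_1\otimes h^{\prime}_1\otimes h_2\otimes h^{\prime}_2$. Since $H$ is a bialgebra, $\mu$ is a coalgebra morphism, so by part 1 of Proposition \ref{pro:inveritbilityinconvolutionalgebra} the map $S\circ\mu$ is a two-sided convolution inverse of $\mu$. Next I will check directly that $\nu(h\otimes h^{\prime}):=S(h^{\prime})S(h)$ is a left inverse of $\mu$:
\begin{equation}
(\nu*\mu)(h\otimes h^{\prime})=S(h^{\prime}_1)S(h_1)h_2h^{\prime}_2=S(h^{\prime}_1)\epsilon(h)h^{\prime}_2=\epsilon(h)\epsilon(h^{\prime})1_H .
\end{equation}
Uniqueness of inverses then gives $\nu=S\circ\mu$. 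For property 2, I will apply the antipode axiom to $1_H$, using $\Delta(1_H)=1_H\otimes 1_H$ and $\epsilon(1_H)=1$, to get $S(1_H)1_H=1_H$ directly.

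For property 3, I will work in $Hom_{\mathbb{K}}(H,H\otimes H)$, with $H\otimes H$ carrying the tensor product algebra structure. Since $\Delta$ is an algebra morphism, part 2 of Proposition \ref{pro:inveritbilityinconvolutionalgebra} shows that $\Delta\circ S$ is its convolution inverse. I will then verify that $\rho:=(S\otimes S)\circ flip\circ\Delta$, i.e. $h\mapsto S(h_2)\otimes S(h_1)$, is a right inverse of $\Delta$:
\begin{equation}
(\Delta*\rho)(h)=h_1S(h_4)\otimes h_2S(h_3)=h_1S(h_3)\otimes\epsilon(h_2)1_H=\epsilon(h)1_H\otimes 1_H .
\end{equation}
This computation is the main obstacle: the Sweedler reindexing after applying coassociativity has to be done carefully, contracting the innermost pair first. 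For property 4, I will apply $\epsilon$ to $h_1S(h_2)=\epsilon(h)1_H$. Since $\epsilon$ is an algebra morphism, this gives $\epsilon(h_1)\epsilon(S(h_2))=\epsilon(h)$, and counitality reduces the left-hand side to $\epsilon(S(h))$.
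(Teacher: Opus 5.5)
Your proof is correct and follows the paper's route: properties 1 and 3 come from uniqueness of convolution inverses in $Hom_{\mathbb{K}}(H\otimes H,H)$ and $Hom_{\mathbb{K}}(H,H\otimes H)$, exactly as in the paper. The only difference there is that you check a right inverse of $\Delta$ where the paper checks a left one, which is immaterial since $\Delta\circ S$ is a two-sided inverse. For properties 2 and 4 you evaluate the antipode axiom at $1_H$ and apply $\epsilon$ to it; this is just the unwrapped form of the paper's argument that $\eta$ and $\epsilon$ are the units of their convolution algebras and hence their own convolution inverses.
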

\begin{proof} Proving point by point:
\begin{enumerate}
    \item We define $f := S \circ \mu \colon H \otimes H \to H$ and $g := \mu \circ flip \circ (S \otimes S) \colon H \otimes H \to H$. By Proposition \ref{pro:inveritbilityinconvolutionalgebra} we have that $f$ is the convolution inverse of $\mu$ in the convolution algebra $Hom_{\mathbb{K}}(H \otimes H, H)$, since $\mu$ is a coalgebra map by definition of bialgebra.
    Moreover, we notice that $g = \mu \circ flip \circ (S \otimes S)$ is a (left) convolution inverse of $\mu \colon H \otimes H \to H$ in $Hom_{\mathbb{K}}(H \otimes H, H)$, in fact:
    \begin{equation}
        \begin{split}
            &(g * \mu)(h \otimes h^{\prime})= \mu \circ (g \otimes \mu) \circ \Delta(h \otimes h^{\prime}) = g(h_1 \otimes h^{\prime}_1)\mu(h_2 \otimes h^{\prime}_2)=\\
            &=\Big(\big(\mu \circ flip \circ (S \otimes S)\big)(h_1 \otimes h^{\prime}_1)\Big)h_2h^{\prime}_2=\\
            &=S(h^{\prime}_1)S(h_1)h_2h^{\prime}_2=\epsilon(h)S(h^{\prime}_1)h^{\prime}_2=\epsilon(h)\epsilon(h^{\prime})1_H=\epsilon(hh^{\prime})1_H,
        \end{split}
        \label{eq:antipodeproperties1}
    \end{equation}
    where $\Delta \colon H \otimes H \to H \otimes H \otimes H \otimes H, h \otimes h^{\prime} \mapsto h_1 \otimes h_2 \otimes h^{\prime}_1 \otimes h^{\prime}_2$ is the coproduct for $H \otimes H$ and we used the fact that $\epsilon \circ \mu=\mu \circ (\epsilon \otimes \epsilon)$, i.e.  that $\epsilon$ is an algebra map. \\
    Therefore, since both $f$ and $g$ are convolution inverses of $\mu$ in the convolution algebra $Hom_{\mathbb{K}}(H \otimes H, H)$ an by uniqueness of the inverse, we have $f=g$, i.e. $S \circ \mu = \mu \circ flip \circ (S \otimes S)$ or $S(hh^{\prime}) = S(h^{\prime})S(h)$ for any $h,h^{\prime} \in H$: the antipode $S$ is an anti-algebra morphism.
    \item Since $\eta \colon \mathbb{K} \to H$ is a coalgebra map we know $S \circ \eta$ is its convolution inverse by Proposition \ref{pro:inveritbilityinconvolutionalgebra}. Moreover $\eta$ is the unit of the convolution algebra $Hom_{\mathbb{K}}(\mathbb{K}, H)$ and so the convolution inverse of itself. 
    \item Let $f := \Delta \circ S \colon H \to H \otimes H$ and $g := (S \otimes S) \circ flip \circ \Delta \colon H \to H \otimes H$. Since $\Delta \colon H \to H \otimes H$ is an algebra map $f = \Delta \circ S$ is the convolution inverse of $\Delta$ by Proposition \ref{pro:inveritbilityinconvolutionalgebra}; moreover, for every $h \in H$ we get
    \begin{equation}
        \begin{split}
            &(g*\Delta)(h)=\mu \circ (g \otimes \Delta)(h_1 \otimes h_2)=g(h_1)\Delta(h_2)\\
            &=\big(S(h_{12}) \otimes S(h_{11})\big)(h_{21} \otimes h_{22})=\big(S(h_2)\otimes S(h_1)\big)(h_3 \otimes h_4)=\\
            &=S(h_2)h_3 \otimes S(h_1)h_4 = \epsilon(h)1_H \otimes S(h_1)h_2=\\
            &=\epsilon(h)1_H \otimes \epsilon(h)1_H=\epsilon(h)\epsilon(h)(1_H \otimes 1_H)=\epsilon(h)(1_H \otimes 1_H),
        \end{split}
        \label{eq:antipodeproperties3}
    \end{equation}
    i.e. $g$ is also a convolution inverse of $\Delta$ in $Hom_{\mathbb{K}}(H, H \otimes H)$. Therefore, by uniqueness, $f=g$, i.e. $\Delta \circ S=(S\otimes S) \circ flip \circ \Delta$: $S$ is an anti-coalgebra morphism.
    \item Since $\epsilon \colon H \to \mathbb{K}$ is an algebra map, $\epsilon \circ S$ must be its convolution inverse by Proposition $\ref{pro:inveritbilityinconvolutionalgebra}$. On the other hand $\epsilon$ is the unit of the convolution algebra $Hom_{\mathbb{K}}(H,\mathbb{K})$ and so its the convolution inverse of itself. Therefore $\epsilon \circ S=\epsilon$.
    \end{enumerate}
\end{proof}

\subsection{Comodules and comodule algebras} \label{comodulealgebras}
\begin{definition}
    A \textsl{left} $H$\textsl{-module} is a vector space $M$ over the field $\mathbb{K}$ with a \textsl{left} $H$\textsl{-action} $H \times M \to M, (h,m) \mapsto h \triangleright m$ such that $(hg) \triangleright m = h \triangleright (g \triangleright m)$ for every $h,g \in H$ and $m \in M$ and $1_H \triangleright m = m$ for every $m \in M$.
    
    Similarly, a \textsl{right} $H$\textsl{-module} is a vector space $M$ over $\mathbb{K}$ with a \textsl{right} $H$\textsl{-action} $M \times H \to H,(m,h) \mapsto m \triangleleft h$ such that $m \triangleleft (hg)= (m \triangleleft h) \triangleleft g$ and $m \triangleleft 1_H = m$.

    An $H$\textsl{-bimodule} $B$ is both a left and a right $H$-module, with actions $\triangleright$ and $\triangleleft$ such that $h \triangleright (b \triangleleft h^{\prime}) = (h \triangleright b) \triangleleft h^{\prime}$ for every $h,h^{\prime} \in H$ and $b \in B$.
    \label{def:module}
\end{definition}
\begin{definition}
    A \textsl{left} $H$\textsl{-module algebra} $V$ is an associative algebra over $\mathbb{K}$ which is also a left $H$-module, with compatibility conditions
    \begin{equation}
        h \triangleright 1_V = \epsilon(h)1_V, \;\;\; h \triangleright (vw)=(h_1 \triangleright v)(h_2 \triangleright w)
        \label{eq:leftmodulealgebracompatibility}
    \end{equation}
    for any $v, w \in V$ and $h \in H$.
    
    Similarly, a \textsl{right} $H$\textsl{-module algebra} $V$ is an associative algebra over $\mathbb{K}$ which is also a right $H$-module, with compatibility conditions
    \begin{equation}
        1_V \triangleleft h = 1_V\epsilon(h), \;\;\; (vw) \triangleleft h=( v \triangleleft h_1)(w \triangleleft h_2)
        \label{eq:rightmodulealgebracompatibility}
    \end{equation}
    for any $v, w \in V$ and $h \in H$.
    \label{def:modulealgebra}
\end{definition}
\begin{definition}
    A \textsl{right} $H$\textsl{-comodule} is a vector space $V$ together with a map $\Delta_V \colon V \to V \otimes H$, called \textsl{right} $H$\textsl{-coaction}, such that the diagrams\\
    \[
    \begin{tikzcd}
    V \arrow{r}{\Delta_V} \arrow{d}{\Delta_V} & V \otimes H \arrow{d}{\Delta_V \otimes id} \\
    V \otimes H \arrow{r}{id \otimes \Delta} & V \otimes H \otimes H
    \end{tikzcd}\;\;\;
    \begin{tikzcd}
    V \arrow{r}{\Delta_V} \arrow{rd}{id} &V \otimes H \arrow{d}{id \otimes \epsilon} \\
    & V
    \end{tikzcd}
    \]
    commute, i.e.
    \begin{equation}
        \begin{split}
            &(\Delta_V \otimes id) \circ \Delta_V= (id \otimes \Delta) \circ \Delta_V;\\
            &(id \otimes \epsilon) \circ \Delta_V=id.
        \end{split}
        \label{eq:commutingdiagramsrightcomodule}
    \end{equation}
    
    A \textsl{left} $H$\textsl{-comodule} is a vector space $V$ together with a map $\leftindex_V \Delta \colon V \to h \otimes V$, called \textsl{left} $H$\textsl{-coaction}, such that the diagrams\\
    \[
    \begin{tikzcd}
    V \arrow{r}{\leftindex_V\Delta} \arrow{d}{\leftindex_V\Delta} & H \otimes V \arrow{d}{id \otimes \leftindex_V\Delta} \\
    H \otimes V \arrow{r}{\Delta \otimes id} & H \otimes H \otimes V
    \end{tikzcd}\;\;\;
    \begin{tikzcd}
    V \arrow{r}{\leftindex_V\Delta} \arrow{rd}{id} & H \otimes V \arrow{d}{\epsilon \otimes id} \\
    & V
    \end{tikzcd}
    \]
    commute, i.e.
    \begin{equation}
        \begin{split}
            &(id \otimes \leftindex_V\Delta) \circ \leftindex_V\Delta= (\Delta \otimes id) \circ \leftindex_V\Delta;\\
            &(\epsilon \otimes id) \circ \leftindex_V\Delta=id.
        \end{split}
        \label{eq:commutingdiagramsleftcomodule}
    \end{equation}
    \label{def:comodule}
\end{definition}
\begin{remark}
    The Sweedler notation we will be using for right and left $H$-coactions on $V$ is the following:
    \begin{equation}
        \begin{split}
            &\Delta_V(v) =: v_0 \otimes v_1;\\
            &\leftindex_V\Delta(v) =: v_{-1} \otimes v_0.
        \end{split}
        \label{eq:sveedlerleftandrightcoactionnotation}
    \end{equation}
    for all $v \in V$.
    \label{re:onSweedlernotationforcoactions}
\end{remark}
\begin{definition}  
    Let $(V, \Delta_V)$ and $(W, \Delta_W)$ be right $H$-comodules. A \textsl{morphism of right} $H$\textsl{-comodules}, or \textsl{right} $H$\textsl{-colinear map}, is a $\mathbb{K}$-linear map $\phi \colon V \to W$ such that the diagram\\
    \[
    \begin{tikzcd}
    V \arrow{r}{\phi} \arrow{d}{\Delta_V} & W \arrow{d}{\Delta_W} \\
    V \otimes H \arrow{r}{\phi \otimes id} & W \otimes H
    \end{tikzcd}
    \]
    commutes, i.e.
    \begin{equation}
        \Delta_W \circ \phi = (\phi \otimes id) \circ \Delta_V.
        \label{eq:commutingdiagramrightcomodulemorphism}
    \end{equation}

    Let $(V, \leftindex_V\Delta)$ and $(W, \leftindex_W\Delta)$ be left $H$-comodules. A \textsl{morphism of left} $H$\textsl{-comodules}, or a \textsl{left} $H$\textsl{-colinear map}, is a $\mathbb{K}$-linear map $\phi \colon V \to W$ such that the diagram\\
    \[
    \begin{tikzcd}
    V \arrow{r}{\phi} \arrow{d}{\leftindex_V\Delta} & W \arrow{d}{\leftindex_W\Delta} \\
    H \otimes V \arrow{r}{id \otimes \phi} & H \otimes W
    \end{tikzcd}
    \]
    commutes, i.e.
    \begin{equation}
        \leftindex_W\Delta \circ \phi = (id \otimes \phi) \circ \leftindex_V\Delta.
        \label{eq:commutingdiagramleftcomodulemorphism}
    \end{equation}
    \label{def:morphismofcomodules}
\end{definition}
\begin{definition}
    A \textsl{right} $H$\textsl{-comodule algebra} $A$ is an algebra which is also a right $H$-comodule with right $H$-coation $\Delta_A \colon A \to A \otimes H$, such that $\mu \colon A \otimes A \to A$ and $\eta \colon \mathbb{K} \to A$ are morphisms of right $H$-comodules: namely, for any $a,b \in A$,
    \begin{equation}
        \begin{split}
            &\Delta_A(ab)=\Delta_A(a)\Delta_A(b) \;\;\; \text{and}\\
            &\Delta_A(1_A)=1_A \otimes 1_H.
        \end{split}
        \label{eq:compadibiltityleftcomodulealgebra2}
    \end{equation}
    
    A \textsl{left} $H$\textsl{-comodule algebra} $A$ is an algebra which is also a left $H$-comodule with left $H$-coaction $\leftindex_A\Delta \colon A \to H \otimes A$, such that $\mu \colon A \otimes A \to A$ and $\eta \colon \mathbb{K} \to A$ are morphisms of left $H$-comodules: namely
    \begin{equation}
        \begin{split}
            &\leftindex_A\Delta(ab)=\leftindex_A\Delta(a)\leftindex_A\Delta(b) \;\;\; \text{and}\\
            &\leftindex_A\Delta(1_A)=1_H \otimes 1_A.
        \end{split}
        \label{eq:compadibiltityleftcomodulealgebra1}
    \end{equation}
    \label{def:comodulealgebra}
\end{definition}
\begin{definition}
    Let $(V, \Delta_V)$ and $(W, \Delta_W)$ be right $H$-comodule algebras. A \textsl{morphism of right} $H$\textsl{-comodule algebras} is a morphism of right $H$-comodules $\phi \colon V \to W$ that is also a morphism of algebras, i.e. 
    \begin{equation}
        \begin{split}
            &\phi(vv^{\prime})=\phi(v)\phi(v^{\prime}),\\
            &\phi(1_V)=1_W
        \end{split}
        \label{eq:leftcomodulealgebramorphism2}
    \end{equation}
    for every $v,v^{\prime} \in V$. 
    
    Let $(V, \leftindex_V\Delta)$ and $(W, \leftindex_W\Delta)$ be left $H$-comodule algebras. A \textsl{morphism of left} $H$\textsl{-comodule algebras} is a morphism of left $H$-comodules $\phi \colon V \to W$ that is also a morphism of algebras, i.e. 
    \begin{equation}
        \begin{split}
            &\phi(vv^{\prime})=\phi(v)\phi(v^{\prime}),\\
            &\phi(1_V)=1_W.
        \end{split}
        \label{eq:leftcomodulealgebramorphism1}
    \end{equation} 
    for every $v,v^{\prime} \in V$.
    \label{def:comodulealgebramorphisms}
\end{definition}
\begin{definition}
    Let $A$ be a right $H$-comodule algebra, we define the subalgebra $A^{coH}$ of \textsl{right-coinvariant} elements under the right $H$-coaction $\Delta_A \colon A \to A \otimes H$ as:
    \begin{equation}
        A^{coH} := \{ a \in A \;|\; \Delta_A(a) = a \otimes 1_H \}.
        \label{eq:rightcoinvariantsubalgebradef}
    \end{equation}
    
    Let $A$ be a left $H$-comodule algebra. We define the subalgebra $\leftindex^{coH}A$ of \textsl{left-coinvariant} elements under the left $H$-coaction $\leftindex_A\Delta \colon A \to H \otimes A$ as:
    \begin{equation}
        \leftindex^{coH}A := \{ a \in A \;|\; \leftindex_A\Delta(a) = 1_H \otimes a \}.
        \label{eq:leftcoinvariantsubalgebradef}
    \end{equation}
    \label{def:leftandrightcoinvariantsubalgebras}
\end{definition}
\begin{exmp}
    Let us define the \textsl{adjoint right} $H$\textsl{-coaction} as
    \begin{equation}
        \begin{split}
            ad_R \colon &H \to H \otimes H,\\
            &h \mapsto h_2 \otimes S(h_1)h_3.
        \end{split}
        \label{eq:adjointrightcoactiondef}
    \end{equation}
    We have that the Hopf algebra $H$ is also a right $H$-comodule with right $H$-coaction $ad_R$. To show this, we have to prove that 
    $(id \otimes \Delta) \circ ad_R = (ad_R \otimes id) \circ ad_R$. Using the fact that $\Delta$ is an algebra map and the third property of Proposition \ref{pro:Hopfantipodeproperties}, the left-hand side reads
    \begin{equation}
        \begin{split}
            &(id \otimes \Delta) \circ ad_R(h) = (id \otimes \Delta)\big(h_2 \otimes S(h_1)h_3\big) =\\
            &=h_2 \otimes \Delta\big(S(h_1)h_3\big)= h_2 \otimes \Delta\big(S(h_1)\big)\Delta(h_3)=\\
            &= h_2 \otimes \big(flip \circ (S \otimes S) \circ \Delta(h_1)\big)\Delta(h_3)=\\
            &=h_2 \otimes \big(S(h_{12}) \otimes S(h_{11})\big)(h_{31} \otimes h_{32})=\\
            &=h_2 \otimes \big(S(h_{12})h_{21} \otimes S(h_{11})h_{22}\big)=\\
            &=h_3 \otimes S(h_2)h_4 \otimes S(h_1)h_5,
        \end{split}
        \label{eq:adjointrightHcomoduleproof1}
    \end{equation}
    and the right-hand side is
    \begin{equation}
        \begin{split}
            &(ad_R \otimes id) \circ ad_R(h) = (ad_R \otimes id)\big(h_2 \otimes S(h_1)h_3\big) =\\
            &=ad_R(h_2) \otimes S(h_1)h_3= \big(h_{22} \otimes S(h_{21})h_{23}\big) \otimes S(h_1)h_3=\\
            &=h_3 \otimes S(h_2)h_4 \otimes S(h_1)h_5.
        \end{split}
        \label{eq:adjointrightHcomoduleproof2}
    \end{equation}
    Moreover, we have to check that $(id \otimes \epsilon) \circ ad_R = id$. For any $h \in H$, using that $\epsilon$ is an algebra map and the fourth property in Proposition \ref{pro:Hopfantipodeproperties}, we can write
    \begin{equation}
        \begin{split}
            &(id \otimes \epsilon) \circ ad_R(h) = \big(id \otimes \epsilon)(h_2 \otimes S(h_1)h_3\big) = h_2 \otimes \epsilon\big(S(h_1)h_3\big) = \\
            &= h_2 \otimes \epsilon\big(S(h_1)\big)\epsilon(h_3) = h_2 \otimes (\epsilon \circ S)(h_1)\epsilon(h_3) = h_2 \otimes \epsilon(h_1)\epsilon(h_3) = h
        \end{split}
        \label{eq:adjointrightHcomoduleproof3}
    \end{equation}
    which proves that $ad_R$ is a right $H$-coaction, endowing $H$ with a right $H$-comodule structure.
    \label{ex:rightadjointcoactiongivesrightHcomodule}
\end{exmp}
\begin{remark}
    Every Hopf algebra $H$ can be seen as a right $H$-comodule algebra by seeing the coproduct $\Delta \colon H \to H \otimes H$ as a right $H$-coaction. In fact,  $(\Delta \otimes id) \circ \Delta(h) = (id \otimes \Delta) \circ \Delta(h)$ is the coassociativity axiom for a Hopf algebra and $(id \otimes \epsilon) \circ \Delta = id$ is counitality. Finally $\Delta(hh^{\prime}) = \Delta(h)\Delta(h^{\prime})$ and $\Delta(1_H) = 1_H \otimes 1_H$ since $H$ is a bialgebra and thus $\Delta$ is a morphsim of algebras.
    \label{rem:Hopfalgebraasacomodulealgebrawithcoproductascoaction}
\end{remark}

\section{Hopf-Galois extensions} \label{HopfGaloisextensions}
Let $B := A^{coH}$ be the subalgebra of coinvariant elements of $A$ under $\Delta_A$. In the lemma below, we show some properties of a convolution invertible right $H$-colinear map $j \colon H \to A$.
\begin{lemma}
    Let $j \colon H \to A$ be a convolution invertible morphism of right $H$-comodules, where $H$ is a right $H$-comodule by the coproduct $\Delta$ as in Remark \ref{rem:Hopfalgebraasacomodulealgebrawithcoproductascoaction}. Denote by $j^{-1} \colon H \to A$ the convolution inverse of $j$, i.e. $j * j^{-1} = \eta \circ \epsilon$ or $j(h_1)j^{-1}(h_2)=\epsilon(h)1_A$. Then:
    \begin{enumerate}
        \item the convolution inverse satisfies $\Delta_A \circ j^{-1} = (j^{-1} \otimes S) \circ flip \circ \Delta$;
        \item $\mu \circ (id \otimes j^{-1}) \circ \Delta_A$ is a map $A \to B$, i.e. $a_0 j^{-1}(a_1)\in B$ for any $a \in A$;
        \item $j(1_H) \in A$ is invertible with inverse $j^{-1}(1_H) \in A$;
        \item there is a unital right $H$-colinear convolution invertible map $j^{\prime} \colon H \to A, h \mapsto j^{-1}(1_H)j(h)$, with convolution inverse $j^{\prime-1} \colon H \to A, h \mapsto j^{-1}(h)j(1_H)$;
        \item if $j$ is an algebra morphism then $j^{-1}$ is anti-algebra morphism, that is $j^{-1}(hh^{\prime}) = j^{-1}(h^{\prime})j^{-1}(h)$ and $j^{-1}(1_H) = 1_A$ for every $h,h^{\prime} \in H$.
    \end{enumerate}
    \label{lem:convolutioninvertiblecomodulemorphismproperties}
\end{lemma}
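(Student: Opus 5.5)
The common strategy for all five items is the uniqueness of convolution inverses in $Hom_{\mathbb{K}}(C,A')$ for suitable $C$ and $A'$, as in the proof of Proposition \ref{pro:Hopfantipodeproperties}. Throughout I use that $\Delta_A$ is an algebra map and that right $H$-colinearity of $j$ reads $\Delta_A(j(h)) = j(h_1)\otimes h_2$.

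For item 1, I would work in the convolution algebra $Hom_{\mathbb{K}}(H, A\otimes H)$. First, $\Delta_A\circ j$ is convolution invertible with inverse $\Delta_A\circ j^{-1}$: since $\Delta_A$ is an algebra map,
\[
(\Delta_A\circ j)*(\Delta_A\circ j^{-1})(h) = \Delta_A\big(j(h_1)j^{-1}(h_2)\big) = \epsilon(h)1_A\otimes 1_H ,
\]
and the other side is identical. Second, I check that $g(h) := j^{-1}(h_2)\otimes S(h_1)$ is a left inverse of $\Delta_A\circ j$. Using colinearity,
\[
g(h_1)\,\Delta_A(j(h_2)) = j^{-1}(h_2)j(h_3)\otimes S(h_1)h_4 .
\]
The factor $j^{-1}(h_2)j(h_3)$ collapses to $\epsilon(h_2)1_A$, which gives $1_A\otimes S(h_1)h_2 = \epsilon(h)1_A\otimes 1_H$. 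Uniqueness of inverses then gives $\Delta_A\circ j^{-1}=g$. This is exactly the statement $\Delta_A\circ j^{-1} = (j^{-1}\otimes S)\circ flip\circ\Delta$.

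For item 2, I compute $\Delta_A(a_0 j^{-1}(a_1))$ using multiplicativity of $\Delta_A$ together with item 1:
\[
\Delta_A\big(a_0 j^{-1}(a_1)\big) = a_0 j^{-1}(a_3)\otimes a_1 S(a_2) = a_0 j^{-1}(a_1)\otimes 1_H ,
\]
where the Sweedler indices come from coassociativity of the coaction and the last step is the antipode property. Hence $a_0 j^{-1}(a_1)\in B$.

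For item 3, I use $\Delta(1_H)=1_H\otimes 1_H$. Convolution invertibility evaluated at $1_H$ gives $j(1_H)j^{-1}(1_H) = 1_A = j^{-1}(1_H)j(1_H)$.

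For item 4, colinearity of $j'$ requires $\Delta_A(j^{-1}(1_H))$. By item 1 this equals $j^{-1}(1_H)\otimes S(1_H) = j^{-1}(1_H)\otimes 1_H$, so $j^{-1}(1_H)\in B$. Consequently
\[
\Delta_A(j'(h)) = j^{-1}(1_H)j(h_1)\otimes h_2 = j'(h_1)\otimes h_2 .
\]
Unitality of $j'$ follows from item 3. The convolution inverse is checked directly:
\[
j'(h_1)j'^{-1}(h_2) = j^{-1}(1_H)j(h_1)j^{-1}(h_2)j(1_H) = \epsilon(h)1_A ,
\]
and symmetrically for the other order.

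For item 5, I mimic point 1 of Proposition \ref{pro:Hopfantipodeproperties} in $Hom_{\mathbb{K}}(H\otimes H, A)$. When $j$ is an algebra map, $j\circ\mu$ has two-sided inverse $j^{-1}\circ\mu$, because $\mu$ is a coalgebra map. I then show that $h\otimes h'\mapsto j^{-1}(h')j^{-1}(h)$ is a left inverse:
\[
j^{-1}(h'_1)j^{-1}(h_1)j(h_2)j(h'_2) = \epsilon(h)\epsilon(h')1_A .
\]
Uniqueness then gives anti-multiplicativity. Finally, $j^{-1}(1_H)=1_A$ follows from item 3 and $j(1_H)=1_A$.

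The main obstacle is item 1: choosing the right convolution algebra and identifying the candidate $g$, with the Sweedler reindexing done carefully. The remaining items are short computations once item 1 is in place.
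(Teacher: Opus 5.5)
Your proof is correct. Items 1--4 follow essentially the paper's argument. In item 1 the paper also works in $Hom_{\mathbb{K}}(H, A\otimes H)$, but it shows that both $\Delta_A\circ j^{-1}$ and $g=(j^{-1}\otimes S)\circ flip\circ\Delta$ are \emph{right} inverses of $\Delta_A\circ j$. You show that $g$ is a \emph{left} inverse, which works equally well because $\Delta_A\circ j$ is two-sided invertible. In item 4 you go further than the paper, which only checks the convolution inverse: you also verify colinearity (via $j^{-1}(1_H)\in B$) and unitality of $j'$.

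The genuine difference is item 5. The paper uses Proposition~\ref{pro:inveritbilityinconvolutionalgebra} to obtain $j^{-1}=j\circ S$ by uniqueness of inverses. It then transports $S(hh')=S(h')S(h)$ from Proposition~\ref{pro:Hopfantipodeproperties}. You instead rerun the uniqueness argument in $Hom_{\mathbb{K}}(H\otimes H,A)$, with $j\circ\mu$ in place of $\mu$.
\begin{itemize}
\item The paper's route is shorter given the earlier results, and it yields the explicit formula $j^{-1}=j\circ S$.
\item Your route is self-contained and never uses the antipode. It therefore shows that the convolution inverse of any convolution-invertible algebra map out of a bialgebra is anti-multiplicative.
\end{itemize}
Your deduction of $j^{-1}(1_H)=1_A$ from item 3 and $j(1_H)=1_A$ is also cleaner than the paper's. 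The paper infers it from $j^{-1}(h)=j^{-1}(1_H)j^{-1}(h)$, which tacitly needs the invertibility of $j^{-1}(1_H)$, or simply the identity $j(S(1_H))=1_A$.
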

\begin{proof}
    \begin{enumerate}
        \item Using the fact that $j \colon H \to A$ is a right $H$-colinear map, i.e. $\Delta_A \circ j = (j \otimes id) \circ \Delta$, and that $A$ is a right $H$-comodule algebra, so that $\Delta_A(aa^{\prime}) = \Delta_A(a)\Delta_A(a^{\prime})$ for any $a,a^{\prime} \in A$ and $\Delta_A(1_A) = 1_A \otimes 1_H$, we can write
        \begin{equation}
            \begin{split}
                &\epsilon(h)(1_A \otimes 1_H)= \Delta_A\big(\epsilon(h)1_A\big) =  \Delta_A\big((j * j^{-1})(h)\big) = \\
                &=\Delta_A\big(j(h_1)j^{-1}(h_2)\big) = \Delta_A\big(j(h_1)\big)\Delta_A\big(j^{-1}(h_2)\big) = \\
                &= \big(j(h_{11}) \otimes h_{12}\big)\Delta_A\big(j^{-1}(h_2)\big)=\\
                &= \big(j(h_{11}) \otimes h_{12}\big)\big(j^{-1}(h_2)_0 \otimes j^{-1}(h_2)_1\big)=\\
                &=j(h_{11})j^{-1}(h_2)_0 \otimes h_{12}j^{-1}(h_2)_1 =\\
                &=j(h_1)j^{-1}(h_3)_0 \otimes h_2j^{-1}(h_3)_1
            \end{split}
            \label{eq:convolinvprop11}
        \end{equation}
        for every $h \in H$. Consider now $f := (j \otimes id) \circ \Delta \colon H \to A \otimes H$, i.e. $f = \Delta_A \circ j = (j \otimes id) \circ \Delta$ since $j$ is a morphism of right $H$-comodules. We have that:
        \begin{equation}
            \begin{split}
                &\big(f*(\Delta_A \circ j^{-1}) \big)(h)=\mu \circ \big(f(h_1) \otimes (\Delta_A \circ j^{-1})(h_2)\big)=\\
                &=\big(j(h_{11}) \otimes h_{12}\big)\Delta_A\big(j^{-1}(h_2)\big)=\\
                &=\big(j(h_{11}) \otimes h_{12}\big)\big(j^{-1}(h_2)_0 \otimes j^{-1}(h_2)_1\big)=\\
                &=j(h_{11})j^{-1}(h_2)_0 \otimes h_{12}j^{-1}(h_2)_1 =\\
                &=j(h_1)j^{-1}(h_3)_0 \otimes h_2j^{-1}(h_3)_1=\epsilon(h)(1_A \otimes 1_H),
            \end{split}
            \label{eq:convolinvprop12}
        \end{equation}
        where in the last equality we have used Equation (\ref{eq:convolinvprop11}). Thus, $\Delta_A \circ j^{-1}$ is the convolution inverse of $f = \Delta_A \circ j = (j \otimes id) \circ \Delta$ in the convolution algebra $(Hom_{\mathbb{K}}(H, A \otimes H),*)$. \\
        Consider now $g:=(j^{-1} \otimes S) \circ flip \circ \Delta$. We have
        \begin{equation}
            \begin{split}
                &(f*g)(h)=\mu \circ \big(f(h_1) \otimes g(h_2)\big)= f(h_1)g(h_2)=\\
                &=\big(j(h_{11}) \otimes h_{12}\big)\big(j^{-1}(h_{22}) \otimes S(h_{21})\big)=\\
                &=\big(j(h_1) \otimes h_2\big)\big(j^{-1}(h_4) \otimes S(h_3)\big)= \\
                &=j(h_1)j^{-1}(h_4) \otimes h_2S(h_3)= j(h_1)j^{-1}(h_2) \otimes \epsilon(h)1_H =\\
                &=\epsilon(h)1_A \otimes \epsilon(h)1_H=\epsilon(h)(1_A \otimes 1_H),
            \end{split}
            \label{eq:convolinvprop13}
        \end{equation}
        which shows also $g$ is a convolution inverse of $f$. Thus, by the uniqueness of the inverse, we have that $\Delta_A \circ j^{-1}= (j^{-1} \otimes S) \circ flip \circ \Delta$, proving the first statement.
        \item Using, the previous point, by a direct computation we find
        \begin{equation}
            \begin{split}
                &\Delta_A \circ \big(\mu \circ (id \otimes j^{-1}) \circ \Delta_A(a)\big)=\\
                &\Delta_A\big(a_0j^{-1}(a_1)\big)=\Delta_A(a_0)\Delta_A\big(j^{-1}(a_1)\big) =\\
                &=(a_{00}\otimes a_{01})\big(j^{-1}(a_{12}) \otimes S(a_{11})\big)=\\
                &=a_{00}j^{-1}(a_{12}) \otimes a_{01}S(a_{11})=a_0j^{-1}(a_3) \otimes a_{1}S(a_2) =\\
                &=a_0j^{-1}(a_1) \otimes 1_H
            \end{split}
            \label{eq:convolinvprop2}
        \end{equation}
        for every $a \in A$, showing us that $a_0j^{-1}(a_1) \in B=A^{coH}$.
        \item Since $j^{-1}$ is the convolution inverse of $j$ we can write
        \begin{equation}
            \begin{split}
                &(j * j^{-1})(1_H)= j(1_H)j^{-1}(1_H)= \epsilon(1_H)1_A = 1_A=\\
                &=(j^{-1}*j)(1_H)=j^{-1}(1_H)j(1_H),
            \end{split}
            \label{eq:convolinvprop3}
        \end{equation}
        from which we get that $j^{-1}(1_H)$ is the inverse of $j^{-1}(1_H)$ in $A$.
        \item From the previous point it follows that
        \begin{equation}
            \begin{split}
                &(j^{\prime}*j^{\prime-1})(h)=j^{\prime}(h_1)j^{\prime-1}(h_2)=\\
                &=j^{-1}(1_H)j(h_1)j^{-1}(h_2)j(1_H)=j^{-1}(1_H)\epsilon(h)1_Aj(1_H)=\\
                &=\epsilon(h)j^{-1}(1_H)j(1_H)=\epsilon(h)1_A.
            \end{split}
            \label{eq:convolinvprop4}
        \end{equation}
        Similarly, one gets $(j^{\prime-1}*j^{\prime})(h)=\epsilon(h)1_A$.
        \item If $j$ is an algebra map, we have by Proposition \ref{pro:inveritbilityinconvolutionalgebra} that its convolution inverse is $j^{-1}=j \circ S$. In particular, using the properties of the antipode and the fact that $j$ is an algebra morphism, we can write
        \begin{equation}
            \begin{split}
                &j^{-1}(hh^{\prime})= j\big(S(hh^{\prime})\big)= j\big(S(h^{\prime})S(h)\big) =\\
                &=j\big(S(h^{\prime})\big)j\big(S(h)\big)=j^{-1}(h^{\prime})j^{-1}(h)
            \end{split} 
            \label{eq:convolinvprop51}
        \end{equation}
        for every $h,h^{\prime} \in H$. Moreover, setting $h^{\prime}=1_H$ we find
        \begin{equation}
            j^{-1}(h1_H)=j^{-1}(h)=j^{-1}(1_H)j^{-1}(h),
            \label{eq:convolinvprop52}
        \end{equation}
        which implies $j^{-1}(1_H)=1_A$.
    \end{enumerate}
\end{proof}
We now define the notion of extension, which is crucial in defining quantum principal bundles.
\begin{definition}
    We call $B = A^{coH} \subseteq A$
    \begin{enumerate}
        \item a \textsl{trivial extension} if there is a convolution invertible morphism $j \colon H \to A$ of right $H$-comodule algebras;
        \item a \textsl{cleft extension} if there is a convolution invertible morphism $j \colon H \to A$ of right $H$-comodules, called \textsl{cleaving map};
        \item a \textsl{Hopf-Galois extension} if the \textsl{canonical map}
        \begin{equation}
            \begin{split}
                \chi \colon &A \otimes_B A \to A \otimes H,\\
                &a \otimes_B a^{\prime} \mapsto aa^{\prime}_0 \otimes a^{\prime}_1
            \end{split}
            \label{eq:HopfGaloisextensioncanonicalmap}
        \end{equation}
        is invertible.
    \end{enumerate}
    \label{def:extensions}
\end{definition}
\begin{remark}
    Elements of $A \otimes_B A$ satisfy $a \otimes_B ba^{\prime}=ab \otimes_B a^{\prime}$ for every $a,a^{\prime} \in A$ and $b \in B$. \\
    The canonical map $\chi(a \otimes_B a^{\prime}) := aa^{\prime}_0 \otimes a^{\prime}_1$ is well defined on $A \otimes_B A$. To show this, we check that it acts the same way on $a \otimes_B ba^{\prime}$ and on $ab \otimes_B a^{\prime}$. Notice that
    \begin{equation}
        \begin{split}
             &(ba^{\prime})_0 \otimes (ba^{\prime})_1 = \Delta_A(ba^{\prime})= \Delta_A(b)\Delta_A(a^{\prime})= \\
             &=(b \otimes 1_H)(a^{\prime}_0 \otimes a^{\prime}_1) = ba^{\prime}_0 \otimes a^{\prime}_1,
        \end{split}
        \label{eq:canonicalmapHGwelldefined1}
    \end{equation}
    from which:
    \begin{equation}
        \begin{split}
             &\chi(a \otimes_B ba^{\prime})= a(ba^{\prime})_0 \otimes (ba^{\prime})_1=a(ba^{\prime}_0) \otimes a^{\prime}_1=\\
             &=(ab)a^{\prime}_0 \otimes a^{\prime}_1= \chi(ab \otimes_B a^{\prime}).
        \end{split}
        \label{eq:canonicalmapHGwelldefined2}
    \end{equation}
    \label{rem:onthequotienttensorproductotimesB}
\end{remark}
\begin{proposition}
    Every trivial extension is also a cleft extension. Ever cleft extension is also a Hopf-Galois extension.
    \label{pro:trivialarecleftareHG}
\end{proposition}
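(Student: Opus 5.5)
The first claim is immediate from Definition \ref{def:extensions}. A trivial extension comes with a convolution invertible morphism $j \colon H \to A$ of right $H$-comodule algebras. In particular $j$ is a convolution invertible morphism of right $H$-comodules, so it is a cleaving map and $B \subseteq A$ is cleft. Nothing needs to be checked beyond forgetting the algebra-morphism property.

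For the second claim, let $j$ be a cleaving map with convolution inverse $j^{-1}$. I will write down an explicit inverse of the canonical map:
\begin{equation*}
    \chi^{-1} \colon A \otimes H \to A \otimes_B A, \qquad a \otimes h \mapsto a\, j^{-1}(h_1) \otimes_B j(h_2).
\end{equation*}
This map is well defined as a map into $A \otimes_B A$, since it is the composite of a linear map into $A \otimes A$ with the quotient projection.

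The first composite to check is $\chi \circ \chi^{-1}$. Using right $H$-colinearity $\Delta_A \circ j = (j \otimes id)\circ\Delta$, I compute
\[
\chi\big(a j^{-1}(h_1)\otimes_B j(h_2)\big) = a j^{-1}(h_1) j(h_2) \otimes h_3 = a \otimes h,
\]
by the convolution identity $j^{-1}(h_1)j(h_2)=\epsilon(h)1_A$ and counitality.

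The other composite, $\chi^{-1}\circ\chi$, is the step I expect to be the main obstacle, because it requires moving an element across $\otimes_B$. Here
\[
\chi^{-1}\chi(a\otimes_B a') = a a'_0 j^{-1}(a'_1)\otimes_B j(a'_2).
\]
By Lemma \ref{lem:convolutioninvertiblecomodulemorphismproperties}(2), applied to $a'_0$, the element $a'_0 j^{-1}(a'_1)$ lies in $B$. To use this under coassociativity, I first rewrite the expression as $a\, a'_{0}\,j^{-1}(a'_{1})\otimes_B j(a'_{2})$. Here $a'_0\otimes a'_1\otimes a'_2=(\Delta_A\otimes id)\Delta_A(a')$, so that for fixed outer leg $a'_{(0)}\otimes a'_{(1)}$ the factor $b:=a'_{(0)0}j^{-1}(a'_{(0)1})$ is in $B$. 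Since $b\in B$, it can be moved across the balanced tensor product, giving
\[
a\, b \otimes_B j(a'_{(1)}) = a\otimes_B a'_{(0)0}\,j^{-1}(a'_{(0)1})\, j(a'_{(1)}).
\]
Re-indexing by coassociativity, this equals $a\otimes_B a'_0 j^{-1}(a'_1) j(a'_2)$. The inverse identity $j^{-1}(h_1)j(h_2)=\epsilon(h)1_A$ then collapses this to $a\otimes_B a'_0\epsilon(a'_1)=a\otimes_B a'$. The care needed is only to arrange the Sweedler indices so that membership in $B$ is applied to a genuine tensor leg before moving it across $\otimes_B$.

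Since both composites are identities, $\chi$ is bijective and the cleft extension is Hopf-Galois.
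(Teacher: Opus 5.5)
Your proof is correct and follows the paper's argument: you use the same explicit inverse $a\otimes h\mapsto a\,j^{-1}(h_1)\otimes_B j(h_2)$ and the same two composite checks. In both, Lemma \ref{lem:convolutioninvertiblecomodulemorphismproperties}(2) is what lets $a'_0\,j^{-1}(a'_1)\in B$ cross $\otimes_B$. Your bookkeeping of the Sweedler legs only spells out what the paper does implicitly with $a'_{11}\otimes a'_{12}$.
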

\begin{proof}
    The first statement is obviously satisfied.\\
    Let now $B=A^{coH} \subseteq A$ be a cleft extension, $j \colon H \to A$ be the cleaving map with convolution inverse $j^{-1} \colon H \to A$. Define
    \begin{equation}
        \begin{split}
            \chi^{-1} \colon &A \otimes H \to A \otimes_B A,\\
            &a \otimes h \mapsto aj^{-1}(h_1) \otimes_B j(h_2).
        \end{split}
        \label{eq:cleftareHGproof1}
    \end{equation}
    Since $j$ is a right $H$-comodule morphism we have $\Delta_A(j(h))=j(h)_0 \otimes j(h)_1=j(h_1) \otimes h_2$ and we can write:
    \begin{equation}
        \begin{split}
            &\chi\big(\chi^{-1}(a \otimes h)\big)=\chi\big(aj^{-1}(h_1) \otimes_B j(h_2)\big)=\\
            &=aj^{-1}(h_1)j(h_2)_0 \otimes j(h_2)_1 = aj^{-1}(h_1)j(h_{21}) \otimes h_{22}=\\
            &=aj^{-1}(h_1)j(h_2) \otimes h_3= \epsilon(h)a \otimes h=a \otimes h,
        \end{split}
        \label{eq:cleftareHGproof2}
    \end{equation}
    where in the last equality we have used the counit property. On the other hand, by Lemma \ref{lem:convolutioninvertiblecomodulemorphismproperties} $a_0j^{-1}(a_1) \in B$ for any $a \in A$, hence
    \begin{equation}
        \begin{split}
            &\chi^{-1}(\chi(a \otimes_B a^{\prime}))=\chi^{-1}(aa^{\prime}_0 \otimes a^{\prime}_1) = \\
            &= (aa^{\prime}_0)j^{-1}(a^{\prime}_{11}) \otimes_B j(a^{\prime}_{12}) = a\big(a^{\prime}_0j^{-1}(a^{\prime}_1)\big) \otimes_B j(a^{\prime}_2)=\\
            &= a \otimes_B \big(a^{\prime}_0j^{-1}(a^{\prime}_1)\big)j(a^{\prime}_2) = a \otimes_B a^{\prime}.
        \end{split}
        \label{eq:cleftareHGproof3}
    \end{equation}
    We have thus shown $\chi$ is invertible with inverse $\chi^{-1}$ as defined in Equation (\ref{eq:cleftareHGproof1}), proving that every cleft extension is a Hopf-Galois extension.
\end{proof}

\subsection{Smashed product algebras} \label{smashedproductalgebras}
In this section we define the smashed product algebra and prove its correspondence with the case of a trivial extension. \\
In the following, let $B$ be an algebra.
\begin{definition}
    We say that $H$ \textsl{measures} $B$ if there is a $\mathbb{K}$-linear map
    \begin{equation}
        \begin{split}
            \triangleright \colon &H \otimes B \to B,\\
            &h \otimes b \to h \triangleright b,
        \end{split}
        \label{eq:cdotmeasures1}
    \end{equation}
    such that
    \begin{equation}
        \begin{split}
            &h \triangleright 1_B = \epsilon(h)1_B;\\
            &h \triangleright (bb^{\prime}) = (h_1\triangleright b)(h_2 \triangleright b^{\prime})
        \end{split}
        \label{eq:cdotmeasures2}
    \end{equation}
    for every $h \in H$ and $b,b^{\prime} \in B$.
    \label{def:measures}
\end{definition}
Note that $\triangleright \colon H \otimes B \to B$ is not assumed to be a left $H$-action, i.e., for $H$ to measure $B$, it is not required that $h \triangleright (h^{\prime} \triangleright b) = hh^{\prime} \triangleright b$.    
\begin{definition}
    Assume $H$ measures $B$. Consider a map $\sigma \colon H \otimes H \to B$.
    \begin{enumerate}
        \item We call $\sigma \colon H \otimes H \to B$ a $2$\textsl{-cocyle} with values in $B$ if it is a convolution invertible morphism such that
        \begin{equation}
            \begin{split}
                &\sigma(h \otimes 1_H) = \epsilon(h)1_B = \sigma(1_H \otimes h),\\
                &\big(h_1 \triangleright \sigma(h^{\prime}_1 \otimes h^{\prime\prime}_1)\big) \sigma(h_2 \otimes h^{\prime}_2 h^{\prime\prime}_2 ) = \sigma(h_1 \otimes h^{\prime}_1 )\sigma(h_2 h^{\prime}_2 \otimes h^{\prime\prime}),
            \end{split}
            \label{eq:2cocycle}
        \end{equation}
        for all $h,h^{\prime},h^{\prime\prime} \in H$ and $b \in B$.
    \item We call $B$ a $\sigma$\textsl{-twisted left} $H$\textsl{-module} if there is a $2$-cocycle $\sigma \colon H \otimes H \to B$ with values in $B$ such that
    \begin{equation}
            \begin{split}
                &1_H \triangleright b = b,\\
                &h \triangleright (h^{\prime} \triangleright b) = \sigma(h_1 \otimes h^{\prime}_1)\big((h_2 h^{\prime}_2) \triangleright b\big)\sigma^{-1}(h_3 \otimes h^{\prime}_3),
            \end{split}
            \label{eq:sigmatwistedleftHmodule2cocycleproperties}
        \end{equation}
        for all $h, h^{\prime} \in H$ and $b \in B$.
        \end{enumerate}
    \label{def:2cocycletwisted}
\end{definition}
As shown in \cite{del_donno_durdevic}, Lemma 2.2.3, one can define, given a $\sigma$-twisted left $H$-module $B$, an associative unital product
\begin{equation}
        \begin{split}
            \mu_{\#_{\sigma}} \colon &(B \otimes H) \otimes (B \otimes H) \to B \otimes H,\\
            &(b \otimes h) \otimes (b^{\prime} \otimes h^{\prime}) \mapsto b(h_1 \triangleright b^{\prime})\sigma(h_2 \otimes h^{\prime}_1) \otimes h_3 h^{\prime}_2,
        \end{split}
        \label{eq:crossedproduct}
\end{equation}
on $B \otimes H$, with unit $1_B \otimes 1_H$. The algebra $B \#_{\sigma} H := (B \otimes H, \mu_{\#_{\sigma}})$ is called the \textsl{crossed product algebra}.\\
The map
\begin{equation}
    \begin{split}
        \sigma \colon &H \otimes H \to B,\\
        &h \otimes h^{\prime} \mapsto \epsilon(hh^{\prime})1_B,
    \end{split}
    \label{eq:trivial2cocycle}
\end{equation}
is trivially a $2$-cocycle, and is such that the corresponding $\sigma$-twisted left $H$-module $B$, by Definition \ref{def:2cocycletwisted}, satisfies
\begin{equation}
    \begin{split}
        &1_H \triangleright b = b,\\
        &h \triangleright (h^{\prime} \triangleright b) = (hh^{\prime}) \triangleright b,
    \end{split}
    \label{eq:trivial2cocycleproperties}
\end{equation}
for all $b \in B$ and $h,h^{\prime} \in H$, so that $\triangleright$ is a left $H$-action and $B$ is a left $H$-module algebra. \\
The product obtained by using this trivial $2$-cocycle in Equation (\ref{eq:crossedproduct}) defines the smashed product algebra.
\begin{definition}
    Let $B$ be a left $H$-module algebra, with left $H$-action $\triangleright$. The product
    \begin{equation}
        \begin{split}
            \mu_{\#} \colon &(B \otimes H) \otimes (B \otimes H) \to B \otimes H,\\
            &(b \otimes h) \otimes (b^{\prime} \otimes h^{\prime}) \mapsto b(h_1 \triangleright b^{\prime}) \otimes h_2h^{\prime}
        \end{split}
        \label{eq:smashproductalgebraproduct}
    \end{equation}
    makes $B\#H := (B \otimes H, \mu_{\#})$ an associative unital algebra, with unit $1_B \otimes 1_H$, called the \textsl{smashed product algebra}. We denote an element of $B\#H$ by $b\#h$ and we write $(b\#h)(b^{\prime}\#h^{\prime}) = b(h_1 \triangleright b^{\prime}) \otimes h_2h^{\prime}$.
    \label{def:smashproductalgebra}
\end{definition}
We can give $B\#H$ a natural right $H$-comodule structure by the right $H$-coaction
\begin{equation}
    \Delta_{B\#H} := id_B \otimes \Delta \colon B\#H \to B\#H \otimes H,
    \label{eq:naturalrightHcoactiononsmashproduct}
\end{equation}
identifying the subalgebra of coinvariant elements $(B\#H)^{coH} = B \otimes 1_H \cong B$.\\
We now show that the smashed product algebra identifies a trivial extension and viceversa.
\begin{proposition}
    Any smashed product algebra $B\#H$ is a trivial extension with convolution invertible morphism of right $H$-comodule algebras
    \begin{equation}
        \begin{split}
            j \colon &H \to B\#H,\\
            &h \mapsto 1_B \# h.
        \end{split}
        \label{eq:smashproductalgebraimpliestrivialmap}
    \end{equation}
    Conversely, a trivial extension $B \subseteq A$ with $j \colon H \to A$ an algebra map, defines a $\sigma$-twisted left $H$-module action
    \begin{equation}
        \begin{split}
            \triangleright \colon &H \otimes B \to B,\\
            &h \otimes b \mapsto h \triangleright b := j(h_1)bj^{-1}(h_2)
        \end{split}
        \label{eq:cleavingimpliestwistedleftHmoduleaction}
    \end{equation}
    on $B$, and the $2$-cocycle
     \begin{equation}
        \begin{split}
            \sigma \colon &H \otimes H \to B,\\
            &h \otimes h^{\prime} \mapsto \sigma(h \otimes h^{\prime}) := j(h_1) j(h^{\prime}_1)j^{-1}(h_2 h^{\prime}_2)
        \end{split}
        \label{eq:cleavingimplies2cocycle}
    \end{equation}
    is the trivial one, such that $A \cong B\#H$ are isomorphic as right $H$-comodule algebras.
    \label{pro:correspondencebetweentrivialextandsmashproductalg}
\end{proposition}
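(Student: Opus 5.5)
The proof has two directions; I will treat them separately.

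\textbf{From $B\#H$ to a trivial extension.} I would check directly that $j(h)=1_B\#h$ satisfies the definition of trivial extension (Definition \ref{def:extensions}).

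\emph{Algebra map.} Using the product (\ref{eq:smashproductalgebraproduct}) together with $h_1\triangleright 1_B=\epsilon(h_1)1_B$ from Definition \ref{def:modulealgebra},
\[
(1_B\#h)(1_B\#h') = (h_1\triangleright 1_B)\#h_2h' = 1_B\#hh',
\]
and clearly $j(1_H)=1_B\#1_H$.

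\emph{Colinearity.} With the coaction $id_B\otimes\Delta$ we get $\Delta_{B\#H}(1_B\#h)=1_B\#h_1\otimes h_2$, which is $(j\otimes id)\Delta(h)$.

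\emph{Convolution invertibility.} By Proposition \ref{pro:inveritbilityinconvolutionalgebra}(2), $j^{-1}=j\circ S$ is a convolution inverse, since $j$ is an algebra map.

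Finally, $(B\#H)^{coH}=B\otimes 1_H\cong B$, which I would verify by applying $id\otimes\epsilon\otimes id$ to the coinvariance condition.

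\textbf{From a trivial extension to $B\#H$.}

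\emph{Step 1: $\triangleright$ lands in $B$.} Lemma \ref{lem:convolutioninvertiblecomodulemorphismproperties}(5) gives $j^{-1}=j\circ S$, an anti-algebra map with $j^{-1}(1_H)=1_A$. I would compute $\Delta_A(j(h_1)bj^{-1}(h_2))$ using multiplicativity of $\Delta_A$, colinearity of $j$, coinvariance of $b$, and point 1 of the same lemma ($\Delta_A j^{-1}=(j^{-1}\otimes S)\,flip\,\Delta$). This yields
\[
j(h_1)bj^{-1}(h_4)\otimes h_2S(h_3)=h\triangleright b\otimes 1_H,
\]
so $h\triangleright b\in B$.

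\emph{Step 2: triviality of $\sigma$.} Since $j$ is multiplicative, $\sigma(h\otimes h')=j(h_1h'_1)j^{-1}(h_2h'_2)=\epsilon(hh')1_A$, because $\Delta$ is multiplicative and $j*j^{-1}=\eta\circ\epsilon$. So $\sigma$ is the trivial cocycle (\ref{eq:trivial2cocycle}).

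\emph{Step 3: $B$ is a left $H$-module algebra.} The action property follows from
\[
h\triangleright(h'\triangleright b)=j(h_1)j(h'_1)bj^{-1}(h'_2)j^{-1}(h_2)=j((hh')_1)\,b\,j^{-1}((hh')_2),
\]
using the anti-multiplicativity of $j^{-1}$. The measuring property $h\triangleright(bb')=(h_1\triangleright b)(h_2\triangleright b')$ comes from inserting $j^{-1}(h_2)j(h_3)=\epsilon(h_2)1_A$ between $b$ and $b'$. We also have $h\triangleright 1_B=\epsilon(h)1_B$.

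\emph{Step 4: the isomorphism.} I would define
\[
\theta\colon B\#H\to A,\ b\#h\mapsto bj(h), \qquad \theta^{-1}(a)=a_0j^{-1}(a_1)\#a_2 .
\]
$\theta^{-1}$ lands in $B\otimes H$ by Lemma \ref{lem:convolutioninvertiblecomodulemorphismproperties}(2) applied after coassociativity.
\begin{itemize}
\item $\theta\theta^{-1}(a)=a_0j^{-1}(a_1)j(a_2)=a$.
\item $\theta^{-1}\theta(b\#h)=bj(h_1)j^{-1}(h_2)\#h_3=b\#h$, using $\Delta_A(bj(h))=bj(h_1)\otimes h_2$.
\item Colinearity is immediate.
\item Multiplicativity:
\[
\theta(b\#h)\theta(b'\#h')=bj(h)b'j(h')=b\,j(h_1)b'j^{-1}(h_2)j(h_3)\,j(h'),
\]
obtained by inserting $j^{-1}(h_2)j(h_3)$. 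This equals $b(h_1\triangleright b')j(h_2h')=\theta\big((b\#h)(b'\#h')\big)$.
\end{itemize}

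The main obstacle I expect is Step 1, specifically keeping the Sweedler indices straight when combining the formula for $\Delta_A\circ j^{-1}$ with colinearity of $j$. Everything else is a short manipulation relying on the fact that $j$ is an algebra map.
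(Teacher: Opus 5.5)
Your proof is correct. On the steps it shares with the paper it follows the same approach. The paper also uses $j^{-1}(h)=1_B\#S(h)$; it verifies $j*j^{-1}$ and $j^{-1}*j$ by hand, whereas you get it at once from Proposition \ref{pro:inveritbilityinconvolutionalgebra}(2) after checking that $j$ is multiplicative. In the converse direction the paper likewise checks the measuring identities and shows $\sigma$ is trivial, using anti-multiplicativity of $j^{-1}$ where you use multiplicativity of $j$ and $\Delta$.

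The real difference is at the end. After showing $\sigma$ is trivial, the paper simply asserts that $\triangleright$ is an action and that $A\cong B\#H$. It implicitly relies on the general cleft-extension/crossed-product correspondence cited in Remark \ref{rem:onthemoregeneraltheoremaboutcleftcrossed}. It also never checks that $j(h_1)bj^{-1}(h_2)$ actually lies in $B$.

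Your argument fills these points in:
\begin{itemize}
\item Your Step 1 supplies the missing well-definedness check that $\triangleright$ lands in $B$.
\item Your Step 3 proves the action property directly.
\item Your Step 4 writes down the isomorphism $b\#h\mapsto bj(h)$ with inverse $a\mapsto a_0j^{-1}(a_1)\#a_2$, and verifies bijectivity, colinearity and multiplicativity.
\end{itemize}
So your proof is self-contained rather than dependent on the external theorem, at the cost of a few extra lines of Sweedler bookkeeping. For completeness, also record $1_H\triangleright b=b$, which is immediate from $j(1_H)=j^{-1}(1_H)=1_A$.
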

\begin{proof}
    Given a smashed product algebra $B\#H$ we consider the morphism of right $H$-comodule algebras
    \begin{equation}
        \begin{split}
          j \colon &H \to B\#H,\\
          &h \mapsto 1_B\#h.
        \end{split}
        \label{eq:Hcolinearmap1}
    \end{equation}
    This map is convolution invertible, with convolution inverse
    \begin{equation}
        \begin{split}
          j^{-1} \colon &H \to B\#H,\\
          &h \mapsto 1_B\# S(h).
        \end{split}
        \label{eq:Hcolinearmap2}
    \end{equation}
    In fact, we have
    \begin{equation}
        \begin{split}
            &(j*j^{-1}) (h) = j(h_1)j^{-1}(h_2)= (1_B\#h_1)\big(1_B\#S(h_2)\big) = \\
            &=1_B(h_{11} \triangleright 1_B)\#h_{12}S(h_2) = \epsilon(h)\big(1_B\#h_1S(h_2)\big) = \epsilon(h)1_B\#1_H
        \end{split}
        \label{eq:BdiesisthenBtrivial1}
    \end{equation}
    and
    \begin{equation}
        \begin{split}
            &(j^{-1}*j) (h) = j^{-1}(h_1)j(h_2)= \big(1_B\#S(h_1)\big)(1_B\#h_2) = \\
            &=1_B\big((S(h_1))_1 \triangleright 1_B\big)\#(S(h_1))_2h_2 = \big(S(h_{12}) \triangleright 1_B\big)\#S(h_{11})h_2 =\\
            &=\epsilon(h)1_B\#S(h_1)h_2 = \epsilon(h)1_B\#1_H.
        \end{split}
        \label{eq:BdiesisthenBtrivial2}
    \end{equation}
    Conversely, given $j \colon H \to A$ a convolution invertible morphism of right $H$-comodule algebras with convolution inverse $j^{-1}$, the map $\triangleright \colon H \otimes B \to B$ defined as in Equation (\ref{eq:cleavingimpliestwistedleftHmoduleaction}) is a $H$-measure on $B$. In fact
    \begin{equation}
        h \triangleright 1_B = j(h_1)j^{-1}(h_2) = \epsilon(h)1_B
        \label{eq:jtrivialthensmash2}
    \end{equation}
    and
    \begin{equation}
        \begin{split}
            &(h_1 \triangleright b)(h_2 \triangleright b^{\prime}) = j(h_{11})bj^{-1}(h_{12})j(h_{21})b^{\prime}j^{-1}(h_{22}) = \\
            &=j(h_1)bj^{-1}(h_2)j(h_3)b^{\prime}j^{-1}(h_4) = j(h_1)bb^{\prime}j^{-1}(h_2) =\\
            &=h \triangleright (bb^{\prime}).
        \end{split}
        \label{eq:jtrivialthensmash3}
    \end{equation}
    Moreover, the map $\sigma \colon H \otimes H \to B$ defined as in Equation (\ref{eq:cleavingimplies2cocycle}) is the trivial $2$-cocycle:
    \begin{equation}
        \begin{split}
            &\sigma(h \otimes h^{\prime}) = j(h_1) j(h^{\prime}_1)j^{-1}(h_2 h^{\prime}_2) = j(h_1) j(h^{\prime}_1)j^{-1}(h^{\prime}_2)j^{-1}(h_2) = \\
            &=\epsilon(hh^{\prime})1_B,
        \end{split}
        \label{eq:jtrivialthensmash4}
    \end{equation}
    where we used that since $j \colon H \to A$ is an algebra map by assumption, then $j^{-1}$ is an anti-algebra map by Lemma \ref{lem:convolutioninvertiblecomodulemorphismproperties}. Thus, $\triangleright$ is also left $H$-action and $A \cong B\#H$.
\end{proof}
\begin{remark}
    One could prove that (see \cite{del_donno_durdevic} Theorem 2.2.5), more in general, any cleft extension $B \subseteq A$ defines an $H$-measure on $B$ as in Equation (\ref{eq:cleavingimpliestwistedleftHmoduleaction}) and a $2$-cocycle as in Equation (\ref{eq:cleavingimplies2cocycle}), with $j \colon H \to A$ the cleaving map, so that $A$ is isomorphic to the crossed product algebra $B\#_{\sigma}H$, with product as in Equation (\ref{eq:crossedproduct}). Moreover, given the crossed product algebra $B\#_{\sigma}H$, one can prove that $j \colon H \to B\#_{\sigma}H, h \mapsto 1_B\#_{\sigma}h$ is a cleaving map. 
    \label{rem:onthemoregeneraltheoremaboutcleftcrossed}
\end{remark}

\section{First order differential calculi} \label{differentialcalculi}

\begin{definition}
    A \textsl{first order differential calculus} $(\Gamma, \mathrm{d})$ over a $\mathbb{K}$-algebra $A$ is the datum of
    \begin{enumerate}
        \item an $A$-bimodule $\Gamma$;
        \item a linear map $\mathrm{d} \colon A \to \Gamma$ satisfying the Leibniz rule
        \begin{equation}
            d(ab)=(da)b + a(db)
            \label{eq:firstorderdiffcalcLeibnizrule}
        \end{equation}
        for every $a,b \in A$, called the \textsl{differential};
        \item a surjectivity condition
        \begin{equation}
            \Gamma=AdA,
            \label{eq:firstorderdiffcalcsurjectivitycondition}
        \end{equation}
        i.e. $\Gamma = \text{span}\{ adb | a,b \in A \}$.
    \end{enumerate}
    \label{def:firstorderdifferentialcalculus}
\end{definition}
In classical differential geometry $A = C^{\infty}(M)$, and $a\mathrm{d}b = \mathrm{d}ba$ for every $a, b \in A$, which is not true anymore in the noncommutative setting.
\begin{definition}
    Let $(\Gamma, \mathrm{d})$ on $A$ and $(\Gamma^{\prime}, \mathrm{d}^{\prime})$ on $A^{\prime}$ be first order differential calculi. A \textsl{morphism of differential calculi} is a pair $(\Phi, \phi)$, where $\Phi \colon \Gamma \to \Gamma^{\prime}$ is a $\mathbb{K}$-linear map, and $\phi \colon A \to A^{\prime}$ is an algebra morphism, such that
    \begin{equation}
        \Phi(a \cdot \omega \cdot b) = \phi(a) \cdot^{\prime} \Phi(\omega) \cdot^{\prime} \phi(b),
        \label{eq:conditionfrodifferentialcalculimorphism}
    \end{equation}
    for all $a,b \in A$ and $\omega \in \Gamma$, with $\cdot$ denoting the left and right $A$-actions on $\Gamma$ and $\cdot^{\prime}$ the $A^{\prime}$-actions on $\Gamma^{\prime}$, and such that the diagram\\
    \[
    \begin{tikzcd}
    A \arrow{r}{\phi} \arrow{d}{\mathrm{d}} &A^{\prime} \arrow{d}{\mathrm{d}^{\prime}} \\
    \Gamma \arrow{r}{\Phi} & \Gamma^{\prime}
    \end{tikzcd}
    \]
    commutes, that is
    \begin{equation}
        \begin{split}
            \Phi \circ \mathrm{d} = \mathrm{d}^{\prime} \circ \phi.
        \end{split} 
        \label{eq:commutingdiagramdiffcalcmorphism}
    \end{equation}
    \label{def:morphismofdifferentialcalculi1}
\end{definition}
The following proposition shows that there always exists a first order differential calculus $\Gamma_u$ on an algebra $A$, called the universal differential calculus.
\begin{proposition}
    For every algebra $A$ there is a first order differential calculus $(\Gamma_u, \mathrm{d}_u)$ defined by
    \begin{equation}
        \Gamma_u := \ker{\mu_A} = \{ \sum_i a^i \otimes b^i \in A \otimes A | \sum_i a^ib^i=0  \},
        \label{eq:universalfirstorderdiffcalculus}
    \end{equation}
    with differential
    \begin{equation}
        \mathrm{d}_u(a):= 1_A \otimes a - a \otimes 1_A
        \label{eq:differentialuniversalfirst}
    \end{equation}
    for every $a \in A$.
    \label{pro:universaldifferentialcalculus}
\end{proposition}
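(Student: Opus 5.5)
To show that $(\Gamma_u,\mathrm{d}_u)$ is a first order differential calculus, I will check the three conditions of Definition \ref{def:firstorderdifferentialcalculus} in order: bimodule structure, Leibniz rule, surjectivity. First I would confirm that $\mathrm{d}_u$ really lands in $\Gamma_u$: $\mu_A(1_A\otimes a - a\otimes 1_A) = a - a = 0$, so $\mathrm{d}_u(a)\in\ker\mu_A$.

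For the bimodule structure, I will equip $A\otimes A$ with the outer actions $a\cdot(x\otimes y)\cdot b := ax\otimes yb$. These make $A\otimes A$ an $A$-bimodule by associativity and unitality of $A$ (Definition \ref{def:algebra}). The key observation is that $\mu_A$ is an $A$-bimodule map for these actions:
\begin{equation*}
\mu_A(ax\otimes yb) = a\,\mu_A(x\otimes y)\,b .
\end{equation*}
Its kernel is therefore a sub-bimodule, and $\Gamma_u$ inherits the bimodule structure.

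The Leibniz rule is a one-line telescoping computation. I would expand
\begin{equation*}
(\mathrm{d}_u a)b + a(\mathrm{d}_u b) = (1\otimes ab - a\otimes b) + (a\otimes b - ab\otimes 1) = \mathrm{d}_u(ab),
\end{equation*}
using the outer right action on the first term and the outer left action on the second. Linearity of $\mathrm{d}_u$ is clear.

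The only step needing a small idea is surjectivity, $\Gamma_u = A\,\mathrm{d}_u A$. The inclusion $\supseteq$ follows from the bimodule property just established. For $\subseteq$, take $\sum_i a^i\otimes b^i$ with $\sum_i a^ib^i = 0$ and compute
\begin{equation*}
\sum_i a^i\,\mathrm{d}_u b^i = \sum_i a^i\otimes b^i - \Big(\sum_i a^ib^i\Big)\otimes 1_A = \sum_i a^i\otimes b^i .
\end{equation*}
This exhibits every element of $\Gamma_u$ as a sum of terms $a\,\mathrm{d}_u b$, which completes the proof. I expect no real obstacle: the surjectivity trick of subtracting the vanishing product times $1_A$ is the only non-mechanical point.
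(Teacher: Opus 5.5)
Your proposal is correct and follows the paper's proof essentially step by step: the outer bimodule actions on $\ker\mu_A$, the telescoping Leibniz computation, and the surjectivity identity $\sum_i a^i\,\mathrm{d}_u b^i = \sum_i a^i\otimes b^i - (\sum_i a^i b^i)\otimes 1_A$. You are in fact slightly more careful than the paper, which only asserts the bimodule structure on $\ker\mu_A$, whereas you justify closure under the outer actions by noting that $\mu_A$ is a bimodule map.
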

\begin{proof}
    $\Gamma_u = \ker{\mu_A}$ has $A$-bimodule structure given by the left product on the first factor and the right product on the second factor of $A \otimes A$. \\
    The map $\mathrm{d}_u$ as defined in (\ref{eq:differentialuniversalfirst}) has values in $\Gamma_u = \ker{\mu_A}$ since
    \begin{equation}
        \mu_A\big(\mathrm{d}_u(a)\big) = \mu_A(1_A \otimes a - a \otimes 1_A) = a - a = 0,
        \label{eq:fodcuniversal1}
    \end{equation}
    and it satisfies the Leibniz rule:
    \begin{equation}
        \begin{split}
            &\mathrm{d}_u(a)\cdot a^{\prime} + a \cdot \mathrm{d}_u(a^{\prime}) = (1_A \otimes a - a \otimes 1_A)\cdot a^{\prime} + a\cdot (1_A \otimes a^{\prime} - a^{\prime} \otimes 1_A) = \\
            &= 1_A \otimes aa^{\prime} - a \otimes a^{\prime} + a \otimes a^{\prime} - aa^{\prime} \otimes 1_A = 1_A \otimes aa^{\prime} - aa^{\prime} \otimes 1_A =\\
            &= \mathrm{d}_u(aa^{\prime}),
        \end{split} 
        \label{eq:fodcuniversal2}
    \end{equation}
    for all $a,a^{\prime} \in A$. \\
    Let $\sum_i a^i \otimes b^i$ a generic element of $\Gamma_u := \ker{\mu_A}$, that is $\sum_i a^ib^i=0$. Then
    \begin{equation}
        \begin{split}
            &\sum_i a^i \mathrm{d}_u(b^i) = \sum_i a^i (1_A \otimes b^i - b^i \otimes 1_A) = \sum_i a^i \otimes b^i - a^ib^i \otimes 1_A = \\
            &= \sum_i a^i \otimes b^i,
        \end{split}
        \label{eq:fodcuniversal3}
    \end{equation}
    proving the surjectivity condition.
\end{proof}
\begin{remark}
    It is worth noting that every first order differential calculus $(\Gamma, \mathrm{d})$ over a unital $\mathbb{K}$-algebra $A$ can be obtained as a quotient of the universal differential calculus $(\Gamma_u, \mathrm{d}_u)$, as proven by Woronowicz \cite{worono}. In particular, there exists an $A$-subbimodule $N \subseteq \Gamma_u$ such that $\Gamma \cong \Gamma_u/N$ as $A$-bimodules, with the quotient map $\pi \colon \Gamma_u \to \Gamma_u/N$ being the unique surjective morphism such that $\mathrm{d} = \pi \circ \mathrm{d}_u$.
    \label{rem:onthefactthatanyfodcisaquotientoftheuniversal}
\end{remark}
Given a calculus on an algebra $A$, we can always induce a calculus on any of its subalgebras or quotient algebras. One denotes by $[a] = a + I$ an element of the quotient $A/I$, for $a \in A$ and $I$ an ideal. As in \cite{latiniweber_projective}, we present the following result.
\begin{proposition}
    Let $(\Gamma, \mathrm{d})$ be a first order differential calculus on an algebra $A$.
    \begin{enumerate}
        \item A morphism of algebras $\iota \colon A^{\prime} \to A$ induces a first order differential calculus $(\Gamma_{\iota}, \mathrm{d}_{\iota})$ on $A^{\prime}$ with
        \begin{equation}
            \Gamma_{\iota} := \iota(A)\mathrm{d}\iota(A) \subseteq \Gamma
            \label{eq:pullbackcalculus}
        \end{equation}
        and
        \begin{equation}
            \begin{split}
                \mathrm{d}_{\iota} := \mathrm{d} \circ \iota \colon &A^{\prime} \to \Gamma_{\iota},\\
                &a^{\prime} \mapsto \mathrm{d}\iota(a^{\prime}). 
            \end{split}
            \label{eq:pullbackcalculusdifferential}
        \end{equation}
        \item A surjective algebra morphism $\phi \colon A \to A_0$, such that we can identify $A_0$ with the quotient $A/I$ by the ideal $I := \ker{\phi}$, induces a first order differential calculus $(\Gamma_{\phi}, \mathrm{d}_{\phi})$ on $A_0$, where the $A_0$-bimodule $\Gamma_{\phi}$ is the quotient
        \begin{equation}
            \Gamma_{\phi} := \Gamma/\Gamma_I,
            \label{eq:quotientcalculus}
        \end{equation}
        with
        \begin{equation}
            \Gamma_I := I\mathrm{d}A + A\mathrm{d}I,
            \label{eq:idealcalculus}
        \end{equation}
        and
        \begin{equation}
            \begin{split}
                \mathrm{d}_{\phi} \colon &A_0 \to \Gamma_{\phi},\\
                &\phi(a) \mapsto [\mathrm{d}a].
            \end{split}
            \label{eq:quotientcalculusdifferential}
        \end{equation}
    \end{enumerate}
    \label{pro:pullbackcalculusandquotientcalculus}
\end{proposition}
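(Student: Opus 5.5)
Both parts are checks against the axioms of Definition \ref{def:firstorderdifferentialcalculus}: a bimodule structure, a Leibniz differential, and the surjectivity condition $\Gamma = A\,\mathrm{d}A$.

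\textbf{Part 1 (pullback calculus).} Here $\Gamma_{\iota}$ should be read as $\iota(A')\mathrm{d}\iota(A')$, i.e. the span of elements $\iota(a)\mathrm{d}\iota(b)$.
\begin{enumerate}
\item \emph{Bimodule structure.} I make $\Gamma_\iota$ an $A'$-bimodule by $a'\cdot\omega := \iota(a')\omega$ and $\omega\cdot a' := \omega\,\iota(a')$. Closure under left multiplication is immediate. Closure under right multiplication uses the Leibniz rule in $\Gamma$:
\[
\iota(a)\mathrm{d}\iota(b)\,\iota(c) = \iota(a)\mathrm{d}\iota(bc) - \iota(ab)\mathrm{d}\iota(c).
\]
The bimodule axioms are inherited from $\Gamma$, because $\iota$ is an algebra morphism.
\item \emph{Leibniz rule.} This follows from $\mathrm{d}\iota(ab) = \mathrm{d}(\iota(a)\iota(b))$.
\item \emph{Surjectivity.} This holds by construction of $\Gamma_\iota$.
\end{enumerate}

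\textbf{Part 2 (quotient calculus), step 1: $\Gamma_I$ is a sub-bimodule.} I first check that $\Gamma_I = I\,\mathrm{d}A + A\,\mathrm{d}I$ is an $A$-sub-bimodule of $\Gamma$.
\begin{itemize}
\item Left multiplication preserves $\Gamma_I$ trivially, since $I$ is an ideal.
\item For right multiplication, Leibniz gives
\[
i\,\mathrm{d}a\,c = i\,\mathrm{d}(ac) - ia\,\mathrm{d}c \in I\mathrm{d}A,
\qquad
a\,\mathrm{d}i\,c = a\,\mathrm{d}(ic) - ai\,\mathrm{d}c \in A\mathrm{d}I + I\mathrm{d}A,
\]
using $ic \in I$ and $ai \in I$.
\end{itemize}
Hence $\Gamma/\Gamma_I$ is an $A$-bimodule.

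\textbf{Part 2, step 2: descent to an $A_0$-bimodule.} The $A$-action on $\Gamma/\Gamma_I$ factors through $A/I \cong A_0$, because $I\Gamma \subseteq \Gamma_I$ and $\Gamma I \subseteq \Gamma_I$. To see this, write $\Gamma = A\,\mathrm{d}A$. Then $i\,a\,\mathrm{d}b \in I\mathrm{d}A$. For the right action, $a\,\mathrm{d}b\, i = a\,\mathrm{d}(bi) - ab\,\mathrm{d}i \in A\mathrm{d}I$. So I can define
\[
\phi(a)\cdot[\omega]\cdot\phi(b) := [a\omega b].
\]

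\textbf{Part 2, step 3: the differential.} The map $\mathrm{d}_\phi(\phi(a)) = [\mathrm{d}a]$ is well defined: if $\phi(a) = \phi(a')$, then $a - a' \in I$, so
\[
\mathrm{d}(a - a') = 1\cdot\mathrm{d}(a - a') \in A\,\mathrm{d}I \subseteq \Gamma_I .
\]
The Leibniz rule for $\mathrm{d}_\phi$ is inherited from that of $\mathrm{d}$. Surjectivity of $\Gamma_\phi = A_0\,\mathrm{d}_\phi A_0$ follows from surjectivity of $\Gamma = A\,\mathrm{d}A$ together with surjectivity of $\phi$.

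\textbf{Main point.} The only subtle step is verifying $\Gamma I \subseteq \Gamma_I$, since that is what makes the right $A_0$-action well defined. The Leibniz manipulation in step 2 handles it.
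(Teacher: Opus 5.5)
Your proposal is correct and follows the same route as the paper's proof. The pullback bimodule is shown to be closed under right multiplication by the same Leibniz rewriting, and the quotient calculus is obtained by showing $\Gamma_I$ is a sub-bimodule, descending the actions via $I\Gamma,\ \Gamma I\subseteq\Gamma_I$, and setting $\mathrm{d}_\phi(\phi(a))=[\mathrm{d}a]$. You also supply details the paper only asserts: the explicit check of $\Gamma I\subseteq\Gamma_I$, the well-definedness of $\mathrm{d}_\phi$ via $\mathrm{d}(a-a')=1\cdot\mathrm{d}(a-a')\in A\,\mathrm{d}I$, and the correct reading of $\Gamma_\iota$ as $\iota(A')\mathrm{d}\iota(A')$.
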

\begin{proof}
    In order:
    \begin{enumerate}
        \item $\Gamma_{\iota}$ in Equation (\ref{eq:pullbackcalculus}) has an $A^{\prime}$-bimodule structure, defined as
        \begin{equation}
            a^{\prime} \cdot \omega \cdot b^{\prime} := \iota(a^{\prime})\omega\iota(b^{\prime})
            \label{eq:pullbackcalculus1}
        \end{equation}
        for every $a^{\prime},b^{\prime} \in A^{\prime}$ and $\omega \in \Gamma$. We need to check that $\cdot$ maps close in $\Gamma_{\iota}$. A generic element $\omega \in \Gamma_{\iota}$ is written, by definition, as $\omega = \sum_i \iota(a^{\prime i})\mathrm{d}\iota(b^{\prime i})$ with $a^{\prime i}, b^{\prime i} \in A^{\prime}$. Then, for all $a^{\prime}, b^{\prime} \in A^{\prime}$, we have
        \begin{equation}
            \begin{split}
                &a^{\prime} \cdot \omega \cdot b^{\prime} = \sum_i \iota(a^{\prime}) \iota(a^{\prime i})\big(\mathrm{d}\iota(b^{\prime i})\big) \iota(b^{\prime}) = \\
                &= \sum_i \iota(a^{\prime}) \iota(a^{\prime i}) \mathrm{d}\big(\iota(b^{\prime i})  \iota(b^{\prime})\big) - \sum_i \iota(a^{\prime}) \iota(a^{\prime i}) \iota(b^{\prime i}) \mathrm{d}\iota(b^{\prime}) = \\
                &= \sum_i \iota(a^{\prime}a^{\prime i}) \mathrm{d}\big(\iota(b^{\prime i}b^{\prime})\big) - \sum_i \iota(a^{\prime}a^{\prime i}b^{\prime i})  \mathrm{d}\iota(b^{\prime}) \in \Gamma_{\iota},              
            \end{split}
            \label{eq:pullbackcalculus2}
        \end{equation}
        using the Leibniz rule for $\mathrm{d}$ and the fact that $\iota$ is an algebra morphism. The maps $\cdot$ in (\ref{eq:pullbackcalculus1}) are left and right $A^{\prime}$-actions since
        \begin{equation}
            a^{\prime} \cdot (b^{\prime} \cdot \omega) = \iota(a^{\prime})\iota(b^{\prime})\omega = \iota(a^{\prime}b^{\prime})\omega = a^{\prime}b^{\prime} \cdot \omega
            \label{eq:pullbackcalculus3}
        \end{equation}
        and
        \begin{equation}
            1_{A^{\prime}} \cdot \omega = \iota(1_{A^{\prime}})\omega = \omega,
            \label{eq:pullbackcalculus4}
        \end{equation}
        and similarly for the right $A^{\prime}$-action. Finally, we have $(a^{\prime} \cdot \omega) \cdot b^{\prime} = \big(\iota(a^{\prime})\omega\big)\iota(b^{\prime}) = \iota(a^{\prime})\big(\omega\iota(b^{\prime})\big) = a^{\prime} \cdot (\omega \cdot b^{\prime})$, so that $\Gamma_{\iota}$ is an $A^{\prime}$-bimodule.\\
        The map $\mathrm{d}_{\iota} := \mathrm{d} \circ \iota$ satisfies the Leibniz rule:
        \begin{equation}
            \begin{split}
                &\mathrm{d}_{\iota}(a^{\prime}b^{\prime}) = \mathrm{d}\iota(a^{\prime}b^{\prime}) = \mathrm{d}\big(\iota(a^{\prime})\iota(b^{\prime})\big) = \\    
                &= \big(\mathrm{d}\iota(a^{\prime})\big)\iota(b^{\prime}) + \iota(a^{\prime})\mathrm{d}\iota(b^{\prime}) = \mathrm{d}_{\iota}(a^{\prime}) \cdot b^{\prime} + a^{\prime} \cdot \mathrm{d}_{\iota}(b^{\prime}).
            \end{split}
            \label{eq:pullbackcalculus5}
        \end{equation}
        The surjectivity condition is satisfied since a generic element in $\Gamma_{\iota}$ can be written as $\omega = \sum_i \iota(a^{\prime i})\mathrm{d}\iota(b^{\prime i}) = \sum_i a^{\prime i} \cdot \mathrm{d}b^{\prime i}$.
        \item The subset $I = \ker{\phi} \subset A$ is an ideal since for any $x,y \in \ker{\phi}$ we have $\phi(x + y) = 0$ and for any $a,b \in A$ and $x \in \ker{\phi}$ we have $\phi(xa) = \phi(x)\phi(a) = 0 = \phi(a)\phi(x) = \phi(ax)$. Hence, $\Gamma_I := I\mathrm{d}A + A\mathrm{d}I$ is an $A$-subbimodule by the Leibniz rule of $\mathrm{d}$. Thus, $\Gamma_{\phi} := \Gamma/\Gamma_I$ is an $A$-bimodule since, denoting by $[\omega] = \omega + I$ any element of $\Gamma_{\phi}$, we can write for all $a \in A$
        \begin{equation}
            a \cdot [\omega] = a \cdot (\omega + I) = a\omega + I \in \Gamma_{\phi}
            \label{eq:quotientcalculus1}
        \end{equation}
        and similarly for the right $A$-action. Since $I \cdot \Gamma \subseteq \Gamma_I$, $\Gamma \cdot I \subseteq \Gamma_I$, the $A$-actions on $\Gamma$ descend to $A_0$-actions on $\Gamma_{\phi}$, with $A_0 = A/I$, as
        \begin{equation}
            \phi(a) \cdot [\omega] \cdot \phi(b) := [a\omega b]
            \label{eq:quotientcaculus2}
        \end{equation}
        for all $a,b \in A$ and $\omega \in \Gamma$. \\
        The map $\mathrm{d}_{\phi}\big(\phi(a)\big) := [\mathrm{d}a]$ satisfies the Leibniz rule. In fact, since $\phi$ is an algebra morphism:
        \begin{equation}
            \begin{split}
                &\mathrm{d}_{\phi}\big(\phi(a)\phi(b)\big) = \mathrm{d}_{\phi}\big(\phi(ab)\big) = [\mathrm{d}(ab)] = [(\mathrm{d}a)b] + [a\mathrm{d}b] =\\
                &= [\mathrm{d}a] \cdot b + a \cdot [\mathrm{d}b] = \mathrm{d}_{\phi}\big(\phi(a)\big) \cdot b + a \cdot \mathrm{d}_{\phi}\big(\phi(b)\big).
            \end{split}
            \label{eq:quotientcaculus3}
        \end{equation}
        for all $a, b \in A$.\\
        The surjectivity condition is satisfied since a generic element of $\Gamma_{\phi}$ is written as $[\omega] = [\sum_i a^{i}\mathrm{d}b^i] = \sum_i \phi(a^i) \cdot \mathrm{d}_{\phi}\big(\phi(b^i) \big) \in A_0\mathrm{d}_{\phi}A_0$.
    \end{enumerate}
\end{proof}
We call $(\Gamma_{\iota}, \mathrm{d}_{\iota})$ and $(\Gamma_{\phi},\mathrm{d}_{\phi})$ as in the above proposition the \textsl{pullback calculus} and \textsl{quotient calculus} respectively.

\subsection{First order $H$-covariant calculi} \label{firstordercovariantcalculi}
Let $H$ be a Hopf algebra and $A$ an $H$-comodule algebra.
\begin{definition}
    A first order differential calculus $(\Gamma, \mathrm{d})$ on a right $H$-comodule algebra $(A, \Delta_A)$ is called a \textsl{right} $H$\textsl{-covariant first order differential calculus} if $\Gamma$ is a \textsl{right} $H$\textsl{-covariant} $A$-bimodule, i.e., it has coaction
    \begin{equation}
        \begin{split}
            \Delta_{\Gamma} \colon &\Gamma \to \Gamma \otimes H,\\
            &a\mathrm{d}b \mapsto a_0\mathrm{d}b_0 \otimes a_1b_1
        \end{split}
        \label{eq:coactionforrightHcovatiantfirstorderdiffcalc}
    \end{equation}
    for all $a,b \in A$, such that the differential $d \colon A \to \Gamma$ is a morphsim of right $H$-comodules, that is
    \begin{equation}
        \Delta_{\Gamma} \circ \mathrm{d} = (\mathrm{d} \otimes id_H) \circ \Delta_A,
        \label{eq:rightHcovariantdiffcolinearity}
    \end{equation}
    Similarly, it is called a \textsl{left} $H$\textsl{-covariant first order differential calculus} if $\Gamma$ is a \textsl{left} $H$\textsl{-covariant} $A$-bimodule, i.e., it has coaction
    \begin{equation}
        \begin{split}
            \leftindex_{\Gamma}\Delta \colon &\Gamma \to H \otimes \Gamma,\\
            &a\mathrm{d}b \mapsto a_{-1}b_{-1} \otimes a_0\mathrm{d}b_0
        \end{split}
        \label{eq:coactionforleftHcovatiantfirstorderdiffcalc}
    \end{equation}
    for all $a,b \in A$, such that the differential $\mathrm{d} \colon A \to \Gamma$ is a morphsim of left $H$-comodules, that is 
    \begin{equation}
        \leftindex_{\Gamma}\Delta \circ \mathrm{d} = (id_H \otimes \mathrm{d}) \circ \leftindex_A\Delta,
        \label{eq:leftHcovariantdiffcolinearity}
    \end{equation}
    It is called an $H$\textsl{-bicovariant first order differential calculus} if it is both left and right $H$-covariant.
    \label{def:leftrightHcovariantdiffcalculi}
\end{definition}
The following proposition (see \cite{latiniweber_projective} Proposition 3.7) characterizes the pullback calculus and the quotient calculus on an $H$-comodule algebra as $H$-covariant first order differential calculi.
\begin{proposition}
    Let $(\Gamma, \mathrm{d})$ be a right $H$-covariant first order differential calculus on a right $H$-comodule algebra $A$.
    \begin{enumerate}
        \item If $\iota \colon A^{\prime} \to A$ is a right $H$-comodule algebra morphism then the pullback calculus $(\Gamma_{\iota}, \mathrm{d}_{\iota})$ is a right $H$-covariant first order differential calculus on  $A^{\prime}$.
        \item If $\phi \colon A \to A_0$ is a surjective right $H$-comodule algebra morphism then the quotient calculus $(\Gamma_{\phi}, \mathrm{d}_{\phi})$ is a right $H$-covariant first order differential calculus on $A_0$.
    \end{enumerate}
    \label{pro:pullbackandquotientcovariant}
\end{proposition}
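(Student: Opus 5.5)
Both parts are handled in the same way: I would define the coaction on the new bimodule as the restriction (for the pullback) or the descent (for the quotient) of $\Delta_\Gamma$. Then I check three things. The coaction must be well defined, landing in the right space. The comodule axioms must hold; these are inherited from $\Delta_\Gamma$. The new differential must be colinear. Throughout I use that $\Delta_\Gamma$ is determined on generators by $\Delta_\Gamma(a\mathrm{d}b)=a_0\mathrm{d}b_0\otimes a_1b_1$. Equivalently, $\Delta_\Gamma$ is multiplicative for the bimodule structure: $\Delta_\Gamma(a\omega b)=a_0\omega_0 b_0\otimes a_1\omega_1 b_1$, which follows from the formula and the Leibniz rule.

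For the pullback calculus, set $\Delta_{\Gamma_\iota}:=\Delta_\Gamma|_{\Gamma_\iota}$. Take a generator $\iota(a')\mathrm{d}\iota(b')$. Colinearity of $\iota$ gives $\Delta_A\circ\iota=(\iota\otimes id)\circ\Delta_{A'}$, and hence
\begin{equation*}
\Delta_\Gamma\big(\iota(a')\mathrm{d}\iota(b')\big)=\iota(a'_0)\mathrm{d}\iota(b'_0)\otimes a'_1b'_1\in\Gamma_\iota\otimes H .
\end{equation*}
So the restriction lands in $\Gamma_\iota\otimes H$ and has exactly the form required by Definition \ref{def:leftrightHcovariantdiffcalculi} with respect to the $A'$-bimodule structure of Proposition \ref{pro:pullbackcalculusandquotientcalculus}. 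Coassociativity and counitality are inherited from $\Delta_\Gamma$. Colinearity of $\mathrm{d}_\iota=\mathrm{d}\circ\iota$ is the composition of two colinear maps.

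For the quotient calculus, the key step, and the one I expect to be the main obstacle, is to show that $\Gamma_I=I\mathrm{d}A+A\mathrm{d}I$ is a subcomodule, i.e. $\Delta_\Gamma(\Gamma_I)\subseteq\Gamma_I\otimes H$. The first ingredient is that $I=\ker\phi$ is a right $H$-subcomodule of $A$. This follows from $(\phi\otimes id)\Delta_A(x)=\Delta_{A_0}(\phi(x))=0$ for $x\in I$. Since we are over a field, $\ker(\phi\otimes id)=I\otimes H$, so $\Delta_A(I)\subseteq I\otimes H$. This is where I would be careful to use flatness of vector spaces. Then for $x\in I$ and $a\in A$ we get $\Delta_\Gamma(x\mathrm{d}a)=x_0\mathrm{d}a_0\otimes x_1a_1$ with $x_0\in I$, which lies in $I\mathrm{d}A\otimes H$. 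Similarly $\Delta_\Gamma(a\mathrm{d}x)\in A\mathrm{d}I\otimes H$. Hence $\Delta_\Gamma$ descends to a linear map $\Delta_{\Gamma_\phi}\colon\Gamma/\Gamma_I\to\Gamma/\Gamma_I\otimes H$, given by $[\omega]\mapsto[\omega_0]\otimes\omega_1$. Here I again use that $(\Gamma\otimes H)/(\Gamma_I\otimes H)\cong(\Gamma/\Gamma_I)\otimes H$.

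To finish the quotient case, the comodule axioms pass to the quotient because the projection $\Gamma\to\Gamma_\phi$ is colinear by construction. On generators, $\Delta_{\Gamma_\phi}\big(\phi(a)\mathrm{d}_\phi\phi(b)\big)=[a_0\mathrm{d}b_0]\otimes a_1b_1=\phi(a_0)\mathrm{d}_\phi\phi(b_0)\otimes a_1b_1$. This matches the required form, because $\phi(a)_0\otimes\phi(a)_1=\phi(a_0)\otimes a_1$ by colinearity of $\phi$. Colinearity of $\mathrm{d}_\phi$ follows in the same way from $\Delta_{\Gamma_\phi}([\mathrm{d}a])=[\mathrm{d}a_0]\otimes a_1=(\mathrm{d}_\phi\otimes id)\Delta_{A_0}(\phi(a))$. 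Surjectivity of $\phi$ ensures every element of $A_0$ is of the form $\phi(a)$, so this covers all of $A_0$.
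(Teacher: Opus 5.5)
Your proposal is correct and follows essentially the same route as the paper. For the pullback you restrict $\Delta_\Gamma$ to $\Gamma_\iota$ using colinearity of $\iota$. For the quotient you deduce $\Delta_A(I)\subseteq I\otimes H$ from colinearity of $\phi$, hence $\Delta_\Gamma(\Gamma_I)\subseteq\Gamma_I\otimes H$, so $\Delta_\Gamma$ descends to $\Gamma/\Gamma_I$. You also spell out details the paper leaves implicit, all of which are sound: $\ker(\phi\otimes id)=I\otimes H$ over a field, the comodule axioms, and colinearity of $\mathrm{d}_\iota$ and $\mathrm{d}_\phi$.
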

\begin{proof}
    In order:
    \begin{enumerate}
        \item By Proposition \ref{pro:pullbackcalculusandquotientcalculus}, $(\Gamma_{\iota}, \mathrm{d}_{\iota})$ is a first order differential calculus on $A^{\prime}$. For all $a^{\prime},b^{\prime} \in A^{\prime}$ we have
        \begin{equation}
            \iota(a^{\prime})_0\mathrm{d}\iota(b^{\prime})_0 \otimes \iota(a^{\prime})_1\iota(b^{\prime})_1 = \iota(a^{\prime}_0)\mathrm{d}\iota(b^{\prime}_0) \otimes a^{\prime}_1b^{\prime}_1 \in \Gamma_{\iota} \otimes H,
            \label{eq:pullbackcovariant1}
        \end{equation}
        where we used that $\iota$ is a morphism of right $H$-comodules, so that the right $H$-coaction $\Delta_{\Gamma}$ on $\Gamma$ restricted to $\Gamma_{\iota} \subseteq \Gamma$ makes the pullback calculus a right $H$-covariant differential calculus on $A^{\prime}$. 
        \item By Proposition \ref{pro:pullbackcalculusandquotientcalculus}, $(\Gamma_{\phi}, \mathrm{d}_{\phi})$ is a first order differential calculus on $A_0$. Since $\phi$ is a morphism of right $H$-comodules, we have that $\Delta_A(I) \subseteq I \otimes H$, with $I = \ker{\phi}$. Then $\Gamma_I = I\mathrm{d}A + A\mathrm{d}I \subseteq \Gamma$ is such that $\Delta_{\Gamma}(\Gamma_I) \subseteq \Gamma_I \otimes H$, so that $\Delta_{\Gamma} \colon \Gamma \to \Gamma \otimes H$ induces a well defined right $H$-coaction on $\Gamma_{\phi}=\Gamma/\Gamma_I$ as
        \begin{equation}
            \Delta_{\Gamma}([a\mathrm{d}b]) = [a_0 \mathrm{d}b_0] \otimes a_1b_1 = \phi(a_0) \cdot \mathrm{d}_{\phi}\phi(b_0) \otimes a_1b_1
            \label{eq:quotientcovariant1}
        \end{equation}
        for all $a,b \in A$. Thus, $(\Gamma_{\phi}, \mathrm{d}_{\phi})$ is a right $H$-covariant first order differential calculus on $A_0$.
    \end{enumerate}
\end{proof}
Analogous results hold for left $H$-covariant calculi on left $H$-comodule algebras.\\

Let us now fix $(\Gamma_H, \mathrm{d}_H)$ a left covariant first order differential calculus over a Hopf algebra $H$. We denote the module of left coinvariant forms by
\begin{equation}
    \Lambda^1 := \{ \omega \in \Gamma_H \;|\; \leftindex_{\Gamma_H}\Delta (\omega) = 1 \otimes \omega \}.
    \label{eq:moduleofleftcoinvariantforms}
\end{equation}
\begin{definition}
    We define the \textsl{quantum Maurer-Cartan form} as the canonical $\mathbb{K}$-linear map from the kernel $H^+ = \ker{\epsilon}$ of the counit $\epsilon \colon H \to \mathbb{K}$ to the coinvariant forms as
    \begin{equation}
        \begin{split}
            \varpi \colon &H^+ \to \Lambda^1,\\
            &h \mapsto S(h_1)\mathrm{d}_Hh_2.
        \end{split}
        \label{eq:quantumMCform}
    \end{equation}
    \label{def:quantumMCdef}
\end{definition}
The following lemma is used to prove the classification theorem, due to Woronowicz, which provides a correspondence between right ideals $I \subseteq H$ that are in $\ker{\epsilon}$ and left covariant first order differential calculi $\Gamma_H$ over $H$.
\begin{lemma}
    The quantum Maurer-Cartan form $\varpi \colon H^+ \to \Lambda^1$ is a surjective morphism and moreover $I = \ker{\varpi} \subseteq H^+ \subseteq H$ is a right ideal, i.e. it is a $\mathbb{K}$-vector subspace of $H$ and it is closed under right multiplication by $H$.
    \label{lem:qMCsurjectiveandkerideal}
\end{lemma}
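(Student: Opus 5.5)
I will establish the three claims in turn: that $\varpi$ lands in $\Lambda^1$, that it is surjective onto $\Lambda^1$, and that $\ker{\varpi}$ is a right ideal of $H$. Throughout, $H$ is viewed as a left $H$-comodule algebra via $\Delta$, so that by Definition \ref{def:leftrightHcovariantdiffcalculi} the coaction is $\leftindex_{\Gamma_H}\Delta(a\mathrm{d}_Hb) = a_1b_1 \otimes a_2\mathrm{d}_Hb_2$.

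\textbf{Left coinvariance.} I will compute $\leftindex_{\Gamma_H}\Delta\big(S(h_1)\mathrm{d}_Hh_2\big)$ using the third property of Proposition \ref{pro:Hopfantipodeproperties}, $\Delta(S(h_1)) = S(h_2)\otimes S(h_1)$. Relabelling with coassociativity, this gives
\[
S(h_2)h_3 \otimes S(h_1)\mathrm{d}_Hh_4 = 1_H \otimes S(h_1)\mathrm{d}_Hh_2 .
\]
This step is routine.

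\textbf{Surjectivity.} This is the main step. Take $\omega \in \Lambda^1$ and write $\omega = \sum_i a^i\mathrm{d}_Hb^i$ using the surjectivity condition of the calculus. I will apply $\epsilon\otimes id$ after the coaction. Left coinvariance gives $\omega = (\epsilon \otimes id)(1\otimes\omega)$. On the other hand, expanding the coaction yields
\[
(\epsilon \otimes id)\big(a^i_1b^i_1 \otimes a^i_2\mathrm{d}_Hb^i_2\big) = \epsilon(a^i_1b^i_1)\,a^i_2\mathrm{d}_Hb^i_2 ,
\]
which only recovers $\omega$ and gives no new information. The trick is instead to insert $1 = S(b_1)b_2$. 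I will use the identity
\[
a\,\mathrm{d}_Hb = a b_1\, S(b_2)\mathrm{d}_Hb_3 ,
\]
which I expect to check via Leibniz: since $\mathrm{d}_H(S(b_2)b_3)=\mathrm{d}_H(\epsilon(b_2)1)=0$, we have $S(b_2)\mathrm{d}_Hb_3 = -\mathrm{d}_H(S(b_2))\,b_3$, and a short manipulation should then give the identity. This expresses $\omega = \sum_i a^ib^i_1\,\varpi\big(b^i_2 - \epsilon(b^i_2)1\big)$; note that $\varpi(1)$ would contribute $\mathrm{d}_H1 = 0$, so we may replace $b^i_2$ by its projection onto $H^+$. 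Applying the coaction, then $\epsilon\otimes id$, and using coinvariance of $\varpi(\cdot)$, I obtain
\[
\omega = \sum_i \epsilon(a^ib^i_1)\,\varpi\big(\pi^+(b^i_2)\big) = \varpi\Big(\sum_i \epsilon(a^i)\,\pi^+(b^i)\Big),
\]
where $\pi^+(h) := h-\epsilon(h)1$. Hence $\omega \in \operatorname{im}\varpi$.

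\textbf{Right ideal.} $I=\ker\varpi$ is a subspace by linearity, and $I \subseteq H^+$ since $\varpi$ is defined only on $H^+$. For $h\in H^+$ and $g\in H$ we have $hg \in H^+$, and by Leibniz
\[
\varpi(hg) = S(g_1)S(h_1)\,\mathrm{d}_H(h_2g_2) = S(g_1)\,\varpi(h)\,g_2 + \epsilon(h)\,S(g_1)\mathrm{d}_Hg_2 .
\]
The last term vanishes because $\epsilon(h)=0$. So if $\varpi(h)=0$ then $\varpi(hg)=0$, and $I$ is closed under right multiplication by $H$. The hardest part of the whole argument will be the rewriting identity used in the surjectivity step; everything else is bookkeeping with Sweedler indices.
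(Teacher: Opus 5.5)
Your left-coinvariance and right-ideal computations are fine, but the surjectivity step has a genuine gap. After rewriting $\omega=\sum_i a^ib^i_1\,\varpi(\pi^+(b^i_2))$, you claim that applying $\leftindex_{\Gamma_H}\Delta$ and then $\epsilon\otimes id$ yields $\omega=\sum_i\epsilon(a^ib^i_1)\,\varpi(\pi^+(b^i_2))$. This cannot work. The identity $(\epsilon\otimes id)\circ\leftindex_{\Gamma_H}\Delta=id$ is just the counit axiom of the comodule $\Gamma_H$, so this composite returns $\sum_i a^ib^i_1\varpi(\pi^+(b^i_2))$ unchanged. That is exactly the \emph{no new information} you already noticed in your first attempt. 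The coefficient $a^ib^i_1$ is never stripped off, coinvariance of $\omega$ is never actually used, and your rewriting identity does not change this.

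The missing idea is to project onto left-coinvariant forms with $P:=\cdot\circ(S\otimes id)\circ\leftindex_{\Gamma_H}\Delta$.
\begin{itemize}
\item Coinvariance of $\omega$ gives $P(\omega)=S(1_H)\,\omega=\omega$.
\item For $x\in H$ and a coinvariant $\varpi(c)$, one has $P(x\,\varpi(c))=S(x_1)x_2\,\varpi(c)=\epsilon(x)\,\varpi(c)$.
\end{itemize}
This is what actually produces your final formula. It is also the paper's argument, and it makes your rewriting step superfluous: applying $P$ directly to $\omega=\sum_i a^i\mathrm{d}_Hb^i$ gives
\[
\sum_i S(b^i_1)S(a^i_1)a^i_2\,\mathrm{d}_Hb^i_2=\sum_i\epsilon(a^i)\,\varpi(b^i)=\varpi\Big(\sum_i\epsilon(a^i)\,\pi^+(b^i)\Big).
\]
Incidentally, the identity $a\,\mathrm{d}_Hb=ab_1S(b_2)\,\mathrm{d}_Hb_3$ that you flagged as the hardest step is immediate from $b_1S(b_2)\otimes b_3=1_H\otimes b$ (antipode and counit axioms); no Leibniz rule is needed. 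The real content of surjectivity lies in the step you treated as bookkeeping.
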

\begin{proof}
    Let $\cdot \colon H \otimes \Gamma_H \to \Gamma_H$ be the left module action of $H$ on $\Gamma_H$. Considering a left-coinvariant form $\omega= a^i db^i \in \Lambda^1$, with $a^i,b^i \in H$, we find
    \begin{equation}
        \cdot \circ (S \otimes id) \circ \leftindex_{\Gamma_H} \Delta(\omega) = \cdot \circ (S(1_H) \otimes \omega) = \omega,
        \label{eq:qMCproof1}
    \end{equation}
    since $S(1_H)=1_H$ and by left $H$-coinvariance. Morever, since $\Gamma_H$ is a left $H$-covariant first order differential calculus and the coproduct $\Delta$ on $H$ can be seen as a left $H$-comodule action, we notice that
    \begin{equation}
        \begin{split}
            &\omega = \triangleright \circ (S \otimes id) \circ \leftindex_{\Gamma_H} \Delta(\omega)=\triangleright \circ (S \otimes id) \circ \leftindex_{\Gamma_H} \Delta(a^i\mathrm{d}_Hb^i)=\\
            &=\cdot \circ (S \otimes id) (a^i_{-1}b^i_{-1} \otimes a^i_0\mathrm{d}_Hb^i_0)=\\
            &=\cdot \circ \big(S(a^i_{-1}b^i_{-1}) \otimes a^i_0\mathrm{d}_Hb^i_0\big)=\cdot \circ \big(S(b^i_{-1})S(a^i_{-1}) \otimes a^i_0\mathrm{d}_Hb^i_0\big) =\\
            &=S(b^i_{-1})S(a^i_{-1})a^i_0\mathrm{d}_Hb^i_0 = \epsilon(a^i)S(b^i_{-1})\mathrm{d}_Hb^i_{0} = \epsilon(a^i)\varpi(b^i)= \\
            &=\varpi\big(\epsilon(a^i)b^i\big)=\varpi\big(\epsilon(a^i)b^i - \epsilon(a^ib^i)1_H\big)
        \end{split}
        \label{eq:qMCproof2}
    \end{equation}
    where in the last equality we added the term $-\epsilon(a^ib^i)1_H$, since $\varpi(1_H)=0$. The argument of $\varpi$ is in $H^+=\ker{\epsilon}$. In fact, since $\epsilon$ is an algebra map, we have:
    \begin{equation}
        \epsilon\big(\epsilon(a^i)b^i - \epsilon(a^ib^i)1_H\big) = \epsilon(a^i)\epsilon(b^i) - \epsilon(a^ib^i)\epsilon(1_H)=\epsilon(a^ib^i)-\epsilon(a^ib^i)=0.
        \label{eq:qMCproof3}
    \end{equation}
    Moreover, any element in $H^+$ can be written in this form, since the map $\pi^+ \colon H \to H^+, h \mapsto h - \epsilon(h)$ is surjective, and any $h \in H$ can be written as $\epsilon(a)b$ for some $a,b \in H$.
    Thus, by (\ref{eq:qMCproof2}), we have shown that $\varpi$ is surjective.\\
    For completeness, let us show here that any $c=h - \epsilon(h)1_H \in H^{+}$ (with $h \in H$) is sent to an element in $\Lambda^1$:
    \begin{equation}
        \begin{split}
            &\leftindex_{\Gamma_{H}}\Delta\big(\varpi(c)\big) = \leftindex_{\Gamma_{H}}\Delta\big(S(h_1)\mathrm{d}_Hh_2\big)= \Delta\big(S(h_1)\big)\leftindex_{\Gamma_{H}}\Delta\big(\mathrm{d}_Hh_2\big) = \\
            &=\big(S(h_{12}) \otimes S(h_{11})\big)\big((id \otimes \mathrm{d}_H) \circ \Delta(h_2)\big)=\\
            &=\big(S(h_{12}) \otimes S(h_{11})\big)(h_{21} \otimes \mathrm{d}_Hh_{22})=\\
            &=S(h_{12})h_{21} \otimes S(h_{11})\mathrm{d}_Hh_{22} = S(h_2)h_{3} \otimes S(h_1)dh_{4} =\\
            &=\epsilon(h)1_H \otimes S(h_1)dh_2.
        \end{split}
        \label{eq:robainpiuforsemia}
    \end{equation}
    To show that $I = \ker{\varpi}$ is a right ideal we simply consider $\eta \in I$ and $h \in H$, to obtain
    \begin{equation}
        \begin{split}
            &\varpi(\eta h)=S(\eta_1 h_1)\mathrm{d}_H(\eta_2 h_2) =\\
            &=S(h_1)S(\eta_1)\big(\mathrm{d}_H(\eta_2)h_2-\eta_2\mathrm{d}_H(h_2)\big)=\\
            &=S(h_1)S(\eta_1)\mathrm{d}_H(\eta_2)h_2-S(h_1)S(\eta_1)\eta_2\mathrm{d}_H(h_2)=\\
            &=S(h_1)\varpi(\eta)h_2-S(h_1)\epsilon(\eta)\mathrm{d}_H(h_2)=0,
        \end{split}
        \label{eq:qMCproof4}
    \end{equation}
    since $\varpi(\eta)=0$ because $\eta \in I$ and $\epsilon(\eta)=0$ because $\eta \in I=ker(\varpi)\subseteq H^+$, which shows $\eta h \in I$ for any $h \in H$, thus $I$ is a right ideal.
\end{proof}
Denote the quotient map by $\pi \colon H^+ \to H^+/I, \;h \mapsto \pi(h) = [h]$. The following theorem (see \cite{del_donno_durdevic} Theorem 3.1.12 for a detailed proof) provides a characterization of any $H$-covariant first order differential calculus on a Hopf algebra $H$.
\begin{theorem}
    For any right ideal $I \subseteq H$ with $I \subseteq H^+$ we have a left covariant first order differential calculus $(\Gamma_H, \mathrm{d}_H)$ on $H$, where
    \begin{equation}
        \begin{split}
            &\Gamma_H = H \otimes (H^+/I),\\
            &\mathrm{d}_Hh = (id \otimes \pi)\big(\Delta(h) - h \otimes 1\big). 
        \end{split}
        \label{eq:classthm1}
    \end{equation}
    $\Gamma_H$ is an $H$-bimodule by the following left and right $H$-module actions
    \begin{equation}
        \begin{split}
            &h \cdot (h^{\prime} \otimes [g]) = hh^{\prime} \otimes [g],\\
            &(h \otimes [g]) \cdot h^{\prime} = hh^{\prime}_1 \otimes [gh^{\prime}_2],
        \end{split}
        \label{eq:GammaHbimodule}
    \end{equation}
    with $h, h^{\prime} \in H$ and $g \in H^+$. The left $H$-coaction on $\Gamma_H$ is 
    \begin{equation}
        \leftindex_{\Gamma_{H}}\Delta = \Delta \otimes id_{H/I}.
        \label{eq:classthm2}
    \end{equation}
    If $ad_R(I) \subseteq I \otimes H$ (see Example \ref{ex:rightadjointcoactiongivesrightHcomodule}), $(\Gamma_H, \mathrm{d}_H)$ is bicovariant with right $H$-coaction
    \begin{equation}
        \Delta_{\Gamma} (h \otimes [g]) = (h_1 \otimes [g_2]) \otimes h_2 S(g_1) g_3,
        \label{eq:classthm3}
    \end{equation}
    for $h \in H$ and $g \in H^+$. Moreover, every $H$-covariant first order differential calculus on $H$ is of this form.
    \label{thm:classificationtheoremIguess}
\end{theorem}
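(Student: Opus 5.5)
The proof splits into three parts: check that the given data form a left covariant calculus; check bicovariance under the extra hypothesis $ad_R(I)\subseteq I\otimes H$; and prove the converse, which I expect to be the main difficulty. Throughout I read the left coaction as $\Delta\otimes id_{H^+/I}$ and the quotient map as $\pi\colon H^+\to H^+/I$.

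\textbf{Bimodule and differential.} First I verify that the left and right actions on $H\otimes(H^+/I)$ are well defined and compatible.
\begin{itemize}
\item The right action $(h\otimes[g])\cdot h' = hh'_1\otimes[gh'_2]$ descends to the quotient because $I$ is a right ideal.
\item $H^+$ is also a right ideal: $\epsilon(gh)=\epsilon(g)\epsilon(h)=0$ for $g\in H^+$.
\item Associativity of the right action follows from $\Delta$ being an algebra map; commutation with the left action is immediate.
\end{itemize}
Next I show $\mathrm{d}_H$ is well defined, i.e. $h_1\otimes(h_2-\epsilon(h_2)1)$ lands in $H\otimes H^+$, using counitality. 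The Leibniz rule comes from a direct Sweedler expansion of $\Delta(hh')-hh'\otimes 1$, splitting it as $(\Delta(h)-h\otimes1)\Delta(h') + h\otimes 1\cdot(\Delta(h')-h'\otimes1)$. This matches exactly the right action and left multiplication.

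\textbf{Surjectivity and left covariance.} For surjectivity I use the identity $S(h_1)\mathrm{d}_H h_2 = 1\otimes\pi(h-\epsilon(h)1)$. Then $h\otimes[g] = hS(g_1)\mathrm{d}_H g_2$, so $\Gamma_H = H\,\mathrm{d}_H H$. Left covariance of $\mathrm{d}_H$ is coassociativity of $\Delta$. I also check that $\Delta\otimes id$ is a left coaction compatible with both module actions, which is again coassociativity together with the algebra-map property of $\Delta$.

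\textbf{Bicovariance.} Here I define $\Delta_\Gamma$ by the stated formula. The hypothesis $ad_R(I)\subseteq I\otimes H$ is exactly what makes $[g]\mapsto[g_2]\otimes S(g_1)g_3$ well defined on $H^+/I$. Coaction axioms follow from the coaction property of $ad_R$ proven in Example \ref{ex:rightadjointcoactiongivesrightHcomodule}. Colinearity of $\mathrm{d}_H$ is a short Sweedler computation using $S(h_1)h_2=\epsilon(h)$. Commutation with the left coaction is clear since the two act on different legs.

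\textbf{Converse.} Given an arbitrary left covariant calculus $(\Gamma_H,\mathrm{d}_H)$, take $I=\ker\varpi$. By Lemma \ref{lem:qMCsurjectiveandkerideal}, $I$ is a right ideal inside $H^+$ and $\varpi$ induces an isomorphism $H^+/I\cong\Lambda^1$. The remaining task is to show that
\[
\Theta\colon H\otimes\Lambda^1\to\Gamma_H,\qquad h\otimes\omega\mapsto h\omega
\]
is an isomorphism. This is a fundamental theorem of Hopf modules, and it is the key step, since it is the one argument that is not a direct verification.
\begin{itemize}
\item The candidate inverse is $\rho\mapsto\rho_{-1}\otimes S(\rho_{-2})\rho_0$, written with an iterated left coaction.
\item Checking that $S(\rho_{-2})\rho_0$ is left coinvariant uses the anti-coalgebra property of $S$ (Proposition \ref{pro:Hopfantipodeproperties}).
\item Checking that both composites are identities is the antipode axiom.
\end{itemize}

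Finally I transport the structure through $\Theta$. The differential matches because $\mathrm{d}_H h = h_1 S(h_2)\mathrm{d}_H h_3 = h_1\varpi(\pi^+(h_2))$. The right action matches via $\varpi(g)h = h_1\varpi(gh_2)$ modulo $\epsilon$-terms, which is the computation already carried out in the proof of Lemma \ref{lem:qMCsurjectiveandkerideal}. In the bicovariant case, the same reasoning applied to the right coaction of $\Lambda^1$ shows that $I$ is $ad_R$-stable.
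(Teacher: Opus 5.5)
Your proposal is correct and follows the route the paper intends: the paper gives no proof of its own, deferring to its reference, but sets up exactly the Maurer--Cartan Lemma~\ref{lem:qMCsurjectiveandkerideal} ($H^+/\ker\varpi\cong\Lambda^1$) that you combine with the fundamental theorem of Hopf modules for the converse. One notational caution: with the usual labelling $(\Delta\otimes id)\circ\leftindex_{\Gamma_H}\Delta(\rho)=\rho_{-2}\otimes\rho_{-1}\otimes\rho_0$, the inverse of $\Theta$ must be written $\rho\mapsto\rho_{-2}\otimes S(\rho_{-1})\rho_0$, and your formula $\rho_{-1}\otimes S(\rho_{-2})\rho_0$ is correct only under your convention that $\rho_{-2}$ is the leg produced by applying the coaction a second time to $\rho_0$.
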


\subsection{The smashed product calculus} \label{smashedproductcalculus}

In this section we show the construction of a covariant differential calculus on the smashed product algebra $B\#H$ (\cite{latiniweber_projective}, \cite{pflaum}) from an $H$-module calculus on an $H$-module algebra $B$ and a bicovariant calculus on the Hopf algebra $H$. If the $H$-action on $B$ is trivial, we recover the tensor product differential calculus on $B \otimes H$ described in the following proposition.
\begin{proposition}
    Given a first order differential calculus $(\Gamma, \mathrm{d})$ on $A$ and a first order differential calculus $(\Gamma^{\prime}, \mathrm{d}^{\prime})$ on $A^{\prime}$, there is a FODC ($\Gamma_{A\otimes A^{\prime}}, \mathrm{d}_{A\otimes A^{\prime}})$ on $A \otimes A^{\prime}$, where 
    \begin{equation}
        \Gamma_{A\otimes A^{\prime}} = \Gamma \otimes A^{\prime} \oplus A \otimes \Gamma^{\prime}    
        \label{eq:tensorproductcalculus}
    \end{equation}
    and 
    \begin{equation}
        \begin{split}
            \mathrm{d}_{A\otimes A^{\prime}} \colon &A \otimes A^{\prime} \to \Gamma_{A\otimes A^{\prime}},\\
            &a \otimes a^{\prime} \mapsto \mathrm{d}a \otimes a^{\prime} + a \otimes \mathrm{d}^{\prime}a^{\prime}.
        \end{split}
        \label{e:difffortensorproductdiffcalc}
    \end{equation}
The $A \otimes A^{\prime}$-bimodule structure on $\Gamma_{A \otimes A^{\prime}}$ is 
\begin{equation}
    (a \otimes a^{\prime}) \cdot ( \omega \otimes b^{\prime} + b \otimes \omega^{\prime}) \cdot (c \otimes c^{\prime}) = a \omega c \otimes a^{\prime} b^{\prime} c^{\prime} + abc \otimes a^{\prime} \omega^{\prime} c^{\prime},    \label{eq:bimodulestructureofdifftensorproductcalculus}
\end{equation}
where $a, b, c \in A$, $a^{\prime}, b^{\prime}, c^{\prime} \in A^{\prime}$ , $\omega \in \Gamma$ and $\omega^{\prime} \in \Gamma^{\prime}$. This construction is associative, i.e. $(\Gamma_{(A \otimes A^{\prime}) \otimes A^{\prime\prime}}, \mathrm{d}_{(A \otimes A^{\prime}) \otimes A^{\prime\prime}})= (\Gamma_{A \otimes (A^{\prime} \otimes A^{\prime\prime})}, \mathrm{d}_{A \otimes (A^{\prime} \otimes A^{\prime\prime})})$ for another algebra $A^{\prime\prime}$.
    \label{pro:Diffcalculusoftensorproduct}
\end{proposition}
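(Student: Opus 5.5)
We verify directly the three axioms of Definition \ref{def:firstorderdifferentialcalculus} for $(\Gamma_{A\otimes A^{\prime}}, \mathrm{d}_{A\otimes A^{\prime}})$, and then check associativity by comparing the two iterated constructions. All formulas are defined on simple tensors and extended linearly. Well-definedness on $\Gamma\otimes A^{\prime}$ and $A\otimes\Gamma^{\prime}$ is automatic, because each summand of (\ref{eq:bimodulestructureofdifftensorproductcalculus}) is multilinear in its entries.

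\textbf{Bimodule structure.} The left and right actions are componentwise on each direct summand. On $\Gamma\otimes A^{\prime}$ the action is the tensor product of the $A$-bimodule $\Gamma$ with the $A^{\prime}$-bimodule $A^{\prime}$; on $A\otimes\Gamma^{\prime}$ it is the tensor product of $A$ with $\Gamma^{\prime}$. Hence associativity and unitality of each action, and commutation of left and right actions, follow immediately from the corresponding properties of $\Gamma$, $\Gamma^{\prime}$, $A$ and $A^{\prime}$.

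\textbf{Leibniz rule.} We compute $\mathrm{d}_{A\otimes A^{\prime}}\big((a\otimes a^{\prime})(b\otimes b^{\prime})\big)=\mathrm{d}(ab)\otimes a^{\prime}b^{\prime}+ab\otimes \mathrm{d}^{\prime}(a^{\prime}b^{\prime})$. Expanding each term by the Leibniz rules of $\mathrm{d}$ and $\mathrm{d}^{\prime}$ gives four terms, and these regroup as
\[
(\mathrm{d}a\otimes a^{\prime}+a\otimes \mathrm{d}^{\prime}a^{\prime})\cdot(b\otimes b^{\prime})+(a\otimes a^{\prime})\cdot(\mathrm{d}b\otimes b^{\prime}+b\otimes\mathrm{d}^{\prime}b^{\prime}).
\]
The regrouping uses the bimodule formula (\ref{eq:bimodulestructureofdifftensorproductcalculus}). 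For instance, $(a\otimes \mathrm{d}^{\prime}a^{\prime})\cdot(b\otimes b^{\prime})=ab\otimes(\mathrm{d}^{\prime}a^{\prime})b^{\prime}$ and $(a\otimes a^{\prime})\cdot(\mathrm{d}b\otimes b^{\prime})=a\,\mathrm{d}b\otimes a^{\prime}b^{\prime}$. Linearity of $\mathrm{d}_{A\otimes A^{\prime}}$ is clear.

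\textbf{Surjectivity.} This is the only step requiring a small trick. A generic element of $\Gamma\otimes A^{\prime}$ is a sum of terms $a\,\mathrm{d}b\otimes a^{\prime}$, and we claim
\[
a\,\mathrm{d}b\otimes a^{\prime}=(a\otimes a^{\prime})\cdot \mathrm{d}_{A\otimes A^{\prime}}(b\otimes 1_{A^{\prime}}).
\]
This holds because $\mathrm{d}^{\prime}1_{A^{\prime}}=0$, which in turn follows from the Leibniz rule: $\mathrm{d}^{\prime}(1\cdot1)=2\,\mathrm{d}^{\prime}1$. Symmetrically, $a\otimes a^{\prime}\mathrm{d}^{\prime}b^{\prime}=(a\otimes a^{\prime})\cdot\mathrm{d}_{A\otimes A^{\prime}}(1_A\otimes b^{\prime})$, using $\mathrm{d}1_A=0$. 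Hence $\Gamma_{A\otimes A^{\prime}}=(A\otimes A^{\prime})\,\mathrm{d}_{A\otimes A^{\prime}}(A\otimes A^{\prime})$.

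\textbf{Associativity of the construction.} Distributivity of $\otimes$ over $\oplus$ gives
\[
(\Gamma\otimes A^{\prime}\oplus A\otimes\Gamma^{\prime})\otimes A^{\prime\prime}\oplus (A\otimes A^{\prime})\otimes\Gamma^{\prime\prime}\;\cong\;\Gamma\otimes A^{\prime}\otimes A^{\prime\prime}\oplus A\otimes\Gamma^{\prime}\otimes A^{\prime\prime}\oplus A\otimes A^{\prime}\otimes\Gamma^{\prime\prime},
\]
and the same three-summand decomposition arises from the other bracketing. Under these canonical identifications both differentials send $a\otimes a^{\prime}\otimes a^{\prime\prime}$ to the same element,
\[
\mathrm{d}a\otimes a^{\prime}\otimes a^{\prime\prime}+a\otimes\mathrm{d}^{\prime}a^{\prime}\otimes a^{\prime\prime}+a\otimes a^{\prime}\otimes\mathrm{d}^{\prime\prime}a^{\prime\prime},
\]
and both bimodule structures act componentwise on each summand. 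Therefore the two calculi coincide, i.e. they are isomorphic via the canonical associativity isomorphisms, and the construction is associative.
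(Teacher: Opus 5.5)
Your proof is correct and complete. The bimodule axioms, the Leibniz rule, surjectivity (via $\mathrm{d}1_A=0=\mathrm{d}^{\prime}1_{A^{\prime}}$) and associativity up to the canonical identifications are all checked properly.

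The paper itself states this proposition without proof, as background from the cited literature, so there is no argument of the paper's to compare against for this statement. Your verification follows the same pattern the paper uses in its proof of Theorem \ref{thm:actualsmashproductcalculus}. Your surjectivity step is in fact more direct than the subtraction trick used there. In the untwisted tensor product one has $(a\otimes a^{\prime})\cdot(\mathrm{d}b\otimes 1_{A^{\prime}})=a\,\mathrm{d}b\otimes a^{\prime}$. For $B\#H$, by contrast, a left factor $h$ would act nontrivially on $\mathrm{d}_Bb$ through the $H$-action on $\Gamma_B$.
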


\begin{definition}
    Let $B$ be a left $H$-module algebra, with action $\triangleright \colon H \otimes B \to B$. Let $M$ be a $B$-bimodule, with actions $\cdot \colon B \otimes M \to M$, $\cdot \colon M \otimes B \to M$, and a left $H$-module with action $\mathrel{\scriptstyle \to} \colon H \otimes M \to M$. We say that $M$ is a \textsl{relative} $H$\textsl{-module} $B$\textsl{-bimodule} if the $H$ and $B$ actions have the compatibility
    \begin{equation}
        h \mathrel{\scriptstyle \to} (b \cdot m \cdot b^{\prime} ) = (h_1 \triangleright b) \cdot (h_2 \mathrel{\scriptstyle \to} m) \cdot (h_3 \triangleright b^{\prime} ), 
        \label{eq:relativecompatibility}
    \end{equation}
    for all $h \in H$, $b,b^{\prime} \in B$, $m \in M$.
    \label{def:relativeHmoduleBbimodule}
\end{definition}
Similar definitions are given if $M$ is just a left or a right $B$-module.
\begin{definition}
    Let $B$ be a left $H$-module algebra with action $\triangleright \colon H \otimes B \to B$. A FODC $(\Gamma_B, d_B)$ on $B$ is said to be an $H$\textsl{-module FODC} if for any $b^i, b_i \in B$, $i = 1, 2, \dots, n$, ($n \in \mathbb{N}$) and $h \in H$ we have
    \begin{equation}
        \sum_i b^id_Bb_i = 0 \implies \sum_i(h_1 \triangleright b^i)d_B(h_2 \triangleright b_i)=0.
        \label{eq:HmoduleFODC}
    \end{equation}
    \label{def:HmoduleFODC}
\end{definition}
\begin{proposition}
    $(\Gamma_B, d_B)$ is an $H$-module FODC if and only if $\Gamma_B$ is a relative $H$-module $B$-bimodule and $d_B \colon B \to \Gamma_B$ is an $H$-module map: for all $h \in H$, $b \in B$, we have
    \begin{equation}
        h \mathrel{\scriptstyle \to} \mathrm{d}_B b = \mathrm{d}_B (h \triangleright b).
        \label{eq:fordBtobeanHmodulemap}
    \end{equation}
    we have a well-defined $H$-action given by,
    \begin{equation}
        \begin{split}
            &H \otimes \Gamma_B \to \Gamma_B,\\
            &h \mathrel{\scriptstyle \to} \sum_i(b^i\mathrm{d}_Bb_i) := \sum_i(h_1 \triangleright b^i)\mathrm{d}_B(h_2 \triangleright b_i),
        \end{split}
        \label{eq:HmoduleFODCwillimply}
    \end{equation}
    where $\sum_i b^i d_B b_i$ is a generic element of $\Gamma_B = B\mathrm{d}_BB$.
    \label{pro:relationbetween HmoduleFODCandrelativestuff}
\end{proposition}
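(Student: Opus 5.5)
Both directions hinge on one fact: the formula \eqref{eq:HmoduleFODCwillimply} defines a map on $\Gamma_B$ exactly when the implication \eqref{eq:HmoduleFODC} holds.

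\textbf{($\Rightarrow$).} Assume $(\Gamma_B,\mathrm{d}_B)$ is an $H$-module FODC.

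\emph{Step 1: well-definedness.} Every element of $\Gamma_B$ can be written as $\sum_i b^i\mathrm{d}_Bb_i$, by the surjectivity condition $\Gamma_B = B\mathrm{d}_BB$. Suppose two such expressions represent the same form. Their difference is a finite sum $\sum_i b^i\mathrm{d}_Bb_i = 0$. Since $\triangleright$ is linear, \eqref{eq:HmoduleFODC} applied to this difference shows the two candidate values of $h \mathrel{\scriptstyle \to} \omega$ agree. Hence $\mathrel{\scriptstyle \to}$ is a well-defined linear map $H\otimes\Gamma_B\to\Gamma_B$.

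\emph{Step 2: module axioms.} The unit axiom $1_H \mathrel{\scriptstyle \to} \omega = \omega$ follows from $\Delta(1_H) = 1_H\otimes 1_H$ and $1_H\triangleright b = b$. For associativity, compute on a representative:
\[
h \mathrel{\scriptstyle \to} \bigl(g \mathrel{\scriptstyle \to} b\mathrm{d}_Bb'\bigr) = (h_1g_1\triangleright b)\,\mathrm{d}_B(h_2g_2\triangleright b').
\]
This uses that $\triangleright$ is a left action and that $\Delta$ is an algebra map, and the right-hand side is $(hg)\mathrel{\scriptstyle \to} b\mathrm{d}_Bb'$.

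\emph{Step 3: $\mathrm{d}_B$ is an $H$-module map.} Write $\mathrm{d}_Bb = 1_B\mathrm{d}_Bb$. Then
\[
h \mathrel{\scriptstyle \to} \mathrm{d}_Bb = (h_1\triangleright 1_B)\,\mathrm{d}_B(h_2\triangleright b) = \mathrm{d}_B(h\triangleright b),
\]
using $h_1\triangleright 1_B = \epsilon(h_1)1_B$ and counitality.

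\emph{Step 4: relative compatibility \eqref{eq:relativecompatibility}.} Take $\omega = \sum_i b^i\mathrm{d}_Bb_i$ and multiply by $c$ on the left and $c'$ on the right. Rewrite the product as an element of $B\mathrm{d}_BB$ using the Leibniz rule:
\[
c\,b^i(\mathrm{d}_Bb_i)\,c' = c\,b^i\,\mathrm{d}_B(b_ic') - c\,b^ib_i\,\mathrm{d}_Bc'.
\]
Now apply $h$ through the definition \eqref{eq:HmoduleFODCwillimply}. Expand with the module-algebra property $h\triangleright(xy) = (h_1\triangleright x)(h_2\triangleright y)$ and coassociativity. Then use Leibniz again in the reverse direction to recollect the terms into
\[
(h_1\triangleright c)\cdot\bigl(h_2 \mathrel{\scriptstyle \to} \omega\bigr)\cdot(h_3\triangleright c').
\]
This bookkeeping is where I expect the main effort to lie. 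It is purely mechanical but needs care with the Sweedler indices. It can be split into the left case ($c'=1_B$), which is immediate, and the right case ($c=1_B$), which carries the Leibniz manipulation.

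\textbf{($\Leftarrow$).} Assume $\Gamma_B$ is a relative $H$-module $B$-bimodule and $\mathrm{d}_B$ is $H$-linear. Suppose $\sum_i b^i\mathrm{d}_Bb_i = 0$. Then, using \eqref{eq:relativecompatibility} with right factor $1_B$ and then \eqref{eq:fordBtobeanHmodulemap},
\[
0 = h \mathrel{\scriptstyle \to} \sum_i b^i\mathrm{d}_Bb_i = \sum_i (h_1\triangleright b^i)\bigl(h_2 \mathrel{\scriptstyle \to} \mathrm{d}_Bb_i\bigr) = \sum_i (h_1\triangleright b^i)\,\mathrm{d}_B(h_2\triangleright b_i).
\]
This is exactly \eqref{eq:HmoduleFODC}, so $(\Gamma_B,\mathrm{d}_B)$ is an $H$-module FODC. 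The same computation shows that any such action is forced to agree with \eqref{eq:HmoduleFODCwillimply}.
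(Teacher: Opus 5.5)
Your argument is correct and complete. The paper gives no proof of this proposition: it is quoted from the literature (\cite{latiniweber_projective}, \cite{pflaum}) and then used in the smash product construction. So there is nothing to compare against, and your write-up supplies the missing argument along the standard lines.

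The $H$-module FODC condition is exactly what makes \eqref{eq:HmoduleFODCwillimply} independent of the chosen representative. The module axioms follow from $\triangleright$ being an action and $\Delta$ being an algebra map. The $H$-linearity of $\mathrm{d}_B$ follows from $h\triangleright 1_B=\epsilon(h)1_B$. In Step 4, expanding $\mathrm{d}_B\bigl((h_3\triangleright b_i)(h_4\triangleright c')\bigr)$ by Leibniz produces the term $(h_1\triangleright c)(h_2\triangleright b^i)(h_3\triangleright b_i)\,\mathrm{d}_B(h_4\triangleright c')$. This cancels the subtracted term and leaves precisely the right-hand side of \eqref{eq:relativecompatibility}.

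Your closing remark in the converse direction is worth keeping. It shows that such a relative module structure, when it exists, is unique and must be the one in \eqref{eq:HmoduleFODCwillimply}. That is the right reading of the somewhat garbled last sentence of the statement.
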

Given an $H$-module algebra $B$, a first order differential calculus $(\Gamma_B , \mathrm{d}_B)$ on $B$ and a left covariant first order differential calculus $(\Gamma_H, \mathrm{d}_H)$ on $H$, we consider the $\mathbb{K}$-module $\Gamma_{\#} := \Gamma_B \otimes H \oplus B \otimes \Gamma_H$ and study when there is a first order differential calculus $(\Gamma_{\#}, \mathrm{d}_{\#})$ on $B\#H$.
\begin{lemma}
    The $\mathbb{K}$-module
    \begin{equation}
    \Gamma_{\#} := \Gamma_B \otimes H \oplus B \otimes \Gamma_H
    \label{eq:smashproductcalculusmaybe}
    \end{equation}
    is a $B\#H$-bimodule. The left $B\#H$-action on $\Gamma_{\#}$ is
    \begin{equation}
        (b\#h)\cdot (\omega_B \otimes h^{\prime} + b^{\prime} \otimes \omega_H) := b(h_1 \mathrel{\scriptstyle \to} \omega_B) \otimes h_2h^{\prime} + b(h_1 \triangleright b^{\prime}) \otimes h_2\omega_H,
        \label{eq:leftBsmashHactiononGammasmash}
    \end{equation}
    and the right $B\#H$-action on $\Gamma_{\#}$ is
    \begin{equation}
        (\omega_B \otimes h^{\prime} + b^{\prime} \otimes \omega_H) \cdot (b \# h) := \omega_B (h^{\prime}_1 \triangleright b) \otimes h^{\prime}_2h + b^{\prime}\big((\omega_H)_{-1} \triangleright b\big) \otimes (\omega_H)_0h,
        \label{eq:rightBsmashHactiononGammasmash}
    \end{equation}
    for all $b,b^{\prime} \in B$, $h,h^{\prime} \in H$, $\omega_B \in \Gamma_B$, $\omega_H \in \Gamma_H$.
    \label{lem:GammasmahisaBsmashHbimodule}
\end{lemma}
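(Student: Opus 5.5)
We must show that the two formulas define commuting left and right $B\#H$-actions on $\Gamma_{\#}$. The plan is to check unitality, the module (associativity) axioms for each action, and the bimodule compatibility, treating the two summands $\Gamma_B\otimes H$ and $B\otimes\Gamma_H$ separately. Throughout we take $(\Gamma_B,\mathrm{d}_B)$ to be an $H$-module FODC. By Proposition \ref{pro:relationbetween HmoduleFODCandrelativestuff}, $\Gamma_B$ is then a relative $H$-module $B$-bimodule, so \eqref{eq:relativecompatibility} is available. We also take $\Gamma_H$ to be left covariant, so it is an $H$-bimodule with a left coaction $\leftindex_{\Gamma_H}\Delta$ that is a bimodule map, $(h\omega_H h')_{-1}\otimes(h\omega_Hh')_0=h_1(\omega_H)_{-1}h'_1\otimes h_2(\omega_H)_0h'_2$. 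This last property is the standard meaning of a left covariant bimodule, and the explicit form in Theorem \ref{thm:classificationtheoremIguess} makes it transparent.

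\emph{Unitality and left associativity.} Unitality follows from $1\triangleright b=b$, $1\mathrel{\scriptstyle \to}\omega_B=\omega_B$, $\Delta(1)=1\otimes1$, and $\leftindex_{\Gamma_H}\Delta$ being counital, together with $\epsilon(\omega_{H,-1})\omega_{H,0}=\omega_H$. For left associativity we compute $(b\#h)\cdot\big((c\#k)\cdot(\omega_B\otimes h')\big)$. This equals $b\big(h_1\mathrel{\scriptstyle \to}(c(k_1\mathrel{\scriptstyle \to}\omega_B))\big)\otimes h_2k_2h'$. Using \eqref{eq:relativecompatibility} (with $b'=1$) and the fact that $\mathrel{\scriptstyle \to}$ is an action, it becomes $b(h_1\triangleright c)((h_2k_1)\mathrel{\scriptstyle \to}\omega_B)\otimes h_3k_2h'$. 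This is exactly $\big((b\#h)(c\#k)\big)\cdot(\omega_B\otimes h')$ by \eqref{eq:smashproductalgebraproduct}. The $B\otimes\Gamma_H$ summand is handled identically, using that $B$ is an $H$-module algebra and $\Gamma_H$ a left $H$-module.

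\emph{Right associativity.} On $\Gamma_B\otimes H$ the argument mirrors associativity of the smash product itself. We need $\omega_B(h'_1\triangleright b)\,(h'_2\triangleright c)$, so we use that $\triangleright$ is a module-algebra action: $(h'_1\triangleright b)(h'_2\triangleright (h_1\triangleright c))=h'_1\triangleright(b(h_1\triangleright c))$ after reindexing. On $B\otimes\Gamma_H$ the coaction enters. Write $\omega=(\omega_H)$. Then $\big((b'\otimes\omega)\cdot(b\#h)\big)\cdot(c\#k)=b'(\omega_{-1}\triangleright b)\big((\omega_0h)_{-1}\triangleright c\big)\otimes(\omega_0h)_0k$. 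Covariance gives $(\omega_0h)_{-1}\otimes(\omega_0h)_0=\omega_{-1}h_1\otimes\omega_0h_2$ after coassociativity relabelling. The first tensor factor becomes $b'(\omega_{-2}\triangleright b)(\omega_{-1}h_1\triangleright c)=b'\,\omega_{-1}\triangleright\big(b(h_1\triangleright c)\big)$. Here we use that $\leftindex_{\Gamma_H}\Delta$ is a coaction, so $\Delta(\omega_{-1})\otimes\omega_0=\omega_{-2}\otimes\omega_{-1}\otimes\omega_0$, together with the module-algebra property. The result matches $(b'\otimes\omega)\cdot\big(b(h_1\triangleright c)\#h_2k\big)$.

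\emph{Bimodule compatibility.} We check $\big((b\#h)\cdot X\big)\cdot(c\#k)=(b\#h)\cdot\big(X\cdot(c\#k)\big)$. On $\Gamma_B\otimes H$ this reduces to \eqref{eq:relativecompatibility} with $b$ on the right: $h_1\mathrel{\scriptstyle \to}(\omega_B(h'_1\triangleright c))=(h_1\mathrel{\scriptstyle \to}\omega_B)(h_2h'_1\triangleright c)$. On $B\otimes\Gamma_H$ it reduces to left covariance of the left action: $(h_2\omega)_{-1}\otimes(h_2\omega)_0=h_2\omega_{-1}\otimes h_3\omega_0$. This gives $b(h_1\triangleright b')(h_2\omega_{-1}\triangleright c)=b\,h_1\triangleright\big(b'(\omega_{-1}\triangleright c)\big)$, as required.

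The main obstacle is the right action on $B\otimes\Gamma_H$. Its Sweedler bookkeeping requires the left coaction to be multiplicative with respect to both the left and right $H$-actions on $\Gamma_H$. If it proves cleaner, I will verify this in the model $\Gamma_H=H\otimes(H^+/I)$ of Theorem \ref{thm:classificationtheoremIguess}, where $\leftindex_{\Gamma_H}\Delta=\Delta\otimes id$ and the right action $(g\otimes[x])h=gh_1\otimes[xh_2]$ make this identity immediate. Well-definedness is automatic, since all formulas are given on tensor products and linear.
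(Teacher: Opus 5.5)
Your proposal is correct and follows essentially the same route as the paper: both verify unitality, the associativity of each action and the left--right compatibility separately on the summands $\Gamma_B\otimes H$ and $B\otimes\Gamma_H$, using the smash-product associativity computation as the template. The paper only sketches this, splitting the actions into separate $B$- and $H$-parts and leaving the checks to the reader, whereas you carry out the Sweedler computations and name the ingredients actually needed: the relative compatibility \eqref{eq:relativecompatibility}, the fact that $\mathrel{\scriptstyle \to}$ is an action, and the identity $(\omega_H h)_{-1}\otimes(\omega_H h)_0=(\omega_H)_{-1}h_1\otimes(\omega_H)_0h_2$ together with coassociativity of the left coaction on $\Gamma_H$.
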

\begin{proof}
    One can prove this by noting that the $\mathbb{K}$-module $\Gamma_{\#}$ in (\ref{eq:smashproductcalculusmaybe}) is a direct sum of tensor products of left $H$-modules, hence it carries a left $H$-action canonically induced from the $H$-actions on the $H$-modules $\Gamma_B$, $H$, $B$, $\Gamma_H$, that is 
    \begin{equation}
            h \cdot (\omega_B \otimes h^{\prime} + b^{\prime} \otimes \omega_H) = h_1 \mathrel{\scriptstyle \to} \omega_B \otimes h_2h^{\prime} + h_1 \triangleright b^{\prime} \otimes h_2 \omega_H.
            \label{eq:leftHactionofsmashproductcalculus}
     \end{equation}
     for all $h \in H$, $\omega_B \otimes h^{\prime} + b^{\prime} \otimes \omega_H \in \Gamma_{\#}$. Defining the left $B$-action on $\Gamma_{\#}$ as the $B$-action on the first factors in the tensor products $\Gamma_B \otimes H$ and $B \otimes \Gamma_H$ we obtain the left $B\#H$-action on $\Gamma_{\#}$ as in (\ref{eq:leftBsmashHactiononGammasmash}), and one can check that it is actually an action analogously to how one proves associativity of the multiplication in the smashed product algebra.\\
     Similarly, we define
     \begin{equation}
            (\omega_B \otimes h^{\prime} + b^{\prime} \otimes \omega_H) \cdot b := \omega_B (h^{\prime}_1 \triangleright b) \otimes h^{\prime}_2 + b^{\prime}((\omega_H)_{-1} \triangleright b) \otimes (\omega_H)_0
            \label{eq:rightBactiononGammasmash}
    \end{equation}
    and prove it is a right $B$-action on $\Gamma_{\#}$, and defining the right $H$-action on $\Gamma_{\#}$ as the right $H$-action on the second factors in the tensor products $\Gamma_B \otimes H$ and $B \otimes \Gamma_H$ we obtain the right $B\#H$-action on $\Gamma_{\#}$ as in (\ref{eq:rightBsmashHactiononGammasmash}).\\
    Finally, one proves commutativity of the left and right $B\#H$-actions on $\Gamma_B \otimes H$ and on $B \otimes \Gamma_H$.
\end{proof}
This shows that $\Gamma_{\#}$ is a $B\#H$-bimodule. We now prove the surjectivity condition and define a differential $\mathrm{d}_{\#}$ satisfying the Leibniz rule, so that $(\Gamma_{\#},\mathrm{d}_{\#})$ is a first order differential calculus on the smashed product algebra.
\begin{theorem}
    Let $H$ be a Hopf algebra and $B$ a left $H$-module algebra. Given an $H$-module first order differential calculus $(\Gamma_B, \mathrm{d}_B)$ on $B$ and a left covariant first order differential calculus $(\Gamma_H, \mathrm{d}_H)$ on $H$ there is a first order differential calculus $(\Gamma_{\#}, \mathrm{d}_{\#})$ on $B\#H$, where the $\mathbb{K}$-module $\Gamma_{\#} := \Gamma_B \otimes H \oplus B \otimes \Gamma_H$ is endowed with the $B\#H$-bimodule actions (\ref{eq:leftBsmashHactiononGammasmash}), (\ref{eq:rightBsmashHactiononGammasmash}) and the exterior derivative $\mathrm{d}_{\#} \colon B\#H \to \Gamma_{\#}$ is defined by
    \begin{equation}
        \mathrm{d}_{\#}(b \# h) := \mathrm{d}_B b \otimes h  + b \otimes \mathrm{d}_H h
        \label{eq:differentialsmash}
    \end{equation}
    for all $b \in B$ and $h \in H$.
    \label{thm:actualsmashproductcalculus}
\end{theorem}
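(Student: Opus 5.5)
Since Lemma \ref{lem:GammasmahisaBsmashHbimodule} already gives the $B\#H$-bimodule structure on $\Gamma_{\#}$, two things remain: the Leibniz rule for $\mathrm{d}_{\#}$ and the surjectivity condition $\Gamma_{\#} = (B\#H)\mathrm{d}_{\#}(B\#H)$. Before either, I would note that $\mathrm{d}_{\#}$ in (\ref{eq:differentialsmash}) is well defined and $\mathbb{K}$-linear, since it is built from the bilinear maps $(b,h)\mapsto \mathrm{d}_Bb\otimes h$ and $(b,h)\mapsto b\otimes \mathrm{d}_Hh$ on $B\otimes H$.

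For the Leibniz rule I would check $\mathrm{d}_{\#}\big((b\#h)(b'\#h')\big) = \mathrm{d}_{\#}(b\#h)\cdot(b'\#h') + (b\#h)\cdot \mathrm{d}_{\#}(b'\#h')$. The left-hand side is $\mathrm{d}_{\#}\big(b(h_1\triangleright b')\#h_2h'\big)$. Applying the Leibniz rules of $\mathrm{d}_B$ and $\mathrm{d}_H$, this splits into four terms:
\begin{equation*}
\mathrm{d}_Bb\,(h_1\triangleright b')\otimes h_2h' + b\,\mathrm{d}_B(h_1\triangleright b')\otimes h_2h' + b(h_1\triangleright b')\otimes (\mathrm{d}_Hh_2)h' + b(h_1\triangleright b')\otimes h_2\mathrm{d}_Hh'.
\end{equation*}
On the right-hand side, I would expand each product with (\ref{eq:leftBsmashHactiononGammasmash}) and (\ref{eq:rightBsmashHactiononGammasmash}):
\begin{itemize}
\item $(\mathrm{d}_Bb\otimes h)\cdot(b'\#h')$ produces the first term directly.
\item $(b\#h)\cdot(\mathrm{d}_Bb'\otimes h')$ gives $b(h_1\to\mathrm{d}_Bb')\otimes h_2h'$. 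This equals the second term by Proposition \ref{pro:relationbetween HmoduleFODCandrelativestuff}, i.e. $h\to\mathrm{d}_Bb'=\mathrm{d}_B(h\triangleright b')$, which is where the $H$-module FODC hypothesis enters.
\item $(b\#h)\cdot(b'\otimes\mathrm{d}_Hh')$ gives the fourth term.
\item The remaining piece, $(b\otimes \mathrm{d}_Hh)\cdot(b'\#h') = b\big((\mathrm{d}_Hh)_{-1}\triangleright b'\big)\otimes(\mathrm{d}_Hh)_0h'$, must match the third term.
\end{itemize}

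This last identification is the main obstacle, and it is where left covariance of $\Gamma_H$ is used. By (\ref{eq:leftHcovariantdiffcolinearity}) we have $(\mathrm{d}_Hh)_{-1}\otimes(\mathrm{d}_Hh)_0 = h_1\otimes \mathrm{d}_Hh_2$, so the piece becomes $b(h_1\triangleright b')\otimes(\mathrm{d}_Hh_2)h'$, which is exactly the third term. I would also make sure the right action (\ref{eq:rightBsmashHactiononGammasmash}) is read with the left coaction $\leftindex_{\Gamma_H}\Delta$, and that the right $H$-multiplication on the $\Gamma_H$ factor is the bimodule structure of $\Gamma_H$.

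For surjectivity I would proceed in three steps.
\begin{enumerate}
\item Compute $(b\#1)\cdot\mathrm{d}_{\#}(b'\#1) = b\,\mathrm{d}_Bb'\otimes 1$, using $1\triangleright b=b$ and $\mathrm{d}_H1=0$. Multiplying on the right by $1\#h$ gives $b\,\mathrm{d}_Bb'\otimes h$. Since $\Gamma_B = B\mathrm{d}_BB$, this spans $\Gamma_B\otimes H$.
\item Compute $(b\#h)\cdot\mathrm{d}_{\#}(1\#h') = b\otimes h\,\mathrm{d}_Hh'$, using $\mathrm{d}_B1_B=0$ and $h_1\triangleright 1_B=\epsilon(h_1)1_B$. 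Since $\Gamma_H = H\mathrm{d}_HH$, this spans $B\otimes\Gamma_H$.
\item Conclude that the span of $(B\#H)\mathrm{d}_{\#}(B\#H)$ is all of $\Gamma_{\#}$.
\end{enumerate}
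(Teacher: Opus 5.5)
Your proposal is correct and follows the paper's proof essentially step for step. The Leibniz argument is the same four-term expansion: the $H$-module FODC property (via $h\mathrel{\scriptstyle \to}\mathrm{d}_Bb'=\mathrm{d}_B(h\triangleright b')$) handles the $\Gamma_B$ term, and left covariance of $\Gamma_H$ handles $(b\otimes\mathrm{d}_Hh)\cdot(b'\#h')$. Surjectivity onto $B\otimes\Gamma_H$ also uses the same identity, $(b\#h)\cdot\mathrm{d}_{\#}(1\#h')=b\otimes h\,\mathrm{d}_Hh'$.

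The only difference is cosmetic, in the $\Gamma_B\otimes H$ part, where you right-multiply by $1\#h$. That element lies in $(B\#H)\mathrm{d}_{\#}(B\#H)$ only after you invoke the Leibniz rule you just proved. Doing so turns it into the paper's explicit formula $b\,\mathrm{d}_Bb'\otimes h=(b\#1)\cdot\mathrm{d}_{\#}(b'\#h)-(bb'\#1)\cdot\mathrm{d}_{\#}(1\#h)$, so you should state this step.
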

\begin{proof}
    We show that $\mathrm{d}_{\#} \colon B\#H \to \Gamma_{\#}$ satisfies the Leibniz rule:
    \begin{equation}
        \begin{split}
            &\mathrm{d}_{\#}\big((b\#h)(b^{\prime}\#h^{\prime})\big) = \mathrm{d}_{\#}\big(b(h_1 \triangleright b^{\prime})\#h_2h^{\prime}\big)=\\
            &=\mathrm{d}_B\big(b(h_1\triangleright b^{\prime})\big)\otimes h_2h^{\prime} + b(h_1\triangleright b^{\prime}) \otimes \mathrm{d}_H(h_2h^{\prime})=\\
            &=\mathrm{d}_B(b)(h_1\triangleright b^{\prime})\otimes h_2h^{\prime} + b(h_1\triangleright b^{\prime}) \otimes \mathrm{d}_H(h_2)h^{\prime} +\\
            &+ b\mathrm{d}_B(h_1\triangleright b^{\prime})\otimes h_2h^{\prime} + b(h_1\triangleright b^{\prime}) \otimes h_2\mathrm{d}_H(h^{\prime}) = \\
            &=\mathrm{d}_B(b)(h_1\triangleright b^{\prime})\otimes h_2h^{\prime} + b(h_1\triangleright b^{\prime}) \otimes \mathrm{d}_H(h_2)h^{\prime} +\\
            &+ b\big(h_1\mathrel{\scriptstyle \to} \mathrm{d}_B(b^{\prime})\big)\otimes h_2h^{\prime} + b(h_1\triangleright b^{\prime}) \otimes h_2\mathrm{d}_H(h^{\prime})=\\
            &= \mathrm{d}_{\#}(b\#h) \cdot (b^{\prime}\#h^{\prime}) + (b\#h) \cdot \mathrm{d}_{\#}(b^{\prime}\#h^{\prime}),
        \end{split}
        \label{eq:Leibnizruleforsmashproductcalcdiff}
    \end{equation}
    for all $b, b^{\prime} \in B$ and $h, h^{\prime} \in H$ and where we used (\ref{eq:fordBtobeanHmodulemap}). The last equality is correct since by (\ref{eq:leftBsmashHactiononGammasmash}) we have
    \begin{equation}
        \begin{split}
            &(b\#h) \cdot \mathrm{d}_{\#}(b^{\prime}\#h^{\prime}) = (b\#h) \cdot \big(\mathrm{d}_B(b^{\prime})\otimes h^{\prime} + b^{\prime}\otimes \mathrm{d}_H(h^{\prime})\big) = \\
            &= b(h_1 \mathrel{\scriptstyle \to} \mathrm{d}_B(b^{\prime}))\otimes h_2h^{\prime} + b(h_1 \triangleright b^{\prime}) \otimes h_2\mathrm{d}_H(h^{\prime})
        \end{split}
        \label{eq:proofrightmoduleactionforLeibniz}
    \end{equation}
    and by (\ref{eq:rightBsmashHactiononGammasmash})
    \begin{equation}
        \begin{split}
            & \mathrm{d}_{\#}(b\#h) \cdot (b^{\prime}\#h^{\prime}) = \big(\mathrm{d}_B(b) \otimes h + b \otimes \mathrm{d}_H(h)\big) \cdot (b^{\prime}\# h^{\prime})=\\
            &=\mathrm{d}_B(b)(h_1 \triangleright b^{\prime}) \otimes h_2h^{\prime} + b\big( (\mathrm{d}_H(h))_{-1} \triangleright b^{\prime}\big) \otimes (\mathrm{d}_H(h))_0h^{\prime} = \\
            &=\mathrm{d}_B(b)(h_1 \triangleright b^{\prime}) \otimes h_2h^{\prime} + b(h_1 \triangleright b^{\prime}) \otimes \mathrm{d}_H(h_2)h^{\prime},
        \end{split}
        \label{eq:prooflrftmoduleactionforLeibniz}
    \end{equation}
    where in the last equality we have used the fact that $\Gamma_H$ is left $H$-covariant, i.e. that $\leftindex_{\Gamma_H}\Delta \circ \mathrm{d}_H(h) = (\mathrm{d}_H(h))_{-1} \otimes (\mathrm{d}_{H}(h))_{0} = (id \otimes \mathrm{d}_H) \circ \leftindex_H \Delta(h) = h_1 \otimes \mathrm{d}_H(h_2)$. \\
    We still have to prove that $\Gamma_{\#} = (B\#H) \cdot \mathrm{d}_{\#} (B\#H)$, i.e., the surjectivity condition. Let $b, b^{\prime} \in B$ and $h, h^{\prime} \in H$. Note that any element of $\Gamma_{\#}$ can be written as $\omega_B \otimes \bar{h} + \bar{b} \otimes \omega_H= b\mathrm{d}_B(b^{\prime})\otimes \bar{h} + \bar{b} \otimes h\mathrm{d}_H(h^{\prime})$, with $\bar{h} \in H$ and $\bar{b} \in B$. Then we have
    \begin{equation}
        \begin{split}
            &b \mathrm{d}_B b^{\prime} \otimes \bar{h} = b \mathrm{d}_B b^{\prime} \otimes \bar{h} + bb^{\prime} \otimes \mathrm{d}_H \bar{h} - bb^{\prime} \otimes \mathrm{d}_H \bar{h} = \\
            &=(b\#1) \cdot \mathrm{d}_{\#} (b^{\prime}\#\bar{h}) - (bb^{\prime}\#1) \cdot \mathrm{d}_{\#} (1\#\bar{h}),
        \end{split}
        \label{eq:proofofsurjectivityconditionforsmashcalc1}
    \end{equation}
    since
    \begin{equation}
        \begin{split}
            &(b\#1) \cdot \mathrm{d}_{\#} (b^{\prime}\#\bar{h}) - (bb^{\prime}\#1) \cdot \mathrm{d}_{\#} (1\#\bar{h}) = \\
            &=(b\#1) \cdot \big(\mathrm{d}_B(b^{\prime}) \otimes \bar{h} + b^{\prime} \otimes \mathrm{d}_H(\bar{h})\big) - (bb^{\prime}\#1) \cdot \big(1 \otimes \mathrm{d}_H(\bar{h})\big) =\\
            &=b\big(1 \mathrel{\scriptstyle \to} \mathrm{d}_B(b^{\prime})\big) \otimes \bar{h} + b(1 \triangleright b^{\prime}) \otimes \mathrm{d}_H(\bar{h}) - bb^{\prime}(1 \triangleright 1) \otimes \mathrm{d}_H(\bar{h}),
        \end{split}
        \label{eq:proofofsurjectivityconditionforsmashcalc11}
    \end{equation}
    and $b \otimes h\mathrm{d}_H h^{\prime} = (b\#h) \cdot \mathrm{d}_{\#} (1\#h^{\prime})$ since:
    \begin{equation}
        \begin{split}
            & (b\#h) \cdot \mathrm{d}_{\#} (1\#h^{\prime}) = (b \# h) \cdot (\mathrm{d}_B(1) \otimes h^{\prime} +  \otimes \mathrm{d}_H(h^{\prime}) = \\
            &= b\big(h_1 \mathrel{\scriptstyle \to} \mathrm{d}_B(1)\big) \otimes h_2h^{\prime} + b(h_1 \triangleright 1) \otimes h_2\mathrm{d}_H(h^{\prime})=\\
            &=\epsilon(h_1)b \otimes h_2\mathrm{d}_H(h^{\prime}) = b \otimes hd_H(h^{\prime})
        \end{split}
        \label{eq:proofofsurjectivityconditionforsmashcalc2}
    \end{equation}
    where we used $\mathrm{d}_B(1)=0$, $h \triangleright 1 = \epsilon(h)1$ and the counit property $(\epsilon \otimes id) \circ \Delta(h)=\epsilon(h_1) \otimes h_2 = h$. This shows that all elements in $\Gamma_{\#}$, which we know can be written in the form $\omega_B \otimes \bar{h} + \bar{b} \otimes \omega_H$, can also be written as a linear combination of elements $x \mathrm{d}_{\#}x^{\prime}$ for some $x, x^{\prime} \in B\#H$.
\end{proof}
The smash product construction of differential calculi is compatible with right $H$-coactions. In particular, when the calculus on $H$ is bicovariant, the smashed product calculus is right $H$-covariant.
\begin{corollary}
    Let $H$ be a Hopf algebra, $B$ a left $H$-module algebra, $(\Gamma_B, \mathrm{d}_B)$ an $H$-module first order differential calculus on $B$ and $(\Gamma_H, \mathrm{d}_H)$ a bicovariant first order differential calculus on $H$. The first order differential calculus $(\Gamma_{\#}, \mathrm{d}_{\#})$ of Theorem $\ref{thm:actualsmashproductcalculus}$ is then right $H$-covariant.
    \label{cor:smashproductcalculusisrightHcovariant}
\end{corollary}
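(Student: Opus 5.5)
We need to define a right $H$-coaction on $\Gamma_{\#}$, check that it is compatible with the bimodule structure, and check that $\mathrm{d}_{\#}$ is colinear. The natural candidate uses the trivial coaction on $\Gamma_B$ and $B$ together with the coproduct on $H$ and the right coaction $\Delta_{\Gamma_H}$ of the bicovariant calculus (Theorem \ref{thm:classificationtheoremIguess}). Concretely, we set
\begin{equation*}
\Delta_{\Gamma_{\#}}(\omega_B \otimes h + b \otimes \omega_H) := \omega_B \otimes h_1 \otimes h_2 + b \otimes (\omega_H)_0 \otimes (\omega_H)_1 ,
\end{equation*}
that is, $id_{\Gamma_B} \otimes \Delta$ on the first summand and $id_B \otimes \Delta_{\Gamma_H}$ on the second. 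This extends the coaction $\Delta_{B\#H} = id_B \otimes \Delta$ of (\ref{eq:naturalrightHcoactiononsmashproduct}). Coassociativity and counitality hold summandwise, because $\Delta$ and $\Delta_{\Gamma_H}$ are themselves coactions.

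Next we check colinearity of the differential. On one side,
\begin{equation*}
\Delta_{\Gamma_{\#}}\big(\mathrm{d}_{\#}(b\#h)\big) = \mathrm{d}_B b \otimes h_1 \otimes h_2 + b \otimes \Delta_{\Gamma_H}(\mathrm{d}_H h).
\end{equation*}
Right covariance of $\mathrm{d}_H$ gives $\Delta_{\Gamma_H}(\mathrm{d}_H h) = \mathrm{d}_H h_1 \otimes h_2$, so this equals $(\mathrm{d}_{\#} \otimes id)\big(b\#h_1 \otimes h_2\big)$, which is exactly (\ref{eq:rightHcovariantdiffcolinearity}).

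It remains to show that $\Delta_{\Gamma_{\#}}$ is a bimodule-compatible coaction, i.e. $\Delta_{\Gamma_{\#}}(x\cdot\omega\cdot y) = \Delta_{B\#H}(x)\,\Delta_{\Gamma_{\#}}(\omega)\,\Delta_{B\#H}(y)$. Once this holds, the formula $a\mathrm{d}b \mapsto a_0\mathrm{d}b_0 \otimes a_1b_1$ of Definition \ref{def:leftrightHcovariantdiffcalculi} is recovered, and it is automatically well defined, being the restriction of a globally defined map. The left action (\ref{eq:leftBsmashHactiononGammasmash}) is handled as follows.
\begin{itemize}
\item On $\Gamma_B \otimes H$: the check reduces to $\Delta$ being an algebra map, since $b(h_1 \mathrel{\scriptstyle \to}\omega_B)\otimes h_2h'$ coacts to $b(h_1\mathrel{\scriptstyle \to}\omega_B)\otimes h_2h'_1\otimes h_3h'_2$.
\item On $B\otimes\Gamma_H$: we need $\Delta_{\Gamma_H}(h\omega_H) = h_1(\omega_H)_0 \otimes h_2(\omega_H)_1$, which is the bimodule compatibility of the right coaction on $\Gamma_H$.
\end{itemize}
For the right action (\ref{eq:rightBsmashHactiononGammasmash}), the $\Gamma_B\otimes H$ part is again just multiplicativity of $\Delta$.

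The main obstacle is the term $b'\big((\omega_H)_{-1}\triangleright b\big)\otimes(\omega_H)_0 h$. Its coaction is $b'\big((\omega_H)_{-1}\triangleright b\big)\otimes((\omega_H)_0)_0h_1\otimes((\omega_H)_0)_1h_2$, whereas the product of the coactions gives $b'\big(((\omega_H)_0)_{-1}\triangleright b\big)\otimes((\omega_H)_0)_0h_1\otimes(\omega_H)_1h_2$. These agree precisely because the left and right coactions on $\Gamma_H$ commute, $(id\otimes\Delta_{\Gamma_H})\circ\leftindex_{\Gamma_H}\Delta = (\leftindex_{\Gamma_H}\Delta\otimes id)\circ\Delta_{\Gamma_H}$. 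This is where bicovariance, and not just left covariance, is needed. I would verify this commutation directly from the explicit formulas (\ref{eq:classthm2}) and (\ref{eq:classthm3}): there the left coaction acts only on the first tensor factor $h$ through $\Delta$, and the right coaction uses $h_2$ by coassociativity, so the two visibly commute.
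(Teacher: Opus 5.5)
Your proposal is correct and follows the paper's proof. You use the same coaction, $id_{\Gamma_B}\otimes\Delta$ on $\Gamma_B\otimes H$ and $id_B\otimes\Delta_{\Gamma_H}$ on $B\otimes\Gamma_H$, and the same colinearity computation $\Delta_{\Gamma_{\#}}\big(\mathrm{d}_{\#}(b\#h)\big)=\mathrm{d}_{\#}(b\#h_1)\otimes h_2$.

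The paper stops there, so your checks of the comodule axioms and of compatibility with the left and right $B\#H$-actions fill in steps it leaves implicit. One small point: the commutation of the two coactions on $\Gamma_H$ is automatic for any left and right covariant calculus on $H$. By coassociativity, both iterated coactions send $a\,\mathrm{d}_H b$ to $a_1b_1\otimes a_2\,\mathrm{d}_H b_2\otimes a_3b_3$.
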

\begin{proof}
    Define a right $H$-coaction on $\Gamma_{\#}$ via
    \begin{equation}
        \begin{split}
            \Delta_{\Gamma_{\#}} \colon &\Gamma_{\#} \to \Gamma_{\#} \otimes H,\\
            &\omega_B \otimes h + b \otimes \omega_H \mapsto \omega_B \otimes h_1 \otimes h_2 + b \otimes (\omega_H)_0 \otimes (\omega_H)_1
        \end{split}
        \label{eq:rightHcoactiononsmashcalctomakeitrightcovariant}
    \end{equation}
    We prove that the calculus is right $H$-covariant by showing right $H$-colinearity of the differential. For all $b\#h \in B\#H$,
    \begin{equation}
        \begin{split}
            &\Delta_{\Gamma_{\#}} \big(\mathrm{d}_{\#} (b\#h)\big) = \Delta_{\Gamma_{\#}} \big(\mathrm{d}_B(b) \otimes h + b \otimes \mathrm{d}_H(h)\big) =\\
            &= \mathrm{d}_B(b) \otimes h_1 \otimes h_2 + b \otimes \mathrm{d}_H(h_1) \otimes h_2 =\\
            &= \mathrm{d}_{\#} (b\#h_1) \otimes h-2 = (\mathrm{d}_{\#} \otimes id) \Delta_{\Gamma_{\#}} (b\#h).
        \end{split}
        \label{eq:rightHcovariantproodforsmashcalculus}
    \end{equation}
\end{proof}

\chapter{Quantum principal bundles} \label{quantumprincipalbundles}
In this chapter, we delve into the theory of quantum principal bundles, discussing the noncommutative geometry analogues of principal bundles, their associated bundles and reductions. References on this are \cite{majid_qriemanniangeo}, \cite{brz_majid_qgauge}, \cite{coqueraux_quantum}, \cite{del_donno_weber}  and \cite{latiniweber_projective}

In Section \ref{qpb}, we define quantum principal bundles, understood as Hopf-Galois extensions which are faithfully flat. The total space of (classical) principal fiber bundles is replaced by a comodule algebra $A$ and the base space manifold by the subalgbera of coinvariant elements of $A$ under its right $H$-coaction. The Hopf algebra $H$ takes the place of the structure group and the Hopf-Galois condition represents the algebraic analogue of the principality of the action of the Lie group on the total space. In \ref{baseformsandhorizontal}, a definition of base forms and horizontal forms in the non commutative framework is given, assuming a first order differential calculus on $A$. In Section \ref{associatedqvb} we provide two different definitions of associated quantum vector bundles to quantum principal bundles: one as the set of coinvariants $(A \otimes V)^{coH}$ and the other as the cotensor product $A\underset{H}{\Box}V$. Their equivalence is proven. Finally, in Section \ref{quantumreductions}, in light of the recent theory developed in \cite{chiara_qreductions}, a definition of quantum reductions of quantum principal bundles is given. 

\section{Quantum principal bundles} \label{qpb}
We will now introduce the notion of quantum principal bundle as a faithfully flat Hopf-Galois extension.
\begin{definition}
    Let $B$ be a ring and let $X, Y, Z$ be $B$-modules. A $B$-module $A$ si called \textsl{faithfully flat} if
    \begin{equation}
        0 \to X \xrightarrow[]{} Y \xrightarrow[]{} Z \to 0
        \label{eq:sequenceofringmodules}
    \end{equation}
    is a short exact sequence if and only if
    \begin{equation}
        0 \to X \otimes_B A \xrightarrow[]{} Y \otimes_B A \xrightarrow[]{} Z \otimes_B A \to 0
        \label{eq:sequenceiffshortexactthenfaith}
    \end{equation}
    is a short exact sequence.
    \label{def:faithfullyflatleftmodule}
\end{definition}
\begin{remark}
    We recall here that a sequence of objects (such as groups, modules, or vector spaces)
    \begin{equation}
        0 \to A \xrightarrow{f} B \xrightarrow{g} C \to 0
        \label{eq:sequence}
    \end{equation}
    is called a short exact sequence if:
    \begin{enumerate}
        \item the map $f \colon A \to B$ is injective, i.e. $\ker(f)=0$;
        \item $\text{im}(f)=\ker(g)$;
        \item the map $g \colon B \to C$ is surjective, i.e. $\text{im}(g)=C$.
    \end{enumerate}
    This is saying that the structure of $B$ encapsulates both $A$ and $C$.
    \label{rem:shortexactsequence}
\end{remark}
The following definition introduces the noncommutative analogue of a principal fiber bundle, extending the classical framework to noncommutative geometry.  This generalization preserves the fundamental structural features while extending them to a broader, algebraic framework. Let $H$ be a Hopf algebra and $A$ a right $H$-comodule algebra with right $H$-coaction $\Delta_A \colon A \to A \otimes H$.
\begin{definition}
    A \textsl{quantum principal bundle}, is a faithfully flat Hopf-Galois extension $B := A^{coH} \subseteq A$, where $A$ is a right $H$-comodule algebra and $B$ is the subalgebra of coinvariant elements in $A$ under the coaction of $H$. Alternatively, we call the quantum principal bundle $B \subseteq A$ a \textsl{principal} $H$\textsl{-comodule algebra}.
    \label{def:quantumprincipalbundledef}
\end{definition}
\begin{remark}
    Usually, in the classical case, the base space $M$ in a principal fiber bundle is defined to be a manifold from the start. Alternatively, one could require the right action of the group to be proper, i.e. for any $A$ and $B$ compact subsets of $P$ then $\{g \in G | Ag \cap B\ \neq \emptyset\}$ is also compact: this condition ensures that $M = P/G$ is a topological manifold (see \cite{sharpe_dg} Theorem 2.4) and makes the fibration locally trivial. \\
    In the non commutative setting, properness of the action is captured by faithful flatness: it guarantees that the total space algebra $A$ retains enough information about $B = A^{coH}$, i.e. that tensoring with $A$ does not lose information about $B$ and that the descent of data from $A$ to $B$ maintains the right structure. From faithful flatness, essentially, one construct the local triviality of the quantum principal bundle, much like properness of the action ensures local triviality and compatibility with transition maps in the classical case.
    \label{rem:onfaithfullyflatness}
\end{remark}
Classically, a principal bundle $\pi \colon P \to M$ is endowed with a free and transitive action of the structure group $G$ on the total space $P$. This yields the principality condition, i.e. that the canonical map $P \times G \to P \times_MP, (p,g) \mapsto (p,p g)$ into the fibered product $P \times_M P=\{ (p,p^{\prime}) \in P \times P | \pi(p)=\pi(p^{\prime}) \}$ is a diffeomorphism. \\
Let us give a diagrammatic comparison of the quantum vs classical principal bundles, where the classical principality condition $P \times G \cong P \times_M P$ is replaced by the algebraic "Hopf-Galois condition" $A \otimes_B A \cong A \otimes H$ (invertibility of the canonical map $\chi$ precisely means $A \otimes_B A$ and $A \otimes H$ are isomorphic as left $A$-modules and as right $H$-comodules):\\
\[
 \begin{tikzcd}
    P \arrow{d}{\pi} & P \times G \arrow{l}{R} \arrow{d}{\cong} \\
    M  &P \times_M P
\end{tikzcd}, \;\;\;
\begin{tikzcd}
    A \arrow{r}{\Delta_A} & A \otimes H  \\
    B \arrow{u}{\subseteq} &A \otimes_B A \arrow{u}{\cong}
\end{tikzcd}
\]
where the right action $R$ of the structure group $G$ on $P$ is replaced by the right $H$-comodule action $\Delta_A$ on the principal comodule algebra $A$. 

\subsection{Base forms and horizontal forms} \label{baseformsandhorizontal}

Let $B := A^{coH} \subseteq A$ a quantum principal bundle. Given a first order differential calculus $(\Gamma_A, \mathrm{d}_A)$ on $A$, by Proposition \ref{pro:pullbackcalculusandquotientcalculus}, we have a first order differential calculus on $B$ as the pullback calculus by the natural inclusion $B \to A$, which is trivially a morphism of algebras.
The next two definitions characterize the quantum analogues of differential forms $\Omega^1(M)$ on the base space and horizontal forms $\Omega^1_{hor}(P)$ on the total space of a principal fiber bundle $\pi \colon P \to M$. 
\begin{definition}
    Let $(\Gamma_A, d_A)$ be a first order differential calculus on a right $H$-comodule algebra $A$. Let $B=A^{coH}$ be the subalgebra of coinvariant elements. We call the pullback calculus $(\Gamma_B,d_B):=(Bd_A|_BB,d_A|_B)$ on $B$ the \textsl{first order differential calculus of base forms}.
    \label{def:baseformsFODC}
\end{definition}
\begin{definition}
    We call $\Gamma_A^{hor} = A\Gamma_B$ the $(A,B)$\textsl{-bimodule of horizontal forms}.
    \label{def:horizontalforms}
\end{definition}
When $(\Gamma_A,d_A)$ is right $H$-covariant we can give the following result (\cite{latiniweber_projective} Theorem 3.17).
\begin{theorem}
    Let $B \subseteq A$ be a quantum principal bundle and $(\Gamma_A, d_A)$ a right $H$-covariant first order differential calculus on $A$. The map 
    \begin{equation}
        \begin{split}
            &A \otimes_B \Gamma_B \to \Gamma_A,\\
            &a \otimes_B \omega \mapsto a\omega
        \end{split}
        \label{eq:naturalmap}
    \end{equation}
    gives the left $A$-module isomorphism
    \begin{equation}
        A \otimes_B \Gamma_B \cong A\Gamma_B = \Gamma_A^{hor}.
        \label{eq:thesplitting}
    \end{equation}
    Moreover
    \begin{equation}
        \Gamma_B = \Gamma_A^{hor} \cap \Gamma_A^{coH} \cong (\Gamma_A^{hor})^{coH}.
        \label{eq:baseforms}
    \end{equation}
    \label{thm:horizontalformsintersectionandsplitting}
\end{theorem}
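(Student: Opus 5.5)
The argument rests on faithful flatness of $A$ over $B$ together with the Hopf-Galois property, which let us descend statements from $A\otimes_B(-)$ to $B$. Right $H$-covariance of $(\Gamma_A,\mathrm{d}_A)$ enters through the coaction $\Delta_{\Gamma_A}$. The proof has two halves: the isomorphism $A\otimes_B\Gamma_B\cong A\Gamma_B$, and the identification of $\Gamma_B$ with the coinvariant horizontal forms.

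\emph{First half.} The map $a\otimes_B\omega\mapsto a\omega$ is well defined and left $A$-linear, since $\Gamma_B\subseteq\Gamma_A$ is a $B$-submodule. It is surjective onto $A\Gamma_B$ by definition, so only injectivity needs proof. I would compare both sides after a further base change along $A$, using the Hopf-Galois isomorphism $\chi\colon A\otimes_B A\cong A\otimes H$ and its analogue for modules.
\begin{itemize}
\item Every right $(A,H)$-relative Hopf module $M$ satisfies the structure theorem $M\cong M^{coH}\otimes_B A$ (equivalently $A\otimes_B M^{coH}$). This is Schneider's theorem, which holds precisely because $A$ is faithfully flat Hopf-Galois over $B$. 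Here $A\Gamma_B$ is a relative Hopf module: it is a left $A$-module, the coaction restricts to it because $\Delta_{\Gamma_A}(a\,b\,\mathrm{d}b')=a_0b\,\mathrm{d}b'\otimes a_1$ for $b,b'\in B$, and the compatibility is that of $\Delta_{\Gamma_A}$.
\item Its coinvariants contain $\Gamma_B$. The structure theorem then gives $A\otimes_B(A\Gamma_B)^{coH}\cong A\Gamma_B$.
\end{itemize}
Combining the two, it suffices to show $(A\Gamma_B)^{coH}=\Gamma_B$. That equality gives both injectivity of the natural map and the ``moreover'' part.

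\emph{Second half: the coinvariants.} Take $\xi=\sum_i a^i\omega_i\in A\Gamma_B$ with $\Delta_{\Gamma_A}(\xi)=\xi\otimes1$. Using the translation map $\tau=\chi^{-1}(1\otimes\cdot)$, write $\tau(h)=h^{\langle1\rangle}\otimes_B h^{\langle2\rangle}$. Then $\sum_i a^i_0\,\tau(a^i_1)$ lands in $B\otimes_B A$ by the usual translation-map identities, and applying it to the coaction of $\xi$ should recover $\xi$ as an element of $B\,\Gamma_B$. Concretely, I would send $\xi$ to $1\otimes_B\xi$ in $A\otimes_B A\Gamma_B$ and use coinvariance with $\chi$ to see that it equals $\xi'\otimes_B 1$-type data. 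Faithful flatness then lets us conclude $\xi\in\Gamma_B$, via the equalizer $B\to A\rightrightarrows A\otimes_B A$ tensored with $\Gamma_B$, which is exact by flatness.

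Finally, $\Gamma_B\subseteq\Gamma_A^{coH}$ holds because $\Delta_{\Gamma_A}(b\,\mathrm{d}_Ab')=b\,\mathrm{d}_Ab'\otimes1$ by colinearity of $\mathrm{d}_A$. Hence
\[
\Gamma_B\subseteq\Gamma_A^{hor}\cap\Gamma_A^{coH}=(\Gamma_A^{hor})^{coH}\subseteq\Gamma_B,
\]
the last inclusion being the previous step. The main obstacle I expect is the faithful-flatness descent. One must verify that the equalizer sequence $0\to\Gamma_B\to A\otimes_B\Gamma_B\rightrightarrows A\otimes_BA\otimes_B\Gamma_B$ is exact; this uses that $A$ is faithfully flat as a left $B$-module, and that $\Gamma_B\to A\Gamma_B$ is compatible with these maps.
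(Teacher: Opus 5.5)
The overall architecture you chose is the route the paper indicates. The paper gives no proof itself and defers to Schneider's equivalence of categories. Concretely, you regard $N:=A\Gamma_B$ as an object of ${}_A\mathcal{M}^H$ and apply Schneider's structure theorem in its left-module form $A\otimes_B N^{coH}\cong N$. Your other form, $N^{coH}\otimes_B A$, is the right-module version: a different statement, not an equivalent one. Your reduction "$N^{coH}=\Gamma_B$ implies both claims" is valid.

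\textbf{The gap: your proof of $N^{coH}=\Gamma_B$ is circular.}
\begin{itemize}
\item The descent sequence $\Gamma_B\to A\otimes_B\Gamma_B\rightrightarrows A\otimes_BA\otimes_B\Gamma_B$ characterizes $\Gamma_B$ \emph{inside $A\otimes_B\Gamma_B$}.
\item After applying $\chi\otimes_B\mathrm{id}$, its equalizer condition on $\tilde\xi=\sum_i a^i\otimes_B\omega_i$ reads $\sum_i a^i_0\otimes_B\omega_i\otimes a^i_1=\sum_i a^i\otimes_B\omega_i\otimes 1$ in $(A\otimes_B\Gamma_B)\otimes H$.
\item What you actually know is this identity only after pushing into $\Gamma_A\otimes H$, i.e.\ coinvariance of $\xi=\sum_i a^i\omega_i$.
\item Pulling it back requires injectivity of $a\otimes_B\omega\mapsto a\omega$, which is exactly what the equality was supposed to deliver.
\item The translation-map step does not break the circle: $a_0\tau(a_1)=1\otimes_B a$ holds for every $a\in A$ and says nothing about $\xi$.
\end{itemize}
A smaller point: exactness of the tensored equalizer is not "by flatness", since $\Gamma_B$ need not be flat. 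It is exactness of the Amitsur complex, which needs faithful flatness of $A$ as a \emph{right} $B$-module, the side relevant to $A\otimes_B(-)$.

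\textbf{The missing idea.} Use faithful flatness to cancel $A\otimes_B(-)$, rather than testing individual coinvariant elements. Let $\iota\colon\Gamma_B\hookrightarrow N^{coH}$; it lands there by colinearity of $\mathrm{d}_A$. The natural map factors as
\[
A\otimes_B\Gamma_B\xrightarrow{\ \mathrm{id}\otimes_B\iota\ }A\otimes_B N^{coH}\xrightarrow{\ \cong\ }N .
\]
\begin{itemize}
\item Flatness makes $\mathrm{id}\otimes_B\iota$ injective, so the natural map is injective.
\item It is surjective by definition of $N$, so $\mathrm{id}\otimes_B\iota$ is bijective.
\item By right exactness, $A\otimes_B(N^{coH}/\Gamma_B)=0$.
\item Faithful flatness then gives $N^{coH}=\Gamma_B$.
\end{itemize}

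Equivalently, in the equivalence-of-categories language the paper cites:
\begin{itemize}
\item The kernel $K$ of $A\otimes_B\Gamma_B\to\Gamma_A$ is a relative Hopf module.
\item $K^{coH}\subseteq(A\otimes_B\Gamma_B)^{coH}=1\otimes_B\Gamma_B$, and the map $1\otimes_B\omega\mapsto\omega$ is injective there, so $K^{coH}=0$.
\item Hence $K\cong A\otimes_B K^{coH}=0$.
\item Then $(A\Gamma_B)^{coH}\cong(A\otimes_B\Gamma_B)^{coH}\cong\Gamma_B$.
\end{itemize}
With this replacement, your final chain of inclusions goes through.
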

The proof of the above theorem relies on the existence of an equivalence of categories between the category of left $B$-modules and that of right $H$-covariant left $A$-modules, where $B = A^{coH} \subseteq A$ a faithfully flat Hopf-Galois extension. This is established in \cite{schneider}, Theorem 1.

\section{Associated quantum vector bundles} \label{associatedqvb}

We will provide here two definitions of associate quantum vector bundle to the principal bundle $B=A^{coH} \subseteq A$: one, following \cite{coqueraux_quantum}, as the cotensor product $A \underset{H}{\Box} W$, with $W$ a left $H$-comodule algebra, which is more evidently linked to the classical definition of associated vector bundle; the other, as in \cite{brz_majid_qgauge} Appendix A, as the subalgebra of coinvariant elements $(A \otimes V)^{coH}$, with $V$ a right $H^{op}$-comodule algebra. We will prove the two definitions of quantum associated bundle are equivalent.\\

We recall that classically, given a principal bundle $\pi \colon P \to M$ with structure group $G$, the associated vector bundle $P \times_G V$ is given in terms of a vector space $V$ with a left $G$-action (for a suitable representation of the structure group). In particular, we have seen in \ref{associatedbundles} that the associated vector bundle $F$ with fiber $V$ is defined as the quotient of $P \times V$ by the equivalence relation,
\begin{equation}
    (p, gv) \sim (pg, v)
    \label{eq:recollassociatedvectorbundle}
\end{equation}
with $p \in P$, $v \in V$, $g \in G$.\\
Dually, we need a left $H$-comodule algebra $W$ playing the role of the algebra $C(V)$ of functions on $V$. Now, the associated vector bundle, which corresponds to the algebra of functions on the total space of the vector bundle $F$, is given by the following definition.
\begin{definition}
    Let $B=A^{coH} \subseteq A$ be a quantum principal bundle and let $W$ be a left $H$-comodule algebra with coaction $\leftindex_W\Delta \colon W \to H \otimes W$. We call the contensor product 
    \begin{equation}
    E = A \underset{H}{\Box} W := \big\{ a \otimes w \in A \otimes W \; \big| \; \Delta_A(a) \otimes w = a \otimes \leftindex_W\Delta(w) \big\}
    \label{eq:quantumassociatedvectorbundle}
    \end{equation}
    the \textsl{associated quantum vector bundle} to the principal bundle $A$.
    \label{def:quantumassociatedvectorbundleBox}
\end{definition}
This subspace of $A \otimes W$ is an algebra with multiplication given by that of the tensor product algebra, that is $(a \otimes w)(a^{\prime} \otimes w^{\prime}) = aa^{\prime} \otimes ww^{\prime}$. We give the following lemma.
\begin{lemma}
    The following two statements are true:
    \begin{enumerate}
        \item $E = A \underset{H}{\Box} W$ is a subalgebra of $A \otimes W$;
        \item $B$ is a subalgebra of $E = A \underset{H}{\Box} W$.
    \end{enumerate}
    \label{lem:toproveBoxquantumvectorbundleisok}
\end{lemma}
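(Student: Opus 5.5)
We treat the cotensor product as the kernel of a linear map and use that $\Delta_A$ and $\leftindex_W\Delta$ are algebra morphisms. Set
\begin{equation*}
\delta := \Delta_A \otimes id_W - id_A \otimes \leftindex_W\Delta \colon A \otimes W \to A \otimes H \otimes W,
\end{equation*}
so that $E = A \underset{H}{\Box} W = \ker \delta$ (elements of $E$ are understood as finite sums $\sum_i a^i \otimes w^i$). In particular $E$ is a $\mathbb{K}$-vector subspace of $A \otimes W$, and only closure under multiplication and the presence of the unit remain to be checked.

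For the first statement, unitality follows directly from Definition \ref{def:comodulealgebra}: $\Delta_A(1_A) \otimes 1_W = 1_A \otimes 1_H \otimes 1_W = 1_A \otimes \leftindex_W\Delta(1_W)$. For closure, let $x = \sum_i a^i \otimes w^i$ and $y = \sum_j a'^j \otimes w'^j$ lie in $E$, and write $\sum_i a^i_0 \otimes a^i_1 \otimes w^i = \sum_i a^i \otimes w^i_{-1} \otimes w^i_0$, and likewise for $y$. Since $\Delta_A$ and $\leftindex_W\Delta$ are multiplicative, $(\Delta_A \otimes id)(xy) = \sum_{i,j} a^i_0 a'^j_0 \otimes a^i_1 a'^j_1 \otimes w^i w'^j$. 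This is the product, in the algebra $A \otimes H \otimes W$, of the elements $(\Delta_A \otimes id)(x)$ and $(\Delta_A \otimes id)(y)$. Substituting the cotensor identities for $x$ and $y$ separately turns it into $\big((id \otimes \leftindex_W\Delta)(x)\big)\big((id \otimes \leftindex_W\Delta)(y)\big) = (id \otimes \leftindex_W\Delta)(xy)$, using multiplicativity of $\leftindex_W\Delta$. Hence $xy \in E$.

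The only point needing care is this factorization: we must observe that both $\Delta_A \otimes id_W$ and $id_A \otimes \leftindex_W\Delta$ are algebra morphisms $A \otimes W \to A \otimes H \otimes W$ for the tensor product algebra structures. Once this is noted, the argument is just that the equalizer of two algebra morphisms is a subalgebra, and no Sweedler bookkeeping is needed.

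For the second statement, embed $B$ via $b \mapsto b \otimes 1_W$. This map is an injective algebra morphism into $A \otimes W$, because $1_W \neq 0$ and $(b \otimes 1_W)(b' \otimes 1_W) = bb' \otimes 1_W$. Its image lies in $E$ because $\Delta_A(b) \otimes 1_W = b \otimes 1_H \otimes 1_W = b \otimes \leftindex_W\Delta(1_W)$, using coinvariance of $b$ and unitality of the coaction on $W$. Together with the first statement, this exhibits $B$ as a subalgebra of $E$.
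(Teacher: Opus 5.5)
Your proof is correct and follows the paper's argument. Closure under multiplication comes from multiplicativity of $\Delta_A$ and $\leftindex_W\Delta$ together with the cotensor condition applied to each factor, and $B$ embeds in $E$ via $b \mapsto b \otimes 1_W$ using coinvariance and $\leftindex_W\Delta(1_W) = 1_H \otimes 1_W$. Your equalizer formulation packages the same computation more cleanly: it handles general finite sums rather than the simple tensors $a \otimes w$ the paper writes, and it checks explicitly that the unit lies in $E$.
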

\begin{proof}
    To prove the first statement, we observe that $E$ is closed under tensor product multiplication. In fact, for $a \otimes w$ and $a^{\prime} \otimes w^{\prime}$ in $A \underset{H}{\Box} H$ we have: 
    \begin{equation}
        \begin{split}
            &\Delta_A(aa^{\prime}) \otimes ww^{\prime} = \Delta_A(a)\Delta_A(a^{\prime}) \otimes ww^{\prime} =  \big(\Delta_A(a) \otimes w\big)\big(\Delta_A(a^{\prime}) \otimes w^{\prime}\big) = \\
            &= \big( a \otimes \leftindex_W\Delta(w) \big)\big( a^{\prime} \otimes \leftindex_W\Delta(w^{\prime})\big) = aa^{\prime} \otimes \leftindex_W\Delta(w)\leftindex_W\Delta(w^{\prime}) = aa^{\prime} \otimes \leftindex_W\Delta(ww^{\prime}).
        \end{split}
        \label{eq:tensorproductistheproductofboxalgebraproof}
    \end{equation}
    The second statement is proven by considering a map $j_E \colon B \xhookrightarrow{} A \otimes W$ defined as $j_E(b) = b \otimes 1_W$. We have that, for any $b \in B$, $j_E(b) \in E$
    \begin{equation}
        \Delta_A(b) \otimes 1_W = b \otimes 1_H \otimes 1_W = b \otimes \leftindex_W\Delta(1_W). 
        \label{eq:vediamosevale}
    \end{equation}
\end{proof}
An $H^{op}$-comodule algebra $V$ is an algebra with a right $H^{op}$-coaction $\rho_V \colon V \to V \otimes H$, meaning that its algebra compatibiltity condition is given in terms of the  reversed Hopf algebra multiplication $\cdot^{op}$, defined as $h \cdot^{op} h^{\prime} = h^{\prime}h$ for all $h,h^{\prime} \in H$. This explicitly reads
\begin{equation}
    \rho_V(vv^{\prime}) = \rho_V(v)\rho(v^{\prime}) = v_0v^{\prime}_0 \otimes v_1 \cdot^{op} v^{\prime}_1 = v_0v^{\prime}_0 \otimes v^{\prime}_1v_1
    \label{eq:Hopcomodulealgebra}
\end{equation}
for all $v,v^{\prime} \in V$. \\
As mentioned, one can alternatively define an associated quantum vector bundle in terms of a right $H^{op}$-comodule algebra $V$ as below.
\begin{definition}
    Let $B=A^{coH} \subseteq A$ be a quantum principal bundle and let $V$ be a right $H^{op}$-comodule algebra with coaction $\rho_V \colon V \to V \otimes H$. The space $A \otimes V$ is naturally endowed with a right $H$-comodule structure $\Delta_{\mathcal{E}} \colon A \otimes V \to A \otimes V \otimes H$ given by
    \begin{equation}
        \begin{split}
            \Delta_{\mathcal{E}} \colon &A \otimes V \to A \otimes V \otimes H,\\
            &a \otimes v \mapsto \Delta_{\mathcal{E}}(a \otimes v) = a_0 \otimes v_0 \otimes a_1v_1,
        \end{split}
        \label{eq:coactionforPotimesV}
    \end{equation}
    for any $a \in A$ and $v \in V$.
    We say that the space (of coinvariant elements of $P \otimes V$ with respect to the right $H$-coaction $\Delta_{\mathcal{E}}$)
    \begin{equation}
        \mathcal{E} = (A \otimes V)^{coH} = \big\{ a \otimes v \in A \otimes V \; \big| \; \Delta_{\mathcal{E}}(a \otimes v) = a \otimes v \otimes 1_H \big\}
        \label{eq:associatedvectorquantumbundle}
    \end{equation}
    is an \textsl{associated quantum vector bundle} to $A$ over $B$. We denote it by $\mathcal{E}=\mathcal{E}(B, V, H)$.
    \label{def:associatedquantumvectorbundleHop}
\end{definition}
We now show the analogue of Lemma \ref{lem:toproveBoxquantumvectorbundleisok}.
\begin{lemma}
    The following two statements are true:
    \begin{enumerate}
        \item $\mathcal{E}=(A \otimes V)^{coH}$ is a subalgebra of $A \otimes V$;
        \item $B$ is a subalgebra of $\mathcal{E}=(A \otimes V)^{coH}$.
    \end{enumerate}
    \label{lem:toproveHopquantumvectorbundleisok}
\end{lemma}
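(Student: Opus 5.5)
The plan is to mirror the proof of Lemma \ref{lem:toproveBoxquantumvectorbundleisok}, with the cotensor condition replaced by coinvariance under $\Delta_{\mathcal{E}}$. We first equip $A \otimes V$ with the tensor product algebra structure $(a \otimes v)(a^{\prime} \otimes v^{\prime}) = aa^{\prime} \otimes vv^{\prime}$. The key preliminary step is to check that $\Delta_{\mathcal{E}}$ is multiplicative, so that $A \otimes V$ becomes a right $H$-comodule algebra. The subalgebra statement then follows from a general fact: the coinvariants of a comodule algebra form a subalgebra.

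For point 1, take $x = a \otimes v$ and $x^{\prime} = a^{\prime} \otimes v^{\prime}$ (by linearity it suffices to treat simple tensors). Using that $\Delta_A$ is an algebra map and the $H^{op}$-compatibility (\ref{eq:Hopcomodulealgebra}), we compute
\begin{equation*}
\Delta_{\mathcal{E}}(aa^{\prime} \otimes vv^{\prime}) = a_0a^{\prime}_0 \otimes v_0v^{\prime}_0 \otimes a_1a^{\prime}_1v^{\prime}_1v_1 .
\end{equation*}
The obstacle is the ordering in the $H$-factor, which is $a_1a^{\prime}_1v^{\prime}_1v_1$ rather than $a_1v_1a^{\prime}_1v^{\prime}_1$. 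So $\Delta_{\mathcal{E}}$ is not literally multiplicative on all of $A \otimes V$, and multiplicativity cannot be invoked directly.

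Instead we use coinvariance of $x^{\prime}$ first. Write $x^{\prime} = \sum_i a^{\prime i} \otimes v^{\prime i}$ with
\begin{equation*}
\sum_i a^{\prime i}_0 \otimes v^{\prime i}_0 \otimes a^{\prime i}_1 v^{\prime i}_1 = \sum_i a^{\prime i} \otimes v^{\prime i} \otimes 1_H .
\end{equation*}
The middle factor $a^{\prime}_1v^{\prime}_1$ in the expression above, with $v^{\prime}_0$ appropriately placed, is then replaced by $1_H$. This leaves $a_0a^{\prime} \otimes v_0v^{\prime} \otimes a_1v_1$, and coinvariance of $x$ turns this into $aa^{\prime} \otimes vv^{\prime} \otimes 1_H$.

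To make this substitution rigorous, apply to the coinvariance identity for $x^{\prime}$ the linear map
\begin{equation*}
a^{\prime} \otimes v^{\prime} \otimes h \mapsto a_0a^{\prime} \otimes v_0v^{\prime} \otimes a_1 h v_1 ,
\end{equation*}
with $x$ fixed and summed. This map is well defined on tensors, so the substitution is legitimate. It yields exactly $\Delta_{\mathcal{E}}(xx^{\prime})$ on the left side, using coassociativity of the $A$- and $V$-coactions, and $\sum a_0a^{\prime} \otimes v_0v^{\prime} \otimes a_1v_1$ on the right. Coinvariance of $x$, pushed through the map $a \otimes v \otimes h \mapsto a a^{\prime} \otimes v v^{\prime} \otimes h$, finishes the argument. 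The bookkeeping of Sweedler indices here, keeping track of where $v_1$ sits relative to $a^{\prime}_1 v^{\prime}_1$, is the main point to verify carefully. Finally, the unit is coinvariant: $\Delta_{\mathcal{E}}(1_A \otimes 1_V) = 1_A \otimes 1_V \otimes 1_H$, since $\Delta_A(1_A) = 1_A \otimes 1_H$ and $\rho_V(1_V) = 1_V \otimes 1_H$.

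For point 2, define $j_{\mathcal{E}}(b) = b \otimes 1_V$, as in Lemma \ref{lem:toproveBoxquantumvectorbundleisok}. Then
\begin{equation*}
\Delta_{\mathcal{E}}(b \otimes 1_V) = b \otimes 1_V \otimes 1_H 1_H ,
\end{equation*}
so $j_{\mathcal{E}}(b) \in \mathcal{E}$. Moreover $j_{\mathcal{E}}$ is an injective unital algebra map, since $(b \otimes 1_V)(b^{\prime} \otimes 1_V) = bb^{\prime} \otimes 1_V$. Hence $B$ embeds as a subalgebra of $\mathcal{E}$.
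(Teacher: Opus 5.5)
Your proof is correct and follows the paper's route. The linear map $a^{\prime}\otimes v^{\prime}\otimes h\mapsto a_0a^{\prime}\otimes v_0v^{\prime}\otimes a_1hv_1$ that you apply to the coinvariance identity of $x^{\prime}$ is just left multiplication by $a_0\otimes v_0\otimes a_1$ and right multiplication by $1_A\otimes 1_V\otimes v_1$ in the tensor product algebra $A\otimes V\otimes H$; this is exactly how the paper substitutes $\Delta_{\mathcal{E}}(a^{\prime}\otimes v^{\prime})=a^{\prime}\otimes v^{\prime}\otimes 1_H$ before invoking coinvariance of $a\otimes v$, and point 2 is the same check via $j_{\mathcal{E}}$.

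Your opening plan via multiplicativity of $\Delta_{\mathcal{E}}$ is rightly discarded, and coassociativity is not actually needed anywhere in the computation. The remark that it suffices to treat simple tensors should be dropped, since coinvariants need not be spanned by coinvariant simple tensors. Your final version, with $x^{\prime}=\sum_i a^{\prime i}\otimes v^{\prime i}$ and $x$ summed, already handles general elements.
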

\begin{proof}
        To prove the first statement let us take $a \otimes v$ and $a^{\prime} \otimes v^{\prime} \in \mathcal{E}=(A \otimes V)^{coH}$. We have
        \begin{equation}
            \begin{split}
                &\Delta_{\mathcal{E}}(aa^{\prime} \otimes vv^{\prime}) = a_0a^{\prime}_0 \otimes v_0v^{\prime}_0 \otimes a_1a^{\prime}_1v^{\prime}_1v_1 = \\
                &=(a_0 \otimes v_0 \otimes a_1)(a^{\prime}_0 \otimes v^{\prime}_0 \otimes a^{\prime}_1v^{\prime}_1)(1_A \otimes 1_V \otimes v_1)=\\
                &=(a_0 \otimes v_0 \otimes a_1)(a^{\prime} \otimes v^{\prime} \otimes 1_H)(1_A \otimes 1_V \otimes v_1)=\\
                &=(a_0 \otimes v_0 \otimes a_1)(1_A \otimes 1_V \otimes v_1)(a^{\prime} \otimes v^{\prime} \otimes 1_H)=\\
                &=(a_0 \otimes v_0 \otimes a_1v_1)(a^{\prime} \otimes v^{\prime} \otimes 1_H) = \\
                &=(a \otimes v \otimes 1_H)(a^{\prime} \otimes v^{\prime} \otimes 1_H) = (aa^{\prime} \otimes vv^{\prime} \otimes 1_H),
            \end{split}
            \label{eq:proveqvbisasubalgebraoftotalspacealgebra}
        \end{equation}
        where in the second equality we used the fact that $V$ is a right $H^{op}$-comodule algebra. This shows $aa^{\prime} \otimes vv^{\prime} = (a \otimes v)(a^{\prime} \otimes v^{\prime})$, i.e. the ($P \otimes V$) product of two elements in $\mathcal{E}$, is still in $\mathcal{E}$, making $\mathcal{E}$ closed under the multiplication of $P \otimes V$ and proving the first statement.\\
        For the second assertion, we observe that there is a map $j_{\mathcal{E}} \colon B \xhookrightarrow{} A \otimes V$ defined by $j_{\mathcal{E}}(b) = b \otimes 1_V$ for any $b \in M$ and $j_{\mathcal{E}}(b) \in \mathcal{E}$ since
        \begin{equation}
            \Delta_{\mathcal{E}}(j_{\mathcal{E}}(b)) = \Delta_{\mathcal{E}}(b \otimes 1_V) = b_0 \otimes 1_V \otimes 1_H1_H= b \otimes 1_V \otimes 1_H = j_{\mathcal{E}}(b) \otimes 1_H, 
            \label{eq:jEmakesitwork}
        \end{equation}
        since $\Delta_H(b) = b \otimes 1_H$ and $\rho_V(1_V) = 1_V \otimes 1_H$. This proves the Lemma.
\end{proof}
In the following proposition, we show that \ref{def:quantumassociatedvectorbundleBox} and \ref{def:associatedquantumvectorbundleHop} are equivalent definitions of associated quantum vector bundle. We assume that the Hopf algebra $H$ has invertible antipode.
\begin{proposition}
    Let $V$ be a left $H$-comodule algebra with coaction $\leftindex_V\Delta \colon V \to H \otimes V, v \mapsto v_{-1} \otimes v_0$. Then, $V$ is a right $H^{op}$-comodule algebra with coaction
    \begin{equation}
        \begin{split}
            \rho_V \colon &V \to V \otimes H,\\
            &v \mapsto v_0 \otimes S(v_{-1}),
        \end{split}
        \label{eq:rightHopcoactiononV}
    \end{equation}
    so that $E = A \underset{H}{\Box} V$ and $\mathcal{E} = (A \otimes V)^{coH}$ are equivalent.
    \label{pro:thetwodefinitionsofassqvbareequivalent}
\end{proposition}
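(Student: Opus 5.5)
Two things need checking. First, that $\rho_V$ makes $V$ a right $H^{op}$-comodule algebra. Second, that under this choice of coaction the two subspaces $A \underset{H}{\Box} V$ and $(A\otimes V)^{coH}$ of $A\otimes V$ coincide.

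\textbf{Step 1: $\rho_V$ is a right $H^{op}$-comodule algebra coaction.} For coassociativity I compute
\[
(\rho_V\otimes id)\rho_V(v)=v_0\otimes S(v_{-1})\otimes S(v_{-2}),
\qquad
(id\otimes\Delta)\rho_V(v)=v_0\otimes S(v_{-1})_1\otimes S(v_{-1})_2 .
\]
The two agree by property 3 of Proposition \ref{pro:Hopfantipodeproperties} ($\Delta\circ S=(S\otimes S)\circ flip\circ\Delta$) together with left coassociativity of $\leftindex_V\Delta$. Counitality follows from $\epsilon\circ S=\epsilon$ (property 4) and left counitality.

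For the algebra compatibility, $\leftindex_V\Delta(vv')=v_{-1}v'_{-1}\otimes v_0v'_0$. Hence
\[
\rho_V(vv')=v_0v'_0\otimes S(v_{-1}v'_{-1})=v_0v'_0\otimes S(v'_{-1})S(v_{-1}),
\]
using that $S$ is an anti-algebra map (property 1). This is exactly condition (\ref{eq:Hopcomodulealgebra}). Unitality follows from $S(1_H)=1_H$.

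\textbf{Step 2: $\Box$ implies coinvariance.} Let $x=\sum a^i\otimes v^i\in A\underset{H}{\Box}V$, so that $\sum a^i_0\otimes a^i_1\otimes v^i=\sum a^i\otimes v^i_{-1}\otimes v^i_0$ in $A\otimes H\otimes V$. I apply the linear map $A\otimes H\otimes V\to A\otimes V\otimes H$ given by
\[
a\otimes h\otimes v\mapsto a\otimes v_0\otimes h\,S(v_{-1}).
\]
On the left-hand side this yields $\Delta_{\mathcal E}(x)$. On the right-hand side it yields
\[
\sum a^i\otimes v^i_0\otimes v^i_{-2}S(v^i_{-1})=\sum a^i\otimes v^i\otimes 1_H,
\]
by the antipode axiom and counitality. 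So $x\in\mathcal E$.

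\textbf{Step 3: coinvariance implies $\Box$.} This is the main obstacle, since we must recover the left coaction of $V$ from $\rho_V$, and this requires inverting $S$. This is precisely where the standing assumption that $S$ is invertible enters. Starting from $\sum a^i_0\otimes v^i_0\otimes a^i_1S(v^i_{-1})=\sum a^i\otimes v^i\otimes 1_H$, I apply the inverse-type map
\[
a\otimes v\otimes h\mapsto a\otimes h\,v_{-1}\otimes v_0,
\]
with the coaction on $v$ used unchanged; no explicit $S^{-1}$ should appear here. The left-hand side becomes
\[
\sum a^i_0\otimes a^i_1S(v^i_{-2})v^i_{-1}\otimes v^i_0 .
\]
Here $S(v_{-2})v_{-1}$ is in the reversed order $S(h_2)h_1$, so it does not collapse by the antipode axiom directly; this is exactly the obstruction. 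The fix I would try is to write $v_{-1}=S(S^{-1}(v_{-1}))$, so that
\[
S(v_{-2})\,S(S^{-1}(v_{-1}))=S\bigl(S^{-1}(v_{-1})v_{-2}\bigr).
\]
Applying $S^{-1}$ to the antipode identity gives $S^{-1}(h_2)h_1=\epsilon(h)1_H$; with the indices relabelled this is $S^{-1}(v_{-1})v_{-2}=\epsilon(v_{-1})1_H$ after coassociativity. Hence the expression reduces to $\sum a^i_0\otimes a^i_1\otimes v^i$. The right-hand side becomes $\sum a^i\otimes v^i_{-1}\otimes v^i_0$, which is the cotensor condition.

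Finally, I note that the two maps used in Steps 2 and 3 are mutually inverse linear automorphisms between $A\otimes H\otimes V$ and $A\otimes V\otimes H$ (this again uses $S^{-1}$). Consequently the two defining conditions are equivalent, and $E=\mathcal E$ as subsets, and hence as subalgebras, of $A\otimes V$.
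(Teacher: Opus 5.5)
Your proof is correct, and it is organized differently from the paper's, in a way that makes it more robust.

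\textbf{How the routes differ.} The paper specializes the defining identity. Setting $v=1_V$ gives $a_0\otimes 1_V\otimes a_1=a\otimes 1_V\otimes 1_H$, and setting $a=1_A$ gives $1_A\otimes v_0\otimes S(v_{-1})=1_A\otimes v\otimes 1_H$. It then multiplies these in $A\otimes V\otimes H$, and does the same, with $S^{-1}$, for the converse. This treats an element of $A\otimes V$ as a simple tensor whose two factors can be varied independently, which is not legitimate for a fixed element. The intermediate identities can in fact fail. Take $A=V=H$ with both coactions given by $\Delta$, and a grouplike $g\neq 1_H$: then $g\otimes g\in H\underset{H}{\Box}H$, yet $\Delta(g)\neq g\otimes 1_H$.

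Your argument instead uses the linear map $T\colon a\otimes h\otimes v\mapsto a\otimes v_0\otimes hS(v_{-1})$. It satisfies $T\circ(\Delta_A\otimes \mathrm{id}_V)=\Delta_{\mathcal E}$ and $T\bigl((\mathrm{id}_A\otimes {}_V\Delta)(x)\bigr)=x\otimes 1_H$, and you pair it with its inverse $U$. This handles arbitrary sums $\sum_i a^i\otimes v^i$ and shows directly that the two defining conditions are equivalent. You also verify that $\rho_V$ is a right $H^{op}$-comodule algebra coaction, which the paper asserts without checking.

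\textbf{Two mistaken side remarks.} Neither affects the conclusion.
\begin{itemize}
\item In Step 3 you use the convention $v_{-2}\otimes v_{-1}\otimes v_0=(\Delta\otimes\mathrm{id})\,{}_V\Delta(v)$, the same one you used correctly in Steps 1 and 2. Under it, $S(v_{-2})v_{-1}$ has the form $S(h_1)h_2$, not $S(h_2)h_1$. So it collapses to $\epsilon(v_{-1})1_H$ directly by the antipode axiom. Your detour through $S^{-1}$ is valid but unnecessary.
\item Likewise, $U\circ T=\mathrm{id}$ and $T\circ U=\mathrm{id}$ follow from $S(h_1)h_2=\epsilon(h)1_H=h_1S(h_2)$ alone.
\end{itemize}
Step 1 also uses no $S^{-1}$. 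So your argument actually shows that $A\underset{H}{\Box}V=(A\otimes V)^{coH}$ holds without assuming the antipode is bijective.
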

\begin{proof}
    Let us first consider $a \otimes v \in A \underset{H}{\Box} V$ and show it is an invariant under $\Delta_{\mathcal{E}}$. We need first to manipulate the condition satisfied by any element of $A \underset{H}{\Box} V$, which is $\Delta_A(a) \otimes v = a \otimes \leftindex_V\Delta(v)$, i.e. $a_0 \otimes a_1 \otimes v = a \otimes v_{-1} \otimes v_0$. By taking $v = 1_V$ and flipping the last two legs we get
    \begin{equation}
        a_0 \otimes 1_V \otimes a_1 = a \otimes 1_V \otimes 1_H.
        \label{eq:firstrealtionneededforward}
    \end{equation}
    By considering $a = 1_A$ instead, and later flipping the last two legs and applying $( 1_A \otimes 1_V \otimes S)$ to both sides, we get
    \begin{equation}
        1_A \otimes v \otimes 1_H = 1_A \otimes v_0 \otimes S(v_{-1}).
        \label{eq:secondrealtionneededforward}
    \end{equation}
    Using these two relations, we can now write
    \begin{equation}
        \begin{split}
            &\Delta_{\mathcal{E}}(a \otimes v) = a_0 \otimes v_0 \otimes a_1S(v_{-1}) =\\
            &=(a_0 \otimes 1_V \otimes a_1)(1_A \otimes v_0 \otimes S(v_{-1}))=\\
            &=(a \otimes 1_V \otimes 1_H)(1_A \otimes v \otimes 1_H)=\\
            &=a \otimes v \otimes 1_H
        \end{split}
        \label{eq:provingforward}
    \end{equation}
    for any $a \otimes v \in A \underset{H}{\Box} V$, proving that $A \underset{H}{\Box} V \subseteq (A \otimes V)^{coH}$.\\
    To prove that any invariant $a \otimes v \in \mathcal{E} = (A \otimes V)^{coH}$ is in $A \underset{H}{\Box} V$, we have to manipulate the relation satisfied by any element in $(A \otimes V)^{coH}$, which is $a_0 \otimes v_0 \otimes a_1S(v_{-1}) = a \otimes v \otimes 1_H$. First we consider $a =1_A$, and by applying $(1_A \otimes 1_V \otimes S^{-1})$ to both sides and flipping the last two legs we get
    \begin{equation}
        1_A \otimes v_{-1} \otimes v_0 = 1_A \otimes 1_H \otimes v.
        \label{eq:firstrealtionneededbackwards}
    \end{equation}
    Taking instead $v = 1_V$, and flipping the last two legs we get
    \begin{equation}
        a_0 \otimes a_1 \otimes 1_V = a \otimes 1_H \otimes 1_V.
        \label{eq:secondrelationneededbackwards}
    \end{equation}
    Using these two relations we can write
    \begin{equation}
        \begin{split}
            &\Delta_A(a) \otimes v = a_0 \otimes a_1 \otimes v =\\
            &=(a_0 \otimes a_1 \otimes 1_V)(1_A \otimes 1_H \otimes v)=\\
            &=(a \otimes 1_H \otimes 1_V)(1_A \otimes v_{-1} \otimes v_0) =\\
            &= a \otimes v_{-1} \otimes v_0 = a \otimes \rho_V(v)
        \end{split}
        \label{eq:provingbackwards}
    \end{equation}
    for any $a \otimes v \in (A \otimes V)^{coH}$, proving that $(A \otimes V)^{coH} \subseteq A \underset{H}{\Box} V$ and thus showing the two definitions of associated quantum vector bundle are equivalent. 
\end{proof}
In the following, we will consider the quantum associated bundle given as the set coinvariants $\mathcal{E} = (A \otimes V)^{coH}$, as it the preferred choice in most cases.

\subsection{Cross sections} \label{crosssections}

Below, the quantum analogue of sections of the associated fiber bundle is defined.
\begin{definition}
    Let $\mathcal{E} = (A \otimes V)^{coH}$ be a quantum fiber bundle associated to a quantum principal bundle $A(B, H)$. A left $B$-module map
    \begin{equation}
        s \colon \mathcal{E} \to B
        \label{eq:crosssectionofaqvb}
    \end{equation} 
    such that 
    \begin{equation}
        s(1_{\mathcal{E}}) = 1_A
        \label{eq:crosssectionofaqvbdefiningproperty}
    \end{equation}
    is called a \textsl{cross section} of $\mathcal{E}(B, V, H)$.
    \label{def:crosssectionofaqvb}
\end{definition}
\begin{remark}
    The left $B$-module structure for $\mathcal{E}$ is naturally introduced: we can write $b \triangleright (a \otimes v) = ba \otimes v$ for any $b \in B$ and $a \otimes v \in \mathcal{E}$.
    \label{rem:theBmodulestructureonmathcalE}
\end{remark}
Recall that classical sections $\sigma$ satisfy the defining property $\pi \circ \sigma = id$. The following lemma shows that an analogous relation is given for quantum bundle cross sections. 
\begin{lemma}
    If $s \colon \mathcal{E} \to M$ is a cross section of a quantum fiber bundle $\mathcal{E}(M, V, H)$ then
    \begin{equation}
         s \circ j_{\mathcal{E}} = id_B
        \label{eq:nowitbecomesalemma}
    \end{equation}
    where $j_{\mathcal{E}} \colon B \to \mathcal{E}, b \mapsto b \otimes 1_V$ is a natural inclusion.
    \label{lem:nowitbecomesalemma}
\end{lemma}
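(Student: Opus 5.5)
The identity follows directly from the two defining properties of a cross section in Definition \ref{def:crosssectionofaqvb}, namely left $B$-linearity and unitality, combined with the left $B$-module structure of $\mathcal{E}$ from Remark \ref{rem:theBmodulestructureonmathcalE}. I will read the codomain of $s$ as $B$, as in Definition \ref{def:crosssectionofaqvb}; the $M$ in the statement is notation carried over from the classical setting. Likewise, the unit $1_A$ in (\ref{eq:crosssectionofaqvbdefiningproperty}) coincides with $1_B$, since $B \subseteq A$ is a unital subalgebra.

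First, I will check that the unit of $\mathcal{E}$ is $1_{\mathcal{E}} = 1_A \otimes 1_V = j_{\mathcal{E}}(1_B)$. By Lemma \ref{lem:toproveHopquantumvectorbundleisok}, $\mathcal{E}$ is a subalgebra of $A \otimes V$, and it contains $1_A \otimes 1_V$: the computation (\ref{eq:jEmakesitwork}) with $b = 1_B$ shows this element is coinvariant.

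Next, fix $b \in B$. Using the module structure of Remark \ref{rem:theBmodulestructureonmathcalE}, I will write
\begin{equation*}
j_{\mathcal{E}}(b) = b \otimes 1_V = b1_A \otimes 1_V = b \triangleright (1_A \otimes 1_V) = b \triangleright 1_{\mathcal{E}}.
\end{equation*}

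Finally, left $B$-linearity of $s$ followed by the normalization (\ref{eq:crosssectionofaqvbdefiningproperty}) gives
\begin{equation*}
s\big(j_{\mathcal{E}}(b)\big) = s(b \triangleright 1_{\mathcal{E}}) = b\, s(1_{\mathcal{E}}) = b\,1_A = b.
\end{equation*}
Since $b \in B$ was arbitrary, this proves $s \circ j_{\mathcal{E}} = id_B$.

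There is no real obstacle here. The only care needed is in the identifications: recognising that $1_{\mathcal{E}}$ is $j_{\mathcal{E}}(1_B)$, and that the left $B$-action on $B$ implicit in the phrase ``left $B$-module map'' is ordinary left multiplication, so that $b\,s(1_{\mathcal{E}})$ is just the product $b \cdot 1_A$ computed in $A$.
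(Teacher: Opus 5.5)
Your proposal is correct and follows the paper's proof essentially verbatim: the paper also computes $s(j_{\mathcal{E}}(b)) = s(b \otimes 1_V) = b\, s(1_{\mathcal{E}}) = b$, using left $B$-linearity together with the normalization $s(1_{\mathcal{E}}) = 1_A$. Your extra checks, that $1_{\mathcal{E}} = 1_A \otimes 1_V = j_{\mathcal{E}}(1_B)$ and that the codomain $M$ should be read as $B$, simply make explicit the identifications the paper leaves implicit.
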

\begin{proof}
    For any $b \in B$, we have $s \circ j_{\mathcal{E}}(b) = s(b \otimes 1_V) = b \triangleright s(1_{\mathcal{E}}) = b$.
\end{proof}
\begin{remark}
    In \cite{brz_majid_qgauge}, the above lemma is given as a definition of cross section, but, as noted in \cite{majid_remarksgauge}, the left $B$-module structure is needed, since classically we can multiply sections by functions on the base space.
    \label{rem:onwhyweneedittobealeftBmodulemapandwhy}
\end{remark}

\section{Quantum reductions} \label{quantumreductions}

Let us give some preliminary notions to understand the definition of quantum reduction. In particular we define a Hopf ideal $J$ and we show that the quotient algebra $H_0 := H/J$ is a Hopf algebra.
\begin{definition}
    A vector subspace $J \subseteq H$ of a Hopf algebra $H$ is called a \textsl{Hopf ideal} if it is a two-sided ideal of $H$ as an algebra, i.e. $\mu(H \otimes J) \subseteq J$ and $\mu(J \otimes H) \subseteq J$, and
    satisfies the following conditions
    \begin{enumerate}
        \item $\Delta(J) \subseteq J \otimes H + H \otimes J$;
        \item $\epsilon(J)=0$;
        \item $S(J) \subseteq J$.
    \end{enumerate}
    \label{def:Hopfideal}
\end{definition}
\begin{lemma}
    Let $H$ be a Hopf algebra and $J \subseteq H$ and Hopf ideal. Then $H_0 := H/J$ is a Hopf algebra.
    \label{lem:H0isaHopfalgebra}
\end{lemma}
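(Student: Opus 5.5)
The plan is to show that each Hopf algebra structure map of $H$ descends to the quotient $H_0 = H/J$ along the canonical projection $p \colon H \to H_0$, $h \mapsto [h] = h + J$, and that the axioms for $H_0$ then follow from those of $H$ because $p$ is surjective.

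\textbf{Algebra structure.} Since $J$ is a two-sided ideal, $H_0$ is an associative unital algebra with $[h][h'] := [hh']$ and unit $[1_H]$. This is well defined because $(h+j)(h'+j') - hh' \in J$ for $j,j' \in J$. Associativity and unitality are inherited from $H$, and $p$ is an algebra morphism.

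\textbf{Coalgebra structure.} Define $\Delta_0 \colon H_0 \to H_0 \otimes H_0$ by $\Delta_0 := (p \otimes p) \circ \Delta$ on representatives, $\epsilon_0([h]) := \epsilon(h)$, and $S_0([h]) := [S(h)]$. Well-definedness rests on three facts.
\begin{itemize}
\item The kernel of $p \otimes p \colon H \otimes H \to H_0 \otimes H_0$ is exactly $J \otimes H + H \otimes J$. This is the linear algebra fact that the kernel of a tensor product of surjections is the sum of the tensored kernels, which holds over a field.
\item Condition 1 of Definition \ref{def:Hopfideal} then gives $(p \otimes p)\Delta(J) = 0$, so $\Delta_0$ is well defined.
\item Condition 2 gives $\epsilon(J) = 0$, so $\epsilon_0$ is well defined, and condition 3 gives $S(J) \subseteq J$, so $S_0$ is well defined.
\end{itemize}
By construction $\Delta_0 \circ p = (p \otimes p) \circ \Delta$, $\epsilon_0 \circ p = \epsilon$ and $S_0 \circ p = p \circ S$.

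\textbf{Axioms.} Each axiom for $H_0$ is an identity between linear maps out of $H_0$. Composing with the surjection $p$, it is enough to check it after precomposition by $p$, where it becomes the image under $p$ (or $p \otimes p$, $p \otimes p \otimes p$) of the corresponding identity in $H$.
\begin{itemize}
\item Coassociativity: $(\Delta_0 \otimes id)\Delta_0 p = (p \otimes p \otimes p)(\Delta \otimes id)\Delta = (p \otimes p \otimes p)(id \otimes \Delta)\Delta = (id \otimes \Delta_0)\Delta_0 p$.
\item Counitality follows in the same way.
\item $\Delta_0$ and $\epsilon_0$ are algebra maps because $\Delta$, $\epsilon$ and $p$ are algebra maps.
\item Antipode property: $S_0([h_1])[h_2] = [S(h_1)h_2] = \epsilon(h)[1_H] = \epsilon_0([h])1_{H_0}$, and the other side is symmetric.
\end{itemize}
Hence $H_0$ is a Hopf algebra and $p$ is a morphism of Hopf algebras.

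\textbf{Main obstacle.} The only nontrivial point is the kernel identification $\ker(p \otimes p) = J \otimes H + H \otimes J$. To prove it, choose a linear complement $C$ with $H = J \oplus C$, so that $p|_C \colon C \to H_0$ is an isomorphism. Then $H \otimes H = (J \otimes H + H \otimes J) \oplus (C \otimes C)$. On the first summand $p \otimes p$ vanishes, and on $C \otimes C$ it is injective, being $p|_C \otimes p|_C$. This gives the kernel. Everything else is routine diagram checking.
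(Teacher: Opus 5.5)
Your proof is correct and takes the same route as the paper: you descend the product, coproduct, counit and antipode along the projection $p\colon H\to H/J$. You also supply details the paper leaves out, namely the bialgebra compatibilities and the antipode axiom, which you check by precomposing with the surjection $p$.

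One remark: since you define $\Delta_0 := (p\otimes p)\circ\Delta$ directly into $H_0\otimes H_0$, you only need the trivial inclusion $J\otimes H + H\otimes J\subseteq \ker(p\otimes p)$, so your ``main obstacle'' is not actually needed. The full equality is what justifies reading the paper's formula $\Delta_0(h+J)=\Delta(h)+(J\otimes H+H\otimes J)$ as an element of $H_0\otimes H_0$.
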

\begin{proof}
    An element $[h] \in H_0 = H / J$ can be written as $[h] = h + J$.\\ Since $J$ is a two-sided ideal, $H_0$ is an algebra. In fact, the product in $H_0$ is given by $(h + J)(h^{\prime} + J) = hh^{\prime} + J \in H_0$, and associtativity and unitality are trivially satisftied. The unit in $H_0$ is $1_H + J$. Moreover, $H_0$ is also a coalgebra, with coproduct $\Delta_0(h + J) = \Delta(h) + (J \otimes H + H \otimes J)$. Coassociativity follows from that of $\Delta$. Counit is given by $\epsilon_0(h + J) = \epsilon(h)$ and counitality follows. Finally, the antipode for $H_0$ is given by $S_0(h + J) = S(h) + J$.
\end{proof}
Moreover, for any Hopf algebra $H$ and Hopf ideal $J$, the quotient map $\pi \colon H \to H_0 := H/J$ is a Hopf algebra morphism.
\begin{remark}
    In general, the quotient $H_0 := H/J$ is not isomorphic to a Hopf subalgebra of $H$. Given the natural projection $\pi \colon H \to H/J$, for $H_0$ to be a Hopf subalgebra of $H$, there must exist a Hopf algebra morphism $\sigma \colon H/J \to H$ such that $\pi \circ \sigma = id_{H_0}$, so that $H_0 = H/J$ is isomorphic to the Hopf subalgebra $\sigma(H/J)$, which is not always the case. \\
    \label{rem:onthefactthatH0isnotasubalgebraofHingeneral}
\end{remark}
As discussed in Remark \ref{rem:Hopfalgebraasacomodulealgebrawithcoproductascoaction}, any Hopf algebra $H$ can be seen as a right (or left) $H$-comodule algebra with coaction given by the coproduct $\Delta$. Thus, given the surjective projection $\pi \colon H \to H_0 := H/J$, $H$ can be seen as a right $H_0$-comodule algebra with right $H_0$-coaction $(id_H \otimes \pi) \circ \Delta$, or as a left $H_0$-comodule algebra with left $H_0$-coaction $(\pi \otimes id_H) \circ \Delta$.  \\
We can now define a quantum reduction, following \cite{chiara_qreductions}, Definition 3.5, and restricting it to the affine case, so that the sheaf approach is not strictly needed.
\begin{definition}
    Let $B := A^{coH} \subseteq A$ be a quantum principal bundle with right $H$-coaction $\Delta_A \colon A \to A \otimes H$, an $\pi \colon H \to H_0$ as surjective Hopf algebra morphism, with $H_0 := H/J$ for some Hopf ideal $J \subset H$.
    Let $A_0$ be a principal $H_0$-comodule algebra with $B_0 := A_0^{coH_0}$ the coinvariats under the right $H_0$-coaction $\Delta_{A_0} \colon A_0 \to A_0 \otimes H_0$.\\
    We say that $A_0$ is a \textsl{reduction} of $A$ if:
    \begin{enumerate}
        \item $B \cong B_0$ as algebras;
        \item there exists a surjective morphism of $H_0$-comodule algebras $\phi \colon A \to A_0$ such that $\phi(B)=B_0$, where $A$ carries the induced $H_0$-coaction $\Delta_A^0 := (id_A \otimes \pi) \circ \Delta_A \colon A \to A \otimes H_0$ by the Hopf algebra morphism $\pi \colon H \to H_0$.
    \end{enumerate}
    \label{def:quantumreduction}
\end{definition}
The morphism of right $H_0$-comodules $\phi \colon A \to A_0$ (note that the direction of the arrow is inverted with respect to the principal bundle $\iota$ morphism defining a principal bundle reduction as in Section \ref{reductions}) is a $\mathbb{K}$-linear map such that $\Delta_{A_0} \circ \phi = (\phi \otimes id) \circ \Delta_A^0$, where $\Delta_A^0 := (id_A \otimes \pi) \circ \Delta_A$ is the induced right $H_0$-coaction on $A$, given in terms of the natural projection $\pi \colon H \to H_0 :=H/J$. \\
$A$ is a right $H_0$-comodule algebra by the coaction $\Delta_A^0$. In fact, since $\pi$ is a Hopf algbera morphism, we have
\begin{equation}
    \begin{split}
        &\Delta_A^0(aa^{\prime}) = (id_A \otimes \pi) \circ \Delta_A(aa^{\prime}) =(id_A \otimes \pi)\Delta_A(a)\Delta_A(a^{\prime}) = \\
        &= (id_A \otimes \pi)(a_0a^{\prime}_0 \otimes a_1a^{\prime}_1) = a_0a^{\prime}_0 \otimes \pi(a_1a^{\prime}_1) = a_0a^{\prime}_0 \otimes \pi(a_1)\pi(a^{\prime}_1) = \\
        &= \big(a_0 \otimes \pi(a_1)\big)\big(a_1 \otimes \pi(a^{\prime}_1)\big) = \Delta_A^0(a)\Delta_A^0(a^{\prime})
    \end{split}
    \label{eq:ragionandosuDeltaA0suA1}
\end{equation}
for all $a,a^{\prime} \in A$, and
\begin{equation}
    \Delta_A^0(1_A) = (id_A \otimes \pi) \circ \Delta_A(1_A) = 1_A \otimes \pi(1_H) = 1_A \otimes 1_{H_0}.
    \label{eq:ragionandosuDeltaA0suA2}
\end{equation}
Moreover, coinvariants $B := A^{coH}$ under $\Delta_A$ are also coinvariants under $\Delta_A^0$. In fact, for all $b \in B$:
\begin{equation}
        \Delta_A^0(b) = (id_A \otimes \pi) \circ \Delta_A(b) = b \otimes \pi(1_H) = b \otimes 1_{H_0},
    \label{eq:ragionandosuDeltaA0suA3}
\end{equation}
so that $B = A^{coH} \subseteq A^{coH_0}$.
\begin{remark}
    As observed in \cite{chiara_qreductions}, in general, the requirement for the map $\phi \colon A \to A_0$ to be an algebra map is too restrictive. When $\phi$ is a morphism of algebra, on has $A_0 \cong A/I$ by some ideal $I$ of $A$ and, in the degenerate case of $J$ and $I$ trivial, a reduction corresponds to a bundle automorphism (i.e., a gauge transformation): in this case, the algebra morphism condition should be relaxed \cite{aschieri_chiara}, \cite{brzezinski_translation}. For this reason, in \cite{chiara_qreductions}, a quantum reduction is defined in terms of a morphism $\phi \colon A \to A_0$ of right $H_0$-comodules, without assuming it to be an algebra map. When $\phi$ is an algebra morphism too, $A_0$ is called an algebraic reduction.\\
    In this thesis, for simplicity, we require $\phi$ to be an algebra map in order to construct the quotient calculus on $A_0$, which is used to prove Theorem \ref{thm:aquantumGstructureisaquantumframeresolution}. We believe that, in exchange for additional assumptions on the calculus, one could take $\phi$ to be an $H_0$-comodule morphism only. 
    \label{rem:onalgebraicreductions}
\end{remark}

\chapter{The quantum frame bundle} \label{quantumframebundle}
In this chapter, a quantum-geometric generalization of the important example of the frame bundle is developed, with the aim of providing a coherent definition of quantum $G$-structures. Our main references for this final chapter are \cite{cap_slovak}, \cite{brzezinski_translation} and \cite{majid_qbraided}.

We begin in Section \ref{frameresolutions} with a review of the classical theory of frame bundle, focusing on their defining feature: the existence of the canonical form. The key argument in this discussion is based on the bijective correspondence between horizontal and $G$-equivariant forms on the total space $P$ with values in a suitable vector space $V$, and differential forms on the base space $M$ with values in the associated bundle $P \times_G V$. This correspondence allows us to characterize frame bundles through the concept of a "frame resolution", which manifests as an isomorphism between the tangent bundle $TM$ and the associated vector bundle $P \times_G V$. In Section \ref{quantumframeresolutions}, we extend these ideas to the quantum setting by introducing the non commutative counterpart of this correspondence. This is done, following \cite{brzezinski_translation}, by employing the translation map on a quantum principal bundle, whose definition and properties are dealt with in \ref{thetranslationmap}. We thus arrive at the notion of a quantum frame resolution, which serves as the closest analogue to a (classical) frame bundle in noncommutative geomtery. In \ref{exampleofqfr} we show an example of quantum frame resolution by viewing the smashed product algebra as a quantum homogeneous principal bundle, in analogy with the classical affine case discussed in \ref{cartangeometries}. Finally, in Section \ref{quantumGstructure}, we provide a definition of quantum $G$-structures, using the framework of quantum reductions established in \ref{quantumreductions}. Mirroring the classical theory, we define a quantum $G$-structure as a quantum reduction of a quantum frame resolution. The main result in this work asserts that a quantum $G$-structure is itself a quantum frame resolution, under the assumption of a covariant first order differential calculus on the starting quantum frame resolution, and consequently a quotient calculus on the quantum $G$-structure and pullback calculi on the respective base spaces. To conclude, in \ref{exampleofqGstruc}, an example is provided.

\section{Frame resolutions} \label{frameresolutions}

We start by recalling that for any principal bundle $\pi \colon P \to M$ with structure group $G$, there is an associated bundle $E = P \times_G F$ with standard fiber $F$, consisting of equivalence classes in $P \times F$ under the relation $(p g, v) \sim (p, g \circ v)$, or, equivalently, $(p,v) \sim (p g, g^{-1} \circ v)$, for all $p \in P$, $v \in F$ and $g \in G$, given a left action of the group $G$ (possibly under a suitable representation, which is here left understood) on $F$. Let $\pi_F \colon E \to M$ the projection map of the associated bundle, which satisfies $\pi_F(p,v) = \pi(p)$. The following important propositions are given \cite{cap_slovak}.
\begin{proposition}
    Let $\pi \colon P \to M$ be a principal $G$-bundle and $F$ a smooth manifold endowed with a left $G$-action. \\
    There is a natural bijective correspondence
    \begin{equation}
        \Gamma(P \times_G F) \iff C^{\infty}(P, F)^G,
        \label{eq:sectionssmooth}
    \end{equation}
    where $\Gamma(P \times_G F)$ is the set of all smooth sections $\sigma$ of the associated bundle $P \times_G F$, and $C^{\infty}(P, F)^G$ is the set of all smooth functions $\phi \colon P \to F$ which are $G$-equivariant, i.e. $\phi(pg)=g^{-1}\phi(p)$. \\
    Explicitly, this correspondence is given as
    \begin{equation}
        \sigma(\pi(p))= [\![ p, \phi(p)]\!].
        \label{eq:explcorr}
    \end{equation}
    \label{pro:corrspondencebetweensectionsandequivariantmapsCap}
\end{proposition}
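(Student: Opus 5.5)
The plan is to construct the two directions of the correspondence explicitly and check that they are mutually inverse, smooth, and well defined. The key tool is the fiberwise identification $F \cong \pi_F^{-1}(m)$, $v \mapsto [\![p,v]\!]$, which holds for any fixed $p \in \pi^{-1}(m)$. This map is a bijection because $G$ acts freely and transitively on the fibers of $P$ (Definition \ref{def:principalfiberbundle}): two pairs $(p,v)$, $(p,v^{\prime})$ with the same first entry are equivalent only if $v = v^{\prime}$, since $pg = p$ forces $g = e$.

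\textbf{From equivariant maps to sections.} Given $\phi \in C^{\infty}(P,F)^G$, I define $\sigma(m) := [\![p,\phi(p)]\!]$ for any $p \in \pi^{-1}(m)$. To see that this does not depend on the choice of $p$, take another point $pg$ in the same fiber. Equivariance gives $[\![pg,\phi(pg)]\!] = [\![pg, g^{-1}\phi(p)]\!] = [\![p,\phi(p)]\!]$, by the equivalence relation (\ref{eq:equivalenceassbundlejusttobeclear}). We also have $\pi_F \circ \sigma = id_M$, by the definition (\ref{eq:cosaimprecisa}) of $\pi_F$.

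For smoothness I work locally. On $U_\alpha$, with a local section $s_\alpha$, we have $\sigma|_{U_\alpha} = q \circ (s_\alpha, \phi \circ s_\alpha)$, where $q \colon P \times F \to P \times_G F$ is the quotient map. This is smooth, given that the smooth structure on $P \times_G F$ is the one for which the induced maps $U_\alpha \times F \to \pi_F^{-1}(U_\alpha)$ are diffeomorphisms, as in Remark \ref{rem:constructtheassociatedbundlefromlocaldata}.

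\textbf{From sections to equivariant maps.} Given a section $\sigma$, for each $p$ the element $\sigma(\pi(p))$ lies in $\pi_F^{-1}(\pi(p))$. By the fiber identification above there is a unique $\phi(p) \in F$ with $\sigma(\pi(p)) = [\![p,\phi(p)]\!]$. Equivariance follows by comparing $[\![pg,\phi(pg)]\!]$ with $[\![p,\phi(p)]\!] = [\![pg,g^{-1}\phi(p)]\!]$, which by uniqueness gives $\phi(pg) = g^{-1}\phi(p)$.

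For smoothness, on $\pi^{-1}(U_\alpha)$ I write $p = s_\alpha(m)g_\alpha(p)$ as in (\ref{eq:quellochemihafattocapire}). In the local trivialization, $\sigma$ has the form $m \mapsto [\![s_\alpha(m), f_\alpha(m)]\!]$ with $f_\alpha$ smooth. Hence $\phi(p) = g_\alpha(p)^{-1} f_\alpha(\pi(p))$, which is smooth because the action of $G$ on $F$ is smooth.

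\textbf{Bijectivity and naturality.} Both constructions are defined by the same formula (\ref{eq:explcorr}), so they are mutually inverse directly from the uniqueness statements above. Naturality follows by checking compatibility with principal bundle morphisms covering the identity, which is immediate from the formula.

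I expect the only genuine obstacle to be the smoothness claims, since they require making the smooth structure on the quotient $P \times_G F$ explicit. I would handle both of them uniformly through local sections and the identification $\pi_F^{-1}(U_\alpha) \cong U_\alpha \times F$, $[\![s_\alpha(m)g, v]\!] \mapsto (m, gv)$.
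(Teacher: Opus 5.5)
Your proposal is correct and follows essentially the same route as the paper. Both define the two directions by the formula $\sigma(\pi(p)) = [\![ p, \phi(p) ]\!]$, get well-definedness and equivariance from the equivalence relation together with freeness and transitivity on fibers, and check smoothness through local sections. Your explicit local formula $\phi(p) = g_\alpha(p)^{-1} f_\alpha(\pi(p))$ makes the smoothness of $\phi$ more precise than the paper's rather brief justification.
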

\begin{proof}
    Given $\phi \colon P \to F$ a smooth $G$-equivariant function, i.e. satisfying $\phi(pg)=g^{-1}\phi(p)$, points belonging to the total space of the associated fiber bundle $P \times_G F$ can be written as
    \begin{equation}
        [\![ p, x ]\!] = [\![ p, \phi(p)]\!] = [\![ pg, \phi(pg)]\!] = [\![ pg, g^{-1}\phi(p) ]\!] = [\![ pg, g^{-1}x]\!].
        \label{eq:xdxdxd}
    \end{equation}
    This implies, for each $\phi$ as above, we can define a section $\sigma \in \Gamma(P \times_G F)$ as
    \begin{equation}
        \begin{split}
            \sigma \colon &M \to P \times_G F\\
            &m \mapsto \sigma(m) = [\![ s_{\alpha}(m), \phi(s_{\alpha}(m)) ]\!],
        \end{split}
        \label{eq:diodio}
    \end{equation}
    for a smooth local section $s_{\alpha}$ of $P$ defined on a neighborhood $U_{\alpha} \ni m$, which implies smoothness of $\sigma$. Clearly this definition depends only on $m = \pi(p)$, since $\sigma(m)=\sigma(\pi(p))=\sigma(\pi(pg))$.\\
    Conversely, let $\sigma \in \Gamma(P \times_G F)$ be a smooth section of the associated fiber bundle. Any element of the fiber over $\pi(p)=\pi_F(p, x)$ can be uniquely written as an equivalence class $[\![ p, y ]\!] = [\![ pg, g^{-1} y ]\!]$. We can use
    \begin{equation}
        \sigma(\pi(p))=[\![ p, \phi(p)]\!]
        \label{eq:lolol1}
    \end{equation}
    to define $\phi \colon P \to F$. Since $\sigma(\pi(p))= \sigma(\pi(pg))$, then we have 
    \begin{equation}
        \sigma(\pi(p))=[\![ p, \phi(p)]\!] =[\![ pg, \phi(pg)]\!],
        \label{eq:lolol2}
    \end{equation}
    which implies $G$-equivariance $\phi(pg)=g^{-1}\phi(p)$ since an element of the associated bundle is such that $[\![ p, \phi(p)]\!] = [\![ pg, g^{-1}\phi(p)]\!]$. Smoothness of $\phi$ comes from defining the smooth section $\sigma$ in terms of smooth local sections $s_{\alpha}$ of the principal bundle as $\sigma(\pi(s_{\alpha}(m))) = [\![ s_{\alpha}(m), \phi(s_{\alpha}(m))]\!]$.
\end{proof}
The above proposition extends to associated vector bundles $P \times_G V$ with standard fibre a vector space $V$.
\begin{proposition}
    Let $\pi \colon P \to M$ be a principal fiber bundle with structure group $G$ and let $\rho \colon G \to GL(V)$ be a representation of $G$ acting on a vector space $V$.\\
    Then, for each $k \in \mathbb{N}$, there is a bijective correspondence
    \begin{equation}
        \Omega^k( M; P \times_G V) \iff \Omega^k_{hor}(P;V)^G,
        \label{eq:kformshorequi}
    \end{equation}
    where $\Omega^k(M; P \times_G V)$ is the set of $k$-forms with values in the associated bundle and $\Omega^k_{hor}(P;V)^G$ is the set of horizontal and equivariant forms (alternatively called tensorial forms), i.e. differential forms $\theta$ which annihilate vertical vector fields and satisfy
    \begin{equation}
        R_g^{\ast} \theta_{pg} = \rho(g^{-1}) \circ \theta_p \text{ for every } g \in G. 
        \label{eq:eccoci}
    \end{equation}
    \label{pro:correspondencetensorialformsandassfbvaluedformsCap}
\end{proposition}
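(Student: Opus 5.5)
The plan is to reduce the statement to Proposition \ref{pro:corrspondencebetweensectionsandequivariantmapsCap}, working pointwise. A $k$-form on $M$ with values in $E := P \times_G V$ is a section of the vector bundle $\Lambda^k T^{\ast}M \otimes E$. The goal is to identify such a section with a $V$-valued $G$-equivariant function on a suitable principal bundle, and then with a tensorial form on $P$. The explicit correspondence to aim for is the direct one: for $\theta \in \Omega^k_{hor}(P;V)^G$ define $\sigma \in \Omega^k(M;E)$ by
\begin{equation}
\sigma_{\pi(p)}(\pi_{\ast}\xi_1,\dots,\pi_{\ast}\xi_k) := [\![ p, \theta_p(\xi_1,\dots,\xi_k) ]\!],\qquad \xi_i \in T_pP,
\end{equation}
and conversely, given $\sigma$, define $\theta_p(\xi_1,\dots,\xi_k)$ as the unique $v \in V$ with $\sigma_{\pi(p)}(\pi_{\ast}\xi_1,\dots,\pi_{\ast}\xi_k) = [\![ p, v]\!]$.

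The steps are as follows. First, note that for fixed $p$ the map $V \to E_{\pi(p)}$, $v \mapsto [\![ p,v]\!]$, is a linear isomorphism, since $G$ acts freely and transitively on the fibre. Call it $\hat p$. It satisfies $\widehat{pg} = \hat p \circ \rho(g)$. With this notation the definitions read $\sigma_{\pi(p)}\circ \Lambda^k\pi_{\ast} = \hat p \circ \theta_p$ and $\theta_p = \hat p^{-1}\circ \sigma_{\pi(p)} \circ \Lambda^k \pi_{\ast}$.

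Second, check that $\sigma$ is well defined. Independence of the choice of lifts $\xi_i$ of given vectors in $T_{\pi(p)}M$ follows from horizontality: lifts differ by vertical vectors, and $\theta$ kills these by multilinearity. Independence of the point $p$ in the fibre follows from equivariance together with the fact that $\pi\circ R_g=\pi$, so the vectors $(R_g)_{\ast}\xi_i$ are lifts at $pg$. Explicitly,
\[
\widehat{pg}\big(\theta_{pg}((R_g)_{\ast}\xi)\big)=\hat p\,\rho(g)\rho(g^{-1})\theta_p(\xi).
\]
Any two points of the fibre are related by some $g$, by transitivity.

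Third, for the converse, check the properties of the $\theta$ defined from $\sigma$. It is clearly multilinear and alternating. It is horizontal because $\pi_{\ast}$ kills vertical vectors. Equivariance comes from the same identity $\widehat{pg}^{-1} = \rho(g^{-1})\hat p^{-1}$. The two constructions are inverse to each other by construction.

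The main obstacle is smoothness, which I would handle locally as in the proof of Proposition \ref{pro:corrspondencebetweensectionsandequivariantmapsCap}. Over $U_\alpha$, choose a local section $s_\alpha$. The pair $(s_\alpha, \hat s_\alpha)$ then trivializes $E|_{U_\alpha}\cong U_\alpha\times V$. In this trivialization $\sigma$ corresponds to the $V$-valued form $s_\alpha^{\ast}\theta$, which is smooth when $\theta$ is. Conversely, $\theta$ on $\pi^{-1}(U_\alpha)$ is recovered as
\[
\theta = \rho(g_\alpha^{-1})\circ \pi^{\ast}(\hat s_\alpha^{-1}\sigma),
\]
using $p=s_\alpha(\pi(p))g_\alpha(p)$ and horizontality. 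This expression is visibly smooth because $g_\alpha$ and $\rho$ are smooth. Verifying this local formula is the one genuinely computational point, and it mirrors the argument of Proposition \ref{pro:restrictionoftheconnectiononeform} without the Maurer–Cartan term.
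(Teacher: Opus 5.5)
Your proposal is correct and follows essentially the same route as the paper. Both use the defining relation $\sigma_{\pi(p)}(\pi_\ast\xi_1,\dots,\pi_\ast\xi_k)=[\![p,\theta_p(\xi_1,\dots,\xi_k)]\!]$, with horizontality giving independence of the lifts and equivariance (your identity $\widehat{pg}=\hat p\circ\rho(g)$) giving independence of the point in the fibre. Your local-section treatment of smoothness in both directions, via $\hat s_\alpha^{-1}\sigma=s_\alpha^{\ast}\theta$ and $\theta=\rho(g_\alpha^{-1})\circ\pi^{\ast}(\hat s_\alpha^{-1}\sigma)$, fills in what the paper only asserts. Note, however, that the reduction to the sections/equivariant-functions proposition announced in your opening paragraph is never actually used.
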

\begin{proof}
    Given $\alpha \in \Omega^k(M; P \times_G V)$, let $m = \pi(p)$ with $p \in P$ and let $\xi_1,\dots,\xi_k \in T_pP$ be $k$ tangent vectors. There is a unique element $\theta_p (\xi_1, \dots ,\xi_k) \in V$ such that
    \begin{equation}
        \alpha_{\pi(p)}\big((\pi_{\ast})_p \cdot \xi_1, \dots , (\pi_{\ast})_p \cdot \xi_k\big) = [\![ p, \theta_p(\xi_1, \dots , \xi_k) ]\!].
        \label{eq:cheridere}
    \end{equation}
    with $(\pi_{\ast})_p \colon T_pP \to T_{\pi(p)}M$. Since $\alpha_{\pi(p)}$ is a $k$-form, it is an alternating (i.e. antisymmetric in its arguments) linear map, which implies also $\theta_p \colon (T_pP)^k \to V$ is. In particular, if one of the entries in the argument of $\alpha_{\pi(p)}$ vanishes, then also the result does. Thus, if one of the $\xi_i \in T_pP$ is a vertical tangent vector, which means that its push forward by the projection map vanishes (i.e. $(\pi_{\ast})_p \xi_i=0$), then $\theta_p (\xi_1,\dots,\xi_k)$ vanishes: $\theta_p$ is horizontal for any $p \in P$.\\
    Let $(R_g)_{\ast} \colon T_pP \to T_{pg}P$. To prove equivariance, we notice that we can write
    \begin{equation}
        \begin{split}
            &[\![ pg, R_g^{\ast} \theta_{pg} (\xi_1,\dots,\xi_k) ) ]\!] = [\![ pg, \theta_{pg} \big((R_g)_{\ast}\xi_1, \dots ,(R_g)_{\ast}\xi_k\big) ]\!] =\\
            &= \alpha_{\pi(pg)}\Big((\pi_{\ast})_p \big((R_g)_{\ast} \xi_1\big), \dots , (\pi_{\ast})_p \big( (R_g)_{\ast} \xi_k\big)\Big) = \\
            &=\alpha_{\pi(p)}\big( (\pi_{\ast})_p \xi_1, \dots , (\pi_{\ast})_p \xi_k\big) = [\![ p, \theta_p(\xi_1, \dots , \xi_k) ]\!],
        \end{split}
        \label{eq:considerationsss}
    \end{equation}
    where we used that $\pi_{\ast} \circ (R_g)_{\ast} = (\pi \circ R_g)_{\ast} = \pi_{\ast}$. Now, since any element $[\![ p, \phi(p)(\xi_1, \dots , \xi_k) ]\!] \in P \times_G V$ in the associated bundle is, by definition, such that
    \begin{equation}
        [\![ p,  \theta_p (\xi_1, \dots , \xi_k) ]\!] = [\![ pg, \rho(g^{-1}) \circ \theta_p((\xi_1, \dots , \xi_k) )]\!],
        \label{eq:okstepdio}
    \end{equation}
    we have proven equivariance and thus the forward implication.\\
    Conversely, suppose we have given a horizontal, equivariant form $\theta \in \Omega^k_{hor}(P;V)^G$. Then, for each $m \in M$, we can choose $p \in P$ such that $\pi(p)=m$, and any tangent vector at $m$ can be written as $(\pi_{\ast})_p \xi$ with $\xi \in T_pP$. Fixing $p$, we can use equation (\ref{eq:cheridere}) to define $\alpha_{\pi(p)}$. This does not depend on the choice of the lifts of the tangent vectors since $\theta$ is horizontal. By the $G$-equivariance of $\theta$, this does not depend on the choice of $p$. Finally, smoothness of $\theta$ implies smoothness of $\alpha$. 
\end{proof}

As in \cite{hajac_strong}, Appendix B, we show below a very important theorem which captures the essential properties of a frame bundle. 
Recall that an isomorphism of two principal fiber bundles $P(M, G)$ and $P^{\prime}(M, G)$, with the same structure group $G$, is a diffeomorphism $f \colon P \to P^{\prime}$ satisfying $\pi^{\prime} \circ f = \pi$ and $f \circ R_g = R_g^{\prime} \circ f$ for any $g \in G$.
\begin{theorem}
    Let $M$ be a smooth manifold with $\text{dim}(M) = n$. A principal $GL(n,\mathbb{R})$-bundle $\pi \colon P \to M$ is isomorphic to the frame bundle $FM$ if and only if there exists a smooth horizontal and equivariant $\mathbb{R}^n$-valued $1$-form $\theta \in \Omega^1_{hor}(P; \mathbb{R}^n)^{GL(n;\mathbb{R})}$.
    \label{thm:equivalencebetweenframeandprincipalwithsoldering}
\end{theorem}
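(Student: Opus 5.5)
The plan is to treat the theorem as the special case $H = GL(n,\mathbb{R})$ of Proposition \ref{pro:thepowerofGstructures}, plus one extra observation: a morphism of principal bundles with the same structure group over $id_M$ is automatically an isomorphism. I read ``horizontal'' for $\theta$ in the strong, nondegenerate sense $\ker\theta_p = \mathcal{V}_pP$ for every $p$. This is the sense used implicitly in the converse part of Proposition \ref{pro:thepowerofGstructures}. The weak reading (only $\mathcal{V}_pP \subseteq \ker\theta_p$) cannot work, since $\theta = 0$ is horizontal and equivariant on any principal $GL(n,\mathbb{R})$-bundle.

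\emph{Forward direction.} Let $f \colon P \to FM$ be an isomorphism of principal bundles and let $\theta^{FM}$ be the canonical form of Definition \ref{def:canonicalsolderingform}. Set $\theta := f^{\ast}\theta^{FM}$. The computations in the proof of Proposition \ref{pro:thepowerofGstructures}, with $\iota = f$ and $i = id$, show that $\theta$ is horizontal and $GL(n,\mathbb{R})$-equivariant. They rely on Proposition \ref{pro:thecanonicalformishorizontalandequivarint}, on $\tilde\pi \circ f = \pi$ and on $f \circ R_g = R_g \circ f$. Moreover $\ker\theta_p = \mathcal{V}_pP$ exactly. Indeed $\theta^{FM}_u = u^{-1}\circ\tilde\pi_{\ast}$ has kernel $\mathcal{V}_uFM$ because $u$ is invertible and $\tilde\pi_\ast$ is surjective, and $f_{\ast}$ is a linear isomorphism carrying $\mathcal{V}_pP$ onto $\mathcal{V}_{f(p)}FM$.

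\emph{Converse.} Given $\theta$, I follow the second half of the proof of Proposition \ref{pro:thepowerofGstructures}. Since $\ker \theta_p = \mathcal{V}_pP$ and $\dim T_pP/\mathcal{V}_pP = n$, $\theta_p$ descends to an isomorphism $T_{\pi(p)}M \cong T_pP/\mathcal{V}_pP \xrightarrow{\sim} \mathbb{R}^n$. Its inverse defines $\iota(p) \in FM_{\pi(p)}$. Equivariance of $\theta$ gives $\iota(pg) = \iota(p)g$ by exactly the computation (\ref{eq:fiberpreservingmorphismproof3})--(\ref{eq:fiberpreservingmorphismproof4}).

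Two points remain that were not addressed there. The first is smoothness of $\iota$. I will check it locally: choose a local section $s_\alpha$ and a chart on $U_\alpha$ with coordinate frame $\partial_1,\dots,\partial_n$. The matrix of $\theta_{s_\alpha(m)}\big((s_{\alpha})_{\ast}\partial_j\big)$ depends smoothly on $m$ and is invertible. Its inverse, together with equivariance, gives $\iota$ in the trivialization of $FM$ induced by the chart, so $\iota$ is smooth. The second point is bijectivity. Over each $m$, $\iota$ maps the fiber $P_m$, a $GL(n,\mathbb{R})$-torsor, equivariantly into the torsor $FM_m$; such a map is a bijection, since $\iota(p_0)g = \iota(p_0 g)$ and the action is free and transitive. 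In the local trivializations $\iota$ reads $(m,g) \mapsto (m, A(m)g)$ with $A$ smooth, so its inverse $(m,h)\mapsto(m,A(m)^{-1}h)$ is smooth too. Hence $\iota$ is a diffeomorphism satisfying $\tilde\pi\circ\iota=\pi$ and $\iota\circ R_g = R_g\circ\iota$, that is, an isomorphism of principal bundles. As a by-product, $\theta = \iota^{\ast}\theta^{FM}$.

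The main obstacle is conceptual rather than computational. One has to make the nondegeneracy of $\theta$ explicit, without which the statement is false, and then check that $\iota$ is a diffeomorphism and not merely an equivariant map. I would handle the latter as above, through the local trivializations induced by charts.
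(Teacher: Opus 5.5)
Your proposal is correct and takes essentially the paper's route. The paper's map $F(p) = \big((\sigma^{(p)})^{\ast}\theta_p\big)^{-1}$ is exactly your $\iota(p)$, the inverse of the coframe that $\theta_p$ induces on $T_{\pi(p)}M$; the paper likewise assumes $\ker\theta = \ker\pi_{\ast}$, checks $\tilde{\pi}\circ F = \pi$, equivariance and bijectivity, and proves smoothness of $F$ and $F^{-1}$ through the local form $(m,g)\mapsto (m, A(m)g)$ in trivializations. Your use of the quotient $T_pP/\mathcal{V}_pP$ and local sections is only a slightly cleaner packaging of the paper's choice of an arbitrary section $\sigma^{(p)}$ through $p$.
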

\begin{proof}
    Let us first consider the direct implication. If $P(M,GL(n,\mathbb{R}))$ is isomorphic (as a principal bundle) to the frame bundle $FM$ through the principal bundle isomorphism $f \colon P \to FM$, the (horizontal and equivariant) canonical one form $\theta \in \Omega^1_{hor}(FM;\mathbb{R}^n)^{GL(n,\mathbb{R})}$ on $FM$, which was defined as $\theta(X) = u^{-1}(\tilde{\pi}_{\ast}X)$, with $X \in \mathscr{X}(FM)$, $\tilde{\pi}$ the projection map for the frame bundle and $FM \ni u \colon \mathbb{R}^n \to T_mM$ a (linear) isomorphism (with $u$ in the fiber over $m$), fills the requirements for the $1$-form in the statement of the theorem: it is in fact enough to consider $f^{\ast} \theta \in \Omega^1_{hor}(P;\mathbb{R}^n)^{GL(n,\mathbb{R})}$.\\
    The converse implication is more subtle. Let's assume we have a principal $GL(n,\mathbb{R})$-bundle $\pi \colon P \to M$, equipped with a $\theta \in \Omega^1_{hor}(P;\mathbb{R}^n)^{GL(n,\mathbb{R})}$, i.e. satifying $ker(\theta)=ker(\pi_{\ast})$ and $R_g^{\ast} \theta = g^{-1} \circ \theta$, or equivalently $\theta_{pg} = g^{-1} \circ \theta_p \circ (R_{g^{-1}})_{\ast}$, for any $g \in GL(n,\mathbb{R})$, where it is left implicit that we're considering the fundamental representation of the group acting on $\mathbb{R}^n$. We know a that frame, i.e. an element $u \in FM$, over each orbit $m \in M$, is an isomorphism $u \colon \mathbb{R}^n \xrightarrow{\sim} T_mM$, which means that any $u$ in the fiber over $m$ is a basis for $T_mM$: an assignement of a particular tangent vector to any $n$-tuplet of components. Analogously, a coframe (i.e. an element of $F^{\ast}M$) is, over each point in the base space, an isomorphism $T_mM \xrightarrow{\sim} \mathbb{R}^n$, acting on a tangent vector at $m$ to give its components in that basis.\\    
    Having said this, in order to prove the converse implication, we have to define a principal bundle isomorphism between $P$ and $FM$. Consider the following map:
    \begin{equation}
        \begin{split}
            F \colon &P \xrightarrow{\tilde{F}} F^{\ast}M \to FM,\\
            &p \mapsto (\sigma^{(p)})^{\ast} \theta_p \mapsto \big((\sigma^{(p)})^{\ast} \theta_p\big)^{-1},
        \end{split}
        \label{eq:hajacappendixthm}
    \end{equation}
    where $\sigma^{(p)}$ is any section for which $\sigma^{(p)}(\pi(p)) = p$. Let us check that this map is well defined. Since $\pi_{\ast}$ defines an isomorphism between the horizontal subspace $\mathcal{H}_pP \cong T_pP/\ker\pi_{\ast}$ and $T_{\pi(p)}M$ at each point $p \in P$, we have that $\theta_p$ defines an isomorphism between $\mathcal{H}_pP$ and $\mathbb{R}^n$. In fact, $\theta_p$ is injective on the horizontal subspace (otherwise it would have a non trivial kernel on it and $\ker\theta \neq \ker\pi_{\ast}$ is a contradiction) and $\text{dim}(\mathcal{H}_pP) = \text{dim}(T_{\pi(p)}M) = \text{dim}(\mathbb{R}^n)$. Thus, the pullback $(\sigma^{(p)})^{\ast} \theta_p = \theta_p \circ \sigma^{(p)}_{\ast}$ defines an isomorphism between $T_{\pi(p)}M$ and $\mathbb{R}^n$, i.e. a coframe in $F^{\ast}M$, independently of the chosen $\sigma^{(p)}$. This is because we assumed $\sigma^{(p)}(\pi(p)) = p$, hence, for any such section, $(\sigma^{(p)}_{\ast})_{\pi(p)} \circ (\pi_{\ast}|_{\mathcal{H}_pP})_p = id$, and $\sigma^{(p)}_{\ast} = (\pi_{\ast}|_{\mathcal{H}_pP})^{-1}$ is an isomorphism from $T_{\pi(p)}M$ to $\mathcal{H}_pP$. This proves that $\tilde{F}$ is well (and uniquely) defined, and consequently also $F$ is, since clearly $(\sigma^{\ast} \theta_p)^{-1}$ is a frame.\\
    Let $\tilde{\pi} \colon FM \to M$ be the projection map for the frame bundle. Then we immediately have that $\tilde{\pi} \circ F = \pi$, because $\big((\sigma^{(p)})^{\ast} \theta_p\big)^{-1}$ is a basis for $T_{\pi(p)}M$ for any $p$. Moreover, let $R^{\prime} \colon FM \times GL(n,\mathbb{R}) \to FM$ be the right $GL(n,\mathbb{R})$-action for the frame bundle and $R \colon P \times GL(n,\mathbb{R}) \to P$ the one for $P$. We have that $R^{\prime}_g \circ F = F \circ R_g$ for any $g \in GL(n,\mathbb{R})$, since by $GL(n,\mathbb{R})$-equivariance of $\theta$:
    \begin{equation}
        \begin{split}
            &F \circ R_g(p) = F(pg) = \big(\theta_{pg} \circ (\sigma^{(pg)}_{\ast})_{\pi(pg)}\big) = \big(g^{-1} \circ \theta_p \circ ((R_{g^{-1}})_{\ast})_{pg} \circ  (\sigma^{(pg)}_{\ast})_{\pi(p)}\big)^{-1} =\\
            &= \big(\theta_p \circ (R_{g^{-1}} \circ \sigma^{(pg)})_{\ast}\big)^{-1} \circ g = (\theta_p \circ \sigma^{(p)}_{\ast})^{-1} \circ g = R^{\prime}_g \circ F(p).
        \end{split}
        \label{eq:toproveeverythinggoeswell}
    \end{equation}
    This shows that $F$ satisfies both conditions for it to be a principal bundle isomorphism, i.e. $\tilde{\pi} \circ F = \pi$ and $R^{\prime}_g \circ F = F \circ R_g$, and clearly $F$ is a bijection, since $\tilde{F}$ uniquely identifies one coframe $g^{-1} \circ (\sigma^{(p)})^{\ast}\theta_p$, "shifted" by $g^{-1}$, for each point $pg$ in the fiber over $\pi(p)$.\\
    We are left with the task of proving $F$ is smooth and has a smooth inverse. There always exists a non empty overlap $U$, open neighborhood of the point $\pi(p) \in M$, over which we can pick smooth sections $s \colon U \to P$ and $\tilde{s} \colon U \to FM$. Let $\Psi_s \colon \pi^{-1}(U) \to U \times GL(n,\mathbb{R})$ and $\Psi_{\tilde{s}} \colon \tilde{\pi}^{-1}(U) \to U \times GL(n,\mathbb{R})$ be the local trivializations associated to these local sections. Since $p$ is arbitrary, to prove that $F$ is smooth, it is enough to show that the map 
    \begin{equation}
        \Psi_{\tilde{s}} \circ F \circ \Psi_s^{-1} \colon U \times GL(n, \mathbb{R}) \to U \times GL(n,\mathbb{R})
        \label{eq:considertoproveFsmooth1}
    \end{equation}
    is smooth. Let $m \in U$. Consider the following chain of equalities:
    \begin{equation}
        \begin{split}
            &F\big(s(m)g\big) = \tilde{s}\Big( \tilde{\pi}\big( F(s(m)g) \big) \Big) \circ g_{\tilde{s}}\Big( F\big(s(m)g\big)  \Big); \\
            &\big(\theta_{s(m)g} \circ \sigma^{(s(m)g)}_{\ast}\big)^{-1} = \tilde{s}\Big( \pi\big(s(m)g \big)\Big) \circ g_{\tilde{s}}\Big( \big(\theta_{s(m)g} \circ \sigma^{(s(m)g)}_{\ast}\big)^{-1}  \Big);\\
            &\big(\theta_{s(m)} \circ \sigma^{(s(m))}_{\ast}\big)^{-1} \circ g = \tilde{s}(m) \circ g_{\tilde{s}}\Big( \big(\theta_{s(m)} \circ \sigma^{(s(m))}_{\ast}\big)^{-1} \Big) \circ g ;\\
            &\big(\theta_{s(m)} \circ \sigma^{(s(m))}_{\ast}\big)^{-1}= \tilde{s}(m) \circ g_{\tilde{s}}\Big( \big(\theta_{s(m)} \circ \sigma^{(s(m))}_{\ast}\big)^{-1} \Big); \\
            &g_{\tilde{s}}\Big( \big(\theta_{s(m)} \circ \sigma^{(s(m))}_{\ast}\big)^{-1} \Big)^{-1} = \big(\theta_{s(m)} \circ \sigma^{(s(m))}_{\ast}\big) \circ \tilde{s}(m);\\
            &g_{\tilde{s}}\Big( \big(\theta_{s(m)} \circ \sigma^{(s(m))}_{\ast}\big)^{-1} \Big) = \Big( \theta_{s(m)} \circ \sigma^{(s(m))}_{\ast} \circ \tilde{s}(m) \Big)^{-1}.
        \end{split}
        \label{eq:considertoproveFsmooth2}
    \end{equation}
    Therefore, we have
    \begin{equation}
        \begin{split}
            &\big(\Psi_{\tilde{s}} \circ F \circ \Psi_s^{-1}\big)(m, g) = \big(\Psi_{\tilde{s}} \circ F\big)(s(m)g) = \Psi_{\tilde{s}}\Big(F\big(s(m)g\big)\Big) = \\
            &= \Big( \tilde{\pi}\Big( F\big(s(m)g\big)\Big) , g_{\tilde{s}}\Big( F\big(s(m)g\big)  \Big) \Big) = \Big( \pi\big(s(m)g\big) , g_{\tilde{s}}\Big( \big(\theta_{s(m)g} \circ \sigma^{(s(m)g}_{\ast}\big)^{-1}  \Big) \Big) =\\
            &= \Big( m , g_{\tilde{s}}\Big( \big(\theta_{s(m)} \circ \sigma^{(s(m))}_{\ast}\big)^{-1} \circ g   \Big) \Big) = \Big( m , g_{\tilde{s}}\Big( \big(\theta_{s(m)} \circ \sigma^{(s(m))}_{\ast}\big)^{-1} \Big)  \circ g   \Big) = \\
            &= \Big( m , \Big(\theta_{s(m)} \circ \sigma^{(s(m))}_{\ast} \circ \tilde{s}(m) \Big)^{-1}  \circ g   \Big)
        \end{split}
        \label{eq:considertoproveFsmooth3}
    \end{equation}
    We can now see that the smoothness of $F$ follows from the smoothness of the map
    \begin{equation}
        \begin{split}
            f \colon &U \to GL(n, \mathbb{R}),\\
            &m \mapsto \Big(\theta_{s(m)} \circ \sigma^{(s(m))}_{\ast} \circ \tilde{s}(m) \Big)^{-1}  \circ g,
        \end{split}
        \label{eq:considertoproveFsmooth4}
    \end{equation}
    which is smooth since it's a composition of smooth functions. Furthermore, $F^{-1}$ is also smooth because $\big(\Psi_s \circ F^{-1} \circ \Psi_{\tilde{s}}^{-1}\big)(m, g) = \big(m , \theta_{s(m)} \circ \sigma^{(s(m))}_{\ast} \circ \tilde{s}(m) \circ g)$. Hence  $F$ is a bundle isomorphism and $P( M, GL(n, \mathbb{R}))$ and $FM$ are isomorphic, as claimed.
\end{proof}
This theorem acts as an axiomatic definition of the frame bundle, which will be translated into the language of quantum principal bundles.\\
The important thing to notice is that in the case of the frame bundle $\tilde{\pi} \colon FM \to M$, by the bijective correspondence in Proposition \ref{pro:correspondencetensorialformsandassfbvaluedformsCap} for $k = 1$, the canonical form $\tilde{\theta} \in \Omega^1_{hor}(FM;\mathbb{R}^n)^{GL(n,\mathbb{R})}$ induces a vector bundle isomorphism $TM \cong FM \times_{GL(n,\mathbb{R})} \mathbb{R}^n$. As discussed, the canonical form is an assignment of an $n$-tuple $v = \tilde{\theta}_u(\xi_u) \in \mathbb{R}^n$ of components to any vector $\pi_{\ast}\xi_u \in T_{\tilde{\pi}(u)}M$. Thus, we define an inverse for the map in Equation (\ref{eq:cheridere}) by assigning to each $[\![ u, v ]\!] \in FM \times_{GL(n,\mathbb{R})} \mathbb{R}^n$ the tangent vector $u(v) \in T_{\tilde{\pi}(u)}M$, i.e., the tangent vector with coordinates $v$ in the frame $u$. This mapping shows that, by construction, one can naturally identify the tangent bundle $TM$ with the associated bundle $FM \times_{GL(n,\mathbb{R})} \mathbb{R}^n$ for the standard representation of $GL(n,\mathbb{R})$.\\
Having observed this, we now highlight the key idea in the following lemma.
\begin{lemma}
    Any principal $GL(n,\mathbb{R})$-bundle $\pi \colon P \to M$ equipped with tensorial $1$-form $\theta \in \Omega^1_{hor}(P;\mathbb{R}^n)^{GL(n;\mathbb{R})}$ induces an isomorphism $TM \cong P \times_{GL(n,\mathbb{R})} \mathbb{R}^n$ via Proposition \ref{pro:correspondencetensorialformsandassfbvaluedformsCap}.
    \label{lem:mypropositiontogettoframeresolutions}
\end{lemma}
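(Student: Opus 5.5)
The plan is to apply Proposition \ref{pro:correspondencetensorialformsandassfbvaluedformsCap} with $k=1$, $G = GL(n,\mathbb{R})$, $V=\mathbb{R}^n$ and $\rho$ the fundamental representation. This turns $\theta$ into a $1$-form $\alpha \in \Omega^1(M; P\times_{GL(n,\mathbb{R})}\mathbb{R}^n)$, that is, a vector bundle morphism $\alpha \colon TM \to P \times_{GL(n,\mathbb{R})}\mathbb{R}^n$ covering $id_M$. Explicitly, by (\ref{eq:cheridere}),
\begin{equation*}
\alpha_{\pi(p)}\big((\pi_\ast)_p\xi\big) = [\![ p, \theta_p(\xi)]\!], \qquad \xi \in T_pP .
\end{equation*}
Since a fiberwise linear isomorphism between smooth vector bundles over the same base is a bundle isomorphism (its inverse is automatically smooth), what remains is to show that each $\alpha_m$ is a linear isomorphism.

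As in the proofs of Proposition \ref{pro:thepowerofGstructures} and Theorem \ref{thm:equivalencebetweenframeandprincipalwithsoldering}, I read ``tensorial'' in the nondegenerate sense $\ker\theta_p = \mathcal{V}_pP = \ker(\pi_\ast)_p$; mere horizontality is not enough, since $\theta=0$ is horizontal and equivariant. I will state this assumption explicitly.

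Fix $m$, and pick $p\in\pi^{-1}(m)$. The map $[\![p,\cdot]\!] \colon \mathbb{R}^n \to (P\times_G\mathbb{R}^n)_m$ is a linear isomorphism, since the action is free and transitive on fibers. Moreover, $(\pi_\ast)_p$ induces an isomorphism $T_pP/\mathcal{V}_pP \cong T_mM$. So, with respect to these identifications, $\alpha_m$ is the map $T_pP/\mathcal{V}_pP \to \mathbb{R}^n$ induced by $\theta_p$.
\begin{itemize}
\item This induced map is injective because $\ker\theta_p=\mathcal{V}_pP$.
\item Both sides have dimension $n = \dim M$, so it is an isomorphism.
\end{itemize}
Independence of the choice of $p$ (and of the lift $\xi$) is already part of the cited proposition; it follows from equivariance, since $[\![pg,g^{-1}\theta_p(\xi)]\!] = [\![p,\theta_p(\xi)]\!]$.

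The explicit inverse is $[\![p,v]\!] \mapsto (\pi_\ast)_p\big((\theta_p|_{\mathcal{H}_pP})^{-1}v\big)$, which generalizes $[\![u,v]\!]\mapsto u(v)$ from the frame bundle case. For smoothness, I would check in a local trivialization given by a section $s_\alpha$: there $\alpha$ becomes $m \mapsto \theta_{s_\alpha(m)}\circ s_{\alpha\ast}$, an invertible smooth matrix-valued function. I expect the only real subtlety to be this nondegeneracy hypothesis; the rest is bookkeeping.
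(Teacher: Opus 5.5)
Your proof is correct, but it takes a different route from the paper.

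\textbf{The paper's route.} The paper argues indirectly. It invokes Theorem \ref{thm:equivalencebetweenframeandprincipalwithsoldering} to get a principal bundle isomorphism $P \cong FM$. It then transports the canonical identification $TM \cong FM\times_{GL(n,\mathbb{R})}\mathbb{R}^n$, given by $[\![u,v]\!]\mapsto u(v)$, to conclude that $TM\cong P\times_{GL(n,\mathbb{R})}\mathbb{R}^n$. Once the Theorem is available, this makes the lemma a two-line corollary. It also gives more: $P$ is identified with $FM$ in a way that turns $\theta$ into the canonical form. However, to conclude that the map induced by $\theta$ is the isomorphism, it tacitly needs $F^{\ast}\tilde\theta=\theta$ for the map $F\colon P\to FM$ built in the Theorem. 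This holds, since $\theta_p(\sigma_\ast\pi_\ast\xi)=\theta_p(\xi)$ by horizontality, but the paper does not state it.

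\textbf{Your route.} You check directly that the bundle map $\alpha$ from Proposition \ref{pro:correspondencetensorialformsandassfbvaluedformsCap} is fiberwise a linear isomorphism, using $\ker\theta_p=\mathcal{V}_pP$ and a dimension count. You then get smoothness of the inverse in a local trivialization. This is self-contained linear algebra. It proves exactly the stated claim about the $\theta$-induced map, and it avoids constructing the isomorphism $P\to FM$ and proving it smooth. It also never uses $G=GL(n,\mathbb{R})$: it works verbatim for any structure group $G$ and representation $V$ with $\dim V=\dim M$. That generality is what Remark \ref{rem:onkeepingGandVgeneral} needs when it claims that $G$-structures $(P,M,\mathbb{R}^n,\Theta)$ are frame resolutions.

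\textbf{Hypotheses.} Your point about nondegeneracy is well taken. The paper relies on the same hypothesis $\ker\theta=\ker\pi_\ast$, which is silently assumed in the proof of Theorem \ref{thm:equivalencebetweenframeandprincipalwithsoldering}. The paper also relies on $\dim M=n$. You use this second hypothesis too, so you should state it explicitly alongside the first.
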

\begin{proof}
    This follows from the fact that, as we pointed out, the canonical form $\tilde{\theta} \in \Omega^1_{hor}(FM;\mathbb{R}^n)^{GL(n;\mathbb{R})}$ induces the identification $TM \cong FM \times_{GL(n,\mathbb{R})} \mathbb{R}^n$. Then, by Theorem $\ref{thm:equivalencebetweenframeandprincipalwithsoldering}$, $P \cong FM$, and one has $P \times_{GL(n,\mathbb{R})} \mathbb{R}^n \cong FM \times_{GL(n,\mathbb{R})} \mathbb{R}^n \cong TM$. Thus, the bundle map $TM \to P\times_{GL(n,\mathbb{R})} \mathbb{R}^n$, corresponding to $\theta \in \Omega^1_{hor}(P;\mathbb{R}^n)^{GL(n;\mathbb{R})}$ via Proposition \ref{pro:correspondencetensorialformsandassfbvaluedformsCap}, is an isomorphism.
\end{proof}
\begin{remark}
    Alternatively, one could have started from noticing that the frame bundle is a principal $GL(n,\mathbb{R})$-bundle inducing an isomorphism $TM \cong FM \times_{GL(n,\mathbb{R})} \mathbb{R}^n$ through Proposition \ref{pro:correspondencetensorialformsandassfbvaluedformsCap} for $k=1$. Then, without proving Theorem \ref{thm:equivalencebetweenframeandprincipalwithsoldering}, as in \cite{majid_qbraided} Corollary 2.2, one could establish that any two principal $GL(n,\mathbb{R})$-bundles $P$ and $P^{\prime}$ endowed with tensorial forms $\theta \in \Omega^{1}_{hor}(P;\mathbb{R}^n)^{GL(n,\mathbb{R})}$ and $\theta^{\prime} \in \Omega^1_{hor}(P^{\prime};\mathbb{R}^n)^{GL(n,\mathbb{R})}$ which induce isomorphisms $P \times_{GL(n,\mathbb{R})} \mathbb{R}^n \cong TM \cong P^{\prime} \times_{GL(n,\mathbb{R})} \mathbb{R}^n$ are necessarily isomorphic as principal bundles. This reasoning directly underlies the quantum definition of a frame resolution, which will be given in the next section.
    \label{rem:onthereasoningbehindgettingtothequantumdefinitionlater}
\end{remark}
The above arguments make it clear that we can capture the essential properties of a frame bundle in the following definition.
\begin{definition}
    We call any $(P,M,V,\theta)$ with $\theta \in \Omega^1_{hor}(P;V)^G$ inducing an isomorphism $TM \cong P \times_G V$ like in Lemma \ref{lem:mypropositiontogettoframeresolutions} a \textsl{frame resolution} of the tangent bundle.
    \label{def:frameresolution}
\end{definition}
\begin{remark}
    Notice that in the definition above, we did not specify the structure group $G$ to be exactly $GL(n,\mathbb{R})$. Clearly, if we take $P = FM$ and $\theta$ the canonical form, any other $P^{\prime}$ with a tensorial form $\theta^{\prime}$ is isomorphic to it, and there will be an isomorphism between the tangent bundle and the associated bundle $P^{\prime} \times_{GL(n,\mathbb{R})} \mathbb{R}^n$. But one could have different examples of principal bundles with different structure groups inducing the same isomorphism: for example, one could consider the bundle of orthonormal frames with $G = O(n)$, or the bundle of affine frames with structure group $\mathbb{R}^n \rtimes GL(n, \mathbb{R})$. In particular, any $G$-structure can be viewed as a principal $G$-bundle $P \to M$ with a $\Theta \in \Omega^1_{hor}(P;\mathbb{R}^n)^{G}$ as shown in Proposition \ref{pro:thepowerofGstructures}, and one can prove that $(P,M,\mathbb{R}^n,\Theta)$ is a frame resolution.   
    \label{rem:onkeepingGandVgeneral}
\end{remark}

\section{Quantum frame resolutions} \label{quantumframeresolutions}

In the classical case, when defining a frame resolution, the obvious choice is to take $V = \mathbb{R}^{\text{dim}(M)}$. The quantum case, however, is more intricate, as one no longer has a well-defined $n$-dimensional manifold $M$. Instead, as discussed, the base space is replaced by the subalgebra of coinvariants $B = A^{coH}$ of a generic $H$-comodule algebra $A$.\\
With the aim of defining a quantum frame resolution, we begin by introducing the translation map and showing its properties \cite{brzezinski_translation}. This map will later be used to prove the algebraic analogues of Propositions \ref{pro:corrspondencebetweensectionsandequivariantmapsCap}
and \ref{pro:correspondencetensorialformsandassfbvaluedformsCap}, which justify the definition of quantum frame bundle as discussed in Remark \ref{rem:onthereasoningbehindgettingtothequantumdefinitionlater}.

\subsection{The translation map} \label{thetranslationmap}

Consider the diagonal right $H$-coaction on the tensor product $A \otimes H$ defined as
\begin{equation}
    \begin{split}
        \Delta^{ad}_{A \otimes H} \colon &A \otimes H \to A \otimes H \otimes H,\\
        &a \otimes h \mapsto a_0 \otimes h_2 \otimes a_1S(h_1)h_3,
    \end{split}
    \label{eq:diagonalrightHcoactionontensorproduct}
\end{equation}
and the right $H$-coaction on $A \otimes A$
\begin{equation}
    \begin{split}
        \Delta_{A \otimes A} \colon &A \otimes A \to A \otimes A \otimes H,\\
        &a \otimes a^{\prime} \mapsto (a_0 \otimes a^{\prime}_0) \otimes a_1a^{\prime}_1.
    \end{split}
    \label{eq:diagonalrightHcoactionontensorproductcoactionontensorprod}
\end{equation}
Define the following maps:
\begin{equation}
    \begin{split}
        \chi^{\prime} \colon &A \otimes A \to A \otimes H,\\
        &a \otimes a^{\prime} \mapsto aa^{\prime}_0 \otimes a^{\prime}_1,
    \end{split}
    \label{eq:diagonalrightHcoactionontensorproductauxmap1}
\end{equation}
and
\begin{equation}
    \begin{split}
        \nu \colon &A \otimes H \to H \otimes A \otimes H,\\
        &a \otimes h \mapsto a_1S(h_1) \otimes a_0 \otimes h_2.
    \end{split}
    \label{eq:diagonalrightHcoactionontensorproductauxmap2}
\end{equation}
We now prove a lemma showing properties of $\chi^{\prime}$ as defined in Equation (\ref{eq:diagonalrightHcoactionontensorproductauxmap1}).
\begin{lemma}
    Let $A$ be a right $H$-comodule algebra. Then
    \begin{enumerate}
        \item $(id \otimes \chi^{\prime}) \circ \big((flip_{A,H} \circ \Delta_A) \otimes id\big) = \nu \circ \chi^{\prime}$;
        \item $(\chi^{\prime} \otimes id) \circ (id \otimes \Delta_A) = (id \otimes \Delta) \circ \chi^{\prime}$;
        \item $(\chi^{\prime} \otimes id) \circ \Delta_{A \otimes A} = \Delta^{ad}_{A \otimes H} \circ \chi^{\prime}$.
    \end{enumerate}
    \label{lem:propertiesforchinotonthequotient}
\end{lemma}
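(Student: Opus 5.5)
Each of the three identities is an equality of linear maps out of $A\otimes A$. So the plan is to evaluate both sides on a simple tensor $a\otimes a'$, expand with Sweedler notation for $\Delta_A$ and $\Delta$, and compare. The tools are the same ones used throughout Chapter \ref{Hopfalgebras}: coassociativity of the coaction, $(\Delta_A\otimes id)\circ\Delta_A=(id\otimes\Delta)\circ\Delta_A$, which lets us relabel $a'_{00}\otimes a'_{01}\otimes a'_1$ as $a'_0\otimes a'_1\otimes a'_2$; multiplicativity $\Delta_A(aa')=\Delta_A(a)\Delta_A(a')$ of the comodule algebra structure; and the antipode identities of Proposition \ref{pro:Hopfantipodeproperties}.

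\textbf{Identity 2.} This is the easiest, so I would do it first. The left-hand side sends $a\otimes a'$ to $\chi'(a\otimes a'_0)\otimes a'_1 = aa'_0\otimes a'_1\otimes a'_2$. The right-hand side gives $aa'_0\otimes \Delta(a'_1)$, which is the same expression after relabelling by coassociativity.

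\textbf{Identity 3.} On the left, $\Delta_{A\otimes A}(a\otimes a')=a_0\otimes a'_0\otimes a_1a'_1$, and applying $\chi'\otimes id$ gives $a_0a'_0\otimes a'_1\otimes a_2a'_2$. On the right, $\chi'(a\otimes a')=aa'_0\otimes a'_1$. Applying $\Delta^{ad}_{A\otimes H}$ and using multiplicativity of $\Delta_A$ on $aa'_0$ gives
\[
a_0a'_0\otimes a'_2\otimes a_1a'_1S(a'_2)a'_3
\]
after relabelling. Here the $H$-leg $h=a'_1$ has been split three ways, and the $S(h_1)$ sits next to the coaction leg of $a'_0$. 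The antipode property $a'_1S(a'_2)=\epsilon(a'_2)$ contracts the middle, followed by counitality and relabelling. The result is $a_0a'_0\otimes a'_1\otimes a_1a'_2$. Comparing with the left side, this matches after relabelling the legs of $a$: on the left $a$ carries only two coaction legs, $a_0$ and $a_1$, so the term $a_2$ there should really read $a_1$ and be paired with $a'_2$.

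\textbf{Identity 1.} The left-hand side first sends $a\otimes a'$ to $a_1\otimes a_0\otimes a'$ and then to $a_1\otimes a_0a'_0\otimes a'_1$. The right-hand side is $\nu(aa'_0\otimes a'_1)$. By multiplicativity of $\Delta_A$ and coassociativity this equals
\[
a_1a'_1S(a'_2)\otimes a_0a'_0\otimes a'_3 ,
\]
and the antipode axiom collapses $a'_1S(a'_2)$ to $\epsilon$, which yields $a_1\otimes a_0a'_0\otimes a'_1$.

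\textbf{Main obstacle.} The only real difficulty is the Sweedler bookkeeping in identities 1 and 3. One has to track which leg of $\Delta(a'_1)$ is paired with $S$ so that the antipode identity $h_1S(h_2)=\epsilon(h)1_H$ applies to adjacent legs in the correct order. I would write out the fully indexed expressions ($a'_{00}$, $a'_{01}$, $a'_{1}$, and so on) once before relabelling, to make sure the contraction is legitimate. No result beyond the Hopf and comodule-algebra axioms is needed.
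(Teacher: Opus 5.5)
Your proposal is correct and is essentially the paper's own proof: evaluate both sides on $a\otimes a'$ in Sweedler notation and reduce using coassociativity of $\Delta_A$, multiplicativity $\Delta_A(aa')=\Delta_A(a)\Delta_A(a')$, and the antipode identity $h_1S(h_2)=\epsilon(h)1_H$. The only blemish is the indexing in identity 3. There the left-hand side is $a_0a'_0\otimes a'_1\otimes a_1a'_2$ from the start, and the relabelled right-hand side should read $a_0a'_0\otimes a'_3\otimes a_1a'_1S(a'_2)a'_4$, with each Sweedler index used once; after that your contraction goes through exactly as you describe.
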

\begin{proof}
    We consider left-hand side and right-hand side of each point and prove by direct calculation. In order:
    \begin{enumerate}
        \item \begin{equation}
            \begin{split}
                &(id \otimes \chi^{\prime}) \circ \big((flip_{A,H} \circ \Delta_A) \otimes id\big)(a \otimes a^{\prime})=\\
                &=(id \otimes \chi^{\prime}) \circ (a_1 \otimes a_0 \otimes a^{\prime})=\\
                &=a_1 \otimes \chi^{\prime}(a_0 \otimes a^{\prime})= a_1 \otimes a_0a^{\prime}_0 \otimes a^{\prime}_1.
            \end{split}
            \label{eq:prooflemma11}
        \end{equation}
        \begin{equation}
            \begin{split}
                &\nu \circ \chi^{\prime}(a \otimes a^{\prime})= \nu(aa^{\prime}_0 \otimes a^{\prime}_1) = (aa^{\prime}_0)_1S(a^{\prime}_{11}) \otimes (aa^{\prime}_0)_0 \otimes a^{\prime}_{12}=\\
                &=a_1a^{\prime}_1S(a^{\prime}_2) \otimes a_0a^{\prime}_0 \otimes a^{\prime}_3=a_1 \otimes a_0a^{\prime}_0 \otimes a^{\prime}_1.
            \end{split}
            \label{eq:prooflemma12}
        \end{equation}
        \item \begin{equation}
            \begin{split}
                &(\chi^{\prime} \otimes id) \circ (id \otimes \Delta_A) \circ (a \otimes a^{\prime})=(\chi^{\prime} \otimes id) \circ (a \otimes a^{\prime}_0 \otimes a^{\prime}_1)=\\
                &=\chi^{\prime}(a \otimes a^{\prime}_0) \otimes a^{\prime}_1= (aa^{\prime}_{00} \otimes a^{\prime}_{01})\otimes a^{\prime}_1 = aa^{\prime}_0 \otimes a^{\prime}_1 \otimes a^{\prime}_2.
                \end{split}
            \label{eq:prooflemma21}
        \end{equation}
        \begin{equation}
            \begin{split}
                &(id \otimes \Delta) \circ \chi^{\prime}(a \otimes a^{\prime})= (id \otimes \Delta) \circ (aa^{\prime}_0 \otimes a^{\prime}_1)=\\
                &= aa^{\prime}_0 \otimes a^{\prime}_{11} \otimes a^{\prime}_{12}= aa^{\prime}_0 \otimes a^{\prime}_1 \otimes a^{\prime}_2.
                \end{split}
            \label{eq:prooflemma22}
        \end{equation}
        \item \begin{equation}
            \begin{split}
                &(\chi^{\prime} \otimes id) \circ \Delta_{A \otimes A}(a \otimes a^{\prime})= (\chi^{\prime} \otimes id)\circ   \big((a_0 \otimes a^{\prime}_0) \otimes a_1a^{\prime}_1\big) =\\
                &=\chi^{\prime}(a_0 \otimes a^{\prime}_0) \otimes a_1a^{\prime}_1= a_0a^{\prime}_{00} \otimes a^{\prime}_{01} \otimes a_1a^{\prime}_1 =\\
                &=a_0a^{\prime}_0 \otimes a^{\prime}_1 \otimes a_1a^{\prime}_2.
                \end{split}
            \label{eq:prooflemma31}
        \end{equation}
        \begin{equation}
        \begin{split}
                &\Delta^{ad}_{A \otimes H} \circ \chi^{\prime}(a \otimes a^{\prime})=  \Delta^{ad}_{A \otimes H}(aa^{\prime}_0 \otimes a^{\prime}_1)= \\
                &=(aa^{\prime}_0)_0 \otimes  a^{\prime}_{12} \otimes  (aa^{\prime}_0)_1S(a^{\prime}_{11})a^{\prime}_{13}=a_0a^{\prime}_0 \otimes a^{\prime}_3 \otimes a_1a^{\prime}_1S(a^{\prime}_2)a^{\prime}_4=\\
                &= a_0a^{\prime}_0 \otimes a^{\prime}_1 \otimes a_1a^{\prime}_2.
                \end{split}
            \label{eq:prooflemma32}
        \end{equation}
    \end{enumerate}
\end{proof}
Let now $B \subseteq A$ be a quantum principal bundle. Relations derived in the Lemma \ref{lem:propertiesforchinotonthequotient} above for $\chi^{\prime}$ are also satisfied by the canonical map 
\begin{equation}
    \begin{split}
        \chi \colon &A \otimes_B A \to A \otimes H,\\
        &a \otimes_B a^{\prime} \to aa^{\prime}_0 \otimes a^{\prime}_1
    \end{split}
    \label{eq:recallcanonicalmap}
\end{equation}
on the quotient $A \otimes_B A$, which has a well defined inverse $\chi^{-1} \colon A \otimes H \to A \otimes_B A$.
\begin{definition}
    The \textsl{translation map} on a quantum principal bundle $B \subseteq A$ is defined as
    \begin{equation}
        \begin{split}
            \tau := \chi^{-1} \big|_{1_A \otimes H} \colon &H \to A \otimes_B A,\\
            &h \to \tau(h) = h^{\langle 1 \rangle} \otimes_B h^{\langle 2 \rangle}.
        \end{split}
        \label{eq:translmap}
    \end{equation}
    \label{def:translationmap}
\end{definition}
In the next proposition we show some properties of the translation map, which will be used later to prove important results.
\begin{proposition}
    The translation map $\tau \colon H \to A \otimes_B A$ in a quantum principal bundle $A(B, H)$, i.e. for a Hopf-Galois extension $B \subseteq A$, has the following properties:
    \begin{enumerate}
        \item $\big( (flip_{AH} \circ \Delta_A) \otimes_B id_A\big) \circ \tau = (S \otimes \tau) \circ \Delta$;
        \item $(id_A \otimes_B \Delta_A) \circ \tau = (\tau \otimes id_H) \circ \Delta$;
        \item $\Delta_{A \otimes A} \circ \tau = (\tau \otimes id_H) \circ Ad_R$;
        \item $\mu \circ \tau = 1_A \epsilon$;
    \end{enumerate}
    where $H$ has a right $H$-comodule structure by the coaction $Ad_R \colon H \to H \otimes H, h \mapsto h_2 \otimes S(h_1)h_3$.
    \label{pro:propertiesoftheBrztranslationmap}
\end{proposition}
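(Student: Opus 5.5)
The plan is to deduce all four properties from the relations for $\chi^{\prime}$ in Lemma \ref{lem:propertiesforchinotonthequotient}. First I will check that these relations descend to the canonical map $\chi$ on $A \otimes_B A$. This amounts to verifying that each map involved is well defined on the balanced tensor product, using $\Delta_A(b)=b\otimes 1_H$ for $b\in B$ together with Remark \ref{rem:onthequotienttensorproductotimesB}. I will then rewrite each identity in terms of $\chi^{-1}$ by composing on the appropriate sides with $\chi^{-1}$ or $\chi^{-1}\otimes id$. Every relation has the shape $X\circ\chi=Y\circ\chi$ or $(\chi\otimes id)\circ X=Y\circ\chi$, so inverting gives an identity for $\chi^{-1}$, and restricting it to $1_A\otimes H$ produces the corresponding identity for $\tau$.

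Concretely, for (2) I will use point 2 of the lemma. Applying $\chi^{-1}\otimes id$ on the left and $\chi^{-1}$ on the right gives $(id\otimes_B\Delta_A)\circ\chi^{-1}=(\chi^{-1}\otimes id)\circ(id\otimes\Delta)$. Evaluating at $1_A\otimes h$ then gives $(id\otimes_B\Delta_A)\tau(h)=\tau(h_1)\otimes h_2$. For (3), point 3 likewise gives $\Delta_{A\otimes_B A}\circ\chi^{-1}=(\chi^{-1}\otimes id)\circ\Delta^{ad}_{A\otimes H}$. At $1_A\otimes h$ we have $\Delta^{ad}_{A\otimes H}(1\otimes h)=1\otimes h_2\otimes S(h_1)h_3=(1\otimes id)Ad_R(h)$, which is exactly the claim. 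Here I must make sure that $\Delta_{A\otimes A}$ descends to $A\otimes_B A$, and it does because $B$ is coinvariant. For (4), I will note that $\mu\circ\chi^{-1}$ and $(id\otimes\epsilon)$ agree after composing with $\chi$: $(id\otimes\epsilon)\chi(a\otimes_B a')=aa'_0\epsilon(a'_1)=aa'$. Hence $\mu=(id\otimes\epsilon)\circ\chi$ on $A\otimes_B A$, so $\mu\circ\chi^{-1}=id\otimes\epsilon$, and at $1\otimes h$ this yields $\mu(\tau(h))=\epsilon(h)1_A$.

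The main obstacle is (1). Point 1 of the lemma gives $(id\otimes\chi)\circ((flip\circ\Delta_A)\otimes_B id)=\nu\circ\chi$, hence $((flip\circ\Delta_A)\otimes_B id)\circ\chi^{-1}=(id\otimes\chi^{-1})\circ\nu$. Evaluating at $1_A\otimes h$ gives $\nu(1\otimes h)=S(h_1)\otimes 1_A\otimes h_2$, and so $(id\otimes\chi^{-1})\nu(1\otimes h)=S(h_1)\otimes\tau(h_2)$, which is the claim. The delicate points are that $(flip\circ\Delta_A)\otimes_B id$ is well defined into $H\otimes A\otimes_B A$ (this uses $\Delta_A(ab)=\Delta_A(a)(b\otimes 1)$ for $b\in B$), and that invertibility of $id\otimes\chi$ follows from invertibility of $\chi$. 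I will check both explicitly, since they are exactly where the quotient by $B$ enters. With these in place, all four statements follow by a uniform argument.
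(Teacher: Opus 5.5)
Your proposal is correct and follows essentially the same route as the paper. The paper also descends the identities of Lemma \ref{lem:propertiesforchinotonthequotient} from $\chi'$ to $\chi$, composes with $\chi^{-1}$ (or $\chi^{-1}\otimes id_H$, $id_H\otimes\chi^{-1}$), and evaluates at $1_A\otimes h$; for (4) it applies $id_A\otimes\epsilon$ to $\chi\circ\tau(h)=1_A\otimes h$, which is exactly your observation $\mu=(id_A\otimes\epsilon)\circ\chi$. Your explicit checks that $(flip_{AH}\circ\Delta_A)\otimes_B id_A$, $id_A\otimes_B\Delta_A$ and $\Delta_{A\otimes A}$ are well defined on $A\otimes_B A$ supply details the paper leaves implicit.
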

\begin{proof}
Using the definition of the translation map and relations in Lemma \ref{lem:propertiesforchinotonthequotient} we prove the above in order.
 \begin{enumerate}
         \item  By the first property in Lemma \ref{lem:propertiesforchinotonthequotient} we have that
        \begin{equation}
            \begin{split}
                &(id_H \otimes \chi) \circ \big( (flip_{AH} \circ \Delta_A) \otimes id_A\big) \circ \tau(h) = (\nu \circ \chi \circ \tau)(h) = \\
                &= \nu (1_A \otimes h) = S(h_1) \otimes 1_A \otimes h_2,
            \end{split}
            \label{eq:propertiesofBrztranslmap1}
        \end{equation}
        since $\tau = \chi^{-1}\big|_{1_A \otimes H}$. Applying $(id_H \otimes \chi^{-1})$ to both sides we get
        \begin{equation}
            \begin{split}
                &(id_H \otimes \chi^{-1}) \circ (id_H \otimes \chi) \circ \big( (flip_{AH} \circ \Delta_A) \otimes id_A\big) \circ \tau (h) =\\
                &=\big( (flip_{AH} \circ \Delta_A) \otimes id_A\big) \circ \tau (h)= (id_H \otimes \chi^{-1}) \circ  (\nu \circ \chi \circ \tau)(h) = \\
                &= (id_H \otimes \chi^{-1}) \circ (S(h_1) \otimes 1_A \otimes h_2) = \big(S(h_1) \otimes \tau(h_2)\big) =\\
                &= S(h_1) \otimes (h_2)^{\langle 1 \rangle} \otimes_B (h_2)^{\langle 2 \rangle} = (S \otimes \tau) \circ \Delta(h),
            \end{split}
            \label{eq:propertiesofBrztranslmap2}
        \end{equation}
        where we have used that $\chi^{-1}(1_A \otimes h) = \tau(h)$ by definition of the translation map.
        \item Applying $(\chi^{-1} \otimes id_H)$ to both sides of the second property in \ref{lem:propertiesforchinotonthequotient}:
        \begin{equation}
            \begin{split}
                &(id_A \otimes_B \Delta_A) \circ \tau(h) = (\chi^{-1} \otimes id_H) \circ (id_A \otimes \Delta) \circ \chi \circ \tau(h) =\\
                &=(\chi^{-1} \otimes id_H) \circ (1_A \otimes h_1 \otimes h_2) = \tau(h_1) \otimes h_2 =\\
                &= (\tau \otimes id_H) \circ \Delta(h).
            \end{split}
            \label{eq:propertiesofBrztranslmap3}
        \end{equation}
        \item Consider this time the third property in Lemma \ref{lem:propertiesforchinotonthequotient} and again apply $(\chi^{-1} \otimes id_H)$ to both sides:
        \begin{equation}
            \begin{split}
                &\Delta_{A \otimes A} \circ \tau(h) = (\chi^{-1} \otimes id_H) \circ \Delta^{ad}_{A \otimes H} \circ \chi \circ \tau(h) =\\
                &= (\chi^{-1} \otimes id_H) \circ \Delta^{ad}_{A \otimes H}(1_A \otimes h) = (\chi^{-1} \otimes id_H) \circ (1_A \otimes h_2 \otimes S(h_1)h_3) = \\
                &= (\tau \otimes id_H) \circ Ad_R(h).
            \end{split}
            \label{eq:propertiesofBrztranslmap4}
        \end{equation}
        \item Using the definition of the translation map, for any $h \in H$ we have:
        \begin{equation}
            1_A \otimes h = \chi \circ \tau(h) = \chi(h^{\langle 1 \rangle} \otimes_B h^{\langle 2 \rangle}) = h^{\langle 1 \rangle} (h^{\langle 2 \rangle})_0 \otimes (h^{\langle 2 \rangle})_1. 
            \label{eq:propertiesofBrztranslmap5}
        \end{equation}
        Applying $(id_A \otimes \epsilon)$ to both sides and using the counit property $(id_A \otimes \epsilon) \circ \Delta_A = id_A$ we get:
        \begin{equation}
                1_A \otimes \epsilon(h) = h^{\langle 1 \rangle}(h^{\langle 2 \rangle})_0 \otimes \epsilon\big((h^{\langle 2 \rangle})_1\big) = h^{\langle 1 \rangle}h^{\langle 2 \rangle},
            \label{eq:propertiesofBrztranslmap6}
        \end{equation}
        from which we get the desired result.
    \end{enumerate}
\end{proof}

\subsection{Definition of quantum frame resolution} \label{defofquantumframeresolution}

We define as follows the quantum analogues of equivariant maps $C^{\infty}(P,F)^G$ on the total space of a principal $G$-bundle $P$ with values on the standard fiber $F$ of the associated bundle $P \times_G F$.
\begin{definition}
    Let $A$ be a right $H$-comodule algebra with coaction $\Delta_A$ and $V$ a right $H$-comodule with coaction $\rho_V$. We call a map $\phi \colon V \to A$ \textsl{pseudotensorial} if it is a morphism of right $H$-comodules such that $\phi(1_V) = 1_A$.
    \label{def:pseudotensorialmap}
\end{definition}
We are now ready to prove the quantum counterpart of Proposition \ref{pro:corrspondencebetweensectionsandequivariantmapsCap}, as in \cite{brzezinski_translation}, Theorem 4.3. This result is significant, and its proof method, which uses the translation map, is essential for grasping the definition of quantum frame resolution.
\begin{theorem}
    Let $H$ be a Hopf algebra with a bijective antipode. Cross sections of a quantum fiber bundle $\mathcal{E}(B, V, H)$ associated to a quantum principal bundle $B:=A^{coH} \subseteq A$, i.e. left $B$-module maps $s \colon \mathcal{E} \to B$ such that $s(1)=1$, are in bijective correspondence with pseudotensorial maps $\phi \colon V \to A$, i.e. $\Delta_A \circ \phi = (\phi \otimes id_H) \circ \rho_V$ such that $\phi(1_V) = 1_A$.
    \label{thm:thecorrespondence}
\end{theorem}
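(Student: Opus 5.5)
The plan is to write down explicit maps in both directions, check that each lands in the right class, and then verify they are mutually inverse. Throughout I use the description $\mathcal{E}=(A\otimes V)^{coH}$ with coaction $\Delta_{\mathcal{E}}(a\otimes v)=a_0\otimes v_0\otimes a_1v_1$. I also use the properties of the translation map $\tau(h)=h^{\langle 1\rangle}\otimes_B h^{\langle 2\rangle}$ from Proposition \ref{pro:propertiesoftheBrztranslationmap}.

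\emph{From pseudotensorial maps to sections.} Given $\phi$, define $s_\phi\colon\mathcal{E}\to A$ by $s_\phi(\sum_i a^i\otimes v^i):=\sum_i a^i\phi(v^i)$.
\begin{itemize}
\item Using that $\Delta_A$ is multiplicative and $\Delta_A\circ\phi=(\phi\otimes id)\circ\rho_V$, we get $\Delta_A(a\phi(v))=a_0\phi(v_0)\otimes a_1v_1$. This equals $(\mu\otimes id)\circ(id\otimes\phi\otimes id)\circ\Delta_{\mathcal{E}}(a\otimes v)$.
\item On coinvariants this gives $s_\phi(e)\otimes 1_H$, so $s_\phi$ takes values in $B$.
\item Left $B$-linearity is immediate from Remark \ref{rem:theBmodulestructureonmathcalE}.
\item $s_\phi(1_A\otimes 1_V)=\phi(1_V)=1_A$.
\end{itemize}

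\emph{From sections to pseudotensorial maps.} Given $s$, define
\[
\phi_s(v):=S^{-1}(v_1)^{\langle 1\rangle}\, s\big(S^{-1}(v_1)^{\langle 2\rangle}\otimes v_0\big).
\]
Here I use the bijectivity of the antipode. The first thing to justify is that the argument of $s$ makes sense. Formally, I regard $h^{\langle 1\rangle}\otimes_B h^{\langle 2\rangle}\otimes v_0$ as an element of $A\otimes_B(A\otimes V)$, and check that it lies in $A\otimes_B\mathcal{E}$.
\begin{itemize}
\item By property 2 of Proposition \ref{pro:propertiesoftheBrztranslationmap}, $h^{\langle 1\rangle}\otimes_B h^{\langle 2\rangle}{}_0\otimes h^{\langle 2\rangle}{}_1=\tau(h_1)\otimes h_2$.
\item Put $h=S^{-1}(v_1)$ and use that $S^{-1}$ is an anti-coalgebra map. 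The $H$-leg of $\Delta_{\mathcal{E}}$ then becomes $S^{-1}(v_2)v_1=\epsilon(v_1)$, which is exactly coinvariance.
\item The map $a\otimes_B e\mapsto a\,s(e)$ is well defined on $A\otimes_B\mathcal{E}$ precisely because $s$ is left $B$-linear.
\end{itemize}
To get right $H$-colinearity, $\Delta_A\circ\phi_s=(\phi_s\otimes id)\circ\rho_V$, I use that $s(\cdot)\in B$ is coinvariant. The coaction then only hits $h^{\langle 1\rangle}$, which I control via property 1 (equivalently property 3) of Proposition \ref{pro:propertiesoftheBrztranslationmap}. Unitality follows from $\tau(1_H)=1_A\otimes_B 1_A$ and $s(1)=1$.

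\emph{Mutual inverses.}
\begin{itemize}
\item For $\phi_{s_\phi}=\phi$: expand to get $S^{-1}(v_1)^{\langle 1\rangle}S^{-1}(v_1)^{\langle 2\rangle}\phi(v_0)$, which by property 4 ($\mu\circ\tau=1_A\epsilon$) collapses to $\phi(v)$.
\item For $s_{\phi_s}=s$: for $\sum a\otimes v\in\mathcal{E}$, write $s_{\phi_s}(a\otimes v)=a\,S^{-1}(v_1)^{\langle 1\rangle}s(S^{-1}(v_1)^{\langle 2\rangle}\otimes v_0)$. The coinvariance $a_0\otimes v_0\otimes a_1v_1=a\otimes v\otimes 1$ lets me trade $S^{-1}(v_1)$ for $a_1$, i.e. rewrite the expression as $a_0\tau(a_1)$ paired with $s$. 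Then $\chi^{-1}\chi=id$ gives $a_0a_1{}^{\langle 1\rangle}\otimes_B a_1{}^{\langle 2\rangle}=1_A\otimes_B a$, and the left $B$-linearity of $s$ finishes the argument.
\end{itemize}
I expect this last Sweedler juggling, together with the colinearity of $\phi_s$, to be the main obstacle. At that point I would first derive the auxiliary identity $a_0\tau(a_1)=1\otimes_B a$ from the definition of $\tau$ and only then substitute.
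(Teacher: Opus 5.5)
Your proposal is correct and follows essentially the same route as the paper. It uses the same two maps, $s_\phi=\mu\circ(id_A\otimes\phi)$ and $\phi_s(v)=S^{-1}(v_1)^{\langle 1\rangle}s\big(S^{-1}(v_1)^{\langle 2\rangle}\otimes v_0\big)$, gets well-definedness from property 2, colinearity from property 1, and $\phi_{s_\phi}=\phi$ from property 4. The only variation is in proving $s_{\phi_s}=s$. The paper combines property 1 with coinvariance to see that the prefactor $a\,S^{-1}(v_1)^{\langle 1\rangle}$ lies in $B$, then uses $B$-linearity of $s$ and property 4. Your trade $\sum a\otimes S^{-1}(v_1)\otimes v_0=\sum a_0\otimes a_1\otimes v$, together with $a_0\tau(a_1)=1\otimes_B a$ (from left $A$-linearity of $\chi^{-1}$), reaches the same conclusion more directly.
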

\begin{proof}
    Each equivariant map $\phi \colon V \to A$, i.e. $\Delta_A \circ \phi = (\phi \otimes id) \circ \rho_V$, induces a cross section $s \colon \mathcal{E} \to B$ by:
    \begin{equation}
        s = \mu \circ (id_A \otimes \phi).
        \label{eq:crosssectionfromphi}
    \end{equation}
    Let us show that for any $a \otimes v \in \mathcal{E}=(A \otimes V)^{coH}$, $s = \mu \circ (id_A \otimes \phi)$ has values in $B$. We have
    \begin{equation}
        \begin{split}
            &\Delta_A\big( s(a \otimes v) \big)=\Delta_A\big(\mu \circ (id_A \otimes \phi)(a \otimes v)\big)=\Delta_A(a\phi(v)) = \Delta_A(p)\Delta_A(\phi(v)) =\\
            &= (a_0 \otimes a_1) \big((\phi \otimes id_H) \circ \rho_V(v) \big) = (a_0 \otimes a_1)(\phi(v_0) \otimes v_1) =\\
            &= a_0\phi(v_0) \otimes a_1v_1 = \big(\mu \circ  (id_A \otimes \phi) \otimes id_H\big)(a_0 \otimes v_0 \otimes a_1v_1) =\\
            &= \big(\mu \circ (id_A \otimes \phi) \otimes id_H\big)\Delta_{\mathcal{E}}(a \otimes v) = \big(\mu \circ (id_A \otimes \phi) \otimes id_H\big)(a \otimes v \otimes 1_H)= \\
            &= a\phi(v) \otimes 1_H = s(a \otimes v) \otimes 1_H
        \end{split}
        \label{eq:suchanshasvaluesinM}
    \end{equation}
    where we used that $A$ is a right $H$-comodule algebra. Clearly $s(1_{\mathcal{E}}) = \mu(id_A \otimes \phi)(1_A \otimes 1_V) = \phi(1_V) = 1_A$ since $\phi(1_V) = 1_A$ by assumption. Moreover, the fact that $s = \mu(id_A \otimes \phi)$ is a left $B$-module map is apparent, since 
    \begin{equation}
        \begin{split}
            &s\big(b \triangleright (a \otimes v)\big)=\mu \circ (id_A \otimes \phi)(b \triangleright (a \otimes v))= \mu \circ (id_A \otimes \phi)(ba \otimes v)= \\
            &= ba\phi(v) = bs(a \otimes v)    
        \end{split}
        \label{eq:sfromphiisaleftBmodulemap}
    \end{equation}
    for any $b \in B$ and $a \otimes v \in \mathcal{E}$.\\
    Now, conversely, for any cross section $s \colon \mathcal{E} \to B$ of $\mathcal{E}$, we define a map $\phi \colon V \to A$ using the translation map $\tau \colon H \to A \otimes_B A, h \mapsto h^{\langle 1 \rangle} \otimes_B h^{\langle 2 \rangle}$ as:
    \begin{equation}
        \begin{split}
            \phi \colon &V \to A,\\
            &v \mapsto (S^{-1}(v_1))^{\langle 1 \rangle} s\big((S^{-1}(v_1))^{\langle 2 \rangle} \otimes v_0\big),
        \end{split}
        \label{eq:equivariantusingsection}
    \end{equation}
    which means $\phi := \mu \circ (id_A \otimes_B s) \circ (\tau \otimes id_V) \circ flip_{VH} \circ (id_V \otimes S^{-1}) \circ \rho_V$, $P(M, H)$. Notice that since $\tau(S^{-1}(v_1)) = (S^{-1}(v_1))^{\langle 1 \rangle} \otimes_B (S^{-1}(v_1))^{\langle 2 \rangle}$ is such that $(S^{-1}(v_1))^{\langle 1 \rangle}b \otimes_B (S^{-1}(v_1))^{\langle 2 \rangle} = (S^{-1}(v_1))^{\langle 1 \rangle} \otimes_B b(S^{-1}(v_1))^{\langle 2 \rangle}$ for any $b \in B$, the above definition for $\phi$ makes sense since the cross section $s$ is a left $B$-module map. \\
    In order for $\phi$ to be well-defined, the argument of $s$, i.e. $(S^{-1}(v_1))^{\langle 2 \rangle} \otimes v_0$, must be an element of $\mathcal{E}$. Alternatively, we show that $\tau(S^{-1}(v_1)) \otimes v_0$ is in $A \otimes_B \mathcal{E} = A \otimes_B (A \otimes V)^{coH}$. Using the second property in Proposition \ref{pro:propertiesoftheBrztranslationmap}, we have 
    \begin{equation}
        \begin{split}
            &(id_A \otimes \Delta_A) \circ \tau(S^{-1}(v_1)) = (S^{-1}(v_1))^{\langle 1 \rangle} \otimes_B \big( (S^{-1}(v_1))^{\langle 2 \rangle} \big)_0 \otimes \big( (S^{-1}(v_1))^{\langle 2 \rangle} \big)_1 =\\
            &=(\tau \otimes id_H) \circ \Delta(S^{-1}(v_1)) = (\tau \otimes id_H) \circ \big(S^{-1}(v_{12}) \otimes S^{-1}(v_{11}) \big) = \\
            &= ( S^{-1}(v_{12}) )^{\langle 1 \rangle} \otimes_B ( S^{-1}(v_{12}) )^{\langle 2 \rangle} \otimes S^{-1}(v_{11}),
        \end{split}
        \label{eq:secondproperty}
    \end{equation}    
    where we used that $\Delta \circ S^{-1} = (S^{-1} \otimes S^{-1}) \circ flip \circ \Delta$ by Proposition \ref{pro:Hopfantipodeproperties}. Thus, for any $v \in V$, we have
    \begin{equation}
        \begin{split}
            &(id_A \otimes_B \Delta_{\mathcal{E}})(\tau(S^{-1}(v_1)) \otimes v_0) =\\
            &= (id_A \otimes_B \Delta_{\mathcal{E}})\big((S^{-1}(v_1))^{\langle 1 \rangle} \otimes_B (S^{-1}(v_1))^{\langle 2 \rangle} \otimes v_0\big) = \\
            &= (S^{-1}(v_1))^{\langle 1 \rangle} \otimes_B \big((S^{-1}(v_1))^{\langle 2 \rangle}\big)_0 \otimes v_{00} \otimes \big((S^{-1}(v_1))^{\langle 2 \rangle}\big)_1v_{01} =\\
            &=( S^{-1}(v_{12}) )^{\langle 1 \rangle} \otimes_B  ( S^{-1}(v_{12}) )^{\langle 2 \rangle} \otimes v_{00} \otimes S^{-1}(v_{11}) v_{01} =\\
            &= ( S^{-1}(v_1) )^{\langle 1 \rangle} \otimes_B  ( S^{-1}(v_1) )^{\langle 2 \rangle} \otimes v_0 \otimes S^{-1}(v_{01}) v_{11} = \\
            &= \tau(S^{-1}(v_1)) \otimes v_0 \otimes 1_H,
        \end{split}
        \label{eq:phiiswelldefined}
    \end{equation}
    where we used that $(\rho_V \otimes id_H) \circ \rho_V = v_{00} \otimes v_{01} \otimes v_1 = (id \otimes \Delta) \circ \rho_V = v_0 \otimes v_{11} \otimes v_{12}$, which shows that $\tau(S^{-1}(v_1)) \otimes v_0 \in A \otimes_B \mathcal{E}$.
    Moreover, $\phi(1_V)=1_A$, since $s(1_{\mathcal{E}})=1_A$ and also $\tau(1_H) = 1_A \otimes_B 1_A$.\\
    We have to now show that $\phi \colon V \to A$ is pseudotensorial. By the first property in Proposition \ref{pro:propertiesoftheBrztranslationmap} we have
    \begin{equation}
        \begin{split}
            &\big( (flip_{AH} \circ \Delta_A) \otimes_B id_A \big) \circ \tau(S^{-1}(v_1)) =\\
            &=\big((S^{-1}(v_1))^{\langle 1 \rangle}\big)_1 \otimes \big( (S^{-1}(v_1))^{\langle 1 \rangle} \big)_0 \otimes_B (S^{-1}(v_1))^{\langle 2 \rangle} = \\
            &= (S \otimes \tau) \circ \Delta(S^{-1}(v_1)) = (S \otimes \tau) \circ \big( S^{-1}(v_{12}) \otimes S^{-1}(v_{11}) \big) = \\
            &=S(S^{-1}(v_{12})) \otimes (S^{-1}(v_{11}))^{\langle 1 \rangle} \otimes_B (S^{-1}(v_{11}))^{\langle 2 \rangle},
        \end{split}
        \label{eq:firstproperty}
    \end{equation}
    hence we can write
    \begin{equation}
        \begin{split}
            &\Delta_A \circ \phi(v) = \Delta_A\Big((S^{-1}(v_1))^{\langle 1 \rangle} s\big((S^{-1}(v_1))^{\langle 2 \rangle} \otimes v_0\big)\Big) =\\
            &= \Delta_A\Big((S^{-1}(v_1))^{\langle 1 \rangle}\Big)\Delta_A\Big(s\big((S^{-1}(v_1))^{\langle 2 \rangle}\otimes v_0\big)\Big) = \\
            &= \Big(\big((S^{-1}(v_1))^{\langle 1 \rangle}\big)_0 \otimes \big((S^{-1}(v_1))^{\langle 1 \rangle}\big)_1\Big) \Big(s\big((S^{-1}(v_1))^{\langle 2 \rangle}\otimes v_0\big) \otimes 1_H\Big) =\\
            &=\big((S^{-1}(v_1))^{\langle 1 \rangle}\big)_0s\big((S^{-1}(v_1))^{\langle 2 \rangle}\otimes v_0\big) \otimes \big((S^{-1}(v_1))^{\langle 1 \rangle}\big)_1 = \\
            &=(S^{-1}(v_{11}))^{\langle 1 \rangle} s\big( (S^{-1}(v_{11}))^{\langle 2 \rangle} \otimes v_0 \big) \otimes S(S^{-1}(v_{12}))=\\
            &=(S^{-1}(v_{01}))^{\langle 1 \rangle} s\big( (S^{-1}(v_{01}))^{\langle 2 \rangle} \otimes v_{00} \big) \otimes v_1=\\
            &= (\phi \otimes id_H) \circ \rho_V(v),
        \end{split}
        \label{eq:phipseudotensorial}
    \end{equation}
    for any $v \in V$. Hence we have shown that $\phi$ is an equivariant (or pseudotensorial) map.\\
    We now want to show that the maps $\theta \colon \phi \to s=\mu \circ (id_A \otimes \phi)$ and $\tilde{\theta} \colon s \to \phi = \mu \circ (id_A \otimes_B s) \circ (\tau \otimes id_V) \circ flip_{VH} \circ (id_V \otimes S^{-1}) \circ \rho_V$ are inverses of each other. For any $v \in V$ and $\phi \colon V \to A$ equivariant we have:
    \begin{equation}
        \begin{split}
            &(\tilde{\theta} \circ \theta)(\phi)(v) = (S^{-1}(v_1))^{\langle 1 \rangle} \theta(\phi)\big((S^{-1}(v_1))^{\langle 2 \rangle} \otimes v_0\big) =\\
            &=(S^{-1}(v_1))^{\langle 1 \rangle} \big(\mu \circ (id_A \otimes \phi) \big)\big((S^{-1}(v_1))^{\langle 2 \rangle} \otimes v_0\big)=\\
            &=(S^{-1}(v_1))^{\langle 1 \rangle}(S^{-1}(v_1))^{\langle 2 \rangle}\phi(v_0) = \phi(v),
        \end{split}
        \label{eq:thetatildetheta}
    \end{equation}
    where in the last equality we have used the last property in Proposition \ref{pro:propertiesoftheBrztranslationmap}.\\ Conversely, for any $a \otimes v \in \mathcal{E}$ and any cross section $s \colon \mathcal{E} \to B$, we have
    \begin{equation}
        (\theta \circ \tilde{\theta})(s)(a \otimes v) = a(\tilde{\theta}(s))(v) = a(S^{-1}(v_1))^{\langle 1 \rangle} s\big((S^{-1}(v_1))^{\langle 2 \rangle} \otimes v_0\big).
        \label{eq:thetathetatilde1}
    \end{equation}
    Using again the first property of Proposition \ref{pro:propertiesoftheBrztranslationmap} we find
    \begin{equation}
        \begin{split}
            &\big( (flip_{PH} \circ \Delta_A) \otimes_B id_A \otimes id_V \big) \big(a(S^{-1}(v_1))^{\langle 1 \rangle} \otimes_B (S^{-1}(v_1))^{\langle 2 \rangle} \otimes v_0 \big)=\\
            &= a_1\big((S^{-1}(v_1))^{\langle 1 \rangle}\big)_1 \otimes a_0\big((S^{-1}(v_1))^{\langle 1 \rangle}\big)_0 \otimes_B (S^{-1}(v_1))^{\langle 2 \rangle} \otimes v_0 =\\
            &= a_1S(S^{-1}(v_{12})) \otimes a_0(S^{-1}(v_{11}))^{\langle 1 \rangle} \otimes_B (S^{-1}(v_{11}))^{\langle 2 \rangle} \otimes v_0 =\\
            &= a_1v_{12} \otimes a_0(S^{-1}(v_{11}))^{\langle 1 \rangle} \otimes_B (S^{-1}(v_{11}))^{\langle 2 \rangle} \otimes v_0 =\\
            &= id_H \otimes \big(( \mu \otimes_B id_A \otimes id_V) \circ (id_A \otimes \tau \circ S^{-1} \otimes id_V) \circ \\
            &\;\;\;\;\;\;\;\;\;\;\;\;\;\;\;\;\;\;\;\;\;\;\;\;\;\;\;\circ (id_A \otimes flip_{VH} \circ \rho_V)\big)(a_1v_1 \otimes a_0 \otimes v_0) =\\
            &= a_1v_1 \otimes a_0(S^{-1}(v_{01}))^{\langle 1 \rangle} \otimes_B (S^{-1}(v_{01}))^{\langle 2 \rangle} \otimes v_{00} =\\
            &= id_H \otimes \big(( \mu \otimes_B id_A \otimes id_V) \circ (id_A \otimes \tau \circ S^{-1} \otimes id_V) \circ \\
            &\;\;\;\;\;\;\;\;\;\;\;\;\;\;\;\;\;\;\;\;\;\;\;\;\;\;\;\circ (id_A \otimes flip_{VH} \circ \rho_V)\big)(1_H \otimes a \otimes v) =\\
            &= 1_H \otimes a(S^{-1}(v_1))^{\langle 1 \rangle} \otimes_B (S^{-1}(v_1))^{\langle 2 \rangle} \otimes v_0,
        \end{split}
        \label{eq:thetathetatilde2}
    \end{equation}
    where in the last two equalities we have used that $a \otimes v \in \mathcal{E}$. From the above we see that $a(S^{-1}(v_1))^{\langle 1 \rangle} \otimes_B (S^{-1}(v_1))^{\langle 2 \rangle} \otimes v_0  \in B \otimes_B \mathcal{E}$, since we have already shown that $(S(v_1))^{\langle 2 \rangle} \otimes v_0 \in \mathcal{E}$. Thus, since $a(S^{-1}(v_1))^{\langle 1 \rangle} \in B$, we can use in \ref{eq:thetathetatilde1} that $s$ is a left $B$-module map and finally write
    \begin{equation}
        \begin{split}
            &(\theta \circ \tilde{\theta})(s)(a \otimes v)  = a(S^{-1}(v_1))^{\langle 1 \rangle} s\big((S^{-1}(v_1))^{\langle 2 \rangle} \otimes v_0\big) = \\
            &=s\big(a(S^{-1}(v_1))^{\langle 1 \rangle} (S^{-1}(v_1))^{\langle 2 \rangle} \otimes v_0 \big) = s (a \otimes v),
        \end{split}
        \label{eq:thetathetatilde4}
    \end{equation}
    where we have used again the fourth property of Proposition \ref{pro:propertiesoftheBrztranslationmap}, completing the proof.
\end{proof}
\begin{remark}
    Analogously, one could take $V$ a left $H$-comodule algebra with coaction $\leftindex_V\Delta$ and view it as a right $H^{op}$-comodule algebra with coaction $\rho_V$ as in Proposition \ref{pro:thetwodefinitionsofassqvbareequivalent}, without needing to require that $H$ has bijective antipode. Then, to each cross section $s \colon \mathcal{E} \to B$ corresponds a pseudotensorial map $\phi(v) := (v_{-1})^{\langle 1 \rangle}s\big((v_{-1})^{\langle 2 \rangle} \otimes v_0\big)$, and one can proceed as in the above proof to show the same results.
    \label{rem:ondoingthesamebutwiththeotherdefinitionofassociatedqvb}
\end{remark}
Let $B := A^{coH} \subseteq A$ again be a principal $H$-comodule algebra. In the following, we assume $(\Gamma_A, \mathrm{d}_A)$ is a right $H$-covariant first order differential calculus on $A$, with right $H$-coaction $\Delta_{\Gamma}$. Moreover, as in Definition \ref{def:baseformsFODC}, we have the pullback calculus $(\Gamma_B,\mathbb{d}_B)$ on $B$, and $\Gamma_A^{hor} := A\Gamma_B$ is the $A$-bimodule of horizontal forms.\\
The next definition gives the quantum analogue of horizontal and equivariant $V$-valued $1$-forms $\Omega^1_{hor}(P;V)^G$ on the total space of a principal fiber bundle.
\begin{definition}
    A form $\theta \colon V \to \Gamma_A$ is called \textsl{right strongly tensorial} if its image lies in $A\Gamma_B = \Gamma_A^{hor}$ and if it satisfies
    \begin{equation}
        \Delta_{\Gamma} \circ \theta = (\theta \otimes id_H) \circ \rho_V.
        \label{eq:rightstronglytensorialforms}
    \end{equation}
    \label{def:rightstronglytensorialforms}
\end{definition}
Proceeding like in Section \ref{frameresolutions}, we state, as a corollary to Theorem \ref{thm:thecorrespondence}, the quantum analogue of Proposition \ref{pro:correspondencetensorialformsandassfbvaluedformsCap}.
\begin{corollary}
    Right strongly tensorial $\theta \colon V \to A\Gamma_B$ are in bijective correspondence with left $B$-module maps $\sigma \colon \mathcal{E} \to \Gamma_B$.
    \label{cor:thecorrespondence2}
\end{corollary}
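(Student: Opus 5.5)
The plan is to repeat the proof of Theorem \ref{thm:thecorrespondence} essentially word for word, with the target $B$ replaced by $\Gamma_B$ and $A$ replaced by $\Gamma_A^{hor}=A\Gamma_B$. The bridge is Theorem \ref{thm:horizontalformsintersectionandsplitting}. It tells us that $A\otimes_B\Gamma_B\cong A\Gamma_B$ as left $A$-modules via multiplication, and that $\Gamma_B=(\Gamma_A^{hor})^{coH}$. Together these play the role that $A$ and $B=A^{coH}$ played before.

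\textbf{From $\theta$ to $\sigma$.} Given a right strongly tensorial $\theta$, I set
\[
\sigma:=\mu\circ(id_A\otimes\theta)\colon\mathcal{E}\to\Gamma_A, \qquad a\otimes v\mapsto a\,\theta(v).
\]
The image lies in $A\Gamma_B$ because $\theta$ does. Using that $\Delta_\Gamma$ is compatible with the left $A$-action, i.e.\ $\Delta_\Gamma(a\omega)=a_0\omega_0\otimes a_1\omega_1$ (this holds since $\Gamma_A$ is a right covariant bimodule), the same computation as in (\ref{eq:suchanshasvaluesinM}) gives
\[
\Delta_\Gamma(\sigma(a\otimes v))=\sigma(a\otimes v)\otimes 1_H
\]
for $a\otimes v\in\mathcal{E}$. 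Hence $\sigma$ lands in $(\Gamma_A^{hor})^{coH}=\Gamma_B$. Left $B$-linearity is immediate. No unitality condition is needed, since $\theta(1_V)$ is not normalised.

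\textbf{From $\sigma$ to $\theta$.} Given a left $B$-module map $\sigma\colon\mathcal{E}\to\Gamma_B$, I define
\[
\theta(v):=(S^{-1}(v_1))^{\langle 1\rangle}\,\sigma\big((S^{-1}(v_1))^{\langle 2\rangle}\otimes v_0\big).
\]
Equivalently, $\theta=\mu\circ(id_A\otimes_B\sigma)\circ(\tau\otimes id_V)\circ flip\circ(id\otimes S^{-1})\circ\rho_V$. This is well defined on $A\otimes_B\mathcal{E}$ because $\sigma$ is left $B$-linear, and computation (\ref{eq:phiiswelldefined}) shows that the argument really lies in $A\otimes_B\mathcal{E}$. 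The image is manifestly in $A\Gamma_B$. Colinearity $\Delta_\Gamma\circ\theta=(\theta\otimes id)\circ\rho_V$ follows exactly as in (\ref{eq:phipseudotensorial}): apply property 1 of Proposition \ref{pro:propertiesoftheBrztranslationmap} to the first leg of $\tau$, and use $\Delta_\Gamma(\sigma(\cdot))=\sigma(\cdot)\otimes1_H$ since $\Gamma_B\subseteq\Gamma_A^{coH}$.

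\textbf{Mutual inverseness.} One direction uses $\mu\circ\tau=1_A\epsilon$, as in (\ref{eq:thetatildetheta}). The other uses the computation (\ref{eq:thetathetatilde2}), which shows that $a(S^{-1}(v_1))^{\langle1\rangle}\in B$ for $a\otimes v\in\mathcal{E}$. That lets us pull this factor inside $\sigma$ by $B$-linearity and then collapse the expression via $\mu\circ\tau=1_A\epsilon$.

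\textbf{Main obstacle.} I expect the delicate point to be the coinvariance step and the use of Theorem \ref{thm:horizontalformsintersectionandsplitting}. One must make sure that multiplying an element of $\Gamma_B$ on the left by $A$, and then the coaction, behaves as a comodule-algebra action. One must also check that the identification $(A\Gamma_B)^{coH}=\Gamma_B$ is genuinely available; it is exactly here that faithful flatness enters. The rest is a formal transcription of the earlier proof.
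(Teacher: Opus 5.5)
Your proposal is correct and follows the paper's proof. It uses the same two maps, $\sigma=\mu\circ(id_A\otimes\theta)$ and $\theta(v)=(S^{-1}(v_1))^{\langle 1\rangle}\sigma\big((S^{-1}(v_1))^{\langle 2\rangle}\otimes v_0\big)$, the same coinvariance argument via Theorem \ref{thm:horizontalformsintersectionandsplitting}, and the same colinearity argument via the first property of Proposition \ref{pro:propertiesoftheBrztranslationmap}. The one difference is that you explicitly sketch that the two constructions are mutually inverse, transcribing the argument from the proof of Theorem \ref{thm:thecorrespondence}; the paper's proof of the corollary leaves this step implicit.
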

\begin{proof}
    The right $H$-covariant first order differential calculus $(\Gamma_A, \mathrm{d}_A)$ has right $H$-coaction
    \begin{equation}
        \begin{split}
            \Delta_{\Gamma} \colon &\Gamma_A \to \Gamma_A \otimes H,\\
            &a\mathrm{d}_Aa^{\prime} \mapsto a_0\mathrm{d}_Aa^{\prime}_0 \otimes a_1a^{\prime}_1 = \Delta_A(a)\Delta_{\Gamma}\big(\mathrm{d}_A(a^{\prime})\big),
        \end{split}
        \label{eq:coactionforrightHcovatiantfirstorderdiffcalcGammaA}
    \end{equation}
    for all $a,a^{\prime} \in A$, with $\Delta_{\Gamma} \circ \mathrm{d}_A =(\mathrm{d}_A \otimes id_H) \circ \Delta_A$.
    Given a strongly tensorial $\theta \colon V \to A\Gamma_B$, we can define a left $B$-module map $\sigma = \mu \circ (id_A \otimes \theta) \colon \mathcal{E} \to \Gamma_B$. This map has indeed values in $\Gamma_B$, since for any $a \otimes v \in \mathcal{E}$ we have
    \begin{equation}
        \begin{split}
            &\Delta_{\Gamma}\big(\sigma(a \otimes v) \big) = \Delta_{\Gamma}\big(a\theta(v)\big) = \Delta_A(a)\Delta_{\Gamma}\big(\theta(v)\big) = \\
            &= (a_o \otimes a_1)\big(\theta(v_0) \otimes v_1) = a_0\theta(v_0) \otimes a_1v_1 = a\theta(v) \otimes 1_H \in A\Gamma_B \otimes 1_H,
        \end{split}
        \label{eq:sigmagiventheta}
    \end{equation}
    and $\Gamma_B \cong \big(A\Gamma_B)^{coH}$ by Theorem \ref{thm:horizontalformsintersectionandsplitting}.\\
    Conversely, given a left $B$-module map $\sigma \colon \mathcal{E} \to \Gamma_B$, we can define a right strongly tensorial $\theta$ as in Theorem \ref{thm:thecorrespondence}, using the translation map, as $\theta(v) = (S^{-1}(v_1))^{\langle 1 \rangle}\sigma\big( (S^{-1}(v_1))^{\langle 2 \rangle} \otimes v_0\big)$ for every $v \in V$. This map clearly has values in $A\Gamma_B$, and is equivariant since
    \begin{equation}
        \begin{split}
            &\Delta_{\Gamma} \circ \theta(v) = \Delta_{\Gamma}\Big( (S^{-1}(v_1))^{\langle 1 \rangle}\sigma\big( (S^{-1}(v_1))^{\langle 2 \rangle} \otimes v_0\big) \Big) = \\
            &= \Delta_A\Big((S^{-1}(v_1))^{\langle 1 \rangle} \Big)\Delta_{\Gamma}\Big(\sigma\big( (S^{-1}(v_1))^{\langle 2 \rangle} \otimes v_0\big) \Big) = \\
            &= \Big( (S^{-1}(v_1))^{\langle 1 \rangle}\big)_0 \otimes (S^{-1}(v_1))^{\langle 1 \rangle}\big)_1 \Big)\Big(\sigma\big((S^{-1}(v_1))^{\langle 2 \rangle}\otimes v_0\big) \otimes 1_H\Big) =\\
            &=\big((S^{-1}(v_1))^{\langle 1 \rangle}\big)_0\sigma\big((S^{-1}(v_1))^{\langle 2 \rangle}\otimes v_0\big) \otimes \big((S^{-1}(v_1))^{\langle 1 \rangle}\big)_1 = \\
            &=(S^{-1}(v_{11}))^{\langle 1 \rangle} \sigma\big( (S^{-1}(v_{11}))^{\langle 2 \rangle} \otimes v_0 \big) \otimes S(S^{-1}(v_{12}))=\\
            &= (\theta \otimes id_H) \circ \rho_V(v),
        \end{split}
        \label{eq:equivarianceofthetaquantum}
    \end{equation}
    by using the first property of Proposition \ref{pro:propertiesoftheBrztranslationmap} exactly as in the proof of Theorem \ref{thm:thecorrespondence}.
\end{proof}
We can finally define a quantum frame resolution of $\Gamma_B$, i.e., the quantum analogue of a frame resolution of the tangent bundle, as follows.
\begin{definition}
    A \textsl{quantum frame resolution} of $(B, \Gamma_B)$ is a quantum $H$-principal bundle $B = A^{coH} \subseteq A$, a 
    right $H$-comodule $V$ and a right strongly tensorial form $\theta \colon V \to A\Gamma_B$ such that the map 
    \begin{equation}
        s_{\theta} = \cdot \circ (id_A \otimes \theta) \colon \mathcal{E} \to \Gamma_B
        \label{eq:toevidentiatethesthetamap}
    \end{equation}
    is an isomorphism of $B$-bimodules. We denote such a quantum frame resolution by the tuple $(A,H,V,\theta)$.
    \label{def:quanutumframeresolution}
\end{definition}
\begin{remark}
    In the above definition, we did not require $V$ to be an algebra: its right $H^{op}$-comodule algebra structure is necessary to show that it forms a subalgebra of the tensor product $A \otimes V$, as established in Lemma $\ref{lem:toproveHopquantumvectorbundleisok}$, but it is not needed for proving Theorem \ref{thm:thecorrespondence} and Corollary \ref{cor:thecorrespondence2}. Therefore, we may consider $V$ as simply a right $H$-comodule. This will be the case in the example of quantum frame resolution we will show in \ref{exampleofqfr}.
    \label{rem:onwhyonedoesnotneedVtobeanalgebra}
\end{remark}

\subsection{An example: the quantum affine case} \label{exampleofqfr}
We consider the smashed product algebra $B\#H$, with $B = \mathbb{C}^2_q$ the quantum plane, i.e., the algebra generated by elements $x,y$ such that
\begin{equation}
    xy = qyx,
    \label{eq:quantumplane}
\end{equation}
and $H = GL_q(2)$ the Hopf algebra generated by elements $a,b,c,d,D^{-1}$ as in Example \ref{ex:GLq2}, satisfying the quantum commutation relations
\begin{equation}
    \begin{split}
        & ab = qba, \;\; ac=qca, \;\; bd = qdb, \;\; cd=qdc, \\
        &bc=cb, \;\; ad-da=(q-q^{-1})bc,
    \end{split}
    \label{eq:GLq2commutationrelationsrecall}
\end{equation}
and
\begin{equation}
    \begin{split}
        &D^{-1}a = aD^{-1}, \;\; D^{-1}b=bD^{-1}, \;\; D^{-1}c = cD^{-1}, \;\; D^{-1}d= dD^{-1}.
    \end{split}
    \label{eq:GLq2commutationrelationsrecall1}
\end{equation}
We recall that, for any $b'b^{\prime} \in B$ and $h,h^{\prime} \in H$, the smashed product multiplication is
\begin{equation}
    (b\#h)(b^{\prime}\#h^{\prime}) = b(h_1 \triangleright b^{\prime})\#h_2h^{\prime},
    \label{eq:recallsmashedproductdefinition}
\end{equation}
where the left $H$-module action $\triangleright$ on $B$ is given by the following lemma (\cite{hajac_frame}, Lemma 3.1), which one proves by directly verifying that such an action respects the ideals defining $B$ and $H$.
\begin{lemma}
    There exists a left $H = GL_q(2)$-module action on $B = \mathbb{C}^2_q$ given by
    \begin{equation}
        \begin{split}
            &a\triangleright x = q^{-2}x, \;\; b\triangleright x = 0, \;\; c\triangleright x = (q^{-2} -1)y, \;\; d \triangleright x = q^{-1}x, \;\; D^{-1} \triangleright x = q^3x,\\
            &a \triangleright y = q^{-1}y, \;\; b \triangleright y= 0, \;\; c\triangleright y =0,\;\; d \triangleright y = q^{-2}y,\;\; D^{-1}\triangleright y = q^3y.
        \end{split}
        \label{eq:leftHmoduleactiononBgenerators}
    \end{equation}
    \label{lem:leftHmoduleanctiononB}
\end{lemma}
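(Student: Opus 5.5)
We have to show that the prescription on generators extends to a well-defined left $H$-module algebra structure on $B=\mathbb{C}^2_q$. The plan is to build the action from the free objects and then check that it descends through both quotients.

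\textbf{Step 1 (action of the free algebra on $B$).} For each generator $g\in\{a,b,c,d,D^{-1}\}$ of $H$, the displayed formulas give $g\triangleright x$ and $g\triangleright y$. Extend these to all of $B$ by the measuring rule of Definition \ref{def:measures}:
\begin{equation*}
g\triangleright 1=\epsilon(g)1, \qquad g\triangleright(b b')=(g_1\triangleright b)(g_2\triangleright b'),
\end{equation*}
using the coproduct of Example \ref{ex:GLq2} together with $\Delta(D^{-1})=D^{-1}\otimes D^{-1}$. Because the coproduct is matrix-shaped, the generators act jointly: $(a,b)$ and $(c,d)$ act through a $2\times 2$ matrix of operators on the free algebra $\mathbb{C}\langle x,y\rangle$, and $D^{-1}$ acts as an algebra automorphism. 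The first check is that this is consistent on $B$, i.e.\ that it preserves the ideal generated by $xy-qyx$. It suffices to compute $g\triangleright(xy-qyx)$ for the five generators. For example,
\begin{equation*}
a\triangleright(xy)=(a\triangleright x)(a\triangleright y)+(b\triangleright x)(c\triangleright y)=q^{-3}xy,
\end{equation*}
and $b$ kills both generators, so $b\triangleright(xy-qyx)=(a\triangleright x)(b\triangleright y)+\cdots=0$. The case of $c$ is the only one where mixed terms such as $(c\triangleright x)(a\triangleright y)\propto y^2$ appear, and these must cancel.

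\textbf{Step 2 (descent through the relations of $H$).} We obtain an algebra map from the free algebra on $a,b,c,d,D^{-1}$ into $\mathrm{End}(B)$, and must show that the relations of $GL_q(2)$ act as zero. Since each generator acts as a twisted derivation/automorphism, it is enough to check each relation on $x$ and on $y$, together with the compatibility with the coproduct; the latter lets us propagate from generators to monomials. Concretely, for each relation $r$ we will verify $r\triangleright x=r\triangleright y=0$. The relations to check are the $q$-commutation relations and the central-inverse relations $DD^{-1}=1=D^{-1}D$, where $D=ad-qbc$. Most of these hold trivially because $b$ kills everything. For instance $ab=qba$ reduces to $0=0$, and $bc=cb$ also reduces to $0$ on the generators. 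The relation $ad-da=(q-q^{-1})bc$ is the first genuine one. On $x$ it reads
\begin{equation*}
a\triangleright(q^{-1}x)-d\triangleright(q^{-2}x)=0,
\end{equation*}
which does hold. Then $D\triangleright x=q^{-3}x$, which matches $D^{-1}\triangleright x=q^3x$. The relation $ac=qca$ requires the nonzero $c\triangleright x=(q^{-2}-1)y$, and gives
\begin{equation*}
q^{-1}(q^{-2}-1)y \stackrel{?}{=} q\,(q^{-2}-1)q^{-2}y .
\end{equation*}
The two sides agree, and $cd=qdc$ is handled similarly.

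\textbf{Main obstacle.} The step I expect to be hardest is showing that the relation ideal of $H$ annihilates \emph{all} of $B$, and not just $x$ and $y$. The argument I would try first uses the fact that the relation ideal is a biideal. With coproduct compatibility, $r\triangleright(bb')$ expands via $\Delta(r)\in I\otimes H+H\otimes I$, where $I$ denotes the relation ideal; by induction on degree in $x,y$ this vanishes once $r$ kills the generators. Everything else is direct verification on the finitely many generators.
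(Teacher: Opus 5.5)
Your proposal is correct and takes the same route as the paper, whose proof is the one-line remark that one directly verifies the prescribed action respects the ideal generated by $xy-qyx$ in $\mathbb{C}^2_q$ and the relation ideal of $GL_q(2)$; your checks on $x,y$ and the biideal induction supply the details of that verification. One point to tighten: the induction step produces terms with $r_1$ or $r_2$ arbitrary in the relation ideal $I$, so you need all of $I$, not just the generating relations, to kill $1$, $x$ and $y$ — this holds because $\epsilon(I)=0$, and because $\mathrm{span}\{x,y\}$ is stable under every generator, so its annihilator is a two-sided ideal containing the generating relations.
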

We now show that the smash product algebra $B\#H$ can be seen as an Hopf algebra generated by elements $x\#1_H, y\#1_H, 1_B\#a, 1_B\#b, 1_B\#c, 1_B\#d, 1_B\#D^{-1}$. In doing this, we construct the quantum group of affine transformations of the quantum plain. This coincides with Majid's version, which uses a braded Hopf coproduct on $B$ \cite{aziz_majid}.\\
The generators satisfy the following cross relations, which one can verify by direct calculation using (\ref{eq:recallsmashedproductdefinition}) and the left $H$-module action defined in (\ref{eq:leftHmoduleactiononBgenerators}):
\begin{equation}
    \begin{split}
        &(x\#1_H)(y\#1_H) = q(y\#1_H)(x\#1_H)
    \end{split}
    \label{eq:commutationrelations1}
\end{equation}
\begin{equation}
    \begin{split}
        &(1_B\#a)(1_B\#b) = q(1_B\#b)(1_B\#a), \;\; (1_B\#a)(1_B\#c) = q(1_B\#c)(1_B\#a), \\
        &(1_B\#b)(1_B\#d) = q(1_B\#d)(1_B\#b), \;\; (1_B\#c)(1_B\#d) = q(1_B\#d)(1_B\#c),\\
        &(1_B\#b)(1_B\#c)=(1_B\#c)(1_B\#b),\\
        &(1_B\#a)(1_B\#d) = (1_B\#d)(1_B\#a) + (q - q^{-1})(1_B\#b)(1_B\#c)
    \end{split}
    \label{eq:commutationrelations11}
\end{equation}
\begin{equation}
    \begin{split}
        &(1_B\#D^{-1})(1_B\#a) = (1_B\#a)(1_B\#D^{-1}),\\
        &(1_B\#D^{-1})(1_B\#c) = (1_B\#c)(1_B\#D^{-1}),\\
        &(1_B\#D^{-1})(1_B\#c) = (1_B\#c)(1_B\#D^{-1}),\\
        &(1_B\#D^{-1})(1_B\#d) = (1_B\#d)(1_B\#D^{-1}),
    \end{split}
    \label{eq:commutationrelations111}
\end{equation}
in complete analogy with (\ref{eq:quantumplane}), (\ref{eq:GLq2commutationrelationsrecall}) and (\ref{eq:GLq2commutationrelationsrecall1}), and, in addition
\begin{equation}
    \begin{split}
        &(1_B\#a)(x\#1_H) = q^{-2}(x\#1_H)(1_B\#a), \;\; (1_B\#b)(x\#1_H) = q^{-2}(x\#1_H)(1_B\#b),\\
        &(1_B\#c)(x\#1_H) = (q^{-2}-1)(y\#1_H)(1_B\#a) + q^{-1}(x\#1_H)(1_B\#c),\\
        &(1_B\#d)(x\#1_H) = (q^{-2}-1)(y\#1_H)(1_B\#b) + q^{-1}(x\#1_H)(1_B\#d),\\
        &(1_B\#a)(y\#1_H) = q^{-1}(y\#1_H)(1_B\#a), \;\; (1_B\#b)(y\#1_H) = q^{-1}(y\#1_H)(1_B\#b),\\
        &(1_B\#c)(y\#1_H) = q^{-2}(y\#1_H)(1_B\#c), \;\; (1_B\#d)(y\#1_H) = q^{-2}(y\#1_H)(1_B\#d),
    \end{split}
    \label{eq:qcommutationrelationsBsmashH1}
\end{equation}
\begin{equation}
    \begin{split}
        &(x\#1_H)(1_B\#D^{-1}) = q^{-3}(1_B\#D^{-1})(x\#1_H),\\
        &(y\#1_H)(1_B\#D^{-1}) = q^{-3}(1_B\#D^{-1})(y\#1_H).
    \end{split}
    \label{eq:qcommutationrelationsBsmashH2}
\end{equation}
We endow $B\#H$ with a coalgebra structure by defining a coproduct $\Delta_{\#} \colon B\#H \to B\#H \otimes B\#H$, a counit $\epsilon_{\#} \colon B\#H \to \mathbb{C}$, and an antipode $S_{\#}$ defined as follows.
The coproduct is
\begin{equation}
    \begin{split}
    &\Delta_{\#} \begin{pmatrix}
        (1_B\#1_H) &0 &0\\
        (x\#1_H) &(1_B\#a) &(1_B\#b)\\
        (y\#1_H) &(1_B\#c) &(1_B\#d)
    \end{pmatrix} = \\
    &=\begin{pmatrix}
        (1_B\#1_H) &0 &0\\
        (x\#1_H) &(1_B\#a) &(1_B\#b)\\
        (y\#1_H) &(1_B\#c) &(1_B\#d)
    \end{pmatrix} \dot{\otimes} \begin{pmatrix}
        (1_B\#1_H) &0 &0\\
        (x\#1_H) &(1_B\#a) &(1_B\#b)\\
        (y\#1_H) &(1_B\#c) &(1_B\#d)
    \end{pmatrix},
    \end{split}
    \label{eq:coproductmatrixBsmashH}
\end{equation}
that is
\begin{equation}
    \begin{split}
        &\Delta_{\#}(1_B\#a) = (1_B\#a) \otimes (1_B\#a) + (1_B\#b) \otimes (1_B\#c),\\
        &\Delta_{\#}(1_B\#b) = (1_B\#a) \otimes (1_B\#b) + (1_B\#b) \otimes (1_B\#d),\\
        &\Delta_{\#}(1_B\#c)=(1_B\#c)\otimes(1_B\#a) + (1_B\#d) \otimes (1_B\#c),\\
        &\Delta_{\#}(1_B\#d)=(1_B\#c) \otimes (1_B\#b) + (1_B\#d) \otimes (1_B\#d),\\
        &\Delta_{\#}(1_B\#D^{-1})= (1_B\#D^{-1}) \otimes (1_B\#D^{-1}),\\
        &\Delta_{\#}(x\#1_H) = (x\#1_H) \otimes (1_B\#1_H) + (1_B \# a) \otimes (x \# 1_H) + (1_B\# b) \otimes (y \# 1_H),\\
        &\Delta_{\#}(y\#1_H) = (y\#1_H) \otimes (1_B\#1_H) + (1_B \# c) \otimes (x \# 1_H) + (1_B\#d) \otimes (y \# 1_H),
    \end{split}
    \label{eq:coproductBsmashH}
\end{equation}
the counit is defined as
\begin{equation}
    \epsilon_{\#}\begin{pmatrix}
        (1_B\#1_H) &0 &0\\
        (x\#1_H) &(1_B\#a) &(1_B\#b)\\
        (y\#1_H) &(1_B\#c) &(1_B\#d)
    \end{pmatrix} =\begin{pmatrix}
        1 &0 &0\\ 0 &1 &0 \\0 &0 &1
    \end{pmatrix}
    \label{eq:counitmatrixBsmashH}
\end{equation}
and finally the antipode
\begin{equation}
    \begin{split}
        &S_{\#}(1_B\#a) = (1_B\#D^{-1})(1_B\#d),\\
        &S_{\#}(1_B\#b) = -q^{-1}(1_B\#D^{-1})(1_B\#b),\\
        &S_{\#}(1_B\#c) = -q (1_B\#D^{-1})(1_B\#c),\\
        &S_{\#}(1_B\#d) =(1_B\#D^{-1})(1_B\#a),\\
        &S_{\#}(1_B\#D^{-1})=(1_B\#D),\\
        &S_{\#}(x\#1_H) = (1_B\#D^{-1})\Big(q^{-1}(1_B\#b)(y\#1_H) -(1_B\#d)(x\#1_H) \Big),\\
        &S_{\#}(y\#1_H) = (1_B\#D^{-1})\Big( q(1_B\#c)(x\#1_H) - (1_B\#a)(y\# 1_H) \Big).
    \end{split}
    \label{eq:antipodeBsmashH}
\end{equation}
For brevity, we show coassociativity, counitality and antipode property are satisfied on $(x\#1_H)$, but one can analogously check the Hopf algebra properties on all generators.\\
Coassociativity reads
\begin{equation}
    \begin{split}
        &(\Delta_{\#} \otimes id) \circ \Delta_{\#}(x\#1) = \Delta_{\#}(x\#1) \otimes (1\#1) +\Delta_{\#}(1\#a) \otimes (x\#1) + \\
        &+ \Delta_{\#}(1\#b) \otimes (y\#1)=\\
        &=(x\#1) \otimes (1\#1) \otimes (1\#1) + (1 \# a) \otimes (x \# 1) \otimes (1\#1) + \\
        &+(1\# b) \otimes (y \# 1) \otimes (1\#1) + (1\#a) \otimes (1\#a) \otimes (x\#1)+\\
        &+(1\#b) \otimes (1\#c) \otimes (x\#1) + (1\#a) \otimes (1\#b) \otimes(y\# 1) +\\
        &+ (1\#b) \otimes (1\#d) \otimes (y\#1);\\
        &(id \otimes \Delta_{\#}) \circ \Delta_{\#}(x\#1) = (x\#1) \otimes \Delta_{\#}(1\#1) + (1\#a) \otimes \Delta_{\#}(x\#1) +\\
        &+ (1\#b) \otimes \Delta_{\#}(y\#1) = \\
        &= (x\#1) \otimes (1\#1) \otimes (1\#1) + (1\#a) \otimes (x\#1) \otimes (1\#1) +\\
        &+(1\#a) \otimes (1\#a) \otimes (x\#1) + (1\#a) \otimes (1\#b) \otimes (y\#1) +\\
        &+(1\#b) \otimes (y\#1) \otimes (1\#1) + (1\#b) \otimes (1\#c) \otimes (x \#1) +\\
        &+(1\#b) \otimes (1\#d) \otimes (y\#1).
     \end{split}
    \label{eq:coassociativityBsmashH}
\end{equation}
Counitality reads
\begin{equation}
    \begin{split}
        &(\epsilon_{\#} \otimes id) \circ \Delta_{\#}(x\#1) = \epsilon_{\#}(x\#1) \otimes (1\#1) + \epsilon_{\#}(1\#a) \otimes (x\#1) + \epsilon_{\#}(1\#b) \otimes (y\#1) =\\
        &= \epsilon_{\#}(x\#1)(1\#1) + \epsilon_{\#}(1\#a)(x\#1) + \epsilon_{\#}(1\#b)(y\#1) = 0 + (x\#1) + 0 = (x\#1);\\
        &(id \otimes \epsilon_{\#}) \circ \Delta_{\#}(x\#1) =  (x\#1)\epsilon_{\#}(1\#1) + (1\#a)\epsilon_{\#}(x\#1) + (1\#b)\epsilon_{\#}(y\#1) =\\
        &= (x\#1) + 0 + 0 = (x\#1).
    \end{split}
    \label{eq:counitalityBsmashH}
\end{equation}
Finally, the antipode property reads:
\begin{equation}
    \begin{split}
        &\mu_{\#} \circ (S_{\#} \otimes id) \circ \Delta_{\#}(x\#1) = S_{\#}(x\#1)(1\#1) + S_{\#}(1\#a)(x\#1) +\\
        &+ S_{\#}(1\#b)(y\#1) = \\
        &= q^{-1}(1\#D^{-1})(1\#b)(y\#1) - (1\#D^{-1})(1\#d)(x\#1) + (1\#D^{-1})(1\#d)(x\#1) +\\
        &-q^{-1}(1\#D^{-1})(1\#b)(y\#1) = 0 =\eta_{\#} \circ \epsilon_{\#}(x\#1);
    \end{split} 
    \label{eq:antipodepropertyBsmashH1}
\end{equation}
on the other hand, using $q$-commutation relations (\ref{eq:qcommutationrelationsBsmashH1}), (\ref{eq:qcommutationrelationsBsmashH2}) and associativity of the smashed product algebra, by a direct calculation we obtain
\begin{equation}
    \begin{split}
        &\mu_{\#} \circ (id \otimes S_{\#}) \circ \Delta_{\#}(x\#1) = (x\#1)S_{\#}(1\#1) + (1\#a)S_{\#}(x\#1) +\\
        &+ (1\#b)S_{\#}(y\#1) = \\
        &= (x\#1)(1\#1) + q^{-1}(1\#a)(1\#D^{-1})(1\#b)(y\#1)+\\
        &- (1\#a)(1\#D^{-1})(1\#d)(x\#1) + q(1\#b)(1\#D^{-1})(1\#c)(x\#1)+ \\
        &- (1\#b)(1\#D^{-1})(1\#a)(y\#1) =\\
        &= (x\#1) + q^{-2}(1\#a)(1\#D^{-1})(y\#1)(1\#b)  +\\
        &- (q^{-2} -1)(1\#a)(1\#D^{-1})(y\#1)(1\#b) - q^{-1}(1\#a)(1\#D^{-1})(x\#1)(1\#d)+\\
        &+  q(q^{-2}-1)(1\#b)(1\#D^{-1})(y\#1)(1\#a) +qq^{-1}(1\#b)(1\#D^{-1})(x\#1)(1\#c) +\\
        &- q^{-1}(1\#b)(1\#D^{-1})(y\#1)(1\#a) =\\
        &= (x\#1) + (1\#a)(1\#D^{-1})(y\#1)(1\#b) +\\
        &- q^{-1}(1\#a)(1\#D^{-1})(x\#1)(1\#d)-q(1\#b)(1\#D^{-1})(y\#1)(1\#a)+\\
        &+(1\#b)(1\#D^{-1})(x\#1)(1\#c) +\\
        &=(x\#1) + (1\#aD^{-1})(y\#b) - q^{-1}(1\#aD^{-1})(x\#d)-q(1\#bD^{-1})(y\#a)+\\
        &+(1\#bD^{-1})(x\#c) =\\
        &= (x\#1) + (aD^{-1} \triangleright y)\#aD^{-1}b - q^{-1}(aD^{-1} \triangleright x)\#aD^{-1}d +\\
        &- q(aD^{-1} \triangleright y)\#bD^{-1}a+(aD^{-1} \triangleright x)\#bD^{-1}c =\\
        &= (x\#1) + q^2(y\#aD^{-1}b) - (x\#aD^{-1}d) - q^3(y\#bD^{-1}a)+\\
        &+q(x\#bD^{-1}c) =\\
        &= (x\#1) + q^3(y\#D^{-1}ba) - (x\#D^{-1}ad) - q^3(y\#D^{-1}ba)+\\
        &+q(x\#D^{-1}bc) =  (x\#1) - \big(x\#D^{-1}(ad-qbc)\big) = \\
        &= (x\#1)- (x\#1) = 0
    \end{split}
    \label{eq:antipodepropertyBsmashH2secondtry}
\end{equation}
We are now able to view $B\#H$ as an Hopf algebra. We observe that this construction coincides with the one given by Majid in \cite{aziz_majid}, within the context of braided Hopf algebras.
\begin{definition}
    A \textsl{quantum homogeneous principal bundle} is a Hopf algebra surjection $\bar{\pi} \colon A \to H$ such that the right $H$-coaction $\Delta_A := (id \otimes \bar{\pi}) \circ \Delta$, with $\Delta$ the coproduct on $A$, makes $A$ a quantum principal bundle over $B = A^{coH}$.
    \label{def:quantumhomogeneousprincipalbundle}
\end{definition}
In our case, $B\#1_H \subseteq B\#H$ is quantum homogeneous principal bundle via the right $H$-coaction on $A = B\#H$ given by
\begin{equation}
    \Delta_{B\#H} = (id \otimes \bar{\pi}) \circ \Delta_{\#},
    \label{eq:rightHcoactiononBsmashH}
\end{equation}
where $\bar{\pi} \colon B\#H \to H$ is the Hopf algebra surjection sending any element of the form $(b\#h)$ to $0$, and any element of the form $(1_B\#h)$ to $h$. One can check that the subalgebra of coinvariants under this coaction is $B\#1_H$, as expected. This implies that $B\#1_H := (B\#H)^{coH} \subseteq A = B\#H$ is a quantum homogeneous principal bundle.\\
We define the right $H$-comodule
\begin{equation}
    V := \ker({\epsilon_{\#}}) \cap (B\#1_H),
    \label{eq:Vinourexample}
\end{equation}
with right $H$-coaction
\begin{equation}
    \rho_V(v) = v_2 \otimes \bar{\pi}\big(S_{\#}(v_1)\big),
    \label{eq:rightHcoactiononVinourexample}
\end{equation}
for every $v \in V$, as in \cite{majid_qbraided} Proposition 4.3.\\ 
In order to define $\theta$, we need a right $H$-covariant first order differential calculus on $B\#H$. As discussed in Section \ref{smashedproductcalculus}, we assume a right $H$-covariant first order differential calculus $(\Gamma_B,\mathrm{d}_B)$ on $B$ and an $H$-bicovariant first order differential calculus $(\Gamma_H,\mathrm{d}_H)$ on $H$, so that the smashed product calculus $(\Gamma_{\#}, \mathrm{d}_{\#})$, with
\begin{equation}
    \Gamma_{\#} := \Gamma_B \otimes H + B \otimes \Gamma_H,
    \label{eq:recallGammasmash}
\end{equation}
and
\begin{equation}
    \mathrm{d}_{\#}(b\#h) := \mathrm{d}_Bb \otimes h + b \otimes \mathrm{d}_Hh
    \label{eq:recalldsmash}
\end{equation}
for all $b \in B$ and $h \in H$, is a right $H$-covariant first order differential calculus on $B\#H$ by Theorem \ref{thm:actualsmashproductcalculus} and Corollary \ref{cor:smashproductcalculusisrightHcovariant}.
As in Equation (\ref{eq:rightHcoactiononsmashcalctomakeitrightcovariant}), we recall the right $H$-coaction on $\Gamma_{\#}$:
\begin{equation}
    \begin{split}
        \Delta_{\Gamma_{\#}} \colon &\Gamma_{\#} \to \Gamma_{\#} \otimes H,\\
        &\omega_B \otimes h + b \otimes \omega_H \mapsto \omega_B \otimes h_1 \otimes h_2 + b \otimes (\omega_H)_0 \otimes (\omega_H)_1
    \end{split}
    \label{eq:recallrightHcoactiononsmashcalc}
\end{equation}
for all $\omega_B \in \Gamma_B$, $\omega_H \in \Gamma_H$, $b \in B$ and $h \in H$.\\
Inspired by the classical case, we define $\theta$ as the restriction of the quantum Maurer-Cartan form \ref{def:quantumMCdef} to $B\#1_H$:
\begin{equation}
    \theta(v) = S_{\#}(v_1) \cdot \mathrm{d}_{\#}(v_2),
    \label{eq:thetainourexample}
\end{equation}
for all $v \in V$, where $\cdot$ is the $B\#H$ action on $\Gamma_{\#}$ like in (\ref{eq:leftBsmashHactiononGammasmash}).\\
Let us check that it is a right strongly $H$-tensorial form.
First, we need to verify it has values in the horizontal forms $(B\#H)\Gamma_{B\#1_H} = (B\#H)(\Gamma_B \otimes 1_H)$. We check this on elements $(x\#1)$ and $(y\#1)$ generating $V$:
\begin{equation}
    \begin{split}
        &\theta(x\#1) = \sum_iS_{\#}(x\#1)_{1_i}\cdot\mathrm{d}_{\#} (x\#1)_{2_i} =\\
        &=S_{\#}(x\#1)\cdot\mathrm{d}_{\#}(1\#1) + S_{\#}(1\#a)\cdot\mathrm{d}_{\#}(x\#1) + S_{\#}(1\#b)\cdot\mathrm{d}_{\#}(y\#1) =\\
        &=(1\#D^{-1}d) \cdot (\mathrm{d}_Bx \otimes 1_H) -q^{-1}(1\#D^{-1}b)\cdot (\mathrm{d}_By \otimes 1_H),
    \end{split}
    \label{eq:thetaonxsmash1simple}
\end{equation}
and
\begin{equation}
    \begin{split}
        &\theta(y\#1) = \sum_iS_{\#}(y\#1)_{1_i}\cdot\mathrm{d}_{\#} (y\#1)_{2_i} =\\
        &=S_{\#}(y\#1)\cdot\mathrm{d}_{\#}(1\#1) + S_{\#}(1\#c)\cdot\mathrm{d}_{\#}(x\#1) + S_{\#}(1\#d)\cdot\mathrm{d}_{\#}(y\#1) =\\
        &=-q(1\#D^{-1}c) \cdot (\mathrm{d}_Bx \otimes 1_H) + (1\#D^{-1}a)\cdot (\mathrm{d}_By \otimes 1_H).
    \end{split}
    \label{eq:thetaonysmash1simple}
\end{equation}
Let us now check equivariance, that is
\begin{equation}
    \Delta_{\Gamma_{\#}} \circ \theta = (\theta \otimes id) \circ \rho_V.
    \label{eq:equivarianceofthetainourexample}
\end{equation}
We show calculations on $(x\#1)$. In order to act with $\Delta_{\Gamma_{\#}}$ as in (\ref{eq:recallrightHcoactiononsmashcalc}), we need to compute, using (\ref{eq:leftBsmashHactiononGammasmash}),(\ref{eq:HmoduleFODCwillimply}) and (\ref{eq:leftHmoduleactiononBgenerators}):
\begin{equation}
    \begin{split}
        &\theta(x\#1) = (1\#D^{-1}d) \cdot (\mathrm{d}_Bx \otimes 1_H) -q^{-1}(1\#D^{-1}b)\cdot (\mathrm{d}_By \otimes 1_H) = \\
        &=(1\#D^{-1}c \mathrel{\scriptstyle \to} \mathrm{d}_Bx) \otimes D^{-1}b + (1\#D^{-1}d \mathrel{\scriptstyle \to} \mathrm{d}_Bx) \otimes D^{-1}d \\
        &-q^{-1}(D^{-1}a \mathrel{\scriptstyle \to} \mathrm{d}_By) \otimes D^{-1}b - q^{-1}(D^{-1}b \mathrel{\scriptstyle \to} \mathrm{d}_By) \otimes D^{-1}d =\\
        &=(D^{-1}d \triangleright 1)\mathrm{d}_B(D^{-1}c \triangleright x) \otimes D^{-1}b + (D^{-1}d \triangleright 1_B)\mathrm{d}_B(D^{-1}d \triangleright x) \otimes D^{-1}d +\\
        &-q^{-1}(D^{-1}a \triangleright 1)\mathrm{d}_B(D^{-1}a \triangleright y) =\\
        &=(q^{-2} - 1)q^3 \mathrm{d}_By \otimes D^{-1}b + q^2\mathrm{d}_Bx \otimes D^{-1}d -q^{-1}q^2\mathrm{d}_By \otimes D^{-1}b =\\
        &=q^2\mathrm{d}_Bx \otimes D^{-1}d - q^3\mathrm{d}_By \otimes D^{-1}b,
    \end{split}
    \label{eq:thetaonxsmash1hard}
\end{equation}
so that
\begin{equation}
    \begin{split}
        &\Delta_{\Gamma_{\#}} \circ \theta(x\#1) = \Delta_{\Gamma_{\#}}(q^2\mathrm{d}_Bx \otimes D^{-1}d - q^3\mathrm{d}_By \otimes D^{-1}b) =\\
        &= q^2 \mathrm{d}_Bx \otimes D^{-1}c \otimes D^{-1}b + q^2\mathrm{d}_Bx \otimes D^{-1}d \otimes D^{-1}d +\\
        &-q^3\mathrm{d}_By \otimes D^{-1}a \otimes D^{-1}b - q^3\mathrm{d}_By \otimes D^{-1}b \otimes D^{-1}d.
    \end{split}
    \label{eq:lefthandsideequivarianceexamplex}
\end{equation}
The right $H$-coaction on $(x\#1)$ reads
\begin{equation}
    \begin{split}
        &\rho_V(x\#1) = \sum_i(x\#1)_{2_i} \otimes \bar{\pi}\big(S_{\#}(x\#1)_{1_i}\big) =\\
        &= (1\#1) \otimes \bar{\pi}\big(S_{\#}(x\#1)\big) + (x\#1) \otimes \bar{\pi}\big(S_{\#}(1\#a)\big) + (y\#1)\otimes \bar{\pi}\big(S_{\#}(1\#b)\big) =\\
        &=(1\#1) \otimes \bar{\pi}\big( q^{-1}(1\#D^{-1}b)(y\#1) - (1\#D^{-1}d)(x\#1)\big) + (x\#1) \otimes \bar{\pi}(1\#D^{-1}d) +\\
        &-q^{-1}(y \# 1) \otimes \bar{\pi}(1\#D^{-1}b) = \\
        &=(1\#1) \otimes \bar{\pi}\big(q^{-1}(D^{-1}a \triangleright y)\#D^{-1}b - (D^{-1}c \triangleright x)\#D^{-1}b - (D^{-1}d \triangleright x)\#D^{-1}d\big) +\\
        &+ (x\#1) \otimes D^{-1}d - q^{-1}(y \#1) \otimes D^{-1}b =\\
        &=(x\#1) \otimes D^{-1}d - q^{-1}(y \#1) \otimes D^{-1}b.
    \end{split}
    \label{eq:rhoVonxsmash1}
\end{equation}
Similarly to what we did in (\ref{eq:thetaonxsmash1hard}), one computes
\begin{equation}
    \theta(y\#1) = q^4\mathrm{d}_By \otimes D^{-1}a - q^3\mathrm{d}_Bx \otimes D^{-1}c,
    \label{eq:thetaonysmash1hard}
\end{equation}
so that we can write
\begin{equation}
    \begin{split}
        &(\theta \otimes id) \circ \rho_V(x\#1) =\theta(x\#1) \otimes D^{-1}d - q^{-1}\theta(y\#1) \otimes D^{-1}b =\\
        &= q^2\mathrm{d}_Bx \otimes D^{-1}d \otimes D^{-1}d - q^3\mathrm{d}_By \otimes D^{-1}b \otimes D^{-1}d +\\
        &-q^3\mathrm{d}_By \otimes D^{-1}a \otimes D^{-1}b + q^2\mathrm{d}_Bx \otimes D^{-1}c \otimes D^{-1}b,
    \end{split}
    \label{eq:righthandsideequivarianceexamplex}
\end{equation}
which is exactly what we found in (\ref{eq:lefthandsideequivarianceexamplex}), showing equivariance of $\theta$ on $(x\#1_B)$. Analogously, one can check
\begin{equation}
    \begin{split}
        &\Delta_{\Gamma_{\#}} \circ \theta(y\#1) =q^4\mathrm{d}_By \otimes D^{-1}a \otimes D^{-1}a + q^4\mathrm{d}_By \otimes D^{-1}b \otimes D^{-1}c +\\
        &-q^2\mathrm{d}_Bx \otimes D^{-1}c \otimes D^{-1}a -q^3\mathrm{d}_Bx \otimes D^{-1}d \otimes D^{-1}c =\\
        &=(\theta \otimes id) \circ \rho_V(y\#1).
    \end{split}
    \label{eq:equivarianceonyinexampleriassunto}
\end{equation}
Thus, $\theta$ in Equation (\ref{eq:thetainourexample}) is well-defined and is a good candidate in order to make $(B\#H,H,V,\theta)$ a quantum frame resolution. If that's the case, then the restriction of
\begin{equation}
    s_{\theta} := \cdot \circ (id \otimes \theta)
    \label{eq:sthetaexample}
\end{equation}
to elements in $\mathcal{E}$ is a left $B\#1_H$-module isomorphism $\mathcal{E} \cong \Gamma_{B\#1_H} = (B\#1_H) \cdot \mathrm{d}_{\#}(B\#1_H)$. In fact, one can verify that $s_{\theta}$ has an inverse $s_{\theta}^{-1} \colon \Gamma_{B\#1_H} \to \mathcal{E}$ defined as
\begin{equation}
    \begin{split}
        &s_{\theta}^{-1}\big((z\#1_H) \cdot \mathrm{d}_{\#}(x\#1_H)\big) := (z\#a) \otimes (x\#1) + (z\#b) \otimes (y\#1),\\
        &s_{\theta}^{-1}\big((z\#1_H) \cdot \mathrm{d}_{\#}(y\#1_H)\big) := (z\#c) \otimes (x\#1) + (z\#d) \otimes (y\#1),
    \end{split}
    \label{eq:theinversefrommajidworks}
\end{equation}
for any $z \in B$. We observe that this formula seems to come from the left-coaction of $H$ on $B$. We will further analyze this in later studies
First of all, one can check that $s_{\theta}^{-1}$ has values in $\mathcal{E}$, which is the set of coinvariants under the right $H$-coaction $\Delta_{\mathcal{E}}$ on $B\#H \otimes V$ defined as
\begin{equation}
    \Delta_{\mathcal{E}} = (id_{B\#H} \otimes id_V \otimes \mu) \circ (id_{B\#H} \otimes flip \otimes id_V) \circ (\Delta_{B\#H} \otimes \rho_V),
    \label{eq:DeltamathcalEexample}
\end{equation}
with $\Delta_{B\#H}$ as in (\ref{eq:rightHcoactiononBsmashH}) and $\rho_V$ as in $\ref{eq:rightHcoactiononVinourexample}$. For example, on $(x\#1) \cdot \mathrm{d}_{\#}(y\#1) \in \Gamma_{B\#1_H}$, one gets
\begin{equation}
    \begin{split}
        &\Delta_{\mathcal{E}}\circ s_{\theta}^{-1}\big((x\#1) \cdot \mathrm{d}_{\#}(y\#1)\big) = \\
        &= \Delta_{\mathcal{E}}\big((x\#c) \otimes (x\#1) + (x\#d) \otimes (y\#1) \big) =\\
        &=\Delta_{\mathcal{E}}\big( (x\#c) \otimes (x\#1) \big) + \Delta_{\mathcal{E}}\big( (x\#d) \otimes (y\#1) \big),
    \end{split}
    \label{eq:exampleofwelldefinitenessoftheinverseinourexample}
\end{equation}
and after some calculations, the first term reads
\begin{equation}
    \begin{split}
        &\Delta_{\mathcal{E}}\big( (x\#c) \otimes (x\#1) \big) = (x\#c) \otimes (x\#1) \otimes D^{-1}ad + \\
        &- q^{-1}(x\#c) \otimes (y\#1) \otimes D^{-1}ab +(x\#d) \otimes (x\#1) \otimes D^{-1}cd +\\
        &- q^{-1}(x\#d) \otimes (y\#1) \otimes D^{-1}cb,
    \end{split}
    \label{eq:secondnowfirsttermexample}
\end{equation}
while the second
\begin{equation}
    \begin{split}
        &\Delta_{\mathcal{E}}\big( (x\#d) \otimes (y\#1) \big) = -q(x\#c) \otimes (x\#1) \otimes D^{-1}bc +\\
        &+ (x\#c) \otimes (y\#1) \otimes D^{-1}ba -q(x\#d) \otimes (x\#1) \otimes D^{-1}dc +\\
        &+(x\#d) \otimes (y\#1) \otimes D^{-1}da.
    \end{split}
    \label{eq:thirdnowsecondtermexample}
\end{equation}
Summing them up, by using $q$-commutation relations for $H$, one sees that indeed $s_{\theta}^{-1}\big((x\#1) \cdot \mathrm{d}_{\#}(y\#1)\big) \in \mathcal{E}$.\\
Moreover, by direct calculation, one verifies that
\begin{equation}
    \begin{split}
        &s_{\theta}\circ s_{\theta}^{-1}\big((x\#1) \cdot \mathrm{d}_{\#}(y\#1)\big) = \\
        &= s_{\theta}\big( (xy\#1) \otimes (1\#1) + (x\#c) \otimes (x\#1) + (x\#d) \otimes (y\#1) \big) = \\
        &= q^2(x\#c) \cdot (\mathrm{d}_Bx \otimes 1) - q^3(x\#c) \cdot (\mathrm{d}_By \otimes D^{-1}b) +\\
        &+ q^4(x\#d) \cdot (\mathrm{d}_By \otimes 1_H) - q^3(x\#d) \cdot (\mathrm{d}_Bx \otimes D^{-1}c) = \\
        &= q^2 \big( (q^{-2} - 1)x\mathrm{d}_By \otimes aD^{-1}d + q^{-1} x\mathrm{d}_Bx \otimes cD^{-1}d \big) + \\
        &-q^3\big( q^{-2}x\mathrm{d}_By \otimes cD^{-1}b \big) + q^4\big(q^{-2} x\mathrm{d}_By \otimes dD^{-1}a \big) + \\
        &-q^3 \big( (q^{-2} - 1)x\mathrm{d}_By \otimes bD^{-1}c + q^{-1}x\mathrm{d}_Bx \otimes dD^{-1}c\big) = \\
        &= x\mathrm{d}_By \otimes D^{-1}ad - q^2 x\mathrm{d}_By \otimes D^{-1}da - q^2(q - q^{-1})x\mathrm{d}_By \otimes D^{-1}bc +\\
        &+q^2x\mathrm{d}_Bx \otimes D^{-1}dc - q x\mathrm{d}_By \otimes D^{-1}cb + q^2x\mathrm{d}_By \otimes D^{-1}da +\\
        &-q x\mathrm{d}_By \otimes D^{-1}bc + q^3x\mathrm{d}_By \otimes D^{-1}bc - q^2 x\mathrm{d}_Bx \otimes D^{-1}dc =\\
        &= x\mathrm{d}_By \otimes D^{-1}ad - q x\mathrm{d}_By \otimes D^{-1}cb = x\mathrm{d}_By \otimes 1 = \\
        &=(x\#1) \cdot \mathrm{d}_{\#}(y\#1).
    \end{split}
    \label{eq:sthetaisindeedaninverseinthisexample}
\end{equation}
Thus, $\big(B\#H,H,V,\theta)$, with $B=\mathbb{C}_q^2$, $H=GL_q(2)$, $V = \ker{\epsilon_{\#}}\cap \mathbb{C}_q^2\#1_H$ and $\theta(v) = S_{\#}(v_1)\mathrm{d}_{\#}v_2$ for every $v \in V$ is a quantum frame resolution.

\section{Quantum $G$-structures} \label{quantumGstructure}

In this section, let $(A,H,V,\theta)$ be a quantum frame resolution. Recapping, this means that $s_{\theta} := \cdot \circ (id_A \otimes \theta)$ is an isomorphism $\mathcal{E}=(A \otimes V)^H \xrightarrow[]{\sim} \Gamma_B$ of left $B$-modules, $V$ is a right $H$-comodule with right $H$-coaction $\rho_V \colon V \to V \otimes H$, and $B:=A^{coH} \subseteq A$ is a principal $H$-comodule algebra, with $\Delta_A \colon A \to A \otimes H$ the corresponding right $H$-coaction. In order to define such a quantum frame resolution, we assumed a right $H$-covariant first order differential calculus $(\Gamma_A, \mathrm{d}_A)$, with $\Delta_{\Gamma_A} \colon \Gamma_A \to \Gamma_A \otimes H$ such that $\Delta_{\Gamma_A} \circ \mathrm{d}_A = (\mathrm{d}_A \otimes id_{H}) \circ \Delta_A$. We construct the pullback calculus $(\Gamma_B, \mathrm{d}_B)$ of base forms on $B$ by the inclusion $\iota \colon B \to A, b \mapsto b$, with $\Gamma_B := \iota(B)\mathrm{d}_A|_B\iota(B) = B\mathrm{d_A}|_BB$, and $\mathrm{d}_B := \mathrm{d}_A \circ \iota = \mathrm{d}_A|_B$.\\
We want to study quantum reductions $A_0$ of $A$ to a Hopf algebra $H_0 := H/J$ for some Hopf ideal $J$, with $\pi \colon H \to H_0$ a Hopf algebra map. In particular, let $\phi \colon A \to A_0$ be the surjective morphism of right $H_0$-comodule algebras defining the quantum reduction. Note that, as discussed, $A$ is a right $H_0$-comodule algebra under the coaction $\Delta_A^0 := (id_A \otimes \pi)$. Using Propositions \ref{pro:pullbackcalculusandquotientcalculus} and $\ref{pro:pullbackandquotientcovariant}$, we construct the quotient calculus $(\Gamma_{A_0}, \mathrm{d}_{A_0})$ on $A_0$, with right $H_0$-coaction $\Delta_{\Gamma_{A_0}} \colon \Gamma_{A_0} \to \Gamma_{A_0} \otimes H_0$. In particular, $\Gamma_{A_0} := \Gamma_A / \Gamma_I$, with $\Gamma_I = I\mathrm{d}_A + A\mathrm{d}_AI$ for $I = \ker{\phi}$, and
\begin{equation}
    \begin{split}
        \mathrm{d}_{A_0} \colon &A_0 \to \Gamma_{A_0},\\
        &\phi(a) \mapsto \mathrm{d}_{A_0}\phi(a) = [\mathrm{d}_Aa]
    \end{split}
    \label{eq:differentialforGammaA0}
\end{equation}
for any $a \in A$. This means we have defined a morphism of differential calculi $(\phi, \Phi_{\Gamma})$, with
\begin{equation}
    \begin{split}
        \Phi_{\Gamma} \colon &\Gamma_A \to \Gamma_{A_0},\\
        &a\mathrm{d}_Aa^{\prime} \mapsto \phi(a)\mathrm{d}_{A_0}\phi(a^{\prime})
    \end{split}
    \label{eq:PhiGamma}
\end{equation}
for any $a,a^{\prime} \in A$, such that the $A$-actions on $\Gamma_A$ descend to $A_0$-actions on $\Gamma_{A_0}$. From this, we have the pullback calculus $(\Gamma_{B_0}, \mathrm{d}_{B_0})$ on $B_0$ by the inclusion $\iota_0 \colon B_0 \to A_0, \bar{b} \mapsto \bar{b}$, with $\Gamma_{B_0} := \iota_0(B_0)\mathrm{d_{A_0}}|_{B_0}\iota_0(B_0) = B_0\mathrm{d_{A_0}}|_{B_0}B_0$ and $d_{B_0} := \mathrm{d_{A_0}} \circ \iota_0 = \mathrm{d}_{A_0}|_{B_0}$.\\
By this construction, since $\phi(B) = B_0$ by Definition \ref{def:quantumreduction}, the restriction of $\Phi_{\Gamma}$ to $\Gamma_B \subseteq \Gamma_A$ reads
\begin{equation}
    \begin{split}
        \Phi_{\Gamma}\Big|_{\Gamma_B} \colon &\Gamma_B \to \Gamma_{B_0},\\
        &b\mathrm{d}_Bb^{\prime} \mapsto \phi(b)\mathrm{d}_{A_0}|_{B_0}\phi(b^{\prime})
    \end{split}
    \label{eq:restrictionofGammaPhitoGammaB}
\end{equation}
for all $b,b^{\prime} \in B$. This map is invertible, since $\mathrm{d}_B = \mathrm{d}_A|_B = \mathrm{d}_{A_0}|_{B_0} \circ \phi = \mathrm{d}_{B_0} \circ \phi$ and, by Definition \ref{def:quantumreduction}, we assumed $B \cong B_0$ as algebras, so that $\phi|_B$ is a bijection. Thus, $\Gamma_B \cong \Gamma_{B_0}$.\\
We are now ready to define the quanutum analogue of a $G$-structure.
\begin{definition}
    Let $(A,H,V,\theta)$ a quantum frame resolution. Let $A_0$ be a quantum reduction of $A$ to the Hopf algebra $H_0 := H/J$, with $J$ an Hopf ideal. We call the tuple $(A_0,H_0,V,\theta_0)$, with
    \begin{equation}
        \theta_0 := \Phi_{\Gamma} \circ \theta,
        \label{eq:theta0definition}
    \end{equation}
    a \textsl{quantum} $G$\textsl{-structure} of $(A,H,V,\theta)$.
    \label{def:quantumGstructure}
\end{definition}
Note that the above definition is well-posed only if $\theta_0$ is right strongly $H_0$-tensorial. Let us show this in the following lemma.
\begin{lemma}
    Let $\theta \colon V \to A\Gamma_B$ a right strongly $H$-tensorial form, i.e., it has values in $\Gamma_A^{hor} = A\Gamma_B$ and it satisfies $\Delta_{\Gamma_A} \circ \theta = (\theta \circ id_H) \circ \rho_V$. Then $\theta_0 := \Phi_{\Gamma} \circ \theta \colon V \to \Gamma_{A_0}$ is right strongly $H_0$-tensorial, i.e., it has values in $\Gamma_{A_0}^{hor} = A_0\Gamma_{B_0}$ and satisfies $\Delta_{\Gamma_{A_0}} \circ \theta_0 = (\theta_0 \otimes id_{H_0}) \circ \rho_V^0$, where $\rho_V^0 := (id_V \otimes \pi) \circ \rho_V$.
    \label{lem:theta0isequivariant}
\end{lemma}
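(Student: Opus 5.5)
The proof splits into two independent checks: that $\theta_0$ takes values in horizontal forms, and that it is $H_0$-equivariant. Both follow from the fact that $(\phi,\Phi_{\Gamma})$ is a morphism of differential calculi which intertwines the relevant coactions.

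\emph{Horizontality.} Take $v \in V$. Since $\theta(v) \in A\Gamma_B$, we can write $\theta(v)=\sum_i a^i\, b^i \mathrm{d}_B b'^i$ with $a^i \in A$ and $b^i,b'^i \in B$. By the bimodule compatibility (\ref{eq:conditionfrodifferentialcalculimorphism}) and $\Phi_{\Gamma}\circ \mathrm{d}_A = \mathrm{d}_{A_0}\circ\phi$, we get
\[
\theta_0(v)=\sum_i \phi(a^i)\,\phi(b^i)\,\mathrm{d}_{A_0}\phi(b'^i).
\]
Since $\phi(B)=B_0$ by Definition \ref{def:quantumreduction} and $\mathrm{d}_{B_0}=\mathrm{d}_{A_0}|_{B_0}$, each term lies in $A_0\Gamma_{B_0}$. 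Equivalently, this uses $\Phi_{\Gamma}|_{\Gamma_B}\colon\Gamma_B\to\Gamma_{B_0}$ as in (\ref{eq:restrictionofGammaPhitoGammaB}).

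\emph{Equivariance.} The key intermediate claim is that $\Phi_{\Gamma}$ is right $H_0$-colinear:
\[
\Delta_{\Gamma_{A_0}}\circ\Phi_{\Gamma}=(\Phi_{\Gamma}\otimes\pi)\circ\Delta_{\Gamma_A}.
\]
To prove it, I would first recall how $\Delta_{\Gamma_{A_0}}$ arises in Proposition \ref{pro:pullbackandquotientcovariant}. There $\Gamma_A$ is first regarded as a right $H_0$-covariant calculus via $(id\otimes\pi)\circ\Delta_{\Gamma_A}$, which satisfies the covariance condition because $\pi$ is an algebra map. The coaction then descends to $\Gamma_A/\Gamma_I$ because $\phi$ is $H_0$-colinear, so $I=\ker\phi$ satisfies $\Delta^0_A(I)\subseteq I\otimes H_0$ and hence $\Gamma_I$ is stable. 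With this description, I would check the claim on generators $a\,\mathrm{d}_A a'$:
\begin{align*}
\Delta_{\Gamma_{A_0}}\bigl(\phi(a)\mathrm{d}_{A_0}\phi(a')\bigr)
&=\phi(a)_0\,\mathrm{d}_{A_0}\phi(a')_0\otimes\phi(a)_1\phi(a')_1\\
&=\phi(a_0)\,\mathrm{d}_{A_0}\phi(a'_0)\otimes\pi(a_1)\pi(a'_1)\\
&=(\Phi_{\Gamma}\otimes\pi)\bigl(a_0\,\mathrm{d}_Aa'_0\otimes a_1a'_1\bigr).
\end{align*}
The second equality uses the colinearity $\Delta_{A_0}\circ\phi=(\phi\otimes id)\circ\Delta_A^0$, and the third uses that $\pi$ is multiplicative.

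With the claim in hand, equivariance follows in one line:
\[
\Delta_{\Gamma_{A_0}}\circ\theta_0=(\Phi_{\Gamma}\otimes\pi)\circ\Delta_{\Gamma_A}\circ\theta=(\Phi_{\Gamma}\otimes\pi)\circ(\theta\otimes id_H)\circ\rho_V=(\theta_0\otimes id_{H_0})\circ\rho^0_V .
\]
The last step also needs $V$ to carry the induced $H_0$-comodule structure $\rho_V^0=(id\otimes\pi)\circ\rho_V$. This holds because $\pi$ is a coalgebra map.

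The main obstacle is well-definedness, both in the colinearity claim and in $\Phi_{\Gamma}$ itself. The definition (\ref{eq:PhiGamma}) is given on the spanning expressions $a\,\mathrm{d}_Aa'$, which are not unique representations. The safe route is to identify $\Phi_{\Gamma}$ with the quotient map $\Gamma_A\to\Gamma_A/\Gamma_I$, which is well-defined. The coaction computation should then be phrased as the statement that the quotient map intertwines $(id\otimes\pi)\circ\Delta_{\Gamma_A}$ with the induced coaction, exactly as constructed in Proposition \ref{pro:pullbackandquotientcovariant}(2).
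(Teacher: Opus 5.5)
Your proposal is correct and follows the paper's argument. Horizontality comes from $\phi(B)=B_0$. Equivariance comes from the intertwining formula $\Delta_{\Gamma_{A_0}}\bigl(\phi(a)\mathrm{d}_{A_0}\phi(a')\bigr)=\phi(a_0)\mathrm{d}_{A_0}\phi(a'_0)\otimes\pi(a_1)\pi(a'_1)$, which is exactly the paper's (\ref{eq:equivarianceconsiderations3}).

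The differences are minor. You package that formula as $H_0$-colinearity of $\Phi_{\Gamma}$ on all of $\Gamma_A$ before composing with $\theta$, whereas the paper evaluates both sides directly on a representative $\theta(v)=a\,\mathrm{d}_Bb$, using coinvariance of $b$. Your identification of $\Phi_{\Gamma}$ with the quotient map $\Gamma_A\to\Gamma_A/\Gamma_I$ also settles a well-definedness point that the paper leaves implicit.
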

\begin{proof}
    Since we know $\theta$ has values in $A\Gamma_B$, in order to show $\theta_0$ has values in $A_0\Gamma_{B_0}$ it is enough to check that the restriction of $\Phi_{\Gamma}$ to $A\Gamma_B$ has values in $A_0\Gamma_{B_0}$. This is true, since we can write a generic element of $A\Gamma_B$ as $a\mathrm{d}_A|_Bb = a\mathrm{d}_Bb$, with $a \in A$ and $b \in B$, so that clearly, knowing that $\phi(B) = B_0$,
    \begin{equation}
        \Phi_{\Gamma}\big|_{A\Gamma_B}(a\mathrm{d}_Bb) = \phi(a)\mathrm{d}_{A_0}\phi(b) = \phi(a)\mathrm{d}_{A_0}|_{B_0}\phi(b) = \phi(a)\mathrm{d}_{B_0}\phi(b) \in A_0\Gamma_{B_0},
        \label{eq:horizontalcheckPhiGamma}
    \end{equation}
    From $\Delta_{\Gamma_A} \circ \theta = (\theta \circ id_H) \circ \rho_V$, we deduce $H_0$-equivariance of $\theta$ with respect to $\Delta_{\Gamma_A}^0:=(id_{\Gamma_A} \otimes \pi) \circ \Delta_{\Gamma_A}$ and $\rho_V^0$, i.e.
    \begin{equation}
        \Delta_{\Gamma_A}^0 \circ \theta = (id_{\Gamma_A} \otimes \pi) \circ \Delta_{\Gamma_A} \circ \theta = (\theta \otimes \pi) \circ \rho_V = (\theta \otimes id_{H_0}) \circ \rho_V^0,
        \label{eq:theta0isgood1}
    \end{equation}
    which for all $v \in V$ reads
    \begin{equation}
        \Delta_{\Gamma_A}^0 \circ \theta(v) = (id_{\Gamma_A} \otimes \pi) \circ \Delta_{\Gamma_A} \circ \theta(v) = \theta(v_0) \otimes \pi(v_1).
        \label{eq:theta0isgood2}
    \end{equation}
    The right $H_0$-coaction on the covariant first order differential calculus $\Gamma_{A_0}$ is defined as
    \begin{equation}
        \begin{split}
            \Delta_{\Gamma_{A_0}} \colon &\Gamma_{A_0} \to \Gamma_{A_0} \otimes H_0,\\
            &\phi(a)\mathrm{d}_{A_0}\phi(a^{\prime}) \mapsto \phi(a)_0\mathrm{d}_{A_0}\phi(a^{\prime})_0 \otimes \phi(a)_1\phi(a^{\prime})_1.
        \end{split}
        \label{eq:equivarianceconsiderations1}
    \end{equation}
    Since $\phi \colon A \to A_0$ is a morphism of right $H_0$-comodules, i.e.,
    \begin{equation}
        \Delta_{A_0} \circ \phi(a) = \phi(a)_0 \otimes \phi(a)_1 = (\phi \otimes id_{H_0}) \circ \Delta_{A}^0(a) = \phi(a_0) \otimes \pi(a_1)
        \label{eq:equivarianceconsiderations2}
    \end{equation}
    for all $a \in A$, the right $H_0$-coaction (\ref{eq:equivarianceconsiderations1}) acts as
    \begin{equation}
        \Delta_{\Gamma_{A_0}}\big(\phi(a)\mathrm{d}_{A_0}\phi(a^{\prime})\big) = \phi(a_0)\mathrm{d}_{A_0}\phi(a^{\prime}_0) \otimes \pi(a_1)\pi(a^{\prime}_1).
        \label{eq:equivarianceconsiderations3}
    \end{equation}
    We want to verify that
    \begin{equation}
        \Delta_{\Gamma_{A_0}} \circ \theta_0 = (\theta_0 \otimes id_{H_0}) \circ \rho_V^0 = (\theta_0 \otimes \pi) \circ \rho_V.
        \label{eq:theta0isgood3}
    \end{equation}
    Let us call $\theta(v) := a\mathrm{d}_A|_Bb$ for some $a \in A$, $b \in B$ (summation understood). Then, using Equation (\ref{eq:equivarianceconsiderations3}),  the left-hand side of (\ref{eq:theta0isgood3}) reads
    \begin{equation}
        \begin{split}
            &\Delta_{\Gamma_{A_0}} \circ \theta_0(v) = \Delta_{\Gamma_{A_0}}\big|_{A_0\Gamma_{B_0}} \circ  \Phi_{\Gamma} \circ \theta(v) = \Delta_{\Gamma_{A_0}}\big|_{A_0\Gamma_{B_0}} \circ  \Phi_{\Gamma}(a\mathrm{d}_Bb) = \\
            &= \Delta_{\Gamma_{A_0}}\big|_{A_0\Gamma_{B_0}}\big(\phi(a)\mathrm{d}_{A_0}|_{B_0}\phi(b)\big) = \Delta_{\Gamma_{A_0}}\big|_{A_0\Gamma_{B_0}}\big(\phi(a)\mathrm{d}_{B_0}\phi(b)\big) = \\
            &= \phi(a_0)\mathrm{d}_{B_0}\phi(b) \otimes \pi(a_1)\pi(1_H) = \phi(a_0)\mathrm{d}_{B_0}\phi(b) \otimes \pi(a_1),
        \end{split}
        \label{eq:theta0isgood4}
    \end{equation}
    where we used that $\pi$ is a Hopf algebra map, so that $\pi(1_H) = 1_{H_0}$. Using Equation (\ref{eq:theta0isgood2}), the right-hand side of Equation (\ref{eq:theta0isgood3}) reads
    \begin{equation}
        \begin{split}
            &(\theta_0 \otimes id_{H_0}) \circ \rho_V^0(v) = (\theta_0 \otimes id_{H_0}) \circ (id_V \otimes \pi) \circ \rho_V(v) = \\
            &= (\theta_0 \otimes \pi)(v_0 \otimes v_1) = \Phi_{\Gamma}\big( \theta(v_0) \big) \otimes \pi(v_1) =\\
            &= (\Phi_{\Gamma} \otimes id_{H_0}) \circ \big( \theta(v_0) \otimes \pi(v_1) \big) = (\Phi_{\Gamma} \otimes id_{H_0}) \circ (id_{\Gamma_A} \otimes \pi) \circ \Delta_{\Gamma_A} \circ \theta(v) = \\
            &= (\Phi_{\Gamma} \otimes id_{H_0}) \circ (id_{\Gamma_A} \otimes \pi) \circ \Delta_{\Gamma_A}(a\mathrm{d}_Bb) = \\
            &= (\Phi_{\Gamma} \otimes id_{H_0}) \circ (id_{\Gamma_A} \otimes \pi) \circ \Delta_{\Gamma_A}\big|_{A\Gamma_B}(a\mathrm{d}_Bb) =\\
            &= (\Phi_{\Gamma} \otimes id_{H_0}) \circ (id_{\Gamma_A} \otimes \pi)(a_0\mathrm{d}_Bb \otimes a_1) =  (\Phi_{\Gamma} \otimes id_{H_0})\big(a_0\mathrm{d}_Bb \otimes \pi(a_1)\big) =\\
            &= \phi(a_0)\mathrm{d}_{B_0}\phi(b) \otimes \pi(a_1),
        \end{split}
        \label{eq:theta0isgood5}
    \end{equation}
    completing the proof.
\end{proof}
We can now show the main result of this thesis. We show that a quantum $G$-structure is a quantum frame resolution, analogously to what one expects in the classical case.
\begin{theorem}
    Let $(A_0, H_0, V, \theta_0)$, with $\theta_0 := \Phi_{\Gamma} \circ \theta$, be a quantum $G$-structure of $(A,H,V,\theta)$ a quantum frame resolution of $(B,\Gamma_B)$. Then $(A_0,H_0,V,\theta_0)$ is a quantum frame resolution of $(B_0,\Gamma_{B_0})$.
    \label{thm:aquantumGstructureisaquantumframeresolution}
\end{theorem}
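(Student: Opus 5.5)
The plan is to transport the isomorphism $s_\theta\colon\mathcal{E}\to\Gamma_B$ along the reduction morphism. Write $\mathcal{E}_0:=(A_0\otimes V)^{coH_0}$, where $V$ carries the coaction $\rho_V^0=(id_V\otimes\pi)\circ\rho_V$, and set $s_{\theta_0}:=\cdot\circ(id_{A_0}\otimes\theta_0)$.

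First I would check that $s_{\theta_0}$ is a left $B_0$-module map $\mathcal{E}_0\to\Gamma_{B_0}$. By Lemma \ref{lem:theta0isequivariant}, $\theta_0$ is right strongly $H_0$-tensorial. Corollary \ref{cor:thecorrespondence2}, applied to the principal $H_0$-comodule algebra $A_0$, then shows that $s_{\theta_0}$ lands in $(A_0\Gamma_{B_0})^{coH_0}\cong\Gamma_{B_0}$ (Theorem \ref{thm:horizontalformsintersectionandsplitting}). Left $B_0$-linearity is immediate. Right $B_0$-linearity, needed for the bimodule statement, follows from the right $B$-linearity of $s_\theta$ once the commuting square below is in place.

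Next I would introduce $\Phi_{\mathcal{E}}:=(\phi\otimes id_V)|_{\mathcal{E}}$. If $a\otimes v\in\mathcal{E}$, apply $\phi\otimes id\otimes\pi$ to $a_0\otimes v_0\otimes a_1v_1=a\otimes v\otimes 1_H$. Using that $\phi$ is $H_0$-colinear and $\pi$ is an algebra map, this gives $\Phi_{\mathcal{E}}(a\otimes v)\in\mathcal{E}_0$. Since $\Phi_\Gamma$ is a morphism of calculi, $\Phi_\Gamma(a\,\theta(v))=\phi(a)\Phi_\Gamma(\theta(v))$. This yields the commuting square
\[
\Phi_\Gamma\big|_{\Gamma_B}\circ s_\theta \;=\; s_{\theta_0}\circ\Phi_{\mathcal{E}} .
\]
The maps $\Phi_\Gamma|_{\Gamma_B}\colon\Gamma_B\to\Gamma_{B_0}$ and $s_\theta$ are both bijective; the first was shown just before Definition \ref{def:quantumGstructure}. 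Moreover $\Phi_{\mathcal{E}}$ is $\phi|_B$-semilinear and $\phi|_B\colon B\cong B_0$. Hence it suffices to prove that $\Phi_{\mathcal{E}}\colon\mathcal{E}\to\mathcal{E}_0$ is bijective, after which $s_{\theta_0}=\Phi_\Gamma|_{\Gamma_B}\circ s_\theta\circ\Phi_{\mathcal{E}}^{-1}$ is an isomorphism.

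The main obstacle is this bijectivity, the quantum analogue of $P_0\times_{H}V\cong P\times_G V$ for a reduction $P_0\subseteq P$. I would use the Schneider equivalence underlying Theorem \ref{thm:horizontalformsintersectionandsplitting}. For the faithfully flat Hopf--Galois extensions $B\subseteq A$ and $B_0\subseteq A_0$, multiplication gives isomorphisms $A\otimes_B\mathcal{E}\cong A\otimes V$ and $A_0\otimes_{B_0}\mathcal{E}_0\cong A_0\otimes V$. Identifying $B$ with $B_0$ via $\phi$, I would form $A_0\otimes_B\mathcal{E}\cong A_0\otimes_A(A\otimes_B\mathcal{E})\cong A_0\otimes_A(A\otimes V)\cong A_0\otimes V$. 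I would then check that this chain composed with the inverse of the second isomorphism equals $id_{A_0}\otimes_{B_0}\Phi_{\mathcal{E}}$. Hence $id_{A_0}\otimes_{B_0}\Phi_{\mathcal{E}}$ is an isomorphism, and faithful flatness of $A_0$ over $B_0$ gives that $\Phi_{\mathcal{E}}$ is an isomorphism.

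The delicate points are twofold. One must verify that the middle identification respects the $H_0$-comodule structures, since the coinvariants are taken under $H_0$ rather than $H$. One must also ensure that $A\otimes V$, viewed as an $H$-covariant module, yields exactly the $H_0$-covariant module $A_0\otimes V$ after base change along $\phi$. If this base-change compatibility fails in general, I would add it as a hypothesis on the reduction, the quantum counterpart of $P=P_0\times_{H_0}H$.
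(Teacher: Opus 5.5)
Your proof is correct, and it is more complete than the paper's. Both arguments rest on the same commuting square
$\Phi_\Gamma|_{\Gamma_B}\circ s_\theta = s_{\theta_0}\circ(\phi\otimes id_V)|_{\mathcal{E}}$. The paper's candidate inverse is exactly $\Psi=(\phi\otimes id_V)\circ s_\theta^{-1}\circ\Phi_\Gamma|_{\Gamma_B}^{-1}$, and it verifies $s_{\theta_0}\circ\Psi=id$ and $\Psi\circ s_{\theta_0}=id$ by direct composition.

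The second verification is where the two routes part. It starts from $\bar a\otimes v\in\mathcal{E}_0$, chooses an arbitrary $\tilde a$ with $\phi(\tilde a)=\bar a$, and then applies $s_\theta^{-1}\circ\Phi_\Gamma|_{\Gamma_B}^{-1}\circ\Phi_\Gamma\circ s_\theta$ to $\tilde a\otimes v$ as if that element lay in $\mathcal{E}$. An arbitrary lift is not $H$-coinvariant. That step therefore silently assumes that $\Phi_{\mathcal{E}}=(\phi\otimes id_V)|_{\mathcal{E}}$ is surjective onto $\mathcal{E}_0$. Given the square, this surjectivity is actually equivalent to the theorem: $s_{\theta_0}\circ\Phi_{\mathcal{E}}$ is already bijective, so $\Phi_{\mathcal{E}}$ is automatically injective and $s_{\theta_0}$ automatically surjective.

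Your Schneider descent supplies exactly the missing step. Since $\Delta_{\mathcal{E}}(a'x)=\Delta_A(a')\Delta_{\mathcal{E}}(x)$, and likewise over $H_0$, both $A\otimes V$ and $A_0\otimes V$ are relative Hopf modules. Hence the multiplication maps $A\otimes_B\mathcal{E}\to A\otimes V$ and $A_0\otimes_{B_0}\mathcal{E}_0\to A_0\otimes V$ are bijective. Both routes through your chain send $\bar a\otimes x\mapsto(\bar a\otimes 1)(\phi\otimes id_V)(x)$, so the diagram with $id_{A_0}\otimes\Phi_{\mathcal{E}}$ commutes.

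Your closing hedge is unnecessary. The descent only uses these maps as linear isomorphisms compatible with $id_{A_0}\otimes\Phi_{\mathcal{E}}$, so the $H_0$-compatibility of the middle identification never enters. No extra hypothesis of the form ``$A$ is induced from $A_0$'' is needed. Classically, likewise, any morphism of principal bundles over the same base automatically gives $P\cong P_0\times_K G$.

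The remaining fine print is twofold. You need $A_0$ to be faithfully flat as a \emph{right} $B_0$-module, since you tensor with left modules, whereas the paper's definition is one-sided. You also need Schneider's theorem with bijective antipode, under which left and right faithful flatness agree; the paper already uses this tacitly elsewhere.

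In short, the paper's route is shorter and purely formal but leaves injectivity of $s_{\theta_0}$ unproved. Yours costs the Schneider machinery but genuinely establishes it.
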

\begin{proof}
    Let $\phi \colon A \to A_0$ be the surjective right $H_0$-comodule algebras morphism defining the reduction, satisfying
    \begin{equation}
        \begin{split}
            &\Delta_{A_0} \circ \phi = (\phi \otimes \pi) \circ \Delta_A, \;\;\; \text{that is}\\
            &\phi(a)_0 \otimes \phi(a)_1 = \phi(a_0) \otimes \pi(a_1)
        \end{split}
        \label{eq:phicomodulemorph}
    \end{equation}
    for all $a \in A$, where $\pi \colon H \to H_0 := H/J$ for some Hopf ideal $J$, is a Hopf algebra morphism.\\
    We have $\mathcal{E}_0 = (A_0 \otimes V)^{coH_0}$, the associated vector bundle to $B_0 := A_0^{coH_0} \subseteq A_0$, is the set of coinvariants under the right $H_0$-coaction on $A \otimes V$, defined, for all $\phi(a) \otimes v \in A_0 \otimes V$, as:
    \begin{equation}
        \Delta_{\mathcal{E}_0}\big(\phi(a) \otimes v\big) = \phi(a)_0 \otimes v_0 \otimes \phi(a)_1\pi(v_1).
        \label{eq:DeltaE0def}
    \end{equation}
    Using Equation (\ref{eq:phicomodulemorph}), for all $a \otimes v \in \mathcal{E} = (A \otimes V)^{coH}$, we can write
    \begin{equation}
        \begin{split}
            &\Delta_{\mathcal{E}_0} \circ (\phi \otimes id_V)(a \otimes v) = \Delta_{\mathcal{E}_0}\big(\phi(a) \otimes v\big) =  \\
            &= \phi(a)_0 \otimes v_0 \otimes \phi(a)_1\pi(v_1) = \phi(a_0) \otimes v_0 \otimes \pi(a_1)\pi(v_1) = \\
            &=  \phi(a_0) \otimes v_0 \otimes \pi(a_1v_1) = (\phi \otimes id_V \otimes \pi) (a_0 \otimes v_0 \otimes a_1v_1) =\\
            &= (\phi \otimes id_V \otimes \pi) \circ \Delta_{\mathcal{E}} (a \otimes v) = (\phi \otimes id_V \otimes \pi)(a \otimes v \otimes 1_H) =\\
            &= \phi(a) \otimes v \otimes \pi(1_{H}) = \phi(a) \otimes v \otimes 1_{H_0},
        \end{split}
        \label{eq:DeltaE0phiotimesid}
    \end{equation}
    proving that the restriction of $\phi \otimes id_V$ to $\mathcal{E}$ has values in $\mathcal{E}_0$.\\
    Consider the following diagram:
    \[
\begin{tikzcd}
\mathcal{E} \arrow[ddd, "(\phi \otimes id)"] \arrow[rrrrr, "s_{\theta}", shift left] &  &  &  &  & \Gamma_B \arrow[lllll, "s_{\theta^{-1}}", shift left=2] \arrow[ddd, "\Phi_{\Gamma}\big|_{\Gamma_B}", shift left] \\
                                                                                     &  &  &  &  &                                                                                                                  \\
                                                                                     &  &  &  &  &                                                                                                                  \\
\mathcal{E}_0 \arrow[rrrrr, "s_{\theta_0}"]                                          &  &  &  &  & \Gamma_{B_0} \arrow[uuu, "\Phi_{\Gamma}\big|_{\Gamma_B}^{-1}", shift left=2]                                    
\end{tikzcd},
\]
    where, as discussed, the restriction of $\Phi_{\Gamma}$ to base forms in Equation (\ref{eq:restrictionofGammaPhitoGammaB}) is an isomorphism, and $s_{\theta} = \cdot \circ (id_A \otimes \theta) \colon \mathcal{E} \to \Gamma_B$ is an isomorphism of left $B$-modules, since by assumption $(A,H,V,\theta)$ is a quantum frame resolution, with $\cdot$ the left $A$-action on $\Gamma_A$. Thus, we can prove that the map
    \begin{equation}
        \Psi := (\phi \otimes id_V) \circ s_{\theta}^{-1} \circ \Phi_{\Gamma}\Big|_{\Gamma_B}^{-1} \colon \Gamma_{B_0} \to \mathcal{E}_0,
        \label{eq:LAMAPPA}
    \end{equation}
    satisfies
    \begin{enumerate}
        \item $\Psi \circ s_{\theta_0} = id_{\mathcal{E}_0}$,
        \item $s_{\theta_0} \circ \Psi = id_{\Gamma_{B_0}}$,
    \end{enumerate}
    so that $s_{\theta_0} := \cdot^{\prime} \circ (id_{A_0} \otimes \theta_0)$, with $\cdot^{\prime}$ the left $A_0$-action on $\Gamma_{A_0}$, identifies the desired isomorphism $\mathcal{E}_0 \cong \Gamma_{B_0}$ of left $B_0$ modules. In order:
    \begin{enumerate}
        \item Let $\bar{a} \otimes v \in \mathcal{E}_0$ and let $\bar{a} := \phi(\tilde{a})$ for some $\tilde{a} \in A$. We have
        \begin{equation}
            \begin{split}
                &\Psi \circ s_{\theta_0} (\bar{a} \otimes v) = (\phi \otimes id_V) \circ s_{\theta}^{-1} \circ \Phi_{\Gamma}\big|_{\Gamma_B}^{-1} \circ s_{\theta_0} (\phi(\tilde{a}) \otimes v) = \\
                &= (\phi \otimes id_V) \circ s_{\theta}^{-1} \circ \Phi_{\Gamma}\big|_{\Gamma_B}^{-1} \circ \cdot^{\prime} \circ (id_{A_0} \otimes \theta_0) (\phi(\tilde{a}) \otimes v) = \\
                &= (\phi \otimes id_V) \circ s_{\theta}^{-1} \circ \Phi_{\Gamma}\big|_{\Gamma_B}^{-1} \circ \cdot^{\prime} \circ \big(id_{A_0} \otimes (\Phi_{\Gamma} \circ \theta)\big) (\phi(\tilde{a}) \otimes v) =\\
                &=(\phi \otimes id_V) \circ s_{\theta}^{-1} \circ \Phi_{\Gamma}\big|_{\Gamma_B}^{-1} \circ \cdot^{\prime} \circ (id_{A_0} \otimes \Phi_{\Gamma}) \circ (\phi \otimes \theta)(\tilde{a} \otimes v) = \\
                &= (\phi \otimes id_V) \circ s_{\theta}^{-1} \circ \Phi_{\Gamma}\big|_{\Gamma_B}^{-1} \circ \cdot^{\prime} \circ (\phi \otimes \Phi_{\Gamma}) \circ (id_A \otimes \theta)(\tilde{a} \otimes v)
            \end{split}
            \label{eq:theproofPsisx1}
        \end{equation}
        Since $(\phi, \Phi_{\Gamma})$ is a morphism of differential calculi, we know
        \begin{equation}
            \cdot^{\prime} \circ (\phi \otimes \Phi_{\Gamma}) = \Phi_{\Gamma} \circ \cdot,
            \label{eq:morphismofdiffcalculi}
        \end{equation}
        so that Equation (\ref{eq:theproofPsisx1}) reads
        \begin{equation}
        \begin{split}
                &\Psi \circ s_{\theta_0} (\bar{a} \otimes v) = (\phi \otimes id_V) \circ s_{\theta}^{-1} \circ \Phi_{\Gamma}\big|_{\Gamma_B}^{-1} \circ \Phi_{\Gamma} \circ \cdot \circ (id_A \otimes \theta)(\tilde{a} \otimes v) = \\
                &= (\phi \otimes id_V) \circ s_{\theta}^{-1} \circ \Phi_{\Gamma}\big|_{\Gamma_B}^{-1} \circ \Phi_{\Gamma} \circ s_{\theta}(\tilde{a} \otimes v) =\\
                &= (\phi \otimes id_V) \circ s_{\theta}^{-1} \circ \Phi_{\Gamma}\big|_{\Gamma_B}^{-1} \circ \Phi_{\Gamma}\big|_{\Gamma_B} \circ s_{\theta}(\tilde{a} \otimes v) =\\
                &= (\phi \otimes id_V)(\tilde{a} \otimes v) = \bar{a} \otimes v,
            \end{split}
            \label{eq:theproofPsisx2}
        \end{equation}
        where we used that $s_{\theta} =  \cdot \circ (id_A \otimes \theta)$ has values in $\Gamma_B$.
        \item On the other hand, for any $\phi(b)\mathrm{d}_{B_0}\phi(b^{\prime}) \in \Gamma_{B_0}$ with $b,b^{\prime} \in B$, we have
        \begin{equation}
            \begin{split}
                &s_{\theta_0} \circ \Psi\big(\phi(b)\mathrm{d}_{B_0}\phi(b)\big) = s_{\theta_0} \circ (\phi \otimes id_V) \circ s_{\theta}^{-1} \circ \Phi_{\Gamma}\Big|_{\Gamma_B}^{-1}\big(\phi(b)\mathrm{d}_{B_0}\phi(b)\big) =\\
                &=\cdot^{\prime} \circ \big(id_{A_0} \otimes (\Phi_{\Gamma} \circ \theta)\big) \circ (\phi \otimes id_V) \circ s_{\theta}^{-1} \circ \Phi_{\Gamma}\Big|_{\Gamma_B}^{-1}\big(\phi(b)\mathrm{d}_{B_0}\phi(b)\big) =\\
                &= \cdot^{\prime} \circ (\phi \otimes \Phi_{\Gamma}) \circ (id_A \otimes \theta) \circ s_{\theta}^{-1} \circ \Phi_{\Gamma}\Big|_{\Gamma_B}^{-1}\big(\phi(b)\mathrm{d}_{B_0}\phi(b)\big) =\\
                &= \Phi_{\Gamma} \circ \cdot \circ (id_A \otimes \theta) \circ s_{\theta}^{-1} \circ \Phi_{\Gamma}\Big|_{\Gamma_B}^{-1}\big(\phi(b)\mathrm{d}_{B_0}\phi(b)\big) =\\
                &= \Phi_{\Gamma} \circ s_{\theta} \circ s_{\theta}^{-1}\circ \Phi_{\Gamma}\Big|_{\Gamma_B}^{-1}\big(\phi(b)\mathrm{d}_{B_0}\phi(b)\big) = \phi(b)\mathrm{d}_{B_0}\phi(b_0).
            \end{split}
            \label{eq:theproofPsidx1}
        \end{equation}
    \end{enumerate}
    This shows $s_{\theta_0}$ is a bijection, proving that $(A_0,H_0,V,\theta_0)$ is a quantum frame resolution.
\end{proof}

\subsection{An example: quantum Hermitian structures} \label{exampleofqGstruc}

Given the quantum frame resolution $(B\#H,H,V,\theta)$ in \ref{exampleofqfr}, with $B = \mathbb{C}_q^2$, $H = GL_q(2)$, we want to establish that $(B\#H_0,H_0,V,\theta_0)$, with $H_0 = U_q(2)$, is a quantum frame resolution. In particular, this is shown to be an example of quantum $G$-structure, which we call a quantum Hermitian structure. \\
Here, $V$ as in Equation (\ref{eq:Vinourexample}) is a right $H_0$-comodule by $\rho_V^0 := (id \otimes \pi) \circ \rho_V$, and $\theta_0 := \Phi_{\Gamma_{\#}} \circ \theta$, with $\theta$ as in (\ref{eq:thetainourexample}). The morphism of differential calculi $\Phi_{\Gamma_{\#}}$ is that between the right $H$-covariant smashed product calculus $(\Gamma_{\#},\mathrm{d}_{\#})$ and the quotient calculus on $B\#H_0$. Thus, by the discussion in the previous section and Theorem \ref{thm:aquantumGstructureisaquantumframeresolution}, it is enough to define a surjective morphism of Hopf algebras $\pi \colon H \to H_0$ and a surjective morphism of right $H_0$-comodule algebras $\phi \colon B\#H \to B\#H_0$, since in our case the coinvariants of $B\#H$ coincides with those of $B\#H_0$.\\
The Hopf algebra $U_q(2)$ is generated by elements $\alpha, \alpha^{\ast}, \gamma, \gamma^{\ast}$ and the central invertible element $\delta$, satisfying the following relations:
\begin{equation}
    \begin{split}
        &\alpha\gamma = q\gamma\alpha, \;\; \alpha\gamma^{\ast} = q\gamma\alpha, \;\; \gamma^{\ast}\gamma = \gamma\gamma^{\ast},\\
        &\alpha\alpha^{\ast} + q^2\gamma\gamma^{\ast} = 1, \;\; \alpha^{\ast}\alpha + \gamma\gamma^{\ast} = 1.
    \end{split}
    \label{ref:qcommrelforUq2}
\end{equation}
The coalgebra structure is given by the coproduct $\Delta_0 \colon H_0 \to H_0 \otimes H_0$ defined as
\begin{equation}
    \begin{split}
        &\Delta_0(\alpha) = \alpha \otimes \alpha - q\delta\gamma \otimes \gamma,\\
        &\Delta_0(\alpha^{\ast}) = \alpha^{\ast} \otimes \alpha^{\ast} - q\delta^{-1}\gamma^{\ast} \otimes \gamma,\\
        &\Delta_0(\gamma) = \gamma \otimes \alpha + \delta\alpha^{\ast} \otimes \gamma,\\
        &\Delta_0(\gamma^{\ast}) = \gamma^{\ast} \otimes \alpha^{\ast} + \delta\alpha \otimes \gamma^{\ast},\\
        &\Delta_0(\delta) = \delta \otimes \delta.
    \end{split}
    \label{eq:coproductforUq2}
\end{equation}
We require $q^{\ast}=q$, so that compatibility with the $\ast$-structure is satisfied. The counit $\epsilon_0 \colon H_0 \to \mathbb{K}$ is
\begin{equation}
    \epsilon_0(\alpha) = \epsilon_0(\alpha^{\ast}) = \epsilon_0(\delta) = 1, \;\; \epsilon_0(\gamma) = \epsilon_0(\gamma^{\ast}) = 0.
    \label{eq:counitforUq2}
\end{equation}
The antipode $S_0 \colon H_0 \to H_0$ is
\begin{equation}
    \begin{split}
        &S_0(\alpha) = \alpha^{\ast}.\\
        &S_0(\alpha^{\ast}) = \alpha,\\
        &S_0(\gamma) = -q\delta^{-1}\gamma,\\
        &S_0(\gamma^{\ast}) = q\delta\gamma^{\ast},\\
        &S_0(\delta) = \delta^{-1}.
    \end{split}
    \label{eq:antipodeforUq2}
\end{equation}
One can check that these maps satisfy the Hopf algebra axioms, coproduct and counit are algebra maps and the antipode is an antialgebra map, and they are compatible with the $\ast$-structure.\\
We define a morphism of Hopf algebras
\begin{equation}
    \begin{split}
        \pi \colon &GL_q(2) \to U_q(2),\\
        &a \mapsto \alpha,\\
        &b \mapsto -q\delta\gamma^{\ast},\\
        &c \mapsto \gamma,\\
        &d \mapsto \delta\alpha^{\ast},\\
        &D^{-1} \to \delta^{-1}.
    \end{split} 
    \label{eq:morphismofHopfalgebrasUq2}
\end{equation}
It is easy to check that this is amorphism of Hopf algebras, i.e.:
\begin{equation}
    \begin{split}
        &\pi \circ \mu = \mu_0 \circ (\pi \otimes \pi), \;\; \pi \circ \eta = \eta_0,\\
        &(\pi \otimes \pi) \circ \Delta = \Delta_0 \otimes \pi, \;\; \epsilon = \epsilon_0 \otimes \pi,\\
        &\pi \circ S = S_0 \otimes \pi.
    \end{split}
    \label{eq:pipropertiesHermitianjusttomakeitsimple}
\end{equation}
Moreover, this map is surjective, since one can check that any generator of $U_q(2)$ can be written as $\pi(h)$ for some $h \in GL_q(2)$, namely
\begin{equation}
    \begin{split}
        \alpha = \pi(a), \;\; \alpha^{\ast} = \pi(D^{-1}d), \;\; \gamma = \pi(c), \;\; \gamma^{\ast} = -q^{-1}\pi(D^{-1}b), \;\; \delta =\pi(D).
    \end{split}
    \label{eq:piofsomethingtheymustbe}
\end{equation}
We observe that $A = B\#H$ is a right $H_0$-comodule by the right $H_0$-coaction 
\begin{equation}
    \Delta^0_{B\#H} := (id_B\#id_H \otimes \pi) \circ \Delta_{B\#H},
    \label{eq:robacheserveunpo}
\end{equation}
where $\Delta_{B\#H} = (id \otimes \bar{\pi}) \circ \Delta_{\#}$ as in Equation (\ref{eq:rightHcoactiononBsmashH}). Notice that $\Delta_{B\#H} = id_B\#\Delta$, with $\Delta$ the coproduct of $H$, so that (\ref{eq:robacheserveunpo}) reads
\begin{equation}
    \Delta_{B\#H}^0 := (id_B\#id_H \otimes \pi) \circ (id_B\#\Delta).
    \label{eq:rightH0coactiononBsmashHexamplenuovo}
\end{equation}
Similarly, $A_0 = B\#H_0$ is a right $H_0$-comodule via the right $H_0$-coaction
\begin{equation}
    \Delta_{B\#H_0} := id_B\#\Delta_0.
    \label{eq:rightH0coactionforBsmashH0examplenuovo}
\end{equation}
We define the map
\begin{equation}
    \begin{split}
        \phi := (id_B\#\pi) \colon &B\#H \to B\#H_0,\\
        &b\#h \to b\#\pi(h).
    \end{split}
    \label{eq:thesurjectivemorphismofrightH0comodulealgebrasexample}
\end{equation}
This is a morphism of right $H_0$-comodules, since
\begin{equation}
    \begin{split}
        &\Delta_{B\#H_0} \circ \phi = (id_B\#\Delta_0) \circ (id_B\#\pi) = \\
        &=id_B\#(\Delta_0 \circ \pi) = \\
        &=(\phi \otimes id_{H_0}) \circ \Delta_{B\#H}^0 = (id_B\#\pi \otimes id_{H_0}) \circ (id_B\#id_H \otimes \pi) \circ (id_B\#\Delta) = \\
        &=id_B\#(\pi \otimes \pi) \circ \Delta,
    \end{split}
    \label{eq:thisphiisamorphismofH0comodule}
\end{equation}
which given that $\pi$ is a Hopf algebra morphism. Moreover, we require that $\pi$ respects the left $H$-module action on $B$ in Lemma \ref{lem:leftHmoduleanctiononB}, such that $\phi$ is an algebra morphism too. In particular, the left $H_0$-module action on $B$ is defined as:
\begin{equation}
    \begin{split}
        &\alpha \triangleright x = q^{-2}x, \;\; \alpha^{\ast} \triangleright x = q^2x, \;\; \gamma \triangleright x = (q^{-2} - 1)y, \;\; \gamma^{\ast} \triangleright x = 0, \;\; \delta \triangleright x = q^{-3}x,\\
        &\alpha \triangleright y = q^{-1}y, \;\; \alpha^{\ast} \triangleright y = qy, \;\; \gamma \triangleright y = 0, \;\; \gamma^{\ast} \triangleright y = 0, \;\; \delta \triangleright y = q^{-3}y.
    \end{split}
    \label{eq:leftUq2actiononB}
\end{equation}
Surjectivity of $\phi$ follows from that of $\pi$. Thus, 
by Theorem \ref{thm:aquantumGstructureisaquantumframeresolution}, we conclude that $(B\#H_0, H_0, V, \theta_0)$ is a quantum frame resolution of $(B\#1_{H_0}, \Gamma_{B\#1_{H_0}})$.
\bibliographystyle{plain}  
\bibliography{bibliography}   

\end{document}